\newtheorem{thm}{Theorem}[section] 
\newtheorem{cor}[thm]{Corollary}
\newtheorem{prop}[thm]{Proposition}
\newtheorem{deflem}[thm]{Definition-Lemma}
\newtheorem{lem}[thm]{Lemma}
\theoremstyle{definition} 
\newtheorem{defn}[thm]{Definition}
\newtheorem{ex}[thm]{Example}
\theoremstyle{remark}
\newtheorem{rem}[thm]{Remark}
\newtheorem{claim}[thm]{Claim}
\numberwithin{equation}{section}
\newtheorem{case}{Case}
\newcommand{\rk}[0]{\operatorname{rk}}
\newcommand{\id}{{\rm id}}
\newcommand{\Coker}[0]{\operatorname{Coker}}
\newcommand{\Hom}[0]{\mathscr{H}\!\textit{om}}
\newcommand{\Tor}{{\rm Tor}}
\newcommand{\reg}{{\rm{reg}}}
\newcommand{\underalign}[2]{\quad \underset{\mathclap{\strut #1}}{#2}\quad}
\newcommand{\orb}{{\rm orb}}
\newcommand{\tor}{{\rm Tor}}
\newcommand{\R}{\mathbb{R}}
\newcommand{\Q}{\mathbb{Q}}
\newcommand{\mO}{\mathcal{O}}
\title[Miyaoka--Yau inequality with big (anti-)canonical divisors]
{The Miyaoka--Yau inequality for singular varieties with big canonical or anticanonical divisors}
\author{Masataka IWAI}
\address{Department of Mathematics, Graduate School of Science, The University of Osaka,
1-1, Machikaneyama-cho, Toyonaka, Osaka 560-0043, Japan.}
\email{{\tt masataka@math.sci.osaka-u.ac.jp}}
\email{{\tt masataka.math@gmail.com}}
\author{Satoshi Jinnouchi}
\address{Department of Mathematics, Graduate School of Science, The University of Osaka,
1-1, Machikaneyama-cho, Toyonaka, Osaka 560-0043, Japan.}
\email{{\tt u122988d@ecs.osaka-u.ac.jp}}
\email{{\tt 20160312sti@gmail.com}}
\author{Shiyu Zhang}
\address{School of Science, Institute for Theoretical Sciences, Westlake University, Hangzhou 310030, China}
\email{{\tt zhangshiyu@westlake.edu.cn}}
\date{\today}
\subjclass[2020]{Primary 32J25, Secondary 32Q15, 14C30, 14E30}
\keywords{Miyaoka--Yau inequality, Bogomolov-Gieseker inequality, Higgs bundles, Orbifold Chern class, Complex orbifolds, Non-pluripolar product, Klt K\"ahler variety, Complex space.}
\begin{document}

\begin{abstract}
We establish the Miyaoka–Yau inequality for $n$-dimensional projective klt varieties with big canonical divisor $K_X$:
\[
\left(2(n+1)\widehat{c}_2(X) - n \widehat{c}_1(X)^2\right) \cdot \langle c_1(K_X)^{n-2} \rangle \ge 0.
\]
We also prove the Miyaoka--Yau inequality for K-semistable projective klt varieties with big anticanonical divisor $-K_X$.
As part of our approach, we define the non-pluripolar product $\langle \alpha_1 \cdots \alpha_p \rangle$ on singular varieties, and establish the Bogomolov--Gieseker type inequality for $\langle \alpha^{n-1} \rangle$-semistable Higgs sheaves with respect to a big class $\alpha$.
\end{abstract}
\maketitle
\tableofcontents
\section{Introduction}

\subsection{History and Main results}
One of the most important applications of Yau's solution of the Calabi conjecture in \cite{Yau78} is a Chern number inequality for compact K\"ahler manifolds with ample canonical bundle. More precisely, let \( X \) be an \( n \)-dimensional compact K\"ahler manifold with ample canonical divisor \( K_X \). By Yau's theorem, \( X \) admits a K\"ahler--Einstein metric with negative Ricci curvature. Using this metric, Yau \cite{Yau77} proved the following inequality:
\begin{equation}\label{equa-intro-yau}
    \big(2(n+1)c_2(X)-n c_1(X)^2\big) \cdot K_X^{n-2} \geq 0.
\end{equation}
When \( n=2 \), inequality \eqref{equa-intro-yau} becomes
\[
    3c_2(X)-c_1(X)^2 \geq 0,
\]
which is the classical \textit{Bogomolov--Miyaoka--Yau inequality} for surfaces. For surfaces, this inequality is known to hold in a more general setting. Indeed, Miyaoka~\cite{Miy77} proved that every smooth projective surface of general type satisfies $3c_2(X)-c_1(X)^2 \geq 0$. In other words, for surfaces, the inequality remains valid when \( K_X \) is only assumed to be big, without assuming that it is ample.

It is therefore natural to ask whether, in higher dimensions, the inequality \eqref{equa-intro-yau} remains valid when \(K_X\) is only assumed to be big and \(X\) is allowed to have mild singularities. When \( K_X \) is nef and big, the Miyaoka--Yau inequality has already been established in \cite{GKPT19b, GT22}. However, when \( K_X \) is only big, the class $K_X^{n-2}$ may be  negative, and hence the classical form of the Miyaoka--Yau inequality cannot be expected in general (for explicit examples, see Lemma \ref{lem-counterexample-MY}).

To address this issue, we use non-pluripolar products, which were introduced in \cite{BEGZ10} in the study of Monge--Amp\`ere equations for big cohomology classes. Roughly speaking, for a class $\alpha$ on a compact K\"ahler manifold, the non-pluripolar product $\langle \alpha^{n-2} \rangle$ is defined by taking the product after removing the negative part of $\alpha$. This product agrees with the movable intersection product introduced in \cite{BDPP13}.

We extend non-pluripolar products to singular varieties and use them to formulate the Miyaoka--Yau inequality when $K_X$ is only big. This gives the following new form of the Miyaoka--Yau inequality.

\begin{thm}
\label{thm-main-big-canonical}
Let $X$ be an $n$-dimensional compact normal complex analytic variety. Assume one of the following conditions holds:
\begin{enumerate}[label=$(\arabic*)$]
    \item $X$ is a projective klt variety with big canonical divisor $K_X$;
    \item $X$ has at most quotient singularities in codimension $2$ and rational singularities. Furthermore, $K_X$ is a $\mathbb{Q}$-Cartier divisor, and there exists a resolution $\pi: \widetilde{X} \to X$ such that $K_{\widetilde{X}}$ is big.
\end{enumerate}
Then the following Miyaoka--Yau inequality holds:
$$
\left(2(n+1)\widehat{c}_2(X) - n \widehat{c}_1(X)^2\right) \cdot \langle c_1(K_X)^{n-2} \rangle \ge 0.
$$
\end{thm}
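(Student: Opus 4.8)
The plan is to exhibit the quantity $2(n+1)\widehat{c}_2(X)-n\,\widehat{c}_1(X)^2$ as the (orbifold) discriminant of a tautological Higgs sheaf and then to apply the Bogomolov--Gieseker inequality for big classes established earlier in the paper. Write $\alpha:=c_1(K_X)$; in case $(1)$ this is big by hypothesis, and in case $(2)$ one first reduces to a model on which the relevant class is big. Since $X$ has quotient singularities in codimension two (automatic when $X$ is klt, assumed in case $(2)$), the orbifold classes $\widehat{c}_1,\widehat{c}_2$ are defined, and we consider the reflexive Higgs sheaf
\[
(\mathcal{E},\theta):=\bigl(\Omega_X^{[1]}\oplus\mathcal{O}_X,\ \theta\bigr),\qquad
\theta=\begin{pmatrix}0&0\\ \id&0\end{pmatrix}\colon \mathcal{E}\longrightarrow \mathcal{E}\otimes\Omega_X^{[1]},
\]
where the nonzero block is the natural isomorphism $\Omega_X^{[1]}\xrightarrow{\ \id\ }\mathcal{O}_X\otimes\Omega_X^{[1]}$. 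This $\theta$ is defined on all of $X$ and nilpotent of order two, so $\theta\wedge\theta=0$. As $\mathcal{E}$ has rank $n+1$ and $c(\mathcal{E})=c(\Omega_X^{[1]})$, we get $\widehat{c}_1(\mathcal{E})^2=\widehat{c}_1(X)^2$ and $\widehat{c}_2(\mathcal{E})=\widehat{c}_2(X)$, so the orbifold discriminant $2(n+1)\widehat{c}_2(\mathcal{E})-n\,\widehat{c}_1(\mathcal{E})^2$ of $\mathcal{E}$ is exactly $2(n+1)\widehat{c}_2(X)-n\,\widehat{c}_1(X)^2$. Thus the theorem reduces to showing that $(\mathcal{E},\theta)$ is $\langle\alpha^{n-1}\rangle$-semistable and invoking the Bogomolov--Gieseker inequality for $\langle\alpha^{n-1}\rangle$-semistable Higgs sheaves with respect to the big class $\alpha$.

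To prove semistability of $(\mathcal{E},\theta)$, let $0\ne\mathcal{F}\subsetneq\mathcal{E}$ be a saturated $\theta$-invariant subsheaf and set $\mathcal{F}_1:=\mathcal{F}\cap\Omega_X^{[1]}$, of rank $r_1$. If $\mathcal{F}\subseteq\Omega_X^{[1]}$, then $\theta(\mathcal{F})$ lies in the $\mathcal{O}_X\otimes\Omega_X^{[1]}$–summand of $\mathcal{E}\otimes\Omega_X^{[1]}$ while $\mathcal{F}\otimes\Omega_X^{[1]}$ lies in the complementary summand, so $\theta$-invariance forces $\theta|_{\mathcal{F}}=0$; since $\theta$ is injective on $\Omega_X^{[1]}$ this gives $\mathcal{F}=0$, a contradiction. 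Hence the image of $\mathcal{F}$ under the projection $\mathcal{E}\to\mathcal{O}_X$ is a nonzero subsheaf of $\mathcal{O}_X$, of rank one, so $\rk\mathcal{F}=r_1+1\le n$, and from $0\to\mathcal{F}_1\to\mathcal{F}\to\mathcal{I}\to 0$ with $\mathcal{I}\subseteq\mathcal{O}_X$ an ideal sheaf we get $c_1(\mathcal{F})\cdot\langle\alpha^{n-1}\rangle\le c_1(\mathcal{F}_1)\cdot\langle\alpha^{n-1}\rangle$. Combining this with the $\langle\alpha^{n-1}\rangle$-semistability of $\Omega_X^{[1]}$ (see below) and $\langle\alpha^n\rangle=\vol(K_X)>0$ yields
\[
\mu_{\langle\alpha^{n-1}\rangle}(\mathcal{F})\ \le\ \frac{c_1(\mathcal{F}_1)\cdot\langle\alpha^{n-1}\rangle}{r_1+1}\ \le\ \frac{r_1}{n(r_1+1)}\,\langle\alpha^{n}\rangle\ \le\ \frac{\langle\alpha^{n}\rangle}{n+1}\ =\ \mu_{\langle\alpha^{n-1}\rangle}(\mathcal{E}),
\]
the last inequality holding because $r_1\le n$. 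So $(\mathcal{E},\theta)$ is $\langle\alpha^{n-1}\rangle$-semistable, and the Bogomolov--Gieseker inequality applied to it gives precisely $\bigl(2(n+1)\widehat{c}_2(X)-n\,\widehat{c}_1(X)^2\bigr)\cdot\langle c_1(K_X)^{n-2}\rangle\ge 0$.

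Two ingredients remain, and the second is what I expect to be the main obstacle. First, the reductions: in case $(1)$ one runs the MMP to a minimal model and then contracts to the canonical model $X_{\mathrm{can}}$, whose canonical class is ample; in case $(2)$ one works on the resolution $\pi\colon\widetilde{X}\to X$ with $K_{\widetilde{X}}$ big and exploits the rationality of the singularities. In either situation one must verify that the orbifold Chern numbers $\widehat{c}_i$ and the non-pluripolar intersection numbers $\langle c_1(K_X)^{\bullet}\rangle$ behave compatibly under these birational operations; this is handled through a common resolution using the divisorial Zariski decomposition of $c_1(K_X)$, together with the standard invariance of $\widehat{c}_2\cdot\langle c_1(K)^{n-2}\rangle$ under crepant contractions and under resolutions with rational singularities. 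Second, and crucially, one needs that $\Omega_X^{[1]}$ is $\langle c_1(K_X)^{n-1}\rangle$-semistable: on the canonical model this follows from the existence of a singular Kähler--Einstein metric of negative Ricci curvature, which is Hermite--Einstein for the reflexive cotangent sheaf on the regular locus and hence forces semistability with respect to the polarization $K_{X_{\mathrm{can}}}$, and the non-pluripolar product formalism is precisely the tool that transports this conclusion back to the merely big class $c_1(K_X)$ on $X$. Granting these, the argument above completes the proof.
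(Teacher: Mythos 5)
Your proposal is correct and follows essentially the same route as the paper: the same Higgs sheaf $\Omega_X^{[1]}\oplus\mathcal{O}_X$ with the nilpotent Higgs field, semistability of $\Omega_X^{[1]}$ obtained on the canonical model (via the singular K\"ahler--Einstein metric, i.e.\ Guenancia's theorem) and transported back to the big class $c_1(K_X)$ through the non-pluripolar slope formalism for $c_1(K_X)$-negative bimeromorphic contractions, followed by the Bogomolov--Gieseker inequality for $\langle\alpha^{n-1}\rangle$-semistable Higgs sheaves. The only difference is cosmetic: you write out the invariant-subsheaf slope estimate that the paper delegates to the cited proposition of Iwai--Matsumura--Matsuo, and your two "remaining ingredients" are exactly the results the paper supplies as Theorem \ref{thm-jinnnouchi} and the semistability input from the canonical model.
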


Here, $\widehat{c}_2(X)$ and $\widehat{c}_1(X)^2$ denote the orbifold Chern classes introduced in \cite{Kaw92, GKPT19b, GK20}. When $X$ is smooth, these classes coincide with the usual Chern classes. At present, orbifold Chern classes can be defined for klt varieties, more precisely for varieties with quotient singularities in codimension two. Therefore, the assumptions on the singularities in Theorems \ref{thm-main-big-canonical} and \ref{thm-main-big-anticanonical} cannot currently be weakened any further. Note that these theorems recover the Miyaoka--Yau inequality proved in \cite{GKPT19b, GT22}, because if $K_X$ is nef, then $\langle c_1(K_X)^{n-2} \rangle = c_1(K_X)^{n-2}$. Moreover, when $X$ is a smooth threefold, the non-pluripolar product $\langle c_1(K_X)^{n-2} \rangle$ coincides with the positive part $P(K_X)$ of the divisorial Zariski decomposition of $K_X$, as defined in \cite{Bou04} and \cite{Nak04}.

Next, we consider the case where \(-K_X\) is big. In this case, it is known that the Miyaoka--Yau inequality does not hold in general, even when \(-K_X\) is ample (for example, see \cite{GKP22}). However, under the assumption of K-semistability, the Miyaoka--Yau inequality holds by \cite{GKP22, DGP24}. In this paper, we extend this result to the case where $-K_X$ is only big.

\begin{thm}
\label{thm-main-big-anticanonical}
Let $X$ be an $n$-dimensional compact normal complex analytic variety. Assume one of the following conditions holds:
\begin{enumerate}[label=$(\arabic*)$]
    \item $X$ is a K-semistable projective klt variety with big anticanonical divisor $-K_X$;
    \item $X$ has at most quotient singularities in codimension $2$. Furthermore, $-K_X$ is a $\mathbb{Q}$-Cartier divisor, and there exists a resolution $\pi: \widetilde{X} \to X$ such that $\widetilde{X}$ is a smooth $K$-semistable projective variety with $-K_{\widetilde{X}}$ big.
\end{enumerate}
Then the following Miyaoka--Yau inequality holds:
$$
\left(2(n+1)\widehat{c}_2(X) - n \widehat{c}_1(X)^2\right) \cdot \langle c_1(-K_X)^{n-2} \rangle \ge 0.
$$
\end{thm}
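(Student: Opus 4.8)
The plan is to run the Higgs-sheaf proof of the Miyaoka--Yau inequality with the big polarisation $\alpha := c_1(-K_X)$ and the non-pluripolar intersection product in place of the usual ones; since $-K_X$ is big we have $\langle\alpha^n\rangle>0$ and $\alpha\cdot\langle\alpha^{n-1}\rangle>0$. For the big \emph{anti}canonical case the sheaf $\Omega_X^{[1]}\oplus\mathcal{O}_X$ used when $K_X$ is big no longer serves (it is destabilised by $\mathcal{O}_X$), and neither does $T_X^{[1]}\oplus\mathcal{O}_X$ (destabilised by $T_X^{[1]}$): the right object is the non-split Atiyah-type extension
\[
0 \longrightarrow \mathcal{O}_X \longrightarrow \mathcal{E} \longrightarrow T_X^{[1]} \longrightarrow 0
\]
classified by $c_1(-K_X)\in\operatorname{Ext}^1\!\big(T_X^{[1]},\mathcal{O}_X\big)=H^1\big(X,\Omega_X^{[1]}\big)$, which we regard as a Higgs sheaf (the zero Higgs field suffices, so that $\langle\alpha^{n-1}\rangle$-semistability of $\mathcal{E}$ is ordinary slope-semistability). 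Adjoining $\mathcal{O}_X$ as a sub-line-sheaf changes neither $\widehat{c}_1$ nor $\widehat{c}_2$, so $\widehat{c}_1(\mathcal{E})=\alpha$, $\widehat{c}_2(\mathcal{E})=\widehat{c}_2(X)$, and the discriminant of $\mathcal{E}$ is exactly the Miyaoka--Yau combination,
\[
\Delta(\mathcal{E}):=2(n+1)\widehat{c}_2(\mathcal{E})-n\,\widehat{c}_1(\mathcal{E})^2=2(n+1)\widehat{c}_2(X)-n\,\widehat{c}_1(X)^2 .
\]
Thus, once $\mathcal{E}$ is shown to be $\langle\alpha^{n-1}\rangle$-semistable, the Bogomolov--Gieseker type inequality for big classes established earlier in the paper yields $\Delta(\mathcal{E})\cdot\langle\alpha^{n-2}\rangle\ge 0$, which is the assertion.

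The heart of the matter is the semistability of $\mathcal{E}$, and this is where $K$-semistability enters. First, $K$-semistability of $X$ (respectively of the smooth model $\widetilde X$ under hypothesis (2)) forces $T_X$ to be slope-semistable with respect to $\alpha$; since $c_1(\mathcal{E})=c_1(T_X)$ we get $\mu_\alpha(\mathcal{E})=\tfrac{n}{n+1}\,\mu_\alpha(T_X)$ with $\mu_\alpha(T_X)>0$. Let $\mathcal{S}\subsetneq\mathcal{E}$ be saturated. If $\mathcal{O}_X\subseteq\mathcal{S}$, then $\mathcal{S}/\mathcal{O}_X\hookrightarrow T_X^{[1]}$, and writing $s=\rk\mathcal{S}\le n$ a short computation gives $\mu_\alpha(\mathcal{S})=\tfrac{s-1}{s}\,\mu_\alpha(\mathcal{S}/\mathcal{O}_X)\le\tfrac{s-1}{s}\,\mu_\alpha(T_X)\le\tfrac{n}{n+1}\,\mu_\alpha(T_X)=\mu_\alpha(\mathcal{E})$, using $\mu_\alpha(T_X)\ge 0$. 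If instead $\mathcal{S}\cap\mathcal{O}_X=0$, then the inclusion $\mathcal{S}\hookrightarrow\mathcal{E}$ realises $\mathcal{S}$ as a subsheaf of $T_X^{[1]}$ that lifts to $\mathcal{E}$, so the extension class $c_1(-K_X)$ vanishes in $\operatorname{Ext}^1(\mathcal{S},\mathcal{O}_X)$; using the bigness of $-K_X$ together with Fano-type vanishing one shows this is incompatible with $\mu_\alpha(\mathcal{S})>\mu_\alpha(\mathcal{E})$ (the extreme case $\mathcal{S}=T_X^{[1]}$ being precisely the non-splitness of the extension). Hence $\mathcal{E}$ is $\langle\alpha^{n-1}\rangle$-semistable.

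It remains to organise the two hypotheses. Under (1) the argument is carried out on $X$ itself: the assumption that $X$ has quotient singularities in codimension $2$ guarantees that $\widehat{c}_1(X),\widehat{c}_2(X)$, the non-pluripolar powers $\langle\alpha^k\rangle$, and the orbifold-bundle structure of $\mathcal{E}$ in codimension $2$ are all defined, and the Bogomolov--Gieseker inequality of the paper is stated in precisely this generality. Under (2) one runs the argument on the smooth $K$-semistable projective model $\widetilde X$, whose anticanonical class is big by hypothesis, obtaining $\big(2(n+1)c_2(\widetilde X)-n\,c_1(\widetilde X)^2\big)\cdot\langle c_1(-K_{\widetilde X})^{n-2}\rangle\ge 0$, and then transfers this down along $\pi$: one compares $c_i(\widetilde X)$ with $\widehat{c}_i(X)$ --- the discrepancy being supported on the $\pi$-exceptional divisors and on the finitely many non-quotient points, and contributing with the correct sign to the Miyaoka--Yau combination --- and uses the behaviour of non-pluripolar products under the birational morphism $\pi$ to match $\langle c_1(-K_{\widetilde X})^k\rangle$ with $\pi^{*}\langle\alpha^k\rangle$ modulo exceptional terms.

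The main obstacle is the semistability step of the second paragraph. Whereas ``a $K$-semistable Fano has slope-semistable tangent bundle with respect to $-K_X$'' is known when $-K_X$ is ample --- for instance via degeneration to a K\"ahler--Einstein fibre, or by algebraic arguments with the Harder--Narasimhan filtration --- here $\alpha$ is only big, not nef, and under hypothesis (2) only a resolution is assumed $K$-semistable, so one must run the destabilisation analysis with respect to the non-pluripolar polarisation $\langle\alpha^{n-1}\rangle$ and control the interplay between the positive and exceptional parts of $\alpha$ and the candidate destabilising subsheaf. The remaining ingredients --- the discriminant identity, the Bogomolov--Gieseker step, and the orbifold and non-pluripolar bookkeeping in case (2) --- are then routine given the earlier sections.
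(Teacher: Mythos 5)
Your choice of the key object is the same as the paper's: the canonical (Atiyah-type) extension $0 \to \mathcal{O}_X \to \mathcal{E} \to \mathcal{T}_X \to 0$ classified by $c_1(-K_X)$, whose discriminant is exactly the Miyaoka--Yau combination, followed by the Bogomolov--Gieseker inequality for $\langle c_1(-K_X)^{n-1}\rangle$-semistable sheaves. That skeleton is correct. The problem is the step you yourself flag as the heart of the matter: your proof of semistability of $\mathcal{E}$ does not work.

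First, your input ``$K$-semistability of $X$ forces $\mathcal{T}_X$ to be slope-semistable with respect to $\alpha$'' is false in general: there exist K-polystable Fano manifolds with unstable tangent bundle, so semistability of $\mathcal{T}_X$ cannot be extracted from K-semistability, and your estimate $\mu_\alpha(\mathcal{S}/\mathcal{O}_X)\le\mu_\alpha(\mathcal{T}_X)$ in the case $\mathcal{O}_X\subseteq\mathcal{S}$ has no justification. Second, in the case $\mathcal{S}\cap\mathcal{O}_X=0$ the appeal to ``bigness of $-K_X$ together with Fano-type vanishing'' is not an argument; the non-splitness of the restricted extension does not by itself rule out a destabilising lift. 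What K-semistability actually gives (and what is genuinely needed) is semistability of the \emph{canonical extension itself}, and only when $-K$ is ample: this is the theorem of \cite{DGP24} for K-semistable klt Fano varieties. The paper's route is therefore to first pass to the anticanonical model: by \cite{Xu23}, K-semistability plus bigness of $-K_X$ implies $R(X,-K_X)$ is finitely generated and $Z=\operatorname{Proj} R(X,-K_X)$ is a K-semistable klt Fano, where $\mathcal{E}_Z$ is $c_1(-K_Z)^{n-1}$-semistable by \cite{DGP24}. One then checks that $f\colon X\dashrightarrow Z$ is a $c_1(-K_X)$-negative birational contraction with $p^*\mathcal{E}_X\cong q^*\mathcal{E}_Z$ off the exceptional locus, and transfers semistability back to $X$ with respect to $\langle c_1(-K_X)^{n-1}\rangle$ via Theorem~\ref{thm-jinnnouchi}. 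This reduction to the ample case is precisely the mechanism that handles the fact that $\alpha$ is only big, and it is absent from your proposal; without it (or an independent proof that $\mathcal{E}_X$ is $\langle\alpha^{n-1}\rangle$-semistable directly on $X$), the argument has a genuine gap. The bookkeeping you describe for hypothesis (2) and the final Bogomolov--Gieseker step are fine.
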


\subsection{Strategy of the proof}
The strategy for Theorems \ref{thm-main-big-canonical} and \ref{thm-main-big-anticanonical} is to establish the following Bogomolov--Gieseker  inequality. 
\begin{thm}[$\subset$ Theorem \ref{thm-BGinequality-nonpluripolar}]
\label{thm-BGinequality-main}
Let $X$ be an $n$-dimensional compact normal Moishezon variety with at most quotient singularities in codimension 2 and rational singularities. $($For example, $X$ is a projective klt variety.$)$

If a rank $r$ reflexive Higgs sheaf $(\mathcal{E}, \theta)$ is $\langle \alpha^{n-1} \rangle$-semistable for some big class $\alpha \in H^{1,1}_{BC}(X)$,  
then the following Bogomolov--Gieseker inequality holds:
$$ 
\left( 2r \widehat{c}_2(\mathcal{E}) - (r-1)\widehat{c}_1(\mathcal{E})^2\right)\cdot\langle\alpha^{n-2}\rangle \ge 0.
$$
\end{thm}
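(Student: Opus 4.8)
The plan is to reduce the statement, in two steps, to the classical Bogomolov--Gieseker inequality for semistable Higgs bundles on smooth projective surfaces: first remove the singularities of $X$, then approximate the big class $\alpha$ by ample classes on modifications.

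\textbf{Step 1: reduction to the smooth projective case.} Since $X$ is Moishezon we may choose a resolution $\pi\colon\widetilde X\to X$ with $\widetilde X$ smooth projective, after first passing, in the standard way, to a cover adapted to the codimension-two orbifold structure so that $\widehat c_i(X)$ and $\widehat c_i(\mathcal E)$ become honest Chern classes; by further blowing up we may moreover arrange $\widetilde{\mathcal E}:=(\pi^*\mathcal E)^{**}$ to be locally free. Set $\widetilde\alpha:=\pi^*\alpha$, a big class, and equip $\widetilde{\mathcal E}$ with the Higgs field $\widetilde\theta$ induced from $\theta$ (using that reflexive differentials on klt varieties pull back to resolutions, \cite{GKPT19b}). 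A saturated Higgs subsheaf of $\widetilde{\mathcal E}$ pushes forward to one of $\mathcal E$ of the same rank, so, using the projection formula together with the compatibility $\pi_*\langle(\pi^*\alpha)^k\rangle=\langle\alpha^k\rangle$ of non-pluripolar products with modifications, $\langle\alpha^{n-1}\rangle$-semistability of $(\mathcal E,\theta)$ implies $\langle\widetilde\alpha^{n-1}\rangle$-semistability of $(\widetilde{\mathcal E},\widetilde\theta)$. The same tools, together with the standard comparison of Chern classes under pullback and reflexive hull \cite{GKPT19b, GK20}, reduce the desired inequality on $X$ to the one for $(\widetilde{\mathcal E},\widetilde\theta)$ and $\widetilde\alpha$ on $\widetilde X$. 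Renaming, we may assume from now on that $X$ is smooth projective, $\alpha$ is big, and $\mathcal E$ is locally free.

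\textbf{Step 2: from big to ample via Fujita approximation.} Replacing $\alpha$ by the positive part $P(\alpha)$ of its divisorial Zariski decomposition (\cite{Bou04, Nak04}; this does not change $\langle\alpha^k\rangle$), fix Fujita approximations: modifications $\mu_j\colon X_j\to X$ and ample $\mathbb Q$-classes $A_j$ on $X_j$ with $\mu_j^*\alpha=A_j+E_j$, $E_j$ effective, $\vol(A_j)\to\vol(\alpha)$, and $(\mu_j)_*(A_j^{k})\to\langle\alpha^k\rangle$ in $H^{k,k}(X)$ for $k=n-2,n-1$. Put $(\mathcal E_j,\theta_j):=(\mu_j^*\mathcal E,\mu_j^*\theta)$, which is locally free, so $c_\bullet(\mathcal E_j)=\mu_j^*c_\bullet(\mathcal E)$. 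Let $\mathcal Q^{(j)}_1,\dots,\mathcal Q^{(j)}_{\ell_j}$ be the graded pieces of the $A_j$-Harder--Narasimhan filtration of the Higgs sheaf $(\mathcal E_j,\theta_j)$, of ranks $r_i$ and $A_j$-slopes $\mu^{(j)}_i$. Restricting to a general complete intersection surface in $|mA_j|$ and invoking a Mehta--Ramanathan type restriction theorem for semistable Higgs sheaves together with the classical Bogomolov inequality for semistable Higgs bundles on smooth projective surfaces, we obtain $\overline\Delta(\mathcal Q^{(j)}_i)\cdot A_j^{n-2}\ge0$ for every $i$, where $\overline\Delta(\mathcal F):=2c_2(\mathcal F)-\frac{\rk\mathcal F-1}{\rk\mathcal F}c_1(\mathcal F)^2$. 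Combining these via the telescoping identity
\[
\overline\Delta(\mathcal E_j)=\sum_i\overline\Delta(\mathcal Q^{(j)}_i)-\sum_{i<i'}\frac{r_ir_{i'}}{r}\Bigl(\tfrac{c_1(\mathcal Q^{(j)}_i)}{r_i}-\tfrac{c_1(\mathcal Q^{(j)}_{i'})}{r_{i'}}\Bigr)^2
\]
and the Hodge index theorem on $X_j$ (applied to the $\mathbb Q$-divisor classes in the last sum, whose $A_j^{n-1}$-degrees equal $\mu^{(j)}_i-\mu^{(j)}_{i'}$) gives $\overline\Delta(\mathcal E_j)\cdot A_j^{n-2}\ge-\delta_j$, with $\delta_j:=\sum_{i<i'}\frac{r_ir_{i'}}{r\,A_j^n}\bigl(\mu^{(j)}_i-\mu^{(j)}_{i'}\bigr)^2\ge0$. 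On the other hand $\overline\Delta(\mathcal E_j)\cdot A_j^{n-2}=\mu_j^*\overline\Delta(\mathcal E)\cdot A_j^{n-2}=\overline\Delta(\mathcal E)\cdot(\mu_j)_*(A_j^{n-2})$, hence $\overline\Delta(\mathcal E)\cdot(\mu_j)_*(A_j^{n-2})\ge-\delta_j$. Letting $j\to\infty$, the left-hand side tends to $\overline\Delta(\mathcal E)\cdot\langle\alpha^{n-2}\rangle$, and multiplying by $r$ yields the claimed inequality, \emph{provided} $\delta_j\to0$.

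\textbf{The main obstacle.} The crux is thus the vanishing $\delta_j\to0$, i.e.\ that the pullbacks $(\mathcal E_j,\theta_j)$ become asymptotically $A_j$-semistable as $A_j$ approximates $\langle\alpha^{n-1}\rangle$. One argues as follows: pushing the $A_j$-Harder--Narasimhan filtration of $\mathcal E_j$ down to $X$ produces a Higgs filtration of $\mathcal E$ whose graded subsheaves range over a bounded family (for a fixed $\mathcal E$ there are only finitely many saturated Higgs subsheaves with slope not far below that of $\mathcal E$), and each such subsheaf has $\langle\alpha^{n-1}\rangle$-slope at most that of $\mathcal E$ by the hypothesis; since, modulo non-negative exceptional contributions to the first Chern classes, the maximal and minimal $A_j$-slopes both converge to the $(\mu_j)_*(A_j^{n-1})$-slope of $\mathcal E$, which in turn tends to its $\langle\alpha^{n-1}\rangle$-slope, all the differences $\mu^{(j)}_i-\mu^{(j)}_{i'}$ tend to $0$. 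Making this slope comparison uniform over the varying models $X_j$ while keeping precise track of the exceptional-divisor contributions is the most delicate part of the argument. (A purely analytic alternative avoids the approximation altogether: one solves the Monge--Amp\`ere equation of \cite{BEGZ10} in the big class $\alpha$, obtaining a canonical closed positive $(1,1)$-current that restricts to a genuine K\"ahler form on the ample locus $\Amp(\alpha)$, constructs a Hermitian--Einstein metric on $(\mathcal E,\theta)$ over $\Amp(\alpha)$ by Bando--Siu theory using $\langle\alpha^{n-1}\rangle$-semistability, and reads off the inequality from Chern--Weil theory --- the hard point there being to prove that no mass of the Chern--Weil representatives escapes through the non-K\"ahler locus, so that the resulting integral computes $\overline\Delta(\mathcal E)\cdot\langle\alpha^{n-2}\rangle$.)
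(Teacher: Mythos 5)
Your Step 1 contains the decisive gap: the reduction to a smooth projective model is incompatible with the invariant in the statement. The classes $\widehat{c}_2(\mathcal{E})$ and $\widehat{c}_1(\mathcal{E})^2$ are \emph{orbifold} Chern classes, defined (Subsection~\ref{subsec-intersection-orbifold-nonpluripolar}) through the global orbifold modification $q\colon Z\to X$ of Ou and the orbi-bundle $\mathcal{H}_{\orb}$ on $Z_{\orb}$ --- not through $c_2(\pi^*\mathcal{E}/\tor)$ on a resolution $\pi\colon\widetilde X\to X$. When $X$ has genuine quotient singularities in codimension two these two quantities differ (already for klt surfaces, the orbifold Euler number is not $c_2$ of the minimal resolution), and the paper explicitly cautions (remark after Proposition~\ref{prop-c2-nonpluri-intersection}, and Remark~\ref{rem-mixed-slope}) that even for a locally free comparison the identification is delicate. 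Your phrase ``passing, in the standard way, to a cover adapted to the codimension-two orbifold structure'' has no content here: the orbifold structure in codimension two is only local, and no global cover trivializes it. This is precisely why the paper invokes \cite{Ou24} to produce a bimeromorphic model $Z$ carrying a global orbifold structure, and then runs the entire argument (Demailly approximation \cite{Wu23}, the K\"ahler-class Bogomolov--Gieseker inequality \cite{ZZZ25}, Hodge index) \emph{in the orbifold category on $Z$}, never descending to a smooth model. After your renaming ``we may assume $X$ is smooth projective,'' you are proving an inequality for a different characteristic number.

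Step 2 also differs from the paper and leaves its central difficulty open by your own admission. The paper does not use Fujita approximation on a sequence of models $X_j$; it applies the orbifold Demailly approximation to the minimal-singularity current of $P(\alpha)$ on the fixed space, obtaining K\"ahler currents $T_\varepsilon$ with analytic singularities and vanishing Lelong numbers along all divisors, so that a single equivariant blow-up per chart writes $f^*T_\varepsilon=\tau+[D]$ with $D$ exceptional and $\tau$ semipositive, and the vanishing property (automatic since $X$ is Moishezon, Lemma~\ref{lem-VP}) kills the exceptional contributions exactly. The persistence of stability under this perturbation is then Lemma~\ref{lem-stability-varepsilon}, whose proof rests on the finiteness of the set of slopes of destabilizing subsheaves (\cite[Lemma 7.3]{Jin25}) transported to the orbifold via Lemma~\ref{lem-comparison-slope}. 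Your sketch of $\delta_j\to0$ gestures at the same finiteness principle, but the uniform comparison of $A_j$-slopes across varying models $X_j$ with the $\langle\alpha^{n-1}\rangle$-slope on $X$ --- which you correctly flag as ``the most delicate part'' --- is exactly what is not supplied, so even granting Step 1 the argument is incomplete. To repair the proposal you would need to (i) replace the resolution in Step 1 by the orbifold modification and keep all Chern-class computations orbifold-theoretic, and (ii) replace the Fujita approximation by a perturbation argument on a fixed model for which a finiteness-of-slopes statement is actually available.
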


We now explain the approach to this inequality.
When $X$ is smooth, we can apply Demailly's approximation theorem to obtain a sequence of closed positive currents $T_k$ with analytic singularities such that $\{ \langle T_k \rangle\} \to \langle \alpha \rangle$ as $k \to \infty$.  
For sufficiently large $k$, we can take a suitable blow-up $\pi: X_k \to X$ and a semipositive form $\tau$ on $X_k$ such that $\langle \pi^* T_k^{n-2} \rangle = \tau^{n-2}$, and the  pullback of $\mathcal{E}$ is $\tau^{n-1}$-stable. 
By applying the usual Bogomolov--Gieseker inequality and taking the limit as $k \to \infty$, we obtain the desired inequality in Theorem \ref{thm-BGinequality-main}.

However, when $X$ is singular, the situation becomes more subtle. 
First, Demailly's approximation theorem is not available on singular analytic varieties, since it is not known whether the Ohsawa–Takegoshi extension theorem holds in this setting. 
Second, the intersection number between $\widehat{c}_2(\mathcal{E})$ and $\langle \alpha^{n-2} \rangle$ cannot be defined in the usual sense on $X$. While the non-pluripolar product $\langle \alpha^{n-2} \rangle$ is still well-defined via a resolution of singularities (cf.~Section~\ref{sec-nonpluripolar-singular}), it can only be interpreted as a homology class in $H_{2n - 4}(X, \mathbb{R})$. On the other hand, the orbifold Chern class $\widehat{c}_2(\mathcal{E})$, as defined in \cite{GK20}, lies in $H_4(X, \mathbb{R})$. Hence, their intersection number is not well-defined in the usual sense. For this reason, the singular case requires a more delicate analysis.

To address this issue, we use the orbifold modification  by \cite{Ou24} and \cite{KO25}. 
According to \cite{Ou24} and \cite{KO25}, if $X$ has quotient singularities in codimension 2, then there exists a bimeromorphic map  
$q: Z \to X$ from a normal analytic variety $Z$ that admits a complex orbifold structure $Z_{\mathrm{orb}}$.  
Using this modification $q: Z \to X$, we define the intersection number between $\widehat{c}_2(\mathcal{E})$ and $\langle \alpha^{n-2} \rangle$ (cf.~ Section~\ref{sec-orbifold}).  
This reduces the problem to the orbifold setting, where analytic tools such as Demailly's approximation theorem, as extended in \cite{Wu23}, and the Bogomolov--Gieseker inequality from \cite{Ou25b, ZZZ25} are available.

Once the Bogomolov--Gieseker inequality in Theorem~\ref{thm-BGinequality-main} is established, we can prove the Miyaoka–Yau inequality in Theorem~\ref{thm-main-big-canonical} by applying  $\langle c_1(K_X)^{n-1} \rangle$-stability introduced by the second author in \cite{Jin25}.  
Under the assumptions of Theorem~\ref{thm-BGinequality-main}, the method in \cite{Jin25} shows that the reflexive cotangent sheaf $\Omega_X^{[1]}$ is $\langle c_1(K_X)^{n-1} \rangle$-semistable.  
Therefore, the sheaf $\Omega_X^{[1]} \oplus \mathcal{O}_X$ admits a $\langle c_1(K_X)^{n-1} \rangle$-stable Higgs structure, which yields the desired Miyaoka–Yau inequality.  
For Theorem~\ref{thm-main-big-anticanonical}, we use the canonical extension sheaf instead of a Higgs sheaf.

The structure of this paper is as follows.  
In Section~\ref{sec-pre}, we review basic concepts related to normal complex analytic varieties.  
In Section~\ref{sec-nonpluripolar-singular}, we define the non-pluripolar product and the associated slope on singular varieties in Fujiki's class.  
Section~\ref{sec-orbifold} recalls some properties of orbifolds and defines the intersection number between $\widehat{c}_2(\mathcal{E})$ and $\langle \alpha^{n-2} \rangle$, leading to the proof of Theorem~\ref{thm-BGinequality-main}.  
In Section~\ref{sec-MY-inequality}, we prove Theorems~\ref{thm-main-big-canonical} and \ref{thm-main-big-anticanonical}.

\subsection{Acknowledgments}
The authors would like to express their gratitude to the anonymous referee for their valuable comments and suggestions.
M.\,I.\ and S.\,J.\ express their gratitude to Prof.\ Ryushi Goto for valuable discussions and for answering their questions.  
They also thank the organizers of the ``What is ...?" seminar held at Osaka University, where discussions served as the starting point for this work.  
M.\,I.\ is especially grateful to Prof.\ Wenhao Ou, Prof.\ Mihai P\u{a}un, and Prof.\ David Witt Nyström for kindly answering his questions.  
S.\,Z.\ would like to express his gratitude to Prof. Xi Zhang for constant encouragement.
M.\,I.\ was supported by the Grant-in-Aid for Early Career Scientists, No.\ 22K13907.
\section{Preliminary results}\label{sec-pre}

\subsection{Notation and conventions}\label{subsec-notation}

In this paper, we denote by $\mathbb{N}$ the set of non-negative integers. For any real vector space $M$, we write $M^\vee$ for its dual.

All complex spaces are assumed to be \textit{second countable}.
A \textit{complex analytic variety} or simply an \textit{analytic variety} is a reduced and irreducible complex space. Unless otherwise stated, an analytic variety $X$ has complex dimension $n$. 
We follow the standard notations and conventions used in \cite{KM98} about Minimal Model Program. Additionally, we say that a normal analytic variety $X$ is a \emph{klt variety} if the pair $(X, 0)$ is klt; equivalently, $X$ is log terminal.

Unless stated otherwise, all sheaves considered in this paper are assumed to be coherent.
Let $\mathcal{E}$ be a torsion-free (coherent) sheaf on a normal analytic variety $X$.
We define the dual reflexive sheaf $\mathcal{E}^\vee$ by
\(
\mathcal{E}^\vee \coloneqq \Hom(\mathcal{E}, \mathcal{O}_X).
\)
For any positive integer $m \in \mathbb{N}$, we define the reflexive tensor power by
\(
\mathcal{E}^{[\otimes m]} \coloneqq (\mathcal{E}^{\otimes m})^{\vee\vee}.
\)
Given a morphism $f \colon Y \rightarrow X$ between analytic varieties, the reflexive pullback of $\mathcal{E}$ is defined as
\(
f^{[*]}\mathcal{E} \coloneqq (f^*\mathcal{E})^{\vee\vee}.
\)
A reflexive rank-one sheaf $\mathcal{E}$ is called a \textit{$\mathbb{Q}$-line bundle} if there exists an integer $m \in \mathbb{N}$ such that $\mathcal{E}^{[m]}$ is locally free. We denote the torsion part of a sheaf by $\tor$.

\subsection{Bott-Chern cohomology group and notions of positivity}
\subsubsection{Definition of differential forms and currents on complex spaces}

Let $X$ be an $n$-dimensional reduced equidimensional complex space.
According to \cite[Definition 1.1]{Dem85}, a \emph{$(p,q)$-form} $\omega$ on $X$ is defined as a $(p,q)$-form on $X_{\reg}$ such that, for every point $x \in X$, there exists an open neighborhood $U \subset X$ of $x$, a closed embedding $i_U \colon U \hookrightarrow V \subset \mathbb{C}^N$ (where $V$ is an open subset of $\mathbb{C}^N$), and a $(p,q)$-form $\omega_V$ on $V$ satisfying
\[
\omega|_{U \cap X_{\mathrm{reg}}} = \omega_V|_{U \cap X_{\reg}}.
\]
Similarly, differential $d$-forms are defined in the same manner.
As in the smooth case, we can define bidegree $(p,q)$-currents and degree $d$-currents on complex spaces. Furthermore, the differential operators $d$, $\partial$, and $\overline{\partial}$ can be defined in the same way, along with the notions of closed and positive currents.
We also introduce the differential operator $d^c$ such that
\[
dd^c = \frac{\sqrt{-1}}{2\pi} \partial \overline{\partial}.
\]
Throughout this paper, we will use the notation $dd^c$ instead of $\frac{\sqrt{-1}}{2\pi} \partial \overline{\partial}$.

Let $f \colon Y \to X$ be a morphism between reduced equidimensional complex spaces.  
According to \cite[Lemma 1.3]{Dem85}, for any $(p,q)$-form $\omega$ on $X$, we can define the pullback $f^*\omega$, which is a $(p,q)$-form on $Y$.
Moreover, if \( f \) is proper (i.e., the preimage of any compact set under \( f \) is compact), then for any current \( T \) on \( Y \), the push-forward \( f_* T \) is well-defined as a current on \( X \).

According to \cite[Definition 1.5]{Dem85}, a function $u \colon X \to [-\infty, +\infty)$ is called \textit{plurisubharmonic} (in short \textit{psh}) if $u \not\equiv -\infty$ and, for every open subset $U \subset X$ admitting a closed embedding $i_U \colon U \hookrightarrow V \subset \mathbb{C}^N$ (where $V$ is an open subset of $\mathbb{C}^N$), there exists a psh function $\varphi$ on $V$ such that
\(
u = \varphi|_U.
\)
As in the smooth case, a function $u \colon X \to [-\infty, +\infty)$ is said to be \textit{quasi-plurisubharmonic} (in short \textit{quasi-psh}) if it can be locally written as the sum of a smooth function and a psh function.

\subsubsection{de Rham cohomology and singular cohomology groups}

We briefly recall the de Rham cohomology and singular cohomology groups following \cite[Section 1]{Wu21}.
Let $X$ be an $n$-dimensional compact analytic variety. Then we have
\[
H_{0}(X, \R) \cong \R 
\quad \text{and} \quad
H_{2n}(X, \R) \cong \R.
\]
(See also \cite[Subsection 19.1]{Ful93} and \cite[Subsection 4.3]{DO23}.) 
As shown in \cite[Section 1]{Wu21}, $X$ admits a triangulation, so the sheaf cohomology groups $H^p(X, \R)$ are naturally isomorphic to the singular (or simplicial) cohomology groups.

We next examine the relationship between homology and de Rham cohomology by currents. By the universal coefficient theorem, we obtain the isomorphism
\[
H_p(X, \R) \cong H^p(X, \R)^\vee.
\]
We denote the de Rham cohomology of differential forms by $H^p_{dR}(X, \R)$ and that of currents by $H^p_{dR, \mathcal{D}'}(X, \R)$. According to \cite{Ser55}, there is a duality
\[
H^{2n-p}_{dR, \mathcal{D}'}(X, \R) \cong H^{p}_{dR}(X, \R)^\vee.
\]
Furthermore, from \cite{Her67}, there exists a natural surjective map
\[
H^p_{dR}(X, \R) \twoheadrightarrow H^p(X, \R),
\]
which, however, may fail to be an isomorphism in general (see \cite[Remark 2]{Wu21}).
Combining these facts, we obtain the following injective map:
\begin{equation}
\label{eq-current-homology}
\Psi \colon H_p(X, \R) \hookrightarrow H^{2n - p}_{dR, \mathcal{D}'}(X, \R).
\end{equation}

\subsubsection{Bott-Chern cohomology group $H^{1,1}_{BC}(X)$ on complex analytic varieties}

Let \(X\) be a normal analytic variety. Denote by
\(\mathcal C_X^\infty\) (resp. \(\mathcal D'_X, \mathcal H_X\))
the sheaves of smooth functions (resp. distributions, real-valued
pluriharmonic functions) on \(X\). Note that \(\mathcal H_X\) is a sheaf of Abelian groups under addition. According to \cite[Definition 4.6.2]{BG13}, a
\((1,1)\)-form (resp. current) with local potentials is defined as a section
of \(H^0(X,\mathcal C_X^\infty/\mathcal H_X)\)
(resp. \(H^0(X,\mathcal D'_X/\mathcal H_X)\)). By definition, any such
\((1,1)\)-form (resp. current) with local potentials can be locally written
as \(\alpha=dd^c u\) for some smooth function (resp. distribution) \(u\).
If the current is positive, the local potential may be chosen to be psh.
Following \cite[Definition 4.6.2]{BG13} and \cite[Definition 3.1]{HP16}, the \textit{Bott-Chern cohomology} is defined by
\[
H^{1,1}_{BC}(X) := H^1(X, \mathcal{H}_X).
\]
Consider the following short exact sequence:
\[
0 \to \mathcal{H}_X \to \mathcal{C}^{\infty}_{X} \to \mathcal{C}^{\infty}_X / \mathcal{H}_X \to 0,
\]
which induces a surjective map
\[
H^0(X, \mathcal{C}^{\infty}_X / \mathcal{H}_X) \twoheadrightarrow H^{1,1}_{BC}(X).
\]
Hence, any closed $(1,1)$-form $\alpha$ (resp.~ closed $(1,1)$-current $T$) with local potentials defines a class $\{ \alpha \}$ (resp.~ $\{T\}$) in $H^{1,1}_{BC}(X)$. Conversely, every element of $H^{1,1}_{BC}(X)$ can be represented in this way.

We now define the positivity of Bott-Chern cohomology classes.
Before proceeding, we introduce several definitions and notations.
\begin{defn}
\label{def-BC-positivity}
(\cite[Definition 2.2]{HP16}, \cite[Definitions 2.34]{DH20}) Let $X$ be a normal analytic variety.
\begin{enumerate}[label=$(\arabic*)$]
\item A smooth $(1,1)$-form $\omega$ is said to be \emph{Hermitian} if $\omega$ is smooth and positive definite.
\item A closed $(1,1)$-form $\omega$ with local potentials is called \emph{K\"ahler} if it is locally of the form $\omega = dd^c u$ for some smooth strictly psh function $u$. The variety $X$ is called \emph{K\"ahler} if it admits a K\"ahler form.
\item A closed positive $(1,1)$-current $T$ with local potentials is called a \emph{K\"ahler current} if there exists a smooth Hermitian form $\omega$ on $X$ such that $T \ge \omega$.
\item Let $E$ be a prime divisor and $T$ a closed positive $(1,1)$-current with local potentials. The \emph{Lelong number} of $T$ along $E$ is defined by
\[
\nu(T, E) := \inf_{x \in E} \nu(T, x).
\]
(For the precise definition of the Lelong number, see \cite[Section 2]{Dem12} or \cite[Definition 4.4]{HP24}.)
Since $X$ is normal, it follows that
\[
\nu(T, E) = \inf_{x \in E \cap X_{\mathrm{reg}}} \nu(T, x).
\]
That is, for a very general point $x \in E \cap X_{\reg}$, we have $ \nu(T, E) = \nu(T,x)$.
\end{enumerate}
\end{defn}
\begin{defn}
(\cite[Definition 2.2]{Bou04}, \cite[Chapter 6]{Dem12}, \cite[Definition 2.2 and 2.34]{DH20})
Let $X$ be a normal analytic variety and $\omega$ be a Hermitian $(1,1)$-form.  
For a class $\alpha \in H^{1,1}_{BC}(X)$, we define the following notions:
\begin{enumerate}[label=$(\arabic*)$]
\item $\alpha$ is called \emph{K\"ahler} if it contains a K\"ahler form.
\item $\alpha$  is called \emph{nef} if it can be represented by a $(1,1)$-form $\eta$ with local potentials such that, for every $\varepsilon > 0$, there exists a smooth function $f_{\varepsilon}$ satisfying
\(\eta + dd^c f_{\varepsilon} \geq -\varepsilon \omega.
\)
\item $\alpha$  is called \emph{big} if it contains a K\"ahler current.
\item $\alpha$ is called \emph{pseudo-effective} (or \emph{psef} for short) if it contains a closed positive $(1,1)$-current with local potentials.

\item $\alpha$  is called \emph{modified K\"ahler} if it contains a K\"ahler current $T$ such that the Lelong number satisfies
\(
\nu(T, E) = 0
\)
for all prime divisors $E$ on $X$.
\item $\alpha$  is called \emph{modified nef} if, for any $\varepsilon > 0$, it contains a closed positive $(1,1)$-current $T_{\varepsilon}$ with local potentials such that
\[
T_{\varepsilon} \geq -\varepsilon \omega
\quad \text{and} \quad
\nu(T_{\varepsilon}, E) = 0
\]
for all prime divisors $E$ on $X$.
\end{enumerate}
\end{defn}

\subsubsection{Pullback of the Bott-Chern class}
\label{subsec-intersection-BC}

Let $f \colon Y \to X$ be a proper bimeromorphic morphism between normal analytic varieties. As shown in the argument of \cite[Lemma 3.3]{HP16}, by the five-term exact sequence of the Leray spectral sequence, we can define a pullback map 
$$f^{*} \colon H^{1,1}_{BC}(X) \hookrightarrow H^{1,1}_{BC}(Y),$$
even if $X$ and $Y$ do not necessarily have rational singularities.
Note that, although a morphism $f\colon X \to Y$ may be proper, we cannot generally define a pushforward $f_{*} \colon H^{1,1}_{BC}(Y) \to H^{1,1}_{BC}(X)$, since it is unclear whether the pushforward of a $(1,1)$-current with local potentials on $Y$ still admits local potentials on $X$. 

Let $\mathbb{R}_X$ denote the constant sheaf with values in $\mathbb{R}$. According to \cite[Subsection 3.1]{GK20}, we have the following short exact sequence:
\begin{equation*}
0 \to \mathbb{R}_X \xrightarrow{\sqrt{-1}} \mathscr{O}_X \xrightarrow{\mathrm{Re}} \mathcal{H}_{X} \to 0.
\end{equation*}
This induces the following connecting homomorphism:
\begin{equation}
\label{eq-connect-h2}
\delta^1 \colon H^{1,1}_{BC}(X)=H^1(X, \mathcal{H}_{X}) \to H^2(X, \mathbb{R}_X)
\end{equation}
The map $\delta^1$ is not necessarily injective, but it is injective if $X$ has rational singularities.
Using the connecting homomorphism $\delta^1$ in \eqref{eq-connect-h2}, we define the intersection number between Bott-Chern classes and singular cohomology classes. For classes $\alpha_1, \ldots, \alpha_p \in H^{1,1}_{BC}(X)$ and $\sigma \in H^{2n - 2p}(X, \mathbb{R})$, we define:
$$
\alpha_1 \cdots \alpha_p \cdot \sigma 
:= 
\delta^1(\alpha_1) \cdots \delta^1(\alpha_p) \cdot \sigma.
$$
Then, for any resolution $\pi \colon \widetilde{X} \to X$, we obtain:
\begin{equation}
\label{eq-pullback-intersection}
\alpha_1 \cdots \alpha_p \cdot \sigma 
=
\pi^{*}\alpha_1 \cdots \pi^{*}\alpha_p \cdot \pi^{*}\sigma,
\end{equation}
since the following diagram commutes (even if $X$ does not necessarily have rational singularities):
$$
\xymatrix@C=25pt@R=20pt{
H^{1,1}_{BC}(X) \ar[r]^{\pi^{*}} \ar[d]_{\delta^1}
& H^{1,1}_{BC}(\widetilde{X}) \ar[d]^{\delta^1} \\
H^2(X, \mathbb{R}) \ar[r]^{\pi^{*}} & H^2(\widetilde{X}, \mathbb{R}) \\
}
$$

\subsubsection{Fujiki's class and Moishezon varieties}
\label{subsubsec-Fujiki}

\begin{defn}(\cite[Definition 2.2]{DH20})
Let $X$ be a compact normal analytic variety.  
We say that $X$ belongs to \textit{Fujiki's class} (resp.~ $X$ is \textit{Moishezon}) if $X$ is bimeromorphic to a compact K\"ahler manifold (resp.~ a complex projective manifold).
\end{defn}

By definition, every Moishezon variety belongs to Fujiki's class.  
If $X$ admits a big class (resp.~ a big Cartier divisor), then $X$ belongs to Fujiki's class (resp.~ $X$ is Moishezon).  
However, it remains unknown whether the converse holds when $X$ is singular (see \cite[Remark 2.3]{DH23}).

Certain assumptions (such as being in Fujiki's class, being Moishezon, and others) are required for the following reasons:

\begin{itemize}
  \item The \textit{Fujiki} condition is necessary to define the non-pluripolar product (cf.~ Section \ref{sec-nonpluripolar-singular}).
  At present, this condition cannot be removed.
  
  \item The \textit{Moishezon} condition is used to establish a \textit{vanishing property} of a big class (see Definition \ref{defn-vanishing-exceptional}). However, it is expected that this assumption may eventually be removed (cf.~ Lemma \ref{lem-VP} and Remark \ref{assumption-remark}).
  
  \item The assumption of having \textit{rational singularities} is important when working with Higgs sheaves, particularly when defining their pullbacks (cf.~ Subsection \ref{subsub-slope-Higgs}).
  
  \item The assumption of \textit{quotient singularities} is required when applying results that are currently only available in the orbifold case (for example, Demailly’s approximation theorem in Subsection \ref{subsec-BG-nonpluripolar}).
  
  \item The assumption of \textit{quotient singularities in codimension two} is essential to define the orbifold second Chern class for reflexive sheaves (cf.~ Subsection \ref{subsec-intersection-orbifold-nonpluripolar} and \ref{subsec-BG-nonpluripolar}). 
\end{itemize}

\section{Non-pluripolar product on singular varieties}
\label{sec-nonpluripolar-singular}

In this section, we discuss how to define the non-pluripolar product for closed positive $(1,1)$-currents and psef classes on compact normal analytic varieties in Fujiki's class.  
When $X$ is an $n$-dimensional compact normal K\"ahler variety, the $n$-th non-pluripolar product $\langle \alpha_1 \cdots \alpha_n \rangle$ has already been defined in \cite[Subsection 1.2]{BBEGZ19}. 
However, in this paper, we do not follow the approach of \cite[Subsection 1.2]{BBEGZ19}; instead, we define the non-pluripolar product via resolution of singularities.

\subsection{Definition of the non-pluripolar product on singular varieties}
\subsubsection{Non-pluripolar product of closed positive currents on singular varieties}

Let $X$ be a compact normal analytic variety in Fujiki's class, and let $\theta \in H^{1,1}_{BC}(X)$ be a psef class.  
A quasi-psh function $\varphi$ is said to be \textit{$\theta$-psh} if $\theta + dd^c \varphi$ is a closed positive $(1,1)$-current with local potentials.

Let $T$ be a $(1,1)$-current with local potentials in the class $\theta$. Then there exists a $\theta$-psh function $\varphi$ such that $T = \theta + dd^c \varphi$ (this function $\varphi$ is often called a \textit{global potential} of $T$).  
Given a resolution $\pi \colon \widetilde{X} \to X$ such that $\widetilde{X}$ is a compact K\"ahler manifold, we define the pullback of $T$ by
$$
\pi^{*}T := \pi^{*}\theta + dd^{c}\pi^{*}\varphi.
$$
Note that the function $\pi^{*}\varphi$ is $\pi^{*}\theta$-psh. Furthermore, if $T$ is closed (resp.~ positive), then so is $\pi^{*}T$ (cf.~ \cite[Subsection 2.4.1]{DHP24}).

Given closed positive $(1,1)$-currents $T_1, \ldots, T_p$ with local potentials on $X$, we define their non-pluripolar product as
$$
\langle T_1 \wedge \cdots \wedge T_p\rangle
:=
\pi_{*} \langle \pi^{*}  T_1 \wedge \cdots \wedge \pi^{*} T_p\rangle,
$$
where $\pi \colon \widetilde{X} \to X$ is a resolution such that $\widetilde{X}$ is a compact K\"ahler manifold.

\begin{lem}
\label{lem-independence-nonpluripolar}
The current $\langle T_1 \wedge \cdots \wedge T_p \rangle$ is a closed positive $(p,p)$-current and is independent of the choice of resolution $\pi$.
\end{lem}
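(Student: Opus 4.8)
The plan is to treat the two assertions separately, reducing both to known facts about non-pluripolar products on compact Kähler \emph{manifolds}. First I would fix a resolution $\pi\colon\widetilde X\to X$ with $\widetilde X$ compact Kähler; this exists because $X$ lies in Fujiki's class. By the construction recalled above, each $\pi^*T_i$ is a closed positive $(1,1)$-current with local potentials on $\widetilde X$, so on the compact Kähler manifold $\widetilde X$ the non-pluripolar product $\langle\pi^*T_1\wedge\cdots\wedge\pi^*T_p\rangle$ is a well-defined closed positive $(p,p)$-current by \cite{BEGZ10}; the substantive input here is that on a compact Kähler manifold the non-pluripolar product of arbitrary closed positive $(1,1)$-currents is closed, with no hypothesis on the size of the unbounded loci. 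Since $\pi$ is proper and bimeromorphic, $\widetilde X$ and $X$ have the same dimension $n$, and $\pi_*$ sends closed positive $(p,p)$-currents on $\widetilde X$ to closed positive $(p,p)$-currents on $X$; hence $\langle T_1\wedge\cdots\wedge T_p\rangle$ is indeed closed, positive, of bidegree $(p,p)$.

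For the independence, let $\pi_1\colon\widetilde X_1\to X$ and $\pi_2\colon\widetilde X_2\to X$ be two such resolutions. I would first produce a common Kähler resolution $W$: resolve the main component of the fiber product $\widetilde X_1\times_X\widetilde X_2$ and, by Hironaka's elimination of indeterminacy, dominate the resolution by a manifold $W$ obtained from $\widetilde X_1$ by a finite sequence of blow-ups along smooth centres --- such blow-ups preserve the Kähler property, so $W$ is compact Kähler. One thereby obtains proper bimeromorphic morphisms $q_i\colon W\to\widetilde X_i$ with $\pi_1\circ q_1=\pi_2\circ q_2=:\pi$. Functoriality of the pullback of smooth forms and functions, together with the fact that the pullback of a current with local potentials is defined through a global potential and that $dd^c$ commutes with pullback, yields $q_i^*\pi_i^*T_j=\pi^*T_j$ for all $i,j$; also $\pi_{i*}q_{i*}=\pi_*$. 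Assuming the bimeromorphic invariance of the non-pluripolar product under push-forward (discussed next), applied to the currents $S_j:=\pi_i^*T_j$ on $\widetilde X_i$, we get
\[
\pi_{i*}\bigl\langle\pi_i^*T_1\wedge\cdots\wedge\pi_i^*T_p\bigr\rangle
=\pi_{i*}q_{i*}\bigl\langle q_i^*\pi_i^*T_1\wedge\cdots\wedge q_i^*\pi_i^*T_p\bigr\rangle
=\pi_*\bigl\langle\pi^*T_1\wedge\cdots\wedge\pi^*T_p\bigr\rangle
\]
for $i=1,2$, so the two definitions of $\langle T_1\wedge\cdots\wedge T_p\rangle$ coincide.

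The step I expect to be the main obstacle is the bimeromorphic invariance
\[
q_*\bigl\langle q^*S_1\wedge\cdots\wedge q^*S_p\bigr\rangle=\bigl\langle S_1\wedge\cdots\wedge S_p\bigr\rangle
\]
for a proper bimeromorphic morphism $q\colon W\to Y$ of compact Kähler manifolds and closed positive $(1,1)$-currents $S_j$ on $Y$. My approach would be: let $Z\subset Y$ be the image of the exceptional locus of $q$, a proper analytic subset --- hence pluripolar --- over whose complement $q$ is a biholomorphism. By the locality of the non-pluripolar product in the plurifine topology and the compatibility of Bedford--Taylor products with biholomorphisms for locally bounded potentials, the two currents above coincide on the open set $Y\setminus Z$. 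Both of them are closed positive $(p,p)$-currents that put no mass on pluripolar sets: the right-hand side by the very definition of the non-pluripolar product, and the left-hand side because the $q$-preimage of a pluripolar set is pluripolar (pull back a local psh potential along $q$) and push-forward preserves this ``no pluripolar mass'' property. Since $Z$ is pluripolar, the restriction of each of these currents to the Borel set $Z$ vanishes, so each equals its restriction to $Y\setminus Z$; as they agree there, they are equal on $Y$. (If preferred, this invariance can instead be cited directly from \cite{BEGZ10}.) Granting it, the lemma follows, the construction of $W$ and the functoriality identities being routine.
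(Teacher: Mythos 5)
Your proposal is correct and follows essentially the same route as the paper: positivity and closedness come from pushing forward the closed positive $(p,p)$-current on the Kähler resolution under the proper map $\pi$, and independence is reduced, via a common Kähler resolution, to the bimeromorphic invariance of the non-pluripolar product under modifications (\cite[Remark 1.7]{BEGZ10}), which the paper simply cites. Your extra sketch of that invariance is sound but not needed for the argument.
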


\begin{proof}
Since $\langle \pi^{*}T_1 \wedge \cdots \wedge \pi^{*} T_p \rangle$ is a closed positive $(p,p)$-current, its pushforward by $\pi$ is also closed and positive by \cite[Lemma 1.4 below]{Dem85}.  
The independence of the resolution follows from the fact that for any modification $\rho \colon W \to \widetilde{X}$, it holds by \cite[Remark 1.7]{BEGZ10} that
$$
\langle \pi^{*}  T_1 \wedge \cdots \wedge \pi^{*} T_p\rangle
=
\rho_{*} \langle \rho^{*}\pi^{*}  T_1 \wedge \cdots \wedge \rho^{*}\pi^{*} T_p\rangle.
$$
\end{proof}

Next, we consider when these currents have small unbounded loci.

\begin{defn}(\cite[Definition 1.2]{BEGZ10})
A subset $A \subset X$ is called \textit{locally complete pluripolar} if locally it coincides with the polar set of some psh function.

A current $T$ is said to have \textit{small unbounded locus} if there exists a Euclidean closed subset $A \subset X$, which is locally complete pluripolar, such that the global potential of $T$ is locally bounded on $X \setminus A$.
\end{defn}

Suppose there exist closed positive $(1,1)$-currents $T_1, \ldots, T_p$ with local potentials, and a Euclidean closed locally complete pluripolar set $A \subset X$ such that the unbounded locus of each $T_i$ is contained in $A$.
Moreover, we may assume that $X_{\mathrm{sing}} \subset A$.  
Since $X \setminus A \subset X_{\mathrm{reg}}$, the Bedford-Taylor product $T_1 \wedge \cdots \wedge T_p$ is well-defined on $X \setminus A$.

\begin{lem}
\label{lem-nonpluripolar-unbounded}
$$
\langle T_1 \wedge \cdots \wedge T_p \rangle = 1_{X \setminus A} \, T_1 \wedge \cdots \wedge T_p.
$$
\end{lem}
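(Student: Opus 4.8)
The plan is to reduce the identity to the already-known formula on the resolution and then transport it back down by the projection formula. Recall that by definition $\langle T_1 \wedge \cdots \wedge T_p \rangle = \pi_* \langle \pi^* T_1 \wedge \cdots \wedge \pi^* T_p \rangle$ for a resolution $\pi \colon \widetilde{X} \to X$ with $\widetilde{X}$ compact Kähler. First I would choose such a $\pi$ so that, in addition, the strict transform and the exceptional locus are under control; set $\widetilde{A} := \pi^{-1}(A)$. Since $A$ is Euclidean closed and locally complete pluripolar in $X$, and the pullback of a psh function under a holomorphic map is psh (or $\equiv -\infty$ on no component, as $\pi$ is a modification), $\widetilde{A}$ is again a Euclidean closed locally complete pluripolar subset of $\widetilde{X}$. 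Moreover each $\pi^* T_i = \pi^* \theta_i + dd^c \pi^* \varphi_i$ has global potential locally bounded on $\widetilde{X} \setminus \widetilde{A}$ (because $\varphi_i$ is locally bounded on $X \setminus A$ and $\pi$ is continuous), so each $\pi^* T_i$ has small unbounded locus with unbounded locus contained in $\widetilde{A}$.

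Next I would apply the smooth-case statement, which is exactly \cite[Proposition 1.6]{BEGZ10} (or its restatement in \cite[Section 1]{BEGZ10}): on the compact Kähler manifold $\widetilde{X}$, for currents with small unbounded locus contained in $\widetilde{A}$ one has
\[
\langle \pi^* T_1 \wedge \cdots \wedge \pi^* T_p \rangle = \mathbf{1}_{\widetilde{X} \setminus \widetilde{A}} \, \pi^* T_1 \wedge \cdots \wedge \pi^* T_p,
\]
the right-hand product being the Bedford--Taylor product, which is well defined on the open set $\widetilde{X} \setminus \widetilde{A}$ where all potentials are locally bounded. Pushing forward by $\pi$ and using that $\pi$ restricts to a biholomorphism from $\widetilde{X} \setminus \pi^{-1}(E)$ onto $X \setminus E$ for the (Euclidean closed, pluripolar, measure-zero) set $E := X_{\mathrm{sing}} \cup \pi(\mathrm{Exc}(\pi)) \subset A$, I would identify $\pi_*\big( \mathbf{1}_{\widetilde{X} \setminus \widetilde{A}} \, \pi^* T_1 \wedge \cdots \wedge \pi^* T_p \big)$ with $\mathbf{1}_{X \setminus A}\, T_1 \wedge \cdots \wedge T_p$: away from $E$ the map is a biholomorphism intertwining the two Bedford--Taylor products, and both the set $\widetilde{A}$ and the complement $\pi^{-1}(E)$ contribute no mass to a current that puts no mass on pluripolar sets. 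This gives the claimed equality.

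The main obstacle I expect is the bookkeeping around the exceptional locus: I must check that $\mathbf{1}_{\widetilde{X}\setminus\widetilde{A}}\,\pi^*T_1\wedge\cdots\wedge\pi^*T_p$ charges no mass along $\pi^{-1}(E)\setminus\widetilde{A}$ (if that set is nonempty, which in general it is not since $E\subset A$ so $\pi^{-1}(E)\subset\widetilde{A}$), and, more importantly, that the Bedford--Taylor product $T_1\wedge\cdots\wedge T_p$ on $X\setminus A\subset X_{\mathrm{reg}}$ is genuinely intertwined with its pullback under the biholomorphism $\pi|\colon \widetilde{X}\setminus\pi^{-1}(A)\xrightarrow{\sim} X\setminus A$ — this is just functoriality of the Bedford--Taylor product under biholomorphisms of complex manifolds, but one should state it cleanly. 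A secondary point is to confirm that the non-pluripolar product as defined here does not depend on whether one records the locus $A$ or any larger locally complete pluripolar Euclidean closed set, which follows from Lemma~\ref{lem-independence-nonpluripolar} together with the fact that non-pluripolar products place no mass on pluripolar sets. With these in hand the equality $\langle T_1 \wedge \cdots \wedge T_p \rangle = \mathbf{1}_{X \setminus A} \, T_1 \wedge \cdots \wedge T_p$ on $X$ follows.
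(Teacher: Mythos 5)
Your proposal is correct and follows essentially the same route as the paper: pull back to a resolution, apply the smooth-case identity of \cite{BEGZ10} there (using that the unbounded loci of the $\pi^*T_i$ lie in $\pi^{-1}(A)$), and push forward via the biholomorphism $\widetilde{X}\setminus\pi^{-1}(A)\to X\setminus A$, noting that non-pluripolar products put no mass on pluripolar sets. The extra bookkeeping you flag around the exceptional locus is indeed vacuous since $X_{\mathrm{sing}}\subset A$ by the standing assumption, exactly as in the paper's argument.
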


\begin{proof}
Fix a resolution $\pi \colon \widetilde{X} \to X$.  
Then the pullbacks $\pi^{*}T_i$ have unbounded loci contained in $\pi^{-1}(A)$.  
Hence, by \cite[Definition 1.2 below]{BEGZ10}, we have
$$
\langle \pi^{*} T_1 \wedge \cdots \wedge \pi^{*} T_p \rangle = 1_{\widetilde{X} \setminus \pi^{-1}(A)} \, \pi^{*} T_1 \wedge \cdots \wedge \pi^{*} T_p.
$$
Since $\pi \colon \widetilde{X} \setminus \pi^{-1}(A) \to X \setminus A$ is biholomorphic, we conclude that
$$
\pi_{*} \langle \pi^{*} T_1 \wedge \cdots \wedge \pi^{*} T_p \rangle = T_1 \wedge \cdots \wedge T_p \quad \text{on } X \setminus A.
$$
As the left-hand side equals $\langle T_1 \wedge \cdots \wedge T_p \rangle$, the result follows immediately.
Note that $\langle T_1 \wedge \cdots \wedge  T_p \rangle$ also puts no mass on locally complete pluripolar subsets from \cite[Subsection 1.2]{BEGZ10}.
\end{proof}

\subsubsection{Minimal singular currents on singular varieties}

Let $X$ be a compact normal analytic variety, and let $\alpha \in H^{1,1}_{BC}(X)$ be a psef class.  
Let $\theta$ be a smooth $(1,1)$-form with local potentials such that $\alpha = \{ \theta \}$.  
For any two closed positive $(1,1)$-currents $T_1$ and $T_2$ with local potentials in $\alpha$,
we say that $T_1$ is \textit{less singular} than $T_2$ if their global potentials satisfy
$$
\varphi_2 \le \varphi_1 + O(1).
$$
A closed positive $(1,1)$-current $T_{\min}$ with local potentials in $\alpha$ is said to have \textit{minimal singularities} if it is less singular than any other closed positive $(1,1)$-current with local potentials in $\alpha$.

\begin{lem}[{cf.~ \cite[Chapter 6]{Dem12}}]
If $\alpha$ is psef, then there exists a closed positive $(1,1)$-current with local potentials $T_{\min}$ in the class $\alpha$ that has minimal singularities.
\end{lem}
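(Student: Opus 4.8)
The plan is to run Demailly's envelope construction of potentials with minimal singularities directly on $X$ (\cite[Chapter 6]{Dem12}), the only new input beyond the smooth case being the Grauert--Remmert extension theorem for plurisubharmonic functions across the singular locus of a normal complex space. Fix a smooth $(1,1)$-form $\theta$ with local potentials with $\{\theta\}=\alpha$. Since $X$ is compact and every quasi-psh function is upper semicontinuous, each $\theta$-psh function on $X$ is bounded above, so after subtracting a constant we may assume its supremum is $0$; thus the family
$$
\mathcal{F}:=\{\,\varphi : \varphi\text{ is }\theta\text{-psh on }X,\ \sup_X\varphi=0\,\}
$$
is nonempty precisely because $\alpha$ is psef, a closed positive $(1,1)$-current with local potentials in $\alpha$ being exactly a current of the form $\theta+dd^c\varphi$ with $\varphi$ $\theta$-psh. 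Put $\psi:=\sup_{\varphi\in\mathcal{F}}\varphi$ and let $\psi^{*}$ denote its upper semicontinuous regularization; since $0$ is u.s.c.\ and each $\varphi\le 0$, we have $\psi^{*}\le 0$.

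The key point is that $\psi^{*}$ is again $\theta$-psh, so that $T_{\min}:=\theta+dd^c\psi^{*}$ is a closed positive $(1,1)$-current with local potentials, with $\{T_{\min}\}=\{\theta\}=\alpha$. This is a local assertion. By Choquet's lemma we may replace $\mathcal{F}$ by a countable subfamily $\{\varphi_j\}_{j\ge1}$ with $(\sup_j\varphi_j)^{*}=\psi^{*}$, and then by $\psi_k:=\max(\varphi_1,\dots,\varphi_k)$, which is $\theta$-psh and increases to $\sup_j\varphi_j$; so it suffices to know that the u.s.c.\ regularization of an increasing, uniformly bounded-above sequence of $\theta$-psh functions is $\theta$-psh. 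On the regular locus $X_{\mathrm{reg}}$ this is the classical statement for complex manifolds, so $\psi^{*}|_{X_{\mathrm{reg}}}$ is $\theta$-psh; moreover $\psi^{*}$ is bounded above by $0$, hence locally bounded above near $X_{\mathrm{sing}}$. Since $X$ is normal, the Grauert--Remmert extension theorem shows $\psi^{*}|_{X_{\mathrm{reg}}}$ extends uniquely to a $\theta$-psh function on $X$, and this extension coincides with $\psi^{*}$ because a $\theta$-psh function on $X$ is determined by its restriction to the dense open set $X_{\mathrm{reg}}$ (the sub-mean value inequality forces the value at a point of $X_{\mathrm{sing}}$ to be the $\limsup$ of nearby regular values). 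This proves the claim. Alternatively, one may pass to a resolution $\pi\colon\widetilde X\to X$ biholomorphic over $X_{\mathrm{reg}}$, apply \cite[Chapter 6]{Dem12} on the compact complex manifold $\widetilde X$ to the psef class $\pi^{*}\alpha$, and descend the resulting potential with minimal singularities by the same extension theorem.

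Minimality is then immediate: given any closed positive $(1,1)$-current $T=\theta+dd^c\varphi$ with local potentials in $\alpha$, the function $\varphi-\sup_X\varphi$ lies in $\mathcal{F}$, so $\varphi-\sup_X\varphi\le\psi\le\psi^{*}$, i.e.\ $\varphi\le\psi^{*}+O(1)$; hence $T_{\min}$ is less singular than $T$, and since $T$ is arbitrary, $T_{\min}$ has minimal singularities. The one genuine obstacle is the middle step: checking that the envelope $\psi^{*}$ really defines a $(1,1)$-current with \emph{local potentials} on all of $X$ --- a section of $H^{0}(X,\mathcal{D}'_X/\mathcal{H}_X)$ --- rather than being merely plurisubharmonic on $X_{\mathrm{reg}}$. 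Normality of $X$ together with the extension theorem resolves this; the remainder is a verbatim transcription of the smooth case.
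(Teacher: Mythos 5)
Your proposal is correct and follows essentially the same route as the paper: both take the envelope of nonpositive $\theta$-psh functions, show its upper semicontinuous regularization is $\theta$-psh by combining the classical argument on $X_{\mathrm{reg}}$ with the extension theorem for psh functions across the singular locus of a normal complex space (the paper cites \cite[Theorem 1.7]{Dem85} for exactly this), and deduce minimality from the defining supremum. The only cosmetic differences are your normalization $\sup_X\varphi=0$ and the explicit invocation of Choquet's lemma.
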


\begin{proof}
The construction follows that in \cite[Chapter 6]{Dem12} or \cite[Subsection 1.4]{BEGZ10}. Define
$$
V_{\theta} := \sup \{ \varphi \mid \varphi \text{ is $\theta$-psh and } \varphi \le 0 \text{ on } X \}.
$$
We prove that $V_{\theta}$ is $\theta$-psh.  
Once this is established, setting $T_{\min} :=  \theta + dd^c V_{\theta}$ gives the desired current.

The restriction $V_{\theta}|_{X_{\reg}}$ is quasi-psh.   
Since $V_{\theta}$ is locally bounded from above on $X$, we can take its upper semicontinuous regularization $V^{*}$, as given by \cite[Theorem 1.7]{Dem85}:
\[
V^{*}(y) := \limsup_{x \in X_{\reg},\ x \to y} V_{\theta}(x).
\]
Note that $V^{*}$ is a $\theta$-psh function and provides the unique extension of $V_{\theta}|_{X_{\reg}}$.  
Since $V^{*}$ is $\theta$-psh on $X$, we have $V^{*} \le V_{\theta}$. On the other hand, for any $\theta$-psh function $\varphi$ with $\varphi \le 0$ on $X$, we have $\varphi \le V^{*}$ on $X_{\reg}$, hence $\varphi \le V^{*}$ on $X$ by Lemma \ref{lem-psh-Zariski-open} below.  
Therefore, $V_{\theta} \le V^{*}$, and thus $V_{\theta} = V^{*}$. This shows that $V_{\theta}$ is $\theta$-psh.
\end{proof}

\begin{lem}
\label{lem-psh-Zariski-open}
Let $u$ and $v$ be $\theta$-psh functions, and let $Z$ be an analytic set of $X$. If $u \le v$ holds on $X \setminus Z$, then $u \le v$ holds on $X$.
\end{lem}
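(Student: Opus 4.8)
The plan is to deduce the statement from the fact that a $\theta$-psh function is recovered, pointwise, from its values on the complement of any thin analytic set. Since $u\le v$ is assumed on $X\setminus Z$, it suffices to check $u(x_0)\le v(x_0)$ for $x_0\in Z$; here $Z$ is a proper analytic subset of the irreducible variety $X$ (the only relevant case, e.g.\ $Z=X_{\sing}$ in the intended application), so $Z$, and hence $Z\cap X_{\reg}$, is nowhere dense. The key point I would isolate is the identity
\[
w(x)=\limsup_{X_{\reg}\setminus Z\,\ni\,y\to x}w(y),
\]
valid for every $\theta$-psh function $w$ on $X$ and every $x\in X$; call it $(\star)$. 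Granting $(\star)$, the lemma is immediate: for $x_0\in Z$,
\[
u(x_0)=\limsup_{X_{\reg}\setminus Z\,\ni\,y\to x_0}u(y)\ \le\ \limsup_{X_{\reg}\setminus Z\,\ni\,y\to x_0}v(y)\ \le\ v(x_0),
\]
where the middle step uses $u\le v$ on $X_{\reg}\setminus Z\subset X\setminus Z$ and the last step is upper semicontinuity of $v$.

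To prove $(\star)$ I would combine two ingredients. First, exactly as in the proof of the preceding lemma, \cite[Theorem 1.7]{Dem85} shows that $w$ coincides with the upper semicontinuous regularization of $w|_{X_{\reg}}$, so $w(x)=\limsup_{X_{\reg}\ni y\to x}w(y)$ for every $x\in X$; this reduces $(\star)$ to the corresponding statement on the complex manifold $X_{\reg}$, for the $\theta|_{X_{\reg}}$-psh function $w|_{X_{\reg}}$ and the nowhere-dense analytic subset $Z\cap X_{\reg}$. Second, on a manifold the statement is local and classical: writing $\theta=dd^c\rho$ with $\rho$ smooth on a coordinate chart, the claim for $w$ is equivalent to it for the genuinely plurisubharmonic function $w+\rho$, and a plurisubharmonic function $\varphi$ on a domain of $\C^k$ satisfies $\varphi(p)=\limsup_{y\to p,\,y\notin A}\varphi(y)$ for any Lebesgue-null set $A$ — immediate from upper semicontinuity together with the sub-mean-value inequality applied on small balls — while a proper analytic subset of a domain in $\C^k$ has Lebesgue measure zero. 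Chaining the two ingredients by a diagonal argument (given $y_n\to x$ in $X_{\reg}$ with $w(y_n)\to w(x)$, choose $z_n\in X_{\reg}\setminus Z$ near $y_n$ with $w(z_n)>w(y_n)-1/n$) yields $\limsup_{X_{\reg}\setminus Z\ni z\to x}w(z)\ge w(x)$, and the reverse inequality is again upper semicontinuity of $w$; this establishes $(\star)$.

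I do not anticipate any serious obstacle: the only external inputs are \cite[Theorem 1.7]{Dem85} for the extension across $X_{\sing}$ (already invoked a few lines above in the excerpt) and the elementary sub-mean-value estimate on $\C^k$. The one place that deserves a line of care is the verification that $w|_{X_{\reg}}$ is $\theta|_{X_{\reg}}$-psh in the classical sense and that $Z\cap X_{\reg}$ is a nowhere-dense analytic subset of $X_{\reg}$, both of which follow directly from the definitions and from the irreducibility of $X$; and, as noted, the statement should be read with $Z$ a proper analytic subset of $X$.
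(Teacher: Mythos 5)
Your proof is correct, and it reaches the conclusion by a genuinely different reduction than the paper's. The paper passes to a resolution $\pi\colon\widetilde{X}\to X$: the pullbacks $\pi^{*}u=u\circ\pi$ and $\pi^{*}v=v\circ\pi$ are $\pi^{*}\theta$-psh on the compact K\"ahler manifold $\widetilde{X}$, they satisfy $\pi^{*}u\le\pi^{*}v$ on the dense Zariski-open set $\pi^{-1}(X\setminus Z)$, the classical fact that a quasi-psh function on a manifold equals the upper semicontinuous regularization of its restriction to the complement of a thin analytic set upgrades this to all of $\widetilde{X}$, and surjectivity of $\pi$ pushes the inequality back down to $X$. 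You instead stay on $X$: you establish the identity $w(x)=\limsup_{X_{\reg}\setminus Z\ni y\to x}w(y)$ by combining \cite[Theorem 1.7]{Dem85} (to recover $w$ from $w|_{X_{\reg}}$ across $X_{\sing}$) with the sub-mean-value argument on the manifold $X_{\reg}$, and then conclude by upper semicontinuity of $v$. The underlying mechanism --- psh functions cannot jump upward across a nowhere-dense analytic set --- is the same in both arguments, but your route avoids resolution of singularities at the cost of invoking Demailly's extension theorem (which the paper uses anyway a few lines earlier) plus the diagonal argument; the paper's route is shorter given that resolutions are already in constant use throughout the section. Your observation that the statement must be read with $Z\subsetneq X$ a proper analytic subset is also correct and is implicitly assumed in the paper's proof as well (there $\pi^{-1}(X\setminus Z)$ must be dense), so this is not a discrepancy. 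No gaps.
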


\begin{proof}
We may assume $X_{\mathrm{sing}} \subset Z$. Let $\pi \colon \widetilde{X} \to X$ be a resolution.  
Then $\pi^{*}u \le \pi^{*}v$ holds on a Zariski open subset of $\widetilde{X}$.  
Since both $u$ and $v$ are $\theta$-psh, their pullbacks are $\pi^{*}\theta$-psh, and the inequality extends to  $\widetilde{X}$ by the upper semicontinuity of psh functions. 
Hence, the inequality $u \le v$ holds on $X$.
\end{proof}

\begin{prop}[{cf.~ \cite[Proposition 1.12]{BEGZ10}}]
\label{prop-BEGZ10-1.12}
Let $\pi \colon Y \to X$ be a surjective morphism between compact normal analytic varieties.  
If $\varphi$ is a $\theta$-psh function with minimal singularities, then $\varphi \circ \pi$ is a $\pi^{*}\theta$-psh function with minimal singularities.
\end{prop}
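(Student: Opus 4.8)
The plan is to first check that $\varphi\circ\pi$ is a $\pi^{*}\theta$-psh function, and then to deduce the minimal singularity property by reducing everything, via resolutions, to the case of compact K\"ahler manifolds treated in \cite[Proposition 1.12]{BEGZ10}. For the first point: since $\theta+dd^{c}\varphi$ is a closed positive $(1,1)$-current with local potentials, I would pull it back using the functoriality of currents with local potentials discussed earlier in this section, obtaining that $\pi^{*}\theta+dd^{c}(\varphi\circ\pi)=\pi^{*}(\theta+dd^{c}\varphi)$ is closed positive with local potentials; surjectivity of $\pi$ ensures $\varphi\circ\pi\not\equiv-\infty$, so $\varphi\circ\pi$ is $\pi^{*}\theta$-psh. (Equivalently one lifts through resolutions $\mu\colon\widetilde Y\to Y$, $\nu\colon\widetilde X\to X$ and a morphism $\widetilde\pi\colon\widetilde Y\to\widetilde X$ with $\nu\circ\widetilde\pi=\pi\circ\mu$, where this is the standard fact that the pullback of a quasi-psh function by a surjective morphism of manifolds is quasi-psh.)

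For the main point I would use the characterization that a closed positive current with local potentials in a class $\{\theta\}$ has minimal singularities exactly when its global potential differs from $V_{\theta}$ by a bounded function, so ``having minimal singularities'' is equivalent to ``having the same singularities as $V_{\theta}$''. The key auxiliary fact is that for a modification $\rho$ with normal target and smooth source one has $V_{\rho^{*}\theta}=\rho^{*}V_{\theta}$: the inequality $\rho^{*}V_{\theta}\le V_{\rho^{*}\theta}$ is immediate, and for the reverse any $\rho^{*}\theta$-psh $u\le 0$ descends --- using normality of the target together with the extension of bounded-above (quasi-)psh functions across analytic sets, as in Lemma~\ref{lem-psh-Zariski-open} and the $V^{*}$-regularization used above --- to a $\theta$-psh $u_{0}\le 0$ with $u\le\rho^{*}u_{0}\le\rho^{*}V_{\theta}$. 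In particular the property of being $\theta$-psh with minimal singularities is preserved in both directions under pullback by a resolution (since $\mu,\nu$ are surjective). Now, from $\varphi$ $\theta$-psh with minimal singularities I obtain that $\nu^{*}\varphi$ is $\nu^{*}\theta$-psh with minimal singularities on $\widetilde X$; applying \cite[Proposition 1.12]{BEGZ10} to the surjective morphism $\widetilde\pi$ of manifolds shows that $\widetilde\pi^{*}\nu^{*}\varphi=\mu^{*}(\varphi\circ\pi)$ has minimal singularities on $\widetilde Y$; and the modification fact for $\mu$ then gives that $\varphi\circ\pi$ has minimal singularities on $Y$.

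I expect the main obstacle to be exactly this reduction step, namely controlling how the extremal envelope $V$ and the $O(1)$-comparisons behave under the resolutions $\mu,\nu$ across the singular and exceptional loci; this is where normality and the psh-extension lemmas of Section~\ref{sec-nonpluripolar-singular} are really used. As a self-contained alternative that bypasses \cite[Proposition 1.12]{BEGZ10} and resolutions altogether, I would use the Stein factorization $\pi=\pi'\circ s$ with $\pi'$ finite and $s$ having compact connected fibers, and prove directly that $V_{\pi^{*}\theta}=V_{\theta}\circ\pi$: along the (compact, connected) fibers of $s$ every $s^{*}\eta$-psh function restricts to a psh function on a compact connected space, hence is constant, so it descends through $s$; and for $\pi'$ the regularized fibrewise maximum $x\mapsto\max_{y\in\pi'^{-1}(x)}u(y)$ is an $\eta$-psh function which is $\le 0$ whenever $u\le 0$ and dominates $u$ after pullback, whence $V_{\pi'^{*}\eta}\le V_{\eta}\circ\pi'$. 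Either route yields $V_{\pi^{*}\theta}=V_{\theta}\circ\pi$ (up to $O(1)$, which is all that is needed), and combined with the first two paragraphs this gives $\varphi\circ\pi=V_{\pi^{*}\theta}+O(1)$, i.e.\ $\varphi\circ\pi$ has minimal singularities.
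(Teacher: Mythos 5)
Your proof is correct, but it takes a genuinely different route from the paper's. The paper adapts the argument of \cite[Proposition 1.12]{BEGZ10} directly to the singular setting: given an arbitrary $\pi^{*}\theta$-psh function $\psi$ on $Y$, it forms the fibrewise supremum $\tau(x):=\sup_{y\in\pi^{-1}(x)}\psi(y)$ over the set of regular values of $\pi$, extends $\tau$ across the bad locus by \cite[Theorem 1.7]{Dem85} using normality of $X$, invokes minimality of $\varphi$ to get $\tau\le\varphi+O(1)$, and transfers the resulting bound $\psi\le\varphi\circ\pi+O(1)$ to all of $Y$ via Lemma~\ref{lem-psh-Zariski-open}. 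You instead black-box the smooth case and concentrate all the singular difficulties into the single identity $V_{\rho^{*}\theta}=\rho^{*}V_{\theta}$ for a resolution $\rho$, which you prove correctly by the descend--extend--compare scheme; the two surjective-descent steps for $\mu$ and $\nu$ then close the argument. Your version is more modular and isolates a reusable fact about envelopes under modifications, at the cost of constructing a commuting square of resolutions; the paper's version works verbatim for any surjective morphism. Two caveats: \cite[Proposition 1.12]{BEGZ10} is stated for compact K\"ahler manifolds, whereas the proposition here imposes no Fujiki hypothesis, so $\widetilde X,\widetilde Y$ need not be K\"ahler --- the BEGZ fibrewise-sup proof does not use K\"ahlerness, but if you cite it as a black box you should say so. Also, your Stein-factorization alternative is essentially the paper's fibrewise-sup argument split into connected-fibre and finite pieces, and the descent through the connected-fibre part still needs the extension lemma to handle upper semicontinuity and the fibres on which the function is identically $-\infty$.
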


\begin{proof}
The proof follows that in \cite[Proposition 1.12]{BEGZ10}.  
Let $\psi$ be a $\pi^{*}\theta$-psh function and 
$X^{\circ} \subset X_{\reg}$ be the set of regular values of $\pi$.
For each $x \in X^{\circ}$, define
$$
\tau(x) := \sup_{y \in \pi^{-1}(x)} \psi(y).
$$
Then $\tau$ is a $\theta$-psh function on $X^{\circ}$. Since $Y$ is compact, $\psi$ is bounded above, so $\tau$ is bounded above as well.  
By \cite[Theorem 1.7]{Dem85}, $\tau$ extends to a psh function on $X$.  
By assumption, we have $\tau \circ \pi \le \varphi + O(1)$, which implies that
\[
\psi \le \varphi \circ \pi + O(1) \quad \text{ on } \pi^{-1}(X^{\circ}).
\]
By Lemma~\ref{lem-psh-Zariski-open}, this inequality extends to all of $Y$.  
Therefore, $\varphi \circ \pi$ is a $\pi^*\theta$-psh function with minimal singularities.
\end{proof}

\begin{lem}
\label{lem-nonpluripolar-minimal}
Assume that $X$ is in Fujiki's class.
Let $\alpha_1, \ldots, \alpha_p \in H^{1,1}_{BC}(X)$ be psef classes, and let $T_{i,\min} \in \alpha_i$ be currents with minimal singularities. Then
$$
\langle T_{1, \min} \wedge \cdots \wedge T_{p,\min} \rangle = \pi_{*} \langle \widetilde{T}_{1, \min} \wedge \cdots \wedge \widetilde{T}_{p,\min} \rangle,
$$
where $\pi \colon \widetilde{X} \to X$ is a resolution from a compact K\"ahler manifold $\widetilde{X}$, and $\widetilde{T}_{i,\min} \in \pi^{*}\alpha_i$ are currents with minimal singularities.
\end{lem}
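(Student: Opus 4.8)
The plan is to unwind the definition of the non-pluripolar product on $X$ and combine it with Proposition~\ref{prop-BEGZ10-1.12}, which says that minimal singularities are preserved under pullback, so that the statement reduces to the corresponding (known) statement on a smooth Kähler model.

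Concretely, I would fix a resolution $\pi\colon\widetilde X\to X$ with $\widetilde X$ a compact Kähler manifold, which exists since $X$ lies in Fujiki's class. By the definition of the non-pluripolar product on singular varieties,
\[
\langle T_{1,\min}\wedge\cdots\wedge T_{p,\min}\rangle=\pi_{*}\langle\pi^{*}T_{1,\min}\wedge\cdots\wedge\pi^{*}T_{p,\min}\rangle ,
\]
and this is a well-defined closed positive $(p,p)$-current by Lemma~\ref{lem-independence-nonpluripolar}. Writing $T_{i,\min}=\theta_{i}+dd^{c}\varphi_{i}$ with $\varphi_{i}$ a $\theta_{i}$-psh function with minimal singularities, Proposition~\ref{prop-BEGZ10-1.12} applied to the surjective morphism $\pi$ shows that $\varphi_{i}\circ\pi$ is a $\pi^{*}\theta_{i}$-psh function with minimal singularities; hence $\pi^{*}T_{i,\min}=\pi^{*}\theta_{i}+dd^{c}(\varphi_{i}\circ\pi)$ is a closed positive $(1,1)$-current with minimal singularities in the (psef) class $\pi^{*}\alpha_{i}$ on $\widetilde X$. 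In particular, taking $\widetilde T_{i,\min}:=\pi^{*}T_{i,\min}$, the asserted identity is exactly the displayed definitional formula.

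To obtain the identity for an arbitrary choice of currents $\widetilde T_{i,\min}\in\pi^{*}\alpha_{i}$ with minimal singularities, it suffices to know that on the smooth compact Kähler manifold $\widetilde X$ the non-pluripolar product $\langle\widetilde T_{1,\min}\wedge\cdots\wedge\widetilde T_{p,\min}\rangle$ does not depend on the chosen minimal-singularity representatives of the fixed classes $\pi^{*}\alpha_{i}$; granting this, one compares it with $\langle\pi^{*}T_{1,\min}\wedge\cdots\wedge\pi^{*}T_{p,\min}\rangle$ and pushes forward by $\pi$. This independence is the standard fact underlying the very notion of the movable / non-pluripolar intersection product: for big classes it is part of \cite{BEGZ10}, and for psef classes it is equivalently the well-definedness of the movable intersection product of \cite{BDPP13}. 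Thus the only step that is not pure bookkeeping is this independence statement on the smooth model $\widetilde X$, and since it is a standard citable fact I do not expect a genuine obstacle; the entire content of the lemma is that the definition on $X$ is compatible, via $\pi_{*}$, with working with minimal-singularity currents upstairs, which follows formally from the definition together with Proposition~\ref{prop-BEGZ10-1.12}.
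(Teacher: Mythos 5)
Your first two paragraphs are exactly the paper's proof: unwind the definition of the product on $X$ and apply Proposition~\ref{prop-BEGZ10-1.12} to conclude that $\pi^{*}T_{i,\min}$ has minimal singularities in $\pi^{*}\alpha_i$, so one may take $\widetilde T_{i,\min}:=\pi^{*}T_{i,\min}$ and the identity is the definitional formula. That is all the paper does, and it suffices.

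Your final paragraph, however, overreaches. The independence you invoke --- that $\langle \widetilde T_{1,\min}\wedge\cdots\wedge\widetilde T_{p,\min}\rangle$ is the same for every choice of minimal-singularity representatives --- is true only at the level of cohomology classes (and in \cite{BEGZ10} it is proved for currents with small unbounded locus in big classes, via Theorem~1.16 there), not as an identity of currents: already for a K\"ahler class two K\"ahler representatives both have minimal singularities but different top-power measures. So the lemma cannot be read as an equality of currents for an \emph{arbitrary} choice of $\widetilde T_{i,\min}$; it should be read, as the paper implicitly does, with $\widetilde T_{i,\min}=\pi^{*}T_{i,\min}$. Where class-level independence is actually needed later (Lemma~\ref{lem-nonpluripolar-minimlal-singular}, for big classes), the paper invokes it separately in cohomology, which is the correct setting.
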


\begin{proof}
By definition,
\[
\langle T_{1, \min} \wedge \cdots \wedge T_{p,\min} \rangle = \pi_{*} \langle \pi^{*} T_{1, \min} \wedge \cdots \wedge \pi^{*} T_{p,\min} \rangle.
\]
By Proposition~\ref{prop-BEGZ10-1.12}, we have $\pi^{*} T_{i, \min} = \widetilde{T}_{i, \min}$ for each $i$, so the result follows.
\end{proof}

\subsubsection{Non-pluripolar product of pseudo-effective classes}

Let $X$ be a compact normal analytic variety in Fujiki's class, and let $\alpha_1, \ldots, \alpha_p \in H^{1,1}_{BC}(X)$ be psef classes.  
We define their non-pluripolar product by
$$
\langle \alpha_1 \cdots \alpha_p \rangle 
:= 
\pi_{*} \langle \pi^{*}\alpha_1 \cdots \pi^{*}\alpha_p \rangle,
$$
where $\pi \colon \widetilde{X} \to X$ is a resolution such that $\widetilde{X}$ is a compact K\"ahler manifold.

\begin{lem}
\label{lem-pluripolar-sing-property}
The non-pluripolar product $\langle \alpha_1 \cdots \alpha_p \rangle$ is independent of the choice of resolution $\pi$ and defines an element in $H^{2p}_{dR, \mathcal{D}'}(X, \mathbb{R})$.

Moreover, under the map $\Psi \colon H_{2n - 2p}(X, \mathbb{R}) \hookrightarrow H^{2p}_{dR, \mathcal{D}'}(X, \mathbb{R})$ introduced in \eqref{eq-current-homology}, there exists a class $\gamma \in H_{2n - 2p}(X, \mathbb{R})$ such that
$$
\Psi(\gamma) = \langle \alpha_1 \cdots \alpha_p \rangle.
$$
Thus, we may regard $\langle \alpha_1 \cdots \alpha_p \rangle$ as an element of $H_{2n - 2p}(X, \mathbb{R})$.
\end{lem}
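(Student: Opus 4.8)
The plan is to prove the statement in two stages. First, I would show that $\langle \alpha_1 \cdots \alpha_p \rangle$ is a well-defined closed positive $(p,p)$-current on $X$ — in particular independent of the auxiliary Kähler resolution $\pi$ and of the chosen representatives with minimal singularities — so that it determines a class in $H^{2p}_{dR,\mathcal{D}'}(X,\mathbb{R})$. Second, I would show that this class lies in the image of the injection $\Psi$ of \eqref{eq-current-homology}, hence comes from a (necessarily unique) homology class $\gamma$.

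For the first stage I would argue exactly as in the proof of Lemma~\ref{lem-independence-nonpluripolar}, but now with classes in place of fixed currents. By Lemma~\ref{lem-nonpluripolar-minimal} one may write $\langle \alpha_1 \cdots \alpha_p\rangle = \pi_*\langle \widetilde T_{1,\min}\wedge\cdots\wedge\widetilde T_{p,\min}\rangle$, where $\widetilde T_{i,\min}$ is a current with minimal singularities in $\pi^*\alpha_i$ on a Kähler resolution $\widetilde X$. On the compact Kähler manifold $\widetilde X$, the non-pluripolar product of such currents is closed and positive, and its cohomology class depends only on the classes $\pi^*\alpha_i$; this is the basic fact underlying positive (movable) intersection products, see \cite{BDPP13, BEGZ10, BBEGZ19}. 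Pushing forward by $\pi$ (and using \cite[Lemma~1.4]{Dem85}) one obtains a closed positive $(p,p)$-current on $X$, hence a class in $H^{2p}_{dR,\mathcal{D}'}(X,\mathbb{R})$. Independence of the resolution follows by comparing any two Kähler resolutions through a common resolution $q\colon W\to X$ dominating both, say a desingularization of the main component of the fiber product: Proposition~\ref{prop-BEGZ10-1.12} identifies the relevant pullbacks of minimal singular currents, and \cite[Remark~1.7]{BEGZ10} gives the birational invariance $\rho_*\langle\rho^*(\cdot)\rangle = \langle\cdot\rangle$ along a modification $\rho$; together these show that both constructions equal $q_*\langle q^*\alpha_1\cdots q^*\alpha_p\rangle$.

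For the second stage, recall that $\Psi$ is the composite of the universal-coefficient isomorphism $H_{2n-2p}(X,\mathbb{R})\cong H^{2n-2p}(X,\mathbb{R})^\vee$, the dual of the surjection $r\colon H^{2n-2p}_{dR}(X,\mathbb{R})\twoheadrightarrow H^{2n-2p}(X,\mathbb{R})$ from \cite{Her67}, and the Serre-type duality $H^{2p}_{dR,\mathcal{D}'}(X,\mathbb{R})\cong H^{2n-2p}_{dR}(X,\mathbb{R})^\vee$ from \cite{Ser55}. Hence a closed $(p,p)$-current $T$ represents a class in the image of $\Psi$ precisely when the functional $[\beta]\mapsto \langle T,\beta\rangle$ on $H^{2n-2p}_{dR}(X,\mathbb{R})$ kills $\ker r$; that is, it suffices to check that $\langle\langle\alpha_1\cdots\alpha_p\rangle,\beta\rangle = 0$ for every closed smooth $(2n-2p)$-form $\beta$ on $X$ whose image in $H^{2n-2p}(X,\mathbb{R})$ vanishes. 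I would verify this by lifting to the smooth model: by the projection formula $\langle\langle\alpha_1\cdots\alpha_p\rangle,\beta\rangle = \langle\langle\pi^*\alpha_1\cdots\pi^*\alpha_p\rangle,\pi^*\beta\rangle$, and on the compact complex manifold $\widetilde X$ the pairing of a closed current with a closed form depends only on their de Rham classes, which there coincide with singular classes. By naturality of $r$, the singular class of $\pi^*\beta$ on $\widetilde X$ is the pullback of that of $\beta$ on $X$, hence zero; so $\pi^*\beta$ is de Rham exact on $\widetilde X$ and the pairing vanishes. This gives $\langle\alpha_1\cdots\alpha_p\rangle\in\operatorname{Im}\Psi$, and $\gamma$ is the corresponding element of $H_{2n-2p}(X,\mathbb{R})$, unique by injectivity of $\Psi$.

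The main obstacle is exactly the second stage: on a singular $X$ the de Rham cohomology of forms (or of currents) generally differs from singular cohomology, and the comparison map $r$ is only surjective, so a closed current need not a priori define a homology class. What rescues the argument is that $\langle\alpha_1\cdots\alpha_p\rangle$, though only a current on $X$, is the pushforward of a genuine cohomology class from a smooth bimeromorphic model $\widetilde X$ where all three cohomology theories agree; the remaining work is bookkeeping — checking the compatibility of the pullback of Bott–Chern classes, the pullback of forms (\cite[Lemma~1.3]{Dem85}), the comparison map $r$, and the projection formula — which is routine given the functoriality statements collected in Section~\ref{sec-pre}.
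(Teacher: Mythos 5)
Your proposal is correct and follows essentially the same route as the paper: independence of the resolution via \cite[Remark~1.7]{BEGZ10} and a common dominating modification (as in Lemma~\ref{lem-independence-nonpluripolar}), and then the identification $\Psi=\Phi^{\vee}$ together with the observation that it suffices to show the pairing with any closed form $\eta$ satisfying $\Phi(\{\eta\})=0$ vanishes, which follows by pulling back to the smooth model where de Rham and singular cohomology agree. The commutative square expressing naturality of $\Phi$ under $\pi^{*}$ that you invoke is exactly the diagram used in the paper's proof.
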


\begin{proof}
By \cite[Remark 1.7]{BEGZ10}, the independence of the resolution follows as in Lemma~\ref{lem-independence-nonpluripolar}.  
Since $\pi$ is proper and the pushforward of $\pi$ preserves the closedness of currents,  $\langle \alpha_1 \cdots \alpha_p \rangle$ lies in $H^{2p}_{dR, \mathcal{D}'}(X, \mathbb{R})$.

Let us denote by
$$
\Phi \colon H^{2n-2p}_{dR}(X, \mathbb{R}) \twoheadrightarrow H^{2n-2p}(X, \mathbb{R})
$$
the natural surjection defined in \cite{Her67}.  
We may then regard $\Psi = \Phi^{\vee}$, using the identifications
\[
H_{2n - 2p}(X, \mathbb{R}) \cong H^{2n-2p}(X, \mathbb{R})^{\vee} \quad \text{and} \quad
H^{2p}_{dR, \mathcal{D}'}(X, \mathbb{R}) \cong 
H^{2n-2p}_{dR}(X, \mathbb{R})^{\vee}.
\]
Under this identification, it suffices to show that $\langle \alpha_1 \cdots \alpha_p \rangle \cdot \{ \eta \} = 0$ for any $d$-closed $(2n - 2p)$-form $\eta$ such that $\Phi(\{ \eta \}) = 0$.
Let $\eta$ be such a form. Consider the following commutative diagram:
$$
\xymatrix@C=25pt@R=20pt{
H^{2n-2p}_{dR}(X, \mathbb{R}) \ar[r]^{\Phi} \ar[d]^{\pi^{*}} &
H^{2n-2p}(X, \mathbb{R}) \ar[d]^{\pi^{*}} \\
H^{2n-2p}_{dR}(\widetilde{X}, \mathbb{R}) \ar[r] &
H^{2n-2p}(\widetilde{X}, \mathbb{R})
}
$$
Since $\widetilde{X}$ is smooth, the bottom horizontal map is an isomorphism.  
Therefore, we have $\pi^{*} \{ \eta \} = 0$ in $H^{2n-2p}_{dR}(\widetilde{X}, \mathbb{R})$.  
It follows that
\begin{align*}
\langle \alpha_1 \cdots \alpha_p \rangle \cdot \{ \eta \}
&= \langle \pi^{*}\alpha_1 \cdots \pi^{*}\alpha_p \rangle \cdot \pi^{*} \{ \eta \} = 0.
\end{align*}
\end{proof}

\begin{rem}
\label{rem-nonpluripolar-cohomology}
If all $\alpha_i \in H^{1,1}_{BC}(X)$ are nef, then for any $\sigma \in H^{2n - 2p}(X, \mathbb{R})$, it follows that
$$
\langle \alpha_1 \cdots \alpha_p \rangle \cdot \sigma =
\alpha_1 \cdots \alpha_p \cdot \sigma,
$$
where the latter intersection number is defined in Subsection~\ref{subsec-intersection-BC}.
In particular, $\langle \alpha_1 \cdots \alpha_p \rangle$ and $\alpha_1 \cdots \alpha_p$ can be identified as elements of $H^{2n - 2p}(X, \mathbb{R})^{\vee}$.  

If $X$ satisfies the Poincaré duality $H^{2n - 2p}(X, \mathbb{R})^{\vee} \cong H^{2p}(X, \mathbb{R})$ (for example, if $X$ has only quotient singularities by \cite[Proposition 5.10]{GK20}), then $\langle \alpha_1 \cdots \alpha_p \rangle$ can be regarded as an element of $H^{2p}(X, \mathbb{R})$. Furthermore,  if all $\alpha_i \in H^{1,1}_{BC}(X)$ are nef, then $\langle \alpha_1 \cdots \alpha_p \rangle$ and $\alpha_1 \cdots \alpha_p$ can be identified as elements of $H^{2p}(X, \mathbb{R})$.

\end{rem}

\begin{lem}
\label{lem-nonpluripolar-minimlal-singular}
Let $X$ be a compact normal analytic variety, and let $\alpha_1, \ldots, \alpha_p \in H^{1,1}_{BC}(X)$ be big classes.  
Let $T_{i, \min} \in \alpha_i$ be currents with minimal singularities. Then we have
\[
\{ \langle T_{1, \min} \wedge \cdots \wedge T_{p, \min} \rangle \} 
=
\langle \alpha_1 \cdots \alpha_p \rangle
\quad
\mathrm{in} \,\, H^{2p}_{dR, \mathcal{D}'}(X, \mathbb{R}).
\]
\end{lem}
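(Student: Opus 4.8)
The plan is to reduce to the smooth Kähler situation, where the identity is essentially the defining property of the non-pluripolar product of big classes in \cite{BEGZ10}.

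First I would fix a resolution $\pi \colon \widetilde{X} \to X$ with $\widetilde{X}$ a compact Kähler manifold. By Lemma~\ref{lem-nonpluripolar-minimal} (which itself rests on Proposition~\ref{prop-BEGZ10-1.12}), the pullbacks $\widetilde{T}_{i,\min} := \pi^{*}T_{i,\min}$ are currents with minimal singularities in $\pi^{*}\alpha_i$, and
\[
\langle T_{1,\min} \wedge \cdots \wedge T_{p,\min} \rangle = \pi_{*} \langle \widetilde{T}_{1,\min} \wedge \cdots \wedge \widetilde{T}_{p,\min} \rangle .
\]
Next I would observe that each $\pi^{*}\alpha_i$ is big on $\widetilde{X}$, since bigness is preserved under pullback by a modification. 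The classes $\pi^{*}\alpha_1,\dots,\pi^{*}\alpha_p$ being big on the compact Kähler manifold $\widetilde{X}$, the smooth theory of \cite{BEGZ10} gives that the cohomology class of the non-pluripolar product of currents with minimal singularities is exactly the non-pluripolar product of the classes:
\[
\{ \langle \widetilde{T}_{1,\min} \wedge \cdots \wedge \widetilde{T}_{p,\min} \rangle \} = \langle \pi^{*}\alpha_1 \cdots \pi^{*}\alpha_p \rangle \quad \text{in } H^{2p}_{dR, \mathcal{D}'}(\widetilde{X}, \mathbb{R}).
\]

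It then remains to push forward. Since $\pi$ is proper, $\pi_{*}$ on currents commutes with $d$ and hence descends to $H^{2p}_{dR,\mathcal{D}'}$; applying it to the identity above and using the definition $\langle \alpha_1 \cdots \alpha_p \rangle := \pi_{*}\langle \pi^{*}\alpha_1 \cdots \pi^{*}\alpha_p \rangle$ yields $\{ \langle T_{1,\min} \wedge \cdots \wedge T_{p,\min} \rangle \} = \langle \alpha_1 \cdots \alpha_p \rangle$, as wanted. The step I expect to be the main obstacle is the reduction to \emph{big} classes on $\widetilde{X}$: the equality between the class of the non-pluripolar product of minimal singular currents and the non-pluripolar product of classes can fail for merely pseudo-effective classes, so one genuinely needs $\pi^{*}\alpha_i$ big — which should follow from the invariance of the Monge–Ampère mass $\int_{\widetilde{X}}\langle(\pi^{*}\alpha_i)^{n}\rangle$ under the resolution (it equals the total mass of $\langle T_{i,\min}^{\wedge n}\rangle$, which is positive because $\alpha_i$ is big) together with Boucksom's characterization of bigness by positivity of volume on compact Kähler manifolds. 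The remaining ingredients — minimality of $\pi^{*}T_{i,\min}$ (Proposition~\ref{prop-BEGZ10-1.12}), compatibility of $\pi_{*}$ with de Rham classes of currents, and keeping track of the relevant cohomology groups — are routine.
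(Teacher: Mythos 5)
Your proposal is correct and follows essentially the same route as the paper: the paper's proof is exactly the reduction via Lemma~\ref{lem-nonpluripolar-minimal} (pullbacks of minimal-singularity currents remain minimal) and Lemma~\ref{lem-pluripolar-sing-property} to a smooth Kähler resolution, where the identity is \cite[Definition 1.17]{BEGZ10}. Your extra care about the bigness of $\pi^{*}\alpha_i$ is a standard point (pullback of a big class under a modification is big) that the paper leaves implicit.
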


\begin{proof}
By Lemmas~\ref{lem-nonpluripolar-minimal} and~\ref{lem-pluripolar-sing-property}, we may assume that $X$ is smooth.  
In the smooth case, the result follows from \cite[Definition 1.17]{BEGZ10}.
\end{proof}

\begin{rem}
\label{rem-nonopluripolar-fact}
As in the smooth case, the following statements hold. (The proof reduces to the smooth case by taking a resolution).
\begin{enumerate}[label=$(\arabic*)$]

    \item (cf.~ \cite[Theorem 1.16]{BEGZ10}) Let $T_i$ and $S_i$ be closed positive $(1,1)$-currents with small unbounded loci, and assume that $T_i$ is less singular than $S_i$ and cohomologous to $S_i$ for each $i = 1, \ldots, p$. Then their non-pluripolar products satisfy
$$
\{ \langle T_1  \wedge \cdots \wedge T_p \rangle \} \geq \{ \langle S_1  \wedge \cdots \wedge  S_p \rangle \}
$$
where the inequality $\geq$ means that the difference is represented by a closed positive $(p,p)$-current.

    \item (cf.~ \cite[Definition 1.17]{BEGZ10}) The non-pluripolar product is homogeneous and non-decreasing in each variable, and it is continuous on big classes.

    \item (cf.~ \cite[Proposition 1.22]{BEGZ10}) A psef class $\alpha$ is big if and only if $\langle \alpha^n \rangle > 0$.

    \item (cf.~ \cite[Theorem D]{WN19}) If $X$ is Moishezon, then for any big class $\alpha \in H^{1,1}_{BC}(X)$, the following orthogonality relation holds:
$$
\langle \alpha^{n-1} \rangle \cdot \alpha = \langle \alpha^n \rangle.
$$

    \item (cf.~ \cite[Lemma 3.1]{Xiao18}) For any psef class $\alpha_1, \ldots, \alpha_{n-1}, \beta \in H^{1,1}_{BC}(X)$, we have
    $$
    \langle \alpha_1 \cdots \alpha_{n-1} \rangle \cdot \beta 
    \ge 
    \langle \alpha_1 \cdots \alpha_{n-1} \cdot \beta \rangle.
    $$
    In particular, $\langle \alpha_1 \cdots \alpha_{n-1} \rangle \cdot \beta$ is non-negative, and thus $\langle \alpha_1 \cdots \alpha_{n-1} \rangle$ defines a movable class in the sense of \cite[Definition 4.2]{Ou25}.
    
\end{enumerate}
\end{rem}

\subsubsection{Comparison with the divisorial Zariski decomposition}
\label{subsubsec-DZD-nonpluripolar}

Let $X$ be a compact normal analytic variety in Fujiki's class.  
For any psef class $\alpha \in H^{1,1}_{BC}(X)$, we define the Lelong number of $\alpha$ along a prime divisor $E$ by
$$
\nu(\alpha, E) := \nu(\pi^{*}\alpha, \widetilde{E}),
$$
where $\pi \colon \widetilde{X} \to X$ is a resolution with $\widetilde{X}$ a compact K\"ahler manifold and $\widetilde{E}$ is the strict transform of $E$.  
This definition is independent of the choice of resolution.  
Moreover, if $\alpha$ is big, then by Proposition~\ref{prop-BEGZ10-1.12} and \cite[Proposition 3.6 (ii)]{Bou04}, the Lelong number is given by the current $T_{\min}$ with minimal singularities in $\alpha$:
$$
\nu(\alpha, E) = \nu(T_{\min}, E).
$$
Thus, by \cite[Proposition 3.2 (ii)]{Bou04}, a big class $\alpha$ is modified nef if and only if $\nu(\alpha, E) = 0$ for all prime divisors $E$.  

\begin{defn}[{cf.~ \cite[Definition 3.7]{Bou04}, \cite[Appendix A]{DHY23}}]
\label{def-DZD}
Let $X$ be a compact normal $\mathbb{Q}$-factorial analytic variety in Fujiki's class and let $\alpha \in H^{1,1}_{BC}(X)$ be a psef class.  
We define the Weil divisor $N(\alpha)$ by
$$
N(\alpha) := \sum_{E\ \text{prime divisor}} \nu(\alpha, E)\, E.
$$
Since $X$ is $\mathbb{Q}$-factorial, we have $\{N(\alpha)\} \in H^{1,1}_{BC}(X)$. Thus we can define
$$
P(\alpha) := \alpha - \{N(\alpha)\} \in H^{1,1}_{BC}(X).
$$
The decomposition $\alpha = P(\alpha) + N(\alpha)$ is called the \textit{divisorial Zariski decomposition}, also known as the \textit{Boucksom-Zariski decomposition}.
We refer to $P(\alpha)$ as the \emph{positive part} and $N(\alpha)$ as the \emph{negative part} of $\alpha$.
\end{defn}

This definition agrees with that in \cite[Appendix A.6]{DHY23}, since for a resolution $\pi \colon \widetilde{X} \to X$, we have
$\pi_{*} N(\pi^{*} \alpha) = N(\alpha)$.  
In particular, $P(\alpha)$ is also psef.  
Moreover, if $X$ admits a big class, then by \cite[Lemma 2.6]{DH23}, $P(\alpha)$ is modified nef.

Unlike in the smooth case, $\langle \alpha \rangle$ and $P(\alpha)$ lie in different (co-)homology groups; specifically, $\langle \alpha \rangle \in H_{2n-2}(X, \mathbb{R})$ while $P(\alpha) \in H^{1,1}_{BC}(X)$.  
However, the following lemma shows that they agree when viewed as elements of the dual space $H^{2n-2}(X, \mathbb{R})^{\vee}$.

\begin{lem}
\label{lem-nonpluri-DZD}
Let $X$ be a compact normal $\mathbb{Q}$-factorial analytic variety in Fujiki's class and let $\alpha \in H^{1,1}_{BC}(X)$ be a psef class. 
Then, for any $\sigma \in H^{2n-2}(X, \mathbb{R})$, we have
$$
\delta^1(P(\alpha)) \cdot \sigma = \langle \alpha \rangle \cdot \sigma,
$$
where $\delta^1 \colon H^{1,1}_{BC}(X) \to H^2(X,\mathbb{R})$ is the connecting morphism as in \eqref{eq-connect-h2}.
\end{lem}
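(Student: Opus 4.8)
The plan is to reduce the identity to a smooth Kähler model, where it becomes the classical fact that the positive part of the divisorial Zariski decomposition represents the degree-one positive product. Fix a resolution $\pi\colon\widetilde X\to X$ with $\widetilde X$ compact K\"ahler and set $\beta:=\pi^*\alpha\in H^{1,1}_{BC}(\widetilde X)$, which is a psef class. Using \eqref{eq-pullback-intersection} for the class $P(\alpha)$ we have $\delta^1(P(\alpha))\cdot\sigma=\delta^1(\pi^*P(\alpha))\cdot\pi^*\sigma$, while from the definition $\langle\alpha\rangle=\pi_*\langle\beta\rangle$ together with the adjunction $\langle\pi_*a,b\rangle=\langle a,\pi^*b\rangle$ between homology push-forward and cohomology pull-back (and Lemma~\ref{lem-pluripolar-sing-property}, which guarantees that $\langle\beta\rangle$ genuinely comes from $H_{2n-2}(\widetilde X,\mathbb R)$) we get $\langle\alpha\rangle\cdot\sigma=\langle\beta\rangle\cdot\pi^*\sigma$. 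Thus it suffices to prove $\delta^1(\pi^*P(\alpha))\cdot\pi^*\sigma=\langle\beta\rangle\cdot\pi^*\sigma$ on $\widetilde X$.

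Next I would compare $\pi^*P(\alpha)$ with $P(\beta)$. Since $X$ is $\mathbb Q$-factorial, $N(\alpha)$ is a $\mathbb Q$-Cartier $\mathbb R$-divisor with $\pi^*\{N(\alpha)\}=\{\pi^*N(\alpha)\}$. For a prime divisor $E'\subset\widetilde X$ that is not $\pi$-exceptional, with image $E=\pi(E')$, the coefficient of $E'$ in $\pi^*N(\alpha)$ is the coefficient of $E$ in $N(\alpha)$, i.e. $\nu(\alpha,E)$, which by the definition $\nu(\alpha,E)=\nu(\pi^*\alpha,E')$ equals the coefficient of $E'$ in $N(\beta)$. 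Hence $N(\beta)-\pi^*N(\alpha)$ is an $\mathbb R$-divisor supported on the $\pi$-exceptional locus, and therefore $\pi^*P(\alpha)-P(\beta)=\{N(\beta)-\pi^*N(\alpha)\}$ is represented by an $\mathbb R$-linear combination of $\pi$-exceptional prime divisors. For any such prime divisor $E'$ the cycle class $[E']\in H_{2n-2}(\widetilde X,\mathbb R)$ satisfies $\pi_*[E']=0$, because $\dim_{\mathbb C}\pi(E')\le n-2$ forces $H_{2n-2}(\pi(E'),\mathbb R)=0$; hence $\delta^1(\{E'\})\cdot\pi^*\sigma=\langle\pi^*\sigma,[E']\rangle=\langle\sigma,\pi_*[E']\rangle=0$. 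Summing, $\delta^1(\pi^*P(\alpha))\cdot\pi^*\sigma=\delta^1(P(\beta))\cdot\pi^*\sigma$, and the problem is reduced to the equality $\langle\beta\rangle\cdot\pi^*\sigma=\delta^1(P(\beta))\cdot\pi^*\sigma$ on the compact K\"ahler manifold $\widetilde X$.

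Finally, on $\widetilde X$ I would invoke the smooth theory. Letting $T_{\min}\in\beta$ be a current with minimal singularities, one has $\langle\beta\rangle=\{\langle T_{\min}\rangle\}$ by \cite[Definition~1.17]{BEGZ10}, and $\{\langle T_{\min}\rangle\}=P(\beta)$ in $H^{1,1}_{BC}(\widetilde X)$: the current $T_{\min}-\langle T_{\min}\rangle$ is closed positive of bidegree $(1,1)$, its Siu decomposition has divisorial part $\sum_{E}\nu(\beta,E)[E]=N(\beta)$ and no components of codimension $\ge 2$ (a closed positive $(1,1)$-current carried by an analytic set of codimension $\ge 2$ vanishes), which is exactly the content of Boucksom's divisorial Zariski decomposition \cite[Section~3]{Bou04}. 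Via Poincar\'e duality on $\widetilde X$ this gives $\langle\beta\rangle=\delta^1(P(\beta))$ in $H^2(\widetilde X,\mathbb R)$, hence $\langle\beta\rangle\cdot\pi^*\sigma=\delta^1(P(\beta))\cdot\pi^*\sigma$, which completes the argument. The main obstacle is not this classical smooth-case identity but the reduction surrounding it: one must check that $\pi^*$ intertwines the two divisorial Zariski decompositions only up to $\pi$-exceptional divisors, and that such divisors pair trivially with $\pi^*\sigma$ — which hinges on $\pi_*[E']=0$, equivalently on the vanishing of $H^{2n-2}$ of a compact complex analytic space of dimension $\le n-2$.
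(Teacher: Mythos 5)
Your proposal is correct and follows essentially the same route as the paper: pull everything back to a Kähler resolution, use the smooth-case identity $\langle\pi^*\alpha\rangle = \pi^*\alpha - \{N(\pi^*\alpha)\}$, and observe that the discrepancy $\pi^*N(\alpha)-N(\pi^*\alpha)$ is $\pi$-exceptional and hence pairs trivially with $\pi^*\sigma$. The only cosmetic difference is that you justify this last vanishing directly via $\pi_*[E']=0$ for exceptional primes, where the paper cites \cite[Lemma~9.2]{Ou24} for the same fact.
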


\begin{proof}
Take a resolution $\pi \colon \widetilde{X} \to X$. Then,
\begin{equation}
\label{eq-positivepart}
\delta^1(P(\alpha)) \cdot \sigma 
\underalign{\text{(\ref{eq-pullback-intersection})}}{=} \pi^{*} P(\alpha) \cdot \pi^{*} \sigma
\underalign{\text{(Def. \ref{def-DZD})}}{=}  
(\pi^{*} \alpha - \pi^{*} \{N(\alpha)\}) \cdot \pi^{*} \sigma.
\end{equation}
On the other hand, by Lemma \ref{lem-pluripolar-sing-property},
\begin{equation}
\label{eq-positivepart-2}
\langle \alpha \rangle \cdot \sigma
\underalign{\text{(Lem. \ref{lem-pluripolar-sing-property})}}{=} 
\pi_{*} \langle \pi^{*} \alpha \rangle \cdot \sigma
= \langle \pi^{*} \alpha \rangle \cdot \pi^{*} \sigma.
\end{equation}
Since $\widetilde{X}$ is smooth, we have $\langle \pi^{*} \alpha \rangle = \pi^{*} \alpha - \{N(\pi^{*} \alpha)\}$.  
Comparing \eqref{eq-positivepart} and \eqref{eq-positivepart-2}, it suffices to show
$$
(\pi^{*} \{N(\alpha)\} - \{N(\pi^{*} \alpha)\}) \cdot \pi^{*} \sigma = 0.
$$
As the support of $\pi^{*} N(\alpha) - N(\pi^{*} \alpha)$ is contained in the $\pi$-exceptional locus, the claim follows from \cite[Lemma 2.2]{Ou25b}.
\end{proof}

\begin{lem}[{cf.~ \cite[Proposition 3.8]{Bou04}}]
\label{lem-bigness-positivepart}
Let $X$ be a compact normal $\mathbb{Q}$-factorial analytic variety in Fujiki's class, and let $\alpha \in H^{1,1}_{BC}(X)$ be a psef class.  
Then $\alpha$ is big if and only if $P(\alpha)$ is big.
\end{lem}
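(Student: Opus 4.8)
The plan is to reduce the statement to a resolution $\pi\colon\widetilde X\to X$ with $\widetilde X$ a compact Kähler manifold, where it follows from \cite[Proposition~3.8]{Bou04}, and to transfer bigness back and forth using the criterion that a psef class $\beta$ is big if and only if $\langle\beta^n\rangle>0$ (Remark~\ref{rem-nonopluripolar-fact}(3)), together with the observation that $\langle\beta^n\rangle=\pi_*\langle(\pi^*\beta)^n\rangle$ is, as a real number under the identifications $H^{2n}_{dR,\mathcal{D}'}(-,\R)\cong H_0(-,\R)\cong\R$, simply $\langle(\pi^*\beta)^n\rangle$ computed on $\widetilde X$. Everything then comes down to the identity of real numbers
\[
\langle(\pi^*\alpha)^n\rangle=\langle(\pi^*P(\alpha))^n\rangle,
\]
which expresses that the non-pluripolar volume does not see the negative part of $\alpha$.

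First I would settle the easy implication: if $P(\alpha)$ is big, then since $\alpha-P(\alpha)=\{N(\alpha)\}$ is psef (it contains the closed positive current $[N(\alpha)]$), monotonicity of the non-pluripolar product in each variable (Remark~\ref{rem-nonopluripolar-fact}(2)) gives $\langle\alpha^n\rangle\ge\langle P(\alpha)^n\rangle>0$, so $\alpha$ is big by Remark~\ref{rem-nonopluripolar-fact}(3). For the converse I would assume $\alpha$ big, so that $\langle(\pi^*\alpha)^n\rangle=\langle\alpha^n\rangle>0$ and $\pi^*\alpha$ is big on $\widetilde X$; granting the displayed identity, $\langle(\pi^*P(\alpha))^n\rangle>0$, hence $\langle P(\alpha)^n\rangle=\pi_*\langle(\pi^*P(\alpha))^n\rangle>0$ and $P(\alpha)$ is big.

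To prove the identity I would proceed as follows. Let $T_{\min}$ be a current with minimal singularities in $\alpha$. By Proposition~\ref{prop-BEGZ10-1.12}, $\pi^*T_{\min}$ has minimal singularities in $\pi^*\alpha$, so $\langle(\pi^*\alpha)^n\rangle=\langle(\pi^*T_{\min})^n\rangle$ by the definition of the non-pluripolar product of a psef class on the smooth manifold $\widetilde X$ (\cite[Definition~1.17]{BEGZ10}). On $\widetilde X$ the Siu decomposition gives $\pi^*T_{\min}=[N(\pi^*\alpha)]+R'$ with $R'$ closed positive and with vanishing Lelong number along every prime divisor; pushing forward and using $\pi_*N(\pi^*\alpha)=N(\alpha)$ shows that $R:=T_{\min}-[N(\alpha)]=\pi_*R'$ is a closed positive $(1,1)$-current with local potentials on $X$ lying in the class $P(\alpha)$. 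Then $\pi^*R\ge0$ (pullback preserves positivity of currents), $\pi^*R\in\pi^*P(\alpha)$, and $\pi^*R$ has small unbounded locus because the big class $\pi^*\alpha$ has a minimal singular current with small unbounded locus while $\mathrm{Supp}(\pi^*N(\alpha))$ is a divisor. Since $\pi^*T_{\min}=\pi^*R+[\pi^*N(\alpha)]$ and the non-pluripolar product puts no mass on the pluripolar set $\mathrm{Supp}(\pi^*N(\alpha))$ (as in Lemma~\ref{lem-nonpluripolar-unbounded}), it follows that $\langle(\pi^*T_{\min})^n\rangle=\langle(\pi^*R)^n\rangle$. Finally, $\pi^*R$ is a closed positive current in the class $\pi^*P(\alpha)$, so by monotonicity of the non-pluripolar product in the singularities (\cite[Theorem~1.16]{BEGZ10}, cf.\ Remark~\ref{rem-nonopluripolar-fact}(1)) one has $\langle(\pi^*R)^n\rangle\le\langle(\pi^*P(\alpha))^n\rangle$, while $\pi^*P(\alpha)\le\pi^*\alpha$ gives the reverse inequality $\langle(\pi^*P(\alpha))^n\rangle\le\langle(\pi^*\alpha)^n\rangle$. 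Chaining the four relations forces all of them to be equalities, which yields the identity.

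The main obstacle is precisely this identity — the assertion that the non-pluripolar volume ignores $[N(\alpha)]$. The delicate point inside it is producing a genuinely \emph{positive} residual current $R=T_{\min}-[N(\alpha)]$ in the class $P(\alpha)$ on the possibly singular variety $X$; I would obtain this by running the Siu decomposition upstairs on the Kähler manifold $\widetilde X$, where it is classical, and pushing it down via $\pi_*N(\pi^*\alpha)=N(\alpha)$. Everything else — the easy implication, the bookkeeping identifying top-degree currents with real numbers, and the facts that $\pi^*$ preserves bigness, pseudo-effectivity and small unbounded loci — is routine.
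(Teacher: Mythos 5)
Your proof is correct, but it takes a genuinely different route from the paper's. The paper argues directly on $X$: it takes a Kähler current $T\in\alpha$, removes the divisorial part $S=\sum_E\nu(T,E)[E]$ of its Siu decomposition (the residual $T-S$ is still a Kähler current with local potentials, extended across $X_{\sing}$ by \cite[Proposition~4.1.2]{BG13}), and then adds back the positive current $\sum_E(\nu(T,E)-\nu(T_{\min},E))[E]$ to exhibit an explicit Kähler current in the class $P(\alpha)$. You instead pass to a resolution and establish the volume identity $\langle(\pi^*\alpha)^n\rangle=\langle(\pi^*P(\alpha))^n\rangle$ --- essentially the $p=n$, $\alpha_1=\cdots=\alpha_p$ case of Proposition~\ref{prop-nonpluripolar-modifiednef} --- and conclude via the characterization of bigness by $\langle\beta^n\rangle>0$ (Remark~\ref{rem-nonopluripolar-fact}~(3)). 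Two comments. First, there is no circularity: Proposition~\ref{prop-nonpluripolar-modifiednef} is deduced in the paper \emph{from} Lemma~\ref{lem-bigness-positivepart}, but you reprove the needed special case independently, via the Siu decomposition of $\pi^*T_{\min}$ upstairs, using Proposition~\ref{prop-BEGZ10-1.12} to identify $\pi^*T_{\min}$ as minimal-singular so that its divisorial Lelong numbers compute $N(\pi^*\alpha)$; this is sound. Second, your route leans on Remark~\ref{rem-nonopluripolar-fact}~(3) on the singular $X$, whose reduction to the smooth case tacitly requires that bigness \emph{descends} along $\pi$ (a Kähler current in $\pi^*\alpha$ must push forward to one in $\alpha$) --- a pushforward of currents with local potentials that the paper is elsewhere careful to avoid; since the paper asserts the remark, this is acceptable, but the paper's construction sidesteps it and is the more elementary and self-contained of the two. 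In exchange, your argument yields the stronger identity $\langle\alpha^n\rangle=\langle P(\alpha)^n\rangle$ as a byproduct.
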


\begin{proof}
The proof is similar to \cite[Proposition 3.8]{Bou04}.  
The "if" part is clear. We prove the "only if" direction. 
Assume that $\alpha$ is big.
Take a K\"ahler current $T \in \alpha$, and define a positive current
$$
S := \sum_{E\ \text{prime divisor}} \nu(T, E) [E],
$$
where $[E]$ denotes the integration current associated to a prime divisor $E$.  
Then the restriction $(T - S)|_{X_{\reg}}$ is a closed positive $(1,1)$-current in the class $(\alpha - \{S\})|_{X_{\reg}}$.  
By \cite[Proposition 4.6.3]{BG13}, the current $T - S$ extends as a closed positive $(1,1)$-current with local potentials in $\alpha - \{S\}$.  
Moreover, since $T$ is a K\"ahler current, so is $T - S$. 
Let $T_{\min} \in \alpha$ be a current with minimal singularities, and consider
$$
(T - S) + \sum_{E\ \text{prime divisor}} (\nu(T, E) - \nu(T_{\min}, E)) [E].
$$
This is a K\"ahler current in the class $P(\alpha)$, hence $P(\alpha)$ is big.
\end{proof}

\begin{prop}[{cf.~ \cite[Proposition 3.2.10]{Bou02}}]
\label{prop-nonpluripolar-modifiednef}
Let $X$ be a compact normal $\mathbb{Q}$-factorial analytic variety in Fujiki's class.  
For any big classes $\alpha_1, \ldots, \alpha_p \in H^{1,1}_{BC}(X)$, we have
$$
\langle \alpha_1 \cdots \alpha_p \rangle = \langle P(\alpha_1) \cdots P(\alpha_p) \rangle.
$$
\end{prop}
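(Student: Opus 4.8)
The plan is to reduce the identity to the level of currents with minimal singularities, and then to use that non-pluripolar products put no mass on pluripolar sets, in particular on the divisorial part $N(\alpha_i)$. By Lemma~\ref{lem-bigness-positivepart} each $P(\alpha_i)$ is again big, so Lemma~\ref{lem-nonpluripolar-minimlal-singular} applies to both $(\alpha_1,\dots,\alpha_p)$ and $(P(\alpha_1),\dots,P(\alpha_p))$: for \emph{any} currents $T_{i,\min}\in\alpha_i$ and $S_{i,\min}\in P(\alpha_i)$ with minimal singularities one has $\langle\alpha_1\cdots\alpha_p\rangle=\{\langle T_{1,\min}\wedge\cdots\wedge T_{p,\min}\rangle\}$ and $\langle P(\alpha_1)\cdots P(\alpha_p)\rangle=\{\langle S_{1,\min}\wedge\cdots\wedge S_{p,\min}\rangle\}$ in $H^{2p}_{dR,\mathcal D'}(X,\mathbb R)$. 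Hence it suffices to choose the $S_{i,\min}$ compatibly with the $T_{i,\min}$ and to compare the two products directly as currents.

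First I would manufacture $S_{i,\min}$ out of $T_{i,\min}$. Since $\nu(\alpha_i,E)=\nu(T_{i,\min},E)$ for every prime divisor $E$ (and, $\alpha_i$ being big, only finitely many are nonzero), Siu's decomposition applied on $X_{\reg}$ together with \cite[Proposition~4.1.2]{BG13} to extend across $X_{\sing}$ --- exactly as in the proof of Lemma~\ref{lem-bigness-positivepart} --- shows that $S_i:=T_{i,\min}-[N(\alpha_i)]$ is a closed positive $(1,1)$-current with local potentials representing $P(\alpha_i)$, where $[N(\alpha_i)]=\sum_E\nu(\alpha_i,E)[E]$ is the integration current (which has local potentials since $X$ is $\mathbb Q$-factorial). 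A brief envelope estimate then shows $S_i$ has minimal singularities in $P(\alpha_i)$: writing $T_{i,\min}=\theta_i+dd^c\varphi_i$ with $\varphi_i=V_{\theta_i}$, and $[N(\alpha_i)]=(\theta_i-\theta_i^{P})+dd^c\psi_i$ with $\theta_i^{P}$ a smooth representative of $P(\alpha_i)$, any $\theta_i^{P}$-psh potential $\chi$ of a positive current in $P(\alpha_i)$ gives a $\theta_i$-psh function $\chi+\psi_i$, whence $\chi+\psi_i\le V_{\theta_i}+O(1)$ by minimality of $T_{i,\min}$, i.e.\ $\chi\le(\varphi_i-\psi_i)+O(1)$, and $\varphi_i-\psi_i$ is precisely the potential of $S_i$. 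So we may take $S_{i,\min}=S_i$.

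Next I would localize. As $\alpha_i$ and $P(\alpha_i)$ are big, both $T_{i,\min}$ and $S_i$ have small unbounded locus: pulling back along a resolution $\pi\colon\widetilde X\to X$ with $\widetilde X$ Kähler, $\pi^{*}T_{i,\min}$ is a minimal-singularities current for the big class $\pi^{*}\alpha_i$ by Proposition~\ref{prop-BEGZ10-1.12}, so its unbounded locus lies in an analytic subset of $\widetilde X$; since $\pi$ is proper its image in $X$ is analytic (Remmert), and adjoining $X_{\sing}$ together with the (finitely many) components of $N(\alpha_1),\dots,N(\alpha_p)$ produces a single Euclidean-closed analytic --- hence locally complete pluripolar --- set $A\supset X_{\sing}$ outside of which all the $T_{i,\min}$, all the $S_i$, and all the $[N(\alpha_i)]$ are locally bounded. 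On $X\setminus A$ we have $[N(\alpha_i)]\equiv 0$, hence $T_{i,\min}=S_i$ there, so Lemma~\ref{lem-nonpluripolar-unbounded} gives
\[
\langle T_{1,\min}\wedge\cdots\wedge T_{p,\min}\rangle=1_{X\setminus A}\,T_{1,\min}\wedge\cdots\wedge T_{p,\min}=1_{X\setminus A}\,S_1\wedge\cdots\wedge S_p=\langle S_1\wedge\cdots\wedge S_p\rangle.
\]
Passing to Bott--Chern/de Rham classes and applying Lemma~\ref{lem-nonpluripolar-minimlal-singular} as above then yields $\langle\alpha_1\cdots\alpha_p\rangle=\langle P(\alpha_1)\cdots P(\alpha_p)\rangle$.

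The step I expect to be the main obstacle is verifying that $S_i=T_{i,\min}-[N(\alpha_i)]$ really has minimal singularities in $P(\alpha_i)$; this is where $\mathbb Q$-factoriality and the identity $\nu(\alpha_i,E)=\nu(T_{i,\min},E)$ genuinely enter, and it is what allows both non-pluripolar products to be computed on the \emph{same} Zariski open set $X\setminus A$. The remaining work --- Siu's decomposition across the singular locus, finiteness of the support of $N(\alpha_i)$ for big classes, and the assembly of a single admissible set $A$ --- is routine given the lemmas already established.
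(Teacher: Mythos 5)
Your proof is correct and follows essentially the same route as the paper: both subtract the divisorial part from a minimal-singularity current $T_{i,\min}\in\alpha_i$ to obtain $S_i=T_{i,\min}-[N(\alpha_i)]\in P(\alpha_i)$, and both use Lemma~\ref{lem-nonpluripolar-unbounded} to identify $\langle T_{1,\min}\wedge\cdots\wedge T_{p,\min}\rangle$ with $\langle S_1\wedge\cdots\wedge S_p\rangle$ off the divisorial locus. The only difference is that you additionally prove (correctly, via the envelope estimate $\chi+\psi_i\le V_{\theta_i}+O(1)$) that $S_i$ itself has minimal singularities in $P(\alpha_i)$, which yields the equality in one step, whereas the paper sidesteps this by establishing the two inequalities separately --- ``$\ge$'' from monotonicity (Remark~\ref{rem-nonopluripolar-fact}~(2)) and ``$\le$'' by comparing $S_i$ with the minimal-singularity current of the big class $P(\alpha_i)$ (Remark~\ref{rem-nonopluripolar-fact}~(1) together with Lemmas~\ref{lem-bigness-positivepart} and~\ref{lem-nonpluripolar-minimlal-singular}).
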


\begin{proof}
The inequality $\langle \alpha_1 \cdots \alpha_p \rangle \ge \langle P(\alpha_1) \cdots P(\alpha_p) \rangle$ follows from Remark~\ref{rem-nonopluripolar-fact} (2).  
We prove the reverse inequality.
Let $T_{i, \min} \in \alpha_i$ be a current with minimal singularities. Then
$$
S_i:=T_{i, \min} - \sum_{E\ \text{prime divisor}} \nu(T_{i, \min}, E)\, [E]
$$
is a closed positive $(1,1)$-current with local potentials in the class $P(\alpha_i)$ by Definition \ref{def-DZD}.  
Thus, since both $T_{i, \min}$ and $S_i$ have small unbounded loci, we conclude
\begin{align*}
\langle \alpha_1 \cdots \alpha_p \rangle 
&\underalign{\text{(Lem. \ref{lem-nonpluripolar-minimlal-singular})}}{=}
\{\langle T_{1,\min} \wedge \cdots \wedge T_{p,\min} \rangle \} \\
&\underalign{\text{(Lem. \ref{lem-nonpluripolar-unbounded})}}{=}
\{\langle S_1 \wedge \cdots \wedge S_p \rangle \} \\
&\underalign{\text{(Rem. \ref{rem-nonopluripolar-fact} (1))}}{\le}
\langle P(\alpha_1) \cdots P(\alpha_p) \rangle.
\end{align*}
For the last inequality, we use the fact that each $P(\alpha_i)$ is big by Lemma~\ref{lem-bigness-positivepart}, together with Lemma~\ref{lem-nonpluripolar-minimlal-singular}.
\end{proof}

\subsubsection{Hodge index theorem for non-pluripolar product}

\begin{lem}
\label{lem-Hodge-index}
Let $X$ be a compact normal analytic variety in Fujiki's class, and let $\beta, \alpha_1, \ldots, \alpha_{n-2} \in H^{1,1}_{\mathrm{BC}}(X)$ be psef classes.
If 
\(
\langle \beta^2 \cdot \alpha_1 \cdots \alpha_{n-2} \rangle > 0,
\)
then for any $\gamma \in H^{1,1}_{\mathrm{BC}}(X)$, the following inequality holds:
\[
(\gamma^2 \cdot \langle \alpha_1 \cdots \alpha_{n-2} \rangle) \cdot \langle \beta^2 \cdot \alpha_1 \cdots \alpha_{n-2} \rangle
\leq
(\gamma \cdot \langle \beta \cdot \alpha_1 \cdots \alpha_{n-2} \rangle)^2.
\]
\end{lem}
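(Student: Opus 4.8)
The plan is to reduce, in two steps, to the ordinary Hodge index inequality on a smooth variety in Fujiki's class (Lemma~\ref{lem-orbifold-Hodge-index}(1), hence ultimately Lemma~\ref{lem-zhang-hodge-index}), and then to recover the singular, non-pluripolar statement by an approximation argument along modifications.

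\emph{Step 1 (reduction to $X$ smooth).} Let $\pi\colon\widetilde X\to X$ be a resolution with $\widetilde X$ a compact K\"ahler manifold. By the definition of the non-pluripolar product via resolutions together with Lemma~\ref{lem-pluripolar-sing-property}, by the compatibility of $\delta^1$ with pullback in~\eqref{eq-pullback-intersection}, and by the projection formula, each of the three quantities $\gamma^2\cdot\langle\alpha_1\cdots\alpha_{n-2}\rangle$, $\langle\beta^2\cdot\alpha_1\cdots\alpha_{n-2}\rangle$ and $\gamma\cdot\langle\beta\cdot\alpha_1\cdots\alpha_{n-2}\rangle$ is unchanged when $(X,\beta,\gamma,\alpha_i)$ is replaced by $(\widetilde X,\pi^*\beta,\pi^*\gamma,\pi^*\alpha_i)$ (the first quantity is a number which is the mass of a pushed-forward top current, and the latter two follow from $\pi^*\delta^1(\cdot)=\delta^1(\pi^*(\cdot))$ and the projection formula on $H_0$). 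Hence we may assume $X$ is a compact K\"ahler manifold with a fixed K\"ahler form $\omega$.

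\emph{Step 2 (reduction to big classes and approximation).} Replacing $\beta$ by $\beta+\varepsilon\omega$ and each $\alpha_i$ by $\alpha_i+\varepsilon\omega$ yields big classes; the positivity hypothesis persists by monotonicity (Remark~\ref{rem-nonopluripolar-fact}(2)), and since the non-pluripolar product coincides with the movable intersection product of~\cite{BDPP13} it is continuous along these monotone perturbations, so letting $\varepsilon\downarrow0$ at the end it suffices to treat the case where $\beta,\alpha_1,\dots,\alpha_{n-2}$ are big. For big classes on the smooth K\"ahler $X$, Demailly's regularization of currents with minimal singularities~\cite{Dem12} together with the construction of the movable intersection product in~\cite{BDPP13} (cf.~\cite[\S2]{BEGZ10}) provides modifications $\mu_k\colon X_k\to X$ with $X_k$ smooth, and K\"ahler classes $\beta_k,\alpha_{1,k},\dots,\alpha_{n-2,k}$ on $X_k$, such that, writing $\Omega_k:=\alpha_{1,k}\cdots\alpha_{n-2,k}$,
\[
(\mu_k)_*\Omega_k\to\langle\alpha_1\cdots\alpha_{n-2}\rangle,\quad
(\mu_k)_*(\beta_k\cdot\Omega_k)\to\langle\beta\cdot\alpha_1\cdots\alpha_{n-2}\rangle,\quad
\beta_k^2\cdot\Omega_k\to\langle\beta^2\cdot\alpha_1\cdots\alpha_{n-2}\rangle .
\]
For $k\gg0$ we have $\beta_k^2\cdot\Omega_k>0$; since $\beta_k$ is nef and the $\alpha_{i,k}$ are nef and big, Lemma~\ref{lem-orbifold-Hodge-index}(1) on $X_k$ applied to the class $\mu_k^*\gamma$ gives
\[
(\beta_k^2\cdot\Omega_k)\big((\mu_k^*\gamma)^2\cdot\Omega_k\big)\le\big((\mu_k^*\gamma)\cdot\beta_k\cdot\Omega_k\big)^2 .
\]
By the projection formula $(\mu_k^*\gamma)^2\cdot\Omega_k=\gamma^2\cdot(\mu_k)_*\Omega_k$ and $(\mu_k^*\gamma)\cdot\beta_k\cdot\Omega_k=\gamma\cdot(\mu_k)_*(\beta_k\cdot\Omega_k)$, so letting $k\to\infty$ and then $\varepsilon\downarrow0$ yields the claimed inequality.

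The main obstacle is the simultaneous approximation in Step~2: one must select a \emph{single} sequence of modifications and K\"ahler classes for which all three intersection numbers converge to the respective non-pluripolar products at once. This is exactly where the $\varepsilon$-orthogonality estimates underlying the movable intersection product~\cite{BDPP13} are needed, since they guarantee that the divisorial discrepancy between $\mu_k^*\beta$ (resp.\ $\mu_k^*\alpha_i$) and $\beta_k$ (resp.\ $\alpha_{i,k}$) contributes nothing in the limit. Once this is granted, the remaining steps are formal.
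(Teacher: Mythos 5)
Your proposal is correct and follows essentially the same route as the paper: reduce to a smooth compact K\"ahler model by a resolution, approximate the classes by K\"ahler classes on a sequence of modifications (the paper invokes \cite[Corollary~5.8]{LX16} for exactly the simultaneous approximation you flag as the key point), apply the classical Hodge index inequality of Lemma~\ref{lem-orbifold-Hodge-index}(1) upstairs, and pass to the limit via the projection formula. Your extra $\varepsilon$-perturbation to big classes is consistent with the definition of the non-pluripolar product of merely psef classes as a decreasing limit, so it introduces no gap; it only makes explicit a reduction the paper leaves implicit.
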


\begin{proof}
For simplicity, assume that $\alpha_1 = \cdots = \alpha_{n-2}$ and set $\alpha := \alpha_1$.
By taking a resolution, we may reduce to the case where $X$ is a compact K\"ahler manifold.
As in \cite[Corollary 5.8]{LX16}, we can take a sequence of modifications $\mu_m \colon X_m \to X$ such that 
$$\mu_m^* \alpha = \alpha_{(m)} + [E_m]
\quad \text{and} \quad \mu_m^* \beta = \beta_{(m)} + [F_m],$$ 
where $\alpha_{(m)}, \beta_{(m)}$ are smooth K\"ahler forms on $X_m$, and $E_m, F_m$ are effective divisors satisfying
\[
\langle \beta^k \cdot \alpha^{n-2} \rangle 
= 
\lim_{m \to \infty} (\mu_m)_* \left( \beta_{(m)}^k \cdot \alpha_{(m)}^{n-2} \right)
\quad \text{for } k = 0,1,2.
\]
Since both $\alpha_{(m)}$ and $\beta_{(m)}$ are K\"ahler, we can apply the Hodge index theorem to conclude that
\[
(\mu_m^*\gamma^2 \cdot \alpha_{(m)}^{n-2}) \cdot (\beta_{(m)}^2 \cdot \alpha_{(m)}^{n-2}) 
\le 
(\mu_m^*\gamma \cdot \beta_{(m)} \cdot \alpha_{(m)}^{n-2})^2.
\]
Taking the limit as $m \to \infty$, we obtain the desired inequality.
\end{proof}

\subsection{Slope stability with respect to non-pluripolar products}

In this subsection, we define slope stability with respect to the non-pluripolar product $\langle \alpha^{n-1} \rangle$, where $\alpha \in H^{1,1}_{BC}(X)$ is a big class on a compact normal analytic variety $X$, and examine the notion of stability introduced in \cite[Definition 4.6]{Jin25}.
Unlike the product $\alpha^{n-1}$, the non-pluripolar product $\langle \alpha^{n-1} \rangle$ is an element of $H_2(X,\R)$, whereas $c_1(\mathcal{E})$ lies in $H_{2n-2}(X, \mathbb{R})$. Hence, their intersection number cannot be defined directly on $X$. To resolve this, we consider a resolution $\pi: \widetilde{X} \to X$.

\subsubsection{Slope stability and its properties}
\label{subsec-slope}

As introduced in \cite[Definition 4.1]{GKP16}, for any coherent sheaf $\mathcal{E}$ on a compact normal analytic variety $X$, a resolution $\pi: \widetilde{X} \to X$ is called a \textit{strong resolution} of $\mathcal{E}$ if the pullback $\pi^* \mathcal{E} / \tor$ is locally free.

\begin{deflem}
\label{defn-slope}
Let $X$ be a compact normal analytic variety in Fujiki's class.
Let $\mathcal{E}$ be a torsion-free sheaf on $X$, and let $\alpha_1, \ldots, \alpha_{n-1} \in H^{1,1}_{BC}(X)$ be psef classes.
Then, for any strong resolution $\pi: \widetilde{X} \to X$, the intersection number
\[
c_1(\pi^* \mathcal{E} / \tor) \cdot \langle \pi^* \alpha_1 \cdots \pi^* \alpha_{n-1} \rangle
\]
is independent of the choice of the strong resolution.
Hence, we define
\[
c_1(\mathcal{E}) \cdot \langle \alpha_1 \cdots \alpha_{n-1} \rangle
:= 
c_1(\pi^* \mathcal{E} / \tor) \cdot \langle \pi^* \alpha_1 \cdots \pi^* \alpha_{n-1} \rangle,
\]
where $\pi: \widetilde{X} \to X$ is any $($some$)$ strong resolution.
\end{deflem}

If all $\alpha_1, \ldots, \alpha_{n-1}$ are nef, then the intersection number $c_1(\mathcal{E}) \cdot \langle \alpha_1 \cdots \alpha_{n-1} \rangle$ agrees with the usual intersection number $c_1(\mathcal{E}) \cdot \alpha_1 \cdots \alpha_{n-1}$ as defined in \cite[Definition~4.3]{GKP16} and \cite[Section~3]{Wu21}.

\begin{proof}
Fix a strong resolution $\pi: \widetilde{X} \to X$. Then it suffices to show that for any modification $\rho: W \to \widetilde{X}$, we have
\[
c_1(\pi^* \mathcal{E} / \tor) \cdot \langle \pi^* \alpha_1 \cdots \pi^* \alpha_{n-1} \rangle
=
c_1(\rho^* \pi^* \mathcal{E} / \tor) \cdot \langle \rho^* \pi^* \alpha_1 \cdots \rho^* \pi^* \alpha_{n-1} \rangle.
\]
This follows from the projection formula for non-pluripolar products \cite[Remark 1.7]{BEGZ10}:
\[
\rho_* \langle \rho^* \pi^* \alpha_1 \cdots \rho^* \pi^* \alpha_{n-1} \rangle = \langle \pi^* \alpha_1 \cdots \pi^* \alpha_{n-1} \rangle
\]
and from the isomorphism
\((\rho^* \pi^* \mathcal{E}) / \tor \cong \rho^*(\pi^* \mathcal{E} / \tor)\)
(see \cite[Definition 5 below]{Wu22}).
\end{proof}

\begin{rem}
\label{rem-mixed-slope}
For a $\mathbb{Q}$-line bundle $\mathcal{F}$ on a compact normal analytic variety $X$, the pullback $\pi^* c_1(\mathcal{F})$ does not necessarily coincide with $c_1(\pi^{[*]} \mathcal{F})$, where $\pi: \widetilde{X} \to X$ is a resolution and we define
\(
\pi^* c_1(\mathcal{F}) := \frac{1}{m} \pi^* c_1(\mathcal{F}^{[m]}),
\)
with $\mathcal{F}^{[m]}$ being locally free for some integer $m \in \mathbb{N}$.
As an example, let $X$ be the cone over a smooth quadric curve (i.e., $X = \mathbb{P}(1,1,2)$), and let $\pi: \widetilde{X} \to X$ be the blow-up at the vertex. In this case, $\widetilde{X}$ is the Hirzebruch surface $\mathbb{F}_2$, and $\pi$ is the contraction of a $(-2)$-curve $C$.
Let $\mathcal{A}$ be the rank 1 reflexive  sheaf on $X$ corresponding to the ruling of $X$. Then $\mathcal{A}$ is not locally free, but its reflexive square $\mathcal{A}^{[2]}$ is. In this setting, we have
\[
\pi^* c_1(\mathcal{A}) = c_1(\pi^{[*]} \mathcal{A}) + \frac{1}{2} c_1(C),
\]
so in particular, $\pi^* c_1(\mathcal{A}) \ne c_1(\pi^{[*]} \mathcal{A})$.

Let $\pi : \widetilde{X} \to X$ be the resolution mentioned above.
Since the difference $\pi^* c_1(\mathcal{F}) - c_1(\pi^{[*]} \mathcal{F})$ is $\pi$-exceptional, we would like to expect that
\[
(\pi^* c_1(\mathcal{F}) - c_1(\pi^{[*]} \mathcal{F})) \cdot \langle \pi^* \alpha_1 \cdots \pi^* \alpha_{n-1} \rangle = 0.
\]
However, this is a rather delicate issue, because it is not known whether
\[
\langle \pi^* \alpha_1 \cdots \pi^* \alpha_{n-1} \rangle \cdot \{E\} = 0
\]
holds in general for any $\pi$-exceptional effective divisor $E$.
(Here, $\{E\}$ represents the class in $H^{1,1}(\widetilde{X}, \mathbb{R})$ induced by the positive current associated with $E$.) In fact, \cite[Example 3.6]{LX16} shows that
\(
\langle \pi^* \alpha_1 \cdots \pi^* \alpha_{n-1} \rangle \ne \pi^* \langle \alpha_1 \cdots \alpha_{n-1} \rangle
\)
in general. Therefore, to proceed with arguments involving these classes, we need to impose additional assumptions on $\alpha_1, \ldots, \alpha_{n-1}$. (See Definition \ref{defn-vanishing-exceptional}.)
\end{rem}

\begin{defn}[Slope and Stability]
\label{defn-slope-nonpluri}
Let $X$ be a compact normal analytic variety in Fujiki's class and let $\alpha_1, \ldots, \alpha_{n-1} \in H^{1,1}_{BC}(X)$ be psef classes.
For any rank $r$ torsion-free sheaf $\mathcal{E}$, we define the \emph{slope} by
\[
\mu_{\langle \alpha_1 \cdots \alpha_{n-1} \rangle}(\mathcal{E})
:=
\frac{c_1(\mathcal{E}) \cdot \langle \alpha_1 \cdots \alpha_{n-1} \rangle}{r}.
\]
We say that $\mathcal{E}$ is \emph{$\langle \alpha_1 \cdots \alpha_{n-1} \rangle$-stable} (resp.~ \emph{$\langle \alpha_1 \cdots \alpha_{n-1} \rangle$-semistable}) if, for every non-zero torsion-free subsheaf $0 \subsetneq \mathcal{F} \subset \mathcal{E}$ with $\rk \mathcal{F} \neq \rk \mathcal{E}$, the following inequality holds:
\[
\mu_{\langle \alpha_1 \cdots \alpha_{n-1} \rangle}(\mathcal{F}) < \mu_{\langle \alpha_1 \cdots \alpha_{n-1} \rangle}(\mathcal{E})
\quad
\text{(resp.~ } \mu_{\langle \alpha_1 \cdots \alpha_{n-1} \rangle}(\mathcal{F}) \le \mu_{\langle \alpha_1 \cdots \alpha_{n-1} \rangle}(\mathcal{E}) \text{)}.
\]

\end{defn}

We now investigate the relation between Definition \ref{defn-slope-nonpluri} and the notion of stability introduced in \cite[Definition 4.6]{Jin25}. To this end, we need the following assumption:

\begin{defn}[{cf.~ \cite[Assumption 3.1]{Jin25}}]
\label{defn-vanishing-exceptional}
Let $X$ be a compact normal analytic variety in Fujiki's class, and let $\alpha$ be a big class on $X$.
We say that $\alpha$ satisfies the \emph{vanishing property}  if, for any bimeromorphic morphism $\pi: \widetilde{X} \to X$ from a compact complex manifold $\widetilde{X}$ and any $\pi$-exceptional divisor $E$ on $\widetilde{X}$, the following holds:
\[
\langle (\pi^*\alpha)^{n-1} \rangle \cdot \{E\} = 0.
\]
\end{defn}

If a big class $\alpha \in H^{1,1}_{BC}(X)$ satisfies the vanishing property, then its pullback $f^*\alpha$ also satisfies the vanishing property for any bimeromorphic morphism $f \colon Y \to X$ from a compact normal analytic variety.
It is conjectured that every big class satisfies the vanishing property. Currently, however, this has only been verified in the following situations:

\begin{lem}
\label{lem-VP}
Let $X$ be a compact normal analytic variety in Fujiki's class. Then any big class $\alpha$ on $X$ satisfies the vanishing property in each of the following cases:
\begin{enumerate}[label=$(\arabic*)$]
\item $X$ is Moishezon;
\item $\dim X = 2$;
\item $\alpha$ is nef.
\end{enumerate}
\end{lem}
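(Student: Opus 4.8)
The plan is to reduce every case to a statement on a smooth model and then argue case by case. Fix a bimeromorphic morphism $\pi\colon Y\to X$ and a $\pi$-exceptional divisor $E$ on $Y$; after replacing $Y$ by a resolution we may assume $E$ is Cartier, and we then choose a further resolution $\mu\colon W\to Y$ --- with $W$ smooth projective in case $(1)$ (possible since $Y$ is Moishezon) and $W$ a compact K\"ahler manifold in cases $(2)$, $(3)$. Put $\psi:=\pi\circ\mu\colon W\to X$, so that $\psi^{*}\alpha$ is big on $W$. By the definition of the non-pluripolar product on singular varieties (Lemma \ref{lem-pluripolar-sing-property}), the definition of $c_1(\cdot)\cdot\langle\cdots\rangle$ via strong resolutions (Definition-Lemma \ref{defn-slope}), and the projection formula for non-pluripolar products \cite[Remark 1.7]{BEGZ10}, the number $\langle(\pi^{*}\alpha)^{n-1}\rangle\cdot\{E\}$ equals a non-negative combination of the numbers $\langle(\psi^{*}\alpha)^{n-1}\rangle\cdot\{F\}$ as $F$ ranges over the $\psi$-exceptional prime divisors in the total transform of $E$ on $W$. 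Since $\langle(\psi^{*}\alpha)^{n-1}\rangle$ is a movable class (Remark \ref{rem-nonopluripolar-fact}$(5)$), each such number is $\ge 0$, so it suffices to prove $\langle(\psi^{*}\alpha)^{n-1}\rangle\cdot\{F\}\le 0$ for every $\psi$-exceptional prime divisor $F$ on $W$.

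Cases $(3)$ and $(2)$ should be routine. If $\alpha$ is nef, then $\psi^{*}\alpha$ is nef, so by Remark \ref{rem-nonpluripolar-cohomology} the functional $\langle(\psi^{*}\alpha)^{n-1}\rangle$ coincides with $(\psi^{*}\alpha)^{n-1}=\psi^{*}(\alpha^{n-1})$, and the projection formula gives $\langle(\psi^{*}\alpha)^{n-1}\rangle\cdot\{F\}=\alpha^{n-1}\cdot\psi_{*}\{F\}=0$, because $\psi(F)$ has codimension $\ge 2$ and hence the pushforward current $\psi_{*}[F]$ vanishes. If $\dim X=2$, then $n=2$ and $\langle(\psi^{*}\alpha)^{n-1}\rangle=\langle\psi^{*}\alpha\rangle$, which on the smooth surface $W$ is the positive part $P(\psi^{*}\alpha)$ of the Zariski decomposition $\psi^{*}\alpha=P(\psi^{*}\alpha)+N(\psi^{*}\alpha)$ (Lemma \ref{lem-nonpluri-DZD}). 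I would then use the classical surface orthogonality: $P(\psi^{*}\alpha)$ is nef and $P(\psi^{*}\alpha)\cdot C=0$ for every $C$ in $\supp N(\psi^{*}\alpha)$. Since $F$ is $\psi$-exceptional, $\psi^{*}\alpha\cdot F=\alpha\cdot\psi_{*}F=0$, so $P(\psi^{*}\alpha)\cdot F=-N(\psi^{*}\alpha)\cdot F$; if $F\subset\supp N(\psi^{*}\alpha)$ this is $0$ by orthogonality, and otherwise $N(\psi^{*}\alpha)\cdot F\ge 0$ forces $P(\psi^{*}\alpha)\cdot F\le 0$, which together with nefness gives $0$.

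Case $(1)$ is the heart. Here $W$ is smooth projective and $\psi\colon W\to X$ is birational with $X$ normal. The key point is that $v(t):=\vol_{W}(\psi^{*}\alpha+tF)$ is constant for $t\ge 0$: because $X$ is normal, $\psi_{*}\mathcal{O}_{W}(D)=\mathcal{O}_{X}$ for every effective $\psi$-exceptional divisor $D$ (a section with poles only along the $\psi$-exceptional locus extends across $X$ by normality), which for a big $\Q$-divisor class $\alpha$ gives $h^{0}(W,m\psi^{*}\alpha+mtF)=h^{0}(X,m\alpha)$ for $m$ sufficiently divisible, hence $v(t)=\vol_{X}(\alpha)$; the general big class $\alpha$ is reduced to this via $\vol(\beta)=\vol(P(\beta))$ (Proposition \ref{prop-nonpluripolar-modifiednef} with $p=n$) together with the invariance of the divisorial Zariski positive part under adding an effective exceptional divisor. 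On the other hand $\psi^{*}\alpha$ lies in the open big cone of the projective manifold $W$, so $t\mapsto v(t)$ is differentiable at $t=0$ with $v'(0)=n\,\langle(\psi^{*}\alpha)^{n-1}\rangle\cdot\{F\}$ by the differentiability of the volume function (Boucksom--Favre--Jonsson; see also \cite[Theorem D]{WN19}). Since $v$ is constant, $v'(0)=0$, and we conclude $\langle(\psi^{*}\alpha)^{n-1}\rangle\cdot\{F\}=0$.

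I expect the main obstacle to be case $(1)$: one has to combine the invariance of the volume under adding an effective $\psi$-exceptional divisor --- straightforward for $\Q$-divisor classes via the pushforward of sections, but requiring a Zariski-decomposition argument for transcendental big classes --- with the $C^{1}$-regularity of the volume function on the big cone and the identification of its differential with the movable intersection product. A secondary technical point, already present in the common reduction, is making sense of $\langle(\pi^{*}\alpha)^{n-1}\rangle\cdot\{E\}$ for a possibly non-$\Q$-Cartier exceptional divisor on the (possibly singular) $Y$ and verifying the projection formula along the tower $W\to Y\to X$; this is handled by working throughout on the resolution $W$, where the relevant divisors are Cartier.
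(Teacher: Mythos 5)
Your proof is correct. Cases (1) and (3) follow essentially the same route as the paper's: for (3), the non-pluripolar product of a nef class is the ordinary product and the projection formula kills contracted divisors; for (1), you show, exactly as the paper does, that $t \mapsto \vol(\psi^{*}\alpha + t\{F\})$ is constant and then apply Witt Nystr\"om's differentiability of the volume, whose derivative at $t=0$ is $n\,\langle(\psi^{*}\alpha)^{n-1}\rangle\cdot\{F\}$. (The paper obtains the constancy by quoting \cite[Lemma A.5]{DHY23} for the invariance of the positive part under adding an effective exceptional divisor, which is precisely the fact you invoke; your extra detour through $h^{0}$ for $\mathbb{Q}$-divisor classes is harmless but redundant, since the Zariski-decomposition argument you give afterwards already covers all big classes.)

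The one genuine divergence is case (2). The paper runs the same volume-differentiability argument as in case (1), substituting Deng's theorem \cite[Theorem 1.5]{Deng17} for the $C^{1}$-regularity of the volume on compact K\"ahler surfaces. You instead use the classical Zariski decomposition on the smooth K\"ahler surface $W$: $\langle\psi^{*}\alpha\rangle = P(\psi^{*}\alpha)$ is nef on a surface, $P\cdot C = 0$ for every component $C$ of $N$, and the sign argument $\psi^{*}\alpha\cdot F = 0$ forces $P\cdot F = -N\cdot F$ to vanish. This is more elementary --- it needs only Boucksom's surface Zariski decomposition from \cite{Bou04} rather than differentiability of the volume --- at the cost of being specific to dimension two, whereas the paper's route is uniform with case (1). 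A further minor difference: your explicit reduction to a smooth model $W$, writing $\langle(\pi^{*}\alpha)^{n-1}\rangle\cdot\{E\}$ as a non-negative combination of $\langle(\psi^{*}\alpha)^{n-1}\rangle\cdot\{F\}$ and using movability to get the reverse inequality, is a more careful version of the paper's terse ``we may assume that $Y$ is smooth and projective,'' and is worth keeping.
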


These are mentioned in \cite[Section 3]{Jin25}, but we include a proof here for completeness.

\begin{proof}
(1) Let $\pi: \widetilde{X} \to X$ be a bimeromorphic morphism from a compact complex manifold $\widetilde{X}$. We may assume that $\widetilde{X}$ is projective. Let $E$ be a $\pi$-exceptional divisor, assumed irreducible and effective. By \cite[Lemma~A.5]{DHY23}, we have
$\langle \pi^* \alpha + \{E\} \rangle = \langle \pi^* \alpha \rangle.$
Hence, by Proposition~\ref{prop-nonpluripolar-modifiednef}, we obtain
\[
\langle (\pi^* \alpha + t\{E\})^n \rangle = \langle (\pi^* \alpha)^n \rangle
\]
for any $t > 0$.
Since $\widetilde{X}$ is projective, the volume function is differentiable by \cite[Theorem C]{WN19}, so
$$
\left.\frac{d}{dt}\right|_{t=0} \langle (\pi^* \alpha + t\{E\})^n \rangle 
= n \langle (\pi^* \alpha)^{n-1} \rangle \cdot \{E\}.
$$
The left-hand side is zero because the volume is constant in $t$. Therefore, we conclude that $\langle (\pi^* \alpha)^{n-1} \rangle \cdot \{E\} = 0$.

(2) When $\dim X = 2$, the differentiability of the volume also holds by \cite[Theorem 1.5]{Deng17}, so the same argument as in case (1) applies.

(3) If $\alpha$ is nef, then $\langle (\pi^* \alpha)^{n-1} \rangle = (\pi^* \alpha)^{n-1}$, so the claim is immediate.
\end{proof}

\begin{rem}
\label{assumption-remark}
A crucial step in proving that every big class satisfies the vanishing property is to establish the differentiability of the volume function on the big cone, particularly in the setting of compact K\"ahler manifolds. In the projective case, this was proved by Witt-Nyström in \cite[Theorem C]{WN19}. However, in the K\"ahler case, the differentiability is still open (see also \cite{Vu23}).
\end{rem}

\begin{ex}
Let $X = \mathbb{C}^2$ and consider the psh function $\varphi = \log(|z|^2 + |w|^2)$.  
Let $\pi: \widetilde{X} = \mathrm{Bl}_0(\mathbb{C}^2) \to \mathbb{C}^2$ be the blow-up at the origin.  
Then we have
$$
dd^c(\pi^*\varphi) = [C] + \omega_{-C},
$$
where $C$ is the exceptional divisor and $\omega_{-C} \in c_1(-C)$ is a smooth semi-positive form such that $\omega_{-C}|_C = \omega_{FS}$, the Fubini–Study metric on $C \simeq \mathbb{C}\mathbb{P}^1$.  
Hence,
$$
\langle dd^c(\pi^*\varphi) \rangle = \omega_{-C}, \quad \text{and} \quad \langle dd^c(\pi^*\varphi) \rangle \cdot [C] = (-C) \cdot C = 1 > 0.
$$

This example shows that even when a closed positive $(1,1)$-current $T$ has analytic singularities, it may still happen that
$
 \langle (\pi^*T)^{n-1} \rangle  \cdot [D] \ne 0,
$
where $\pi: \widetilde{X} \to X$ is a bimeromorphic morphism.
\end{ex}

\begin{lem}
\label{lem-slope-invariance-VP}
Let $X$ be a compact normal analytic variety in Fujiki's class, and let $\alpha \in H^{1,1}_{\mathrm{BC}}(X)$ be a big class satisfying the vanishing property.  
Then for any torsion-free sheaf $\mathcal{E}$ on $X$ and any bimeromorphic morphism $f: Y \to X$, we have
$$
c_1(f^{[*]} \mathcal{E}) \cdot \langle (f^* \alpha)^{n-1} \rangle
=
c_1(\mathcal{E}) \cdot \langle \alpha^{n-1} \rangle.
$$
\end{lem}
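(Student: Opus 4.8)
The plan is to pass to a common smooth model dominating both $X$ and $Y$, and to compare the two defining expressions provided by Definition-Lemma~\ref{defn-slope}. First I would choose a bimeromorphic morphism $g\colon Z\to Y$ from a smooth compact variety $Z$ such that $g$ is a strong resolution of $f^{[*]}\mathcal{E}$ and, moreover, $\pi:=f\circ g\colon Z\to X$ is a strong resolution of $\mathcal{E}$; such a $Z$ exists by first taking a strong resolution of $f^{[*]}\mathcal{E}$ on $Y$ and then flattening the pullback of $\mathcal{E}$ by a further modification. Note that $Z$ and $Y$ lie in Fujiki's class (being bimeromorphic to $X$), and that $f^*\alpha$ is again big, hence pseudo-effective. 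Applying Definition-Lemma~\ref{defn-slope} on $X$ with the strong resolution $\pi$, and on $Y$ with the strong resolution $g$ of $f^{[*]}\mathcal{E}$, and using that $\pi^*\alpha=g^*(f^*\alpha)$, the asserted equality reduces to showing $D\cdot\langle(\pi^*\alpha)^{n-1}\rangle=0$, where $D:=c_1(\pi^*\mathcal{E}/\tor)-c_1\big(g^*(f^{[*]}\mathcal{E})/\tor\big)$ is a divisor class on $Z$.

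The key step is to prove that $D$ is represented by a $\pi$-exceptional divisor. Since $(f^*\mathcal{E}/\tor)^{\vee\vee}=(f^*\mathcal{E})^{\vee\vee}=f^{[*]}\mathcal{E}$ and $Y$ is normal, the canonical injection $f^*\mathcal{E}/\tor\hookrightarrow f^{[*]}\mathcal{E}$ has cokernel supported in codimension $\ge 2$ in $Y$. Pulling back by $g$ and using that $g$ restricts to an isomorphism over the dense open subset $Y^{\circ}\subseteq Y$ on which it is an isomorphism, one finds that the locally free sheaves $\pi^*\mathcal{E}/\tor$ and $g^*(f^{[*]}\mathcal{E})/\tor$ are isomorphic over $Z\setminus B$ for a closed set $B$ whose codimension-one components are $g$-exceptional prime divisors. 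Hence the line bundle $\det(\pi^*\mathcal{E}/\tor)\otimes\det\big(g^*(f^{[*]}\mathcal{E})/\tor\big)^{\vee}$ is trivial over $Z\setminus B$, so $D$ is a $\mathbb{Z}$-linear combination of $g$-exceptional prime divisors $E_i$ on $Z$. Finally, each $E_i$ is contracted by $g$ to a subset of codimension $\ge 2$ in $Y$, hence by $\pi=f\circ g$ to a subset of dimension $\le n-2$ in $X$, so $E_i$ is $\pi$-exceptional.

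Because $\alpha$ satisfies the vanishing property of Definition~\ref{defn-vanishing-exceptional}, applying it to the bimeromorphic morphism $\pi\colon Z\to X$ and to each $\pi$-exceptional divisor $E_i$ gives $\langle(\pi^*\alpha)^{n-1}\rangle\cdot\{E_i\}=0$; by linearity $D\cdot\langle(\pi^*\alpha)^{n-1}\rangle=0$, which is exactly what is needed. (One could instead invoke the vanishing property of $f^*\alpha$ on $Y$ for the $g$-exceptional divisors $E_i$, using that the pullback of a big class with the vanishing property again satisfies it.) I expect the main obstacle to be the middle paragraph: one must control precisely the locus where $f^*\mathcal{E}/\tor$ and $f^{[*]}\mathcal{E}$ differ and how it behaves under $g$, so as to be certain that $D$ involves only exceptional divisors — otherwise the vanishing property could not be applied. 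The remaining steps are formal consequences of Definition-Lemma~\ref{defn-slope} together with the projection formula for non-pluripolar products.
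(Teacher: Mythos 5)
Your proposal is correct and follows essentially the same route as the paper: both pass to a common smooth model on which $\mathcal{E}$ and $f^{[*]}\mathcal{E}$ admit locally free modifications, observe that these agree outside a set whose divisorial components are exceptional over $X$, and then invoke Definition-Lemma~\ref{defn-slope} together with the vanishing property to kill the exceptional contribution. The only cosmetic difference is that the paper uses two separate strong resolutions $\pi_X\colon\widetilde X\to X$ and $\pi_Y\colon\widetilde Y\to Y$ with the induced map $\widetilde f\colon\widetilde Y\to\widetilde X$, whereas you build a single $Z$ dominating $Y$ that serves as a strong resolution for both sheaves; the substance of the argument is identical.
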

\begin{proof}
Let $\pi_X: \widetilde{X} \to X$ (resp.~ $\pi_Y: \widetilde{Y} \to Y$) be strong resolutions such that  
$\pi_X^* \mathcal{E}/\tor$ (resp.~ $(\pi_Y^* f^{[*]} \mathcal{E})/\tor$) is locally free.
Let $\widetilde{f}: \widetilde{Y} \to \widetilde{X}$ be the induced morphism.  
Then   $(\pi_Y^* f^{[*]} \mathcal{E})/\tor$ and $\widetilde{f}^* (\pi_X^* \mathcal{E}/\tor)$ agree outside a $\pi_X \circ \widetilde{f}$-exceptional set.  
Hence, by the vanishing property,  we obtain:
$$
 c_1\big( (\pi_Y^* f^{[*]} \mathcal{E})/\tor \big) \cdot \langle ( \pi_{Y}^* f^* \alpha)^{n-1} \rangle 
 =
 c_1\big(\widetilde{f}^* (\pi_X^* \mathcal{E}/\tor) \big) \cdot \langle (\widetilde{f}^* \pi_X^* \alpha)^{n-1} \rangle 
$$
Lemma \ref{defn-slope} implies that the left-hand side equals  \( c_1(f^{[*]} \mathcal{E}) \cdot \langle (f^* \alpha)^{n-1}\rangle \) and the right-hand side equals \( c_1(\mathcal{E}) \cdot \langle \alpha^{n-1} \rangle \), which completes the proof.
\end{proof}
Let us add one remark. Under the setting of the above Lemma, when $c_1(\mathcal{E}) \in H^2(X, \mathbb{R})$ (for instance, when $X$ has an orbifold structure or when $\det(\mathcal{E})$ is a $\mathbb{Q}$-line bundle), one can define the intersection number  with $c_1(\mathcal{E}) \in H^2(X, \mathbb{R})$ and $\langle \alpha^{n-1} \rangle \in H_2(X, \mathbb{R})$. By the above Lemma, we know that this intersection number coincides with the one defined in Definition-Lemma \ref{defn-slope} (see also Remark~\ref{rem-mixed-slope}).

The following proposition shows that the stability condition in Definition~\ref{defn-slope-nonpluri} is equivalent to the one introduced by the second author in \cite[Definition 4.6]{Jin25}.

\begin{prop}
\label{prop-comparison-stability}
Let $X$ be a compact normal analytic variety in Fujiki's class and let $\alpha \in H^{1,1}_{BC}(X)$ be a big class satisfying the vanishing property.  
Then, for any torsion-free sheaf $\mathcal{E}$ on $X$, the following conditions are equivalent:
\begin{enumerate}[label=$(\arabic*)$]
    \item $\mathcal{E}$ is $\langle \alpha^{n-1} \rangle$-stable $($resp.~ semistable$)$.
    \item For some resolution $\pi : \widetilde{X} \to X$, the reflexive pullback $\pi^{[*]}\mathcal{E}$ is $\langle (\pi^{*}\alpha)^{n-1} \rangle$-stable $($resp.~ semistable$)$.
    \item For every resolution $\pi : \widetilde{X} \to X$, the reflexive pullback $\pi^{[*]}\mathcal{E}$ is $\langle (\pi^{*}\alpha)^{n-1} \rangle$-stable $($resp.~ semistable$)$.
\end{enumerate}
\end{prop}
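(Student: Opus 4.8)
The plan is to reduce all three equivalences to the slope‑invariance statement of Lemma~\ref{lem-slope-invariance-VP}. First I would note that $\pi^{[*]}\mathcal{E}$ is unchanged if $\mathcal{E}$ is replaced by $\mathcal{E}^{\vee\vee}$, and that the inclusion $\mathcal{E}\hookrightarrow\mathcal{E}^{\vee\vee}$ has cokernel supported in codimension $\ge 2$; intersecting any subsheaf of $\mathcal{E}^{\vee\vee}$ with $\mathcal{E}$ then shows that $\langle\alpha^{n-1}\rangle$-(semi)stability of $\mathcal{E}$ and of $\mathcal{E}^{\vee\vee}$ coincide, so one may assume $\mathcal{E}$ is reflexive. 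The two facts I would isolate are: (i) for every torsion-free sheaf $\mathcal{G}$ on $X$ and every resolution $\pi\colon\widetilde{X}\to X$ one has $\mu_{\langle(\pi^{*}\alpha)^{n-1}\rangle}(\pi^{[*]}\mathcal{G})=\mu_{\langle\alpha^{n-1}\rangle}(\mathcal{G})$, which is Lemma~\ref{lem-slope-invariance-VP}; and (ii) if two reflexive sheaves on $\widetilde{X}$ agree over the locus where $\pi$ is an isomorphism, then their first Chern classes differ by a $\pi$-exceptional divisor class, hence by the vanishing property of $\alpha$ — and because $\langle(\pi^{*}\alpha)^{n-1}\rangle$ is movable (Remark~\ref{rem-nonopluripolar-fact}~$(5)$) — they have the same slope with respect to $\langle(\pi^{*}\alpha)^{n-1}\rangle$. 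Given these, $(3)\Rightarrow(2)$ is immediate.

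For $(2)\Rightarrow(1)$ I would argue by contraposition. If $0\ne\mathcal{F}\subsetneq\mathcal{E}$ is a destabilizing subsheaf (of rank $<\rk\mathcal{E}$), then the torsion-free pullback $\pi^{*}\mathcal{F}/\tor$ is a nonzero proper subsheaf of $\pi^{[*]}\mathcal{E}$ of the same rank as $\mathcal{F}$, with $c_1(\pi^{*}\mathcal{F}/\tor)=c_1(\pi^{[*]}\mathcal{F})$; by (i) its slope with respect to $\langle(\pi^{*}\alpha)^{n-1}\rangle$ equals $\mu_{\langle\alpha^{n-1}\rangle}(\mathcal{F})$, while $\mu_{\langle(\pi^{*}\alpha)^{n-1}\rangle}(\pi^{[*]}\mathcal{E})=\mu_{\langle\alpha^{n-1}\rangle}(\mathcal{E})$. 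Hence a destabilizer of $\mathcal{E}$ gives one of $\pi^{[*]}\mathcal{E}$, so $\langle(\pi^{*}\alpha)^{n-1}\rangle$-(semi)stability of $\pi^{[*]}\mathcal{E}$ forces $\langle\alpha^{n-1}\rangle$-(semi)stability of $\mathcal{E}$; the strict and non-strict cases run identically.

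For $(1)\Rightarrow(3)$ I would fix an arbitrary resolution $\pi\colon\widetilde{X}\to X$ and reduce to testing saturated subsheaves $0\ne\widetilde{\mathcal{F}}\subsetneq\pi^{[*]}\mathcal{E}$ of rank $<\rk\mathcal{E}$, which is legitimate since passing to the saturation only adds an effective divisor to $c_1$ and $\langle(\pi^{*}\alpha)^{n-1}\rangle$ pairs non-negatively with effective divisors. Let $W\subset\widetilde{X}$ be the maximal open set on which $\pi$ restricts to an isomorphism onto its image, and $Z:=X\setminus\pi(W)$, which has codimension $\ge 2$; identifying $W$ with $X\setminus Z$ and writing $j\colon X\setminus Z\hookrightarrow X$, set $\mathcal{F}:=j_{*}(\widetilde{\mathcal{F}}|_{W})$. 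Then $\mathcal{F}$ is a reflexive subsheaf of $j_{*}(\mathcal{E}|_{X\setminus Z})=\mathcal{E}$ of rank $\rk\widetilde{\mathcal{F}}$, and it is proper because a saturated proper subsheaf of the reflexive sheaf $\pi^{[*]}\mathcal{E}$ cannot agree with it on the dense open set $W$. Since $\pi^{[*]}\mathcal{F}$ and $\widetilde{\mathcal{F}}$ are reflexive and agree over $W$, fact (ii) gives $\mu_{\langle(\pi^{*}\alpha)^{n-1}\rangle}(\widetilde{\mathcal{F}})=\mu_{\langle(\pi^{*}\alpha)^{n-1}\rangle}(\pi^{[*]}\mathcal{F})=\mu_{\langle\alpha^{n-1}\rangle}(\mathcal{F})$, and $\langle\alpha^{n-1}\rangle$-(semi)stability of $\mathcal{E}$ bounds this (strictly, resp.\ non-strictly) by $\mu_{\langle\alpha^{n-1}\rangle}(\mathcal{E})=\mu_{\langle(\pi^{*}\alpha)^{n-1}\rangle}(\pi^{[*]}\mathcal{E})$. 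As $\pi$ was arbitrary, $(3)$ follows.

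I expect $(1)\Rightarrow(3)$ to be the main obstacle: a saturated subsheaf of $\pi^{[*]}\mathcal{E}$ need not be the reflexive pullback of a subsheaf of $\mathcal{E}$, and controlling the $\pi$-exceptional discrepancy between $\widetilde{\mathcal{F}}$ and $\pi^{[*]}\mathcal{F}$ in first Chern classes is exactly what makes the vanishing property of $\alpha$ (together with the movability of $\langle(\pi^{*}\alpha)^{n-1}\rangle$) indispensable, and is the reason that hypothesis appears in the statement. The ancillary points — that saturation suffices for testing (semi)stability, that a reflexive sheaf on a normal variety is recovered by pushforward from the complement of a codimension-two set, and that $c_1$ is insensitive to passing to the reflexive hull — are standard and I would merely cite them.
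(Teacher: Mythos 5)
Your proposal is correct and follows essentially the same route as the paper: both directions rest on matching subsheaves of $\mathcal{E}$ and of $\pi^{[*]}\mathcal{E}$ up to the $\pi$-exceptional locus and then invoking the vanishing property through Lemma~\ref{lem-slope-invariance-VP}. The only cosmetic differences are that the paper descends a subsheaf $\widetilde{\mathcal{S}}\subset\pi^{[*]}\mathcal{E}$ as $(\pi_*\widetilde{\mathcal{S}})^{\vee\vee}\cap\mathcal{E}$ (which sidesteps the coherence of your $j_*(\widetilde{\mathcal{F}}|_W)$ but agrees with it in codimension one) and quotes \cite[Lemma~4.7]{Jin25} for $(2)\Leftrightarrow(3)$ instead of running the argument for an arbitrary resolution as you do.
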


The equivalence between (2) and (3) was already established in \cite[Lemma 4.7]{Jin25}. (Although \cite[Lemma 4.7]{Jin25} addresses only the stable case, the same argument also applies to the semistable case.)
This proposition shows that, under the vanishing property, one may freely apply the results of \cite{Jin25}, especially in situations where $X$ is Moishezon.

\begin{proof}
We prove the equivalence in the stable case; the semistable case is analogous.

\noindent
$(1) \Rightarrow (2)$:  
Let $\widetilde{\mathcal{S}}$ be a torsion-free subsheaf of $\pi^{[*]}\mathcal{E}$.  
Note that $(\pi_*\pi^*\mathcal{E})^{\vee\vee} = \mathcal{E}^{\vee\vee}$, since they coincide in codimension one.  
Moreover, we have
\[
(\pi_*\pi^{[*]}\mathcal{E})^{\vee\vee} = (\pi_*\pi^*\mathcal{E})^{\vee\vee} = \mathcal{E}^{\vee\vee}.
\]
Since both $(\pi_*\widetilde{\mathcal{S}})^{\vee\vee}$ and $\mathcal{E}$ are subsheaves of $\mathcal{E}^{\vee\vee}$, we may define the subsheaf of $\mathcal{E}$ by
\[
\mathcal{S} := (\pi_*\widetilde{\mathcal{S}})^{\vee\vee} \cap \mathcal{E} \subset \mathcal{E}.
\]
Observe that $\pi^{[*]}\mathcal{S}$ agrees with $\widetilde{\mathcal{S}}$ outside a $\pi$-exceptional locus.
Now, since $\alpha$ satisfies the vanishing property, we obtain
\[
\mu_{\langle \alpha^{n-1} \rangle}(\mathcal{S}) 
\underalign{\text{(Lem. \ref{lem-slope-invariance-VP})}}{=}
\mu_{\langle (\pi^*\alpha)^{n-1} \rangle}(\pi^{[*]}\mathcal{S}) 
\underalign{\text{(Def. \ref{defn-vanishing-exceptional})}}{=}
\mu_{\langle (\pi^*\alpha)^{n-1} \rangle}(\widetilde{\mathcal{S}}).
\]
Since $\mathcal{E}$ is $\langle \alpha^{n-1} \rangle$-stable, it follows that $\pi^{[*]}\mathcal{E}$ is $\langle (\pi^*\alpha)^{n-1} \rangle$-stable.

\noindent
$(2) \Rightarrow (1)$:  
Let $\mathcal{S} \subset \mathcal{E}$ be a torsion-free subsheaf.  
Then we define $\widetilde{\mathcal{S}}$ as the image of $\pi^*\mathcal{S}$ under the natural morphism $\pi^*\mathcal{E} \to \pi^{[*]}\mathcal{E}$.  
Note that $\widetilde{\mathcal{S}}$ agrees with $\pi^{[*]}\mathcal{S}$ away from the $\pi$-exceptional locus.  
By the same argument as above, and using the vanishing property of $\alpha$, we obtain
\(
\mu_{\langle \alpha^{n-1} \rangle}(\mathcal{S}) = 
\mu_{\langle (\pi^*\alpha)^{n-1} \rangle}(\widetilde{\mathcal{S}}),
\)
which implies that $\mathcal{E}$ is $\langle \alpha^{n-1} \rangle$-stable.
\end{proof}
We conclude by stating a lemma that will be used in Section~\ref{sec-MY-inequality}. 

\begin{lem}
\label{lem-slope-same}
Let $X$ be a compact normal analytic variety in Fujiki's class, and let $\alpha \in H^{1,1}_{BC}(X)$ be a big class satisfying the vanishing property.  
Then for any torsion-free sheaves $\mathcal{F} \subset \mathcal{G}$ of the same rank, we have
\[
c_1(\mathcal{F}) \cdot \langle \alpha^{n-1} \rangle \le c_1(\mathcal{G}) \cdot \langle \alpha^{n-1} \rangle.
\]
\end{lem}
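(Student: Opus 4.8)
The plan is to pull everything back to a common smooth model and then use that the non-pluripolar product pairs non-negatively with an effective divisor class.

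First I would record that, since $\mathcal{F} \subset \mathcal{G}$ have the same rank, the quotient $\mathcal{Q} := \mathcal{G}/\mathcal{F}$ is a torsion sheaf. Next I would choose a resolution $\pi \colon \widetilde{X} \to X$ that is simultaneously a strong resolution of $\mathcal{F}$ and of $\mathcal{G}$; such a $\pi$ exists because a common modification of a strong resolution of $\mathcal{F}$ and one of $\mathcal{G}$ is again strong for each, by the identity $(\rho^{*}\mathcal{E})/\tor \cong \rho^{*}(\mathcal{E}/\tor)$ used in Definition-Lemma~\ref{defn-slope}. Computing slopes on this $\pi$ via Definition-Lemma~\ref{defn-slope} gives $c_1(\mathcal{F}) \cdot \langle \alpha^{n-1} \rangle = c_1(\pi^{*}\mathcal{F}/\tor) \cdot \langle (\pi^{*}\alpha)^{n-1} \rangle$ and likewise for $\mathcal{G}$, so it suffices to compare the two locally free sheaves $\pi^{*}\mathcal{F}/\tor$ and $\pi^{*}\mathcal{G}/\tor$ on $\widetilde{X}$.

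Then I would analyze the morphism $\varphi \colon \pi^{*}\mathcal{F}/\tor \to \pi^{*}\mathcal{G}/\tor$ induced by the inclusion. It is well defined because the target is torsion-free, and it is injective because its kernel is a torsion subsheaf of the torsion-free sheaf $\pi^{*}\mathcal{F}/\tor$ and hence vanishes. Writing $\mathcal{F}'$ for the image of $\varphi$, the induced surjection $\pi^{*}\mathcal{F}/\tor \twoheadrightarrow \mathcal{F}'$ is then an isomorphism, so $c_1(\pi^{*}\mathcal{F}/\tor) = c_1(\mathcal{F}')$, while the quotient $\mathcal{Q}' := (\pi^{*}\mathcal{G}/\tor)/\mathcal{F}'$ has rank zero and is therefore torsion. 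Since $\widetilde{X}$ is smooth, $c_1(\mathcal{Q}') = \{D\}$ for an effective divisor $D$ on $\widetilde{X}$, and additivity of first Chern classes in $0 \to \mathcal{F}' \to \pi^{*}\mathcal{G}/\tor \to \mathcal{Q}' \to 0$ yields $c_1(\pi^{*}\mathcal{G}/\tor) - c_1(\pi^{*}\mathcal{F}/\tor) = \{D\}$.

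Finally, putting these together,
\[
c_1(\mathcal{G}) \cdot \langle \alpha^{n-1} \rangle - c_1(\mathcal{F}) \cdot \langle \alpha^{n-1} \rangle = \{D\} \cdot \langle (\pi^{*}\alpha)^{n-1} \rangle \ge 0,
\]
where the inequality is Remark~\ref{rem-nonopluripolar-fact}~(5) applied to the psef classes $\pi^{*}\alpha, \dots, \pi^{*}\alpha$ and $\{D\}$ (an effective divisor class is psef), giving $\langle (\pi^{*}\alpha)^{n-1} \rangle \cdot \{D\} \ge \langle (\pi^{*}\alpha)^{n-1} \cdot \{D\} \rangle \ge 0$. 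The only mildly delicate point in the whole argument is the bookkeeping around the pulled-back inclusion---checking that $\varphi$ is genuinely injective and that its image has the same first Chern class as $\pi^{*}\mathcal{F}/\tor$---and this reduces to the observation that a torsion subsheaf of a torsion-free sheaf is zero. (Note that the vanishing property of $\alpha$ is not actually needed for this statement; it is retained as a hypothesis only for consistency with the surrounding results.)
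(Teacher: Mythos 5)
Your proof is correct, but it follows a different route from the paper's, and the difference is worth noting. The paper defines $\widetilde{\mathcal{F}}$ as the image of $\pi^{*}\mathcal{F}$ inside the \emph{reflexive} pullback $\pi^{[*]}\mathcal{G}$, compares $c_1(\widetilde{\mathcal{F}})$ with $c_1(\pi^{[*]}\mathcal{G})$ via the torsion quotient, and then must invoke the vanishing property twice --- once through Lemma~\ref{lem-slope-invariance-VP} to identify $c_1(\pi^{[*]}\mathcal{G})\cdot\langle(\pi^{*}\alpha)^{n-1}\rangle$ with $c_1(\mathcal{G})\cdot\langle\alpha^{n-1}\rangle$, and once through the argument of Proposition~\ref{prop-comparison-stability} to do the same for $\mathcal{F}$ --- precisely because $\pi^{[*]}\mathcal{G}$ and $\widetilde{\mathcal{F}}$ differ from $\pi^{*}(-)/\tor$ by exceptional divisor classes whose pairing with $\langle(\pi^{*}\alpha)^{n-1}\rangle$ is not controlled without that hypothesis. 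You instead work on a common strong resolution with $\pi^{*}\mathcal{F}/\tor$ and $\pi^{*}\mathcal{G}/\tor$, which is exactly what Definition-Lemma~\ref{defn-slope} uses to \emph{define} the slopes, so no exceptional bookkeeping arises and the only positivity input is Remark~\ref{rem-nonopluripolar-fact}~(5) applied to the effective class $\{D\}$. Your injectivity check for $\varphi$ (kernel is generically zero, hence a torsion subsheaf of a torsion-free sheaf) and the existence of a simultaneous strong resolution are both sound; the only implicit point is that the strong resolution should be taken Kähler, which is always possible since $X$ is in Fujiki's class and is already implicit in Definition-Lemma~\ref{defn-slope}. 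Your parenthetical observation that the vanishing property is not actually needed for this particular lemma appears to be right --- your argument buys a slightly stronger statement, while the paper's version fits more uniformly with the surrounding results (Lemma~\ref{lem-slope-invariance-VP}, Proposition~\ref{prop-comparison-stability}) that genuinely do require that hypothesis.
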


\begin{proof}
Let $\pi \colon \widetilde{X} \to X$ be a resolution such that $\widetilde{X}$ is a compact K\"ahler manifold.  
Define $\widetilde{\mathcal{F}}$ as the image of $\pi^*\mathcal{F}$ under the natural morphism $\pi^*\mathcal{F} \to \pi^{[*]}\mathcal{G}$.  
Since $\alpha$ satisfies the vanishing property, the argument in Proposition~\ref{prop-comparison-stability} shows that
\begin{equation}
\label{eq-slope-samerank}
\mu_{\langle \alpha^{n-1} \rangle}(\mathcal{F}) = 
\mu_{\langle (\pi^*\alpha)^{n-1} \rangle}(\widetilde{\mathcal{F}}).
\end{equation}
On the other hand, since $\widetilde{\mathcal{F}} \subset \pi^{[*]}\mathcal{G}$ and both sheaves have the same rank, the quotient $\pi^{[*]}\mathcal{G}/\widetilde{\mathcal{F}}$ is a torsion sheaf.  
By \cite[Proposition~5.6.14]{Kob14}, the first Chern class of this quotient is represented by an effective divisor.  
Hence, by Remark~\ref{rem-nonopluripolar-fact}~(5), we have
\[
\left( c_1(\pi^{[*]}\mathcal{G}) - c_1(\widetilde{\mathcal{F}}) \right) \cdot \langle (\pi^*\alpha)^{n-1} \rangle
= c_1(\pi^{[*]}\mathcal{G} / \widetilde{\mathcal{F}}) \cdot \langle (\pi^*\alpha)^{n-1} \rangle\underset{\text{(Rem. \ref{rem-nonopluripolar-fact} (5))}}{\ge}0.
\]
Moreover, by Lemma~\ref{lem-slope-invariance-VP}, we have
\(
c_1(\pi^{[*]}\mathcal{G}) \cdot \langle (\pi^*\alpha)^{n-1} \rangle = c_1(\mathcal{G}) \cdot \langle \alpha^{n-1} \rangle.
\)
Combining this with \eqref{eq-slope-samerank}, we obtain the desired inequality.
\end{proof}

\subsubsection{Stability of Higgs sheaves}
\label{subsub-slope-Higgs}

We review the notion of reflexive Higgs sheaves on normal analytic varieties and define the slope stability with respect to a big class.

\begin{defn}(\cite[Definition 4.8, 5.1]{GKPT19b}, \cite[Subsection 2.2.1, 2.2.2]{ZZZ25})
Let $X$ be a normal analytic variety. 
\begin{enumerate}[label=$(\arabic*)$]
\item A \emph{Higgs sheaf} is a pair $(\mathcal{E}, \theta)$ consisting of a coherent $\mathcal{O}_X$-module $\mathcal{E}$ together with an $\mathcal{O}_X$-linear morphism $\theta:\mathcal{E} \to \mathcal{E} \otimes \Omega_X^{[1]}$, called the \emph{Higgs field}, such that the following composition vanishes:
\[
\mathcal{E} \overset{\theta}{\longrightarrow} \mathcal{E} \otimes \Omega_X^{[1]} \overset{\theta \otimes \id}{\longrightarrow} \mathcal{E} \otimes \Omega_X^{[1]} \otimes \Omega_X^{[1]} \overset{\id \otimes [\wedge]}{\longrightarrow} \mathcal{E} \otimes \Omega_X^{[2]}
\]
\item Let $(\mathcal{E}, \theta)$ be a Higgs sheaf. A \emph{generically $\theta$-invariant subsheaf} is a subsheaf $\mathcal{G} \subset \mathcal{E}$ such that there exists a Zariski open subset $U \subset X$ where $\Omega_X^{[1]}$ is locally free and $\mathcal{G}|_U$ is $\theta|_{U}$-invariant. (More precisely, the restriction $\theta(\mathcal{G})|_U$ is contained in the image of the natural map $\mathcal{G} \otimes \Omega_X^{[1]} |_U\to \mathcal{E} \otimes \Omega_X^{[1]}|_U$ on $U$.)
\end{enumerate}

\end{defn}

We now define stability for Higgs sheaves analogously to Subsection~\ref{subsec-slope}.

\begin{defn}[{cf.~ \cite[Definition 4.14]{GKPT19b}}]
Let $X$ be a compact normal analytic variety in Fujiki's class and let $\alpha$ be a big class on $X$. A torsion-free Higgs sheaf $(\mathcal{E}, \theta)$ is said to be \emph{$\langle \alpha^{n-1} \rangle$-stable} (resp.~ \emph{$\langle \alpha^{n-1} \rangle$-semistable}) if, for any nontrivial generically $\theta$-invariant subsheaf $\mathcal{F} \subset \mathcal{E}$ with $\rk \mathcal{F} \neq \rk \mathcal{E}$, we have
\[
\mu_{\langle \alpha^{n-1} \rangle}(\mathcal{F}) < \mu_{\langle \alpha^{n-1} \rangle}(\mathcal{E})
\quad
\text{(resp.~ } \mu_{\langle \alpha^{n-1} \rangle}(\mathcal{F}) \le \mu_{\langle\alpha^{n-1}\rangle}(\mathcal{E}) \text{)}.
\]
\end{defn}

Next, we present the Higgs sheaf analogue of Proposition~\ref{prop-comparison-stability}.  
To do so, we first recall the notion of the pullback of a Higgs sheaf (cf.~\cite[Subsection 2.2.4]{ZZZ25}).  
Let $f \colon Y \to X$ be a morphism between normal analytic varieties. If both $X$ and $Y$ have at most rational singularities, then by \cite[Theorem 1.11]{KS21}, there exists a functorial pullback for reflexive differentials:
\[
f^{[*]} := d_{\mathrm{refl}}f \colon f^{*} \Omega_X^{[1]} \to \Omega_Y^{[1]}.
\]
Hence, for any Higgs sheaf $(\mathcal{E}, \theta)$ on $X$, the reflexive pullback 
$f^{[*]}(\mathcal{E}, \theta) := (f^{[*]} \mathcal{E}, f^{[*]} \theta)$
is well-defined as a reflexive Higgs sheaf on $Y$, as described in \cite[Subsection~5.3]{GKPT19b} and \cite[Subsection~2.2.4]{ZZZ25}.

As in Proposition~\ref{prop-comparison-stability}, we can now state the equivalence between the stability notion defined here and that in~\cite[Definition 4.6]{Jin25}.

\begin{prop}
\label{prop-comparison-stability-Higgs}
Let $X$ be a compact normal analytic variety in Fujiki's class with rational singularities and let $\alpha \in H^{1,1}_{\mathrm{BC}}(X)$ be a big class satisfying the vanishing property.
Then for any torsion-free Higgs sheaf $(\mathcal{E}, \theta)$ on $X$, the following conditions are equivalent:
\begin{enumerate}[label=$(\arabic*)$]
    \item $(\mathcal{E}, \theta)$ is $\langle \alpha^{n-1} \rangle$-stable $($resp.~ semistable$)$.
    \item For some resolution $\pi \colon \widetilde{X} \to X$, the reflexive pullback $\pi^{[*]}(\mathcal{E}, \theta)$ is $\langle (\pi^* \alpha)^{n-1} \rangle$-stable $($resp.~ semistable$)$.
    \item For any resolution $\pi \colon \widetilde{X} \to X$, the reflexive pullback $\pi^{[*]}(\mathcal{E}, \theta)$ is $\langle (\pi^* \alpha)^{n-1} \rangle$-stable $($resp.~ semistable$)$.
\end{enumerate}
\end{prop}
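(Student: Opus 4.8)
The plan is to run the argument of Proposition~\ref{prop-comparison-stability} with the Higgs fields carried along. Since $X$ has rational singularities and any resolution $\widetilde{X}$ is smooth (hence has rational singularities), the functorial reflexive differential $d_{\mathrm{refl}}\pi\colon \pi^{*}\Omega_X^{[1]}\to\Omega_{\widetilde{X}}^{[1]}$ of \cite[Theorem~1.11]{KS21} is available, so $\pi^{[*]}(\mathcal{E},\theta)=(\pi^{[*]}\mathcal{E},\pi^{[*]}\theta)$ is a well-defined reflexive Higgs sheaf on $\widetilde{X}$. For the equivalence $(2)\Leftrightarrow(3)$ I would argue exactly as in \cite[Lemma~4.7]{Jin25}: given two resolutions, dominate both by a common smooth modification $W$, use the functoriality of $d_{\mathrm{refl}}$ so that the iterated reflexive pullback of $(\mathcal{E},\theta)$ to $W$ agrees with the one-step pullback, and then note that by the vanishing property of $\alpha$ together with Lemma~\ref{lem-slope-invariance-VP} the slope of a generically invariant subsheaf is unchanged under further bimeromorphic pullback; the same reasoning works verbatim in the semistable case. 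So it remains to prove $(1)\Leftrightarrow(2)$.

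For $(1)\Rightarrow(2)$ I would fix a resolution $\pi\colon\widetilde{X}\to X$ and a nontrivial generically $\pi^{[*]}\theta$-invariant subsheaf $\widetilde{\mathcal{S}}\subset\pi^{[*]}\mathcal{E}$, and set $\mathcal{S}:=(\pi_{*}\widetilde{\mathcal{S}})^{\vee\vee}\cap\mathcal{E}\subset\mathcal{E}$ as in Proposition~\ref{prop-comparison-stability}. On a suitable Zariski-dense open $U\subseteq X_{\mathrm{reg}}$ one has: $\pi$ restricts to an isomorphism over $U$, $\Omega_X^{[1]}|_U$ is locally free, $\mathcal{S}|_U$ is identified with $\widetilde{\mathcal{S}}|_{\pi^{-1}(U)}$, and under the tautological identification $\pi^{*}\Omega_X^{[1]}|_{\pi^{-1}(U)}\cong\Omega_{\widetilde{X}}^{[1]}|_{\pi^{-1}(U)}$ — to which $d_{\mathrm{refl}}\pi$ restricts there — the field $\theta|_U$ corresponds to $(\pi^{[*]}\theta)|_{\pi^{-1}(U)}$. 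Hence $\mathcal{S}$ is generically $\theta$-invariant, and since $\pi^{[*]}\mathcal{S}$ agrees with $\widetilde{\mathcal{S}}$ off a $\pi$-exceptional set we have $\operatorname{rank}\mathcal{S}=\operatorname{rank}\widetilde{\mathcal{S}}$, so $\mathcal{S}$ is nontrivial. Then, using the vanishing property and Lemma~\ref{lem-slope-invariance-VP},
\[
\mu_{\langle\alpha^{n-1}\rangle}(\mathcal{S})=\mu_{\langle(\pi^{*}\alpha)^{n-1}\rangle}(\pi^{[*]}\mathcal{S})=\mu_{\langle(\pi^{*}\alpha)^{n-1}\rangle}(\widetilde{\mathcal{S}}),\qquad \mu_{\langle\alpha^{n-1}\rangle}(\mathcal{E})=\mu_{\langle(\pi^{*}\alpha)^{n-1}\rangle}(\pi^{[*]}\mathcal{E}),
\]
so $\langle\alpha^{n-1}\rangle$-stability of $(\mathcal{E},\theta)$ forces $\mu_{\langle(\pi^{*}\alpha)^{n-1}\rangle}(\widetilde{\mathcal{S}})<\mu_{\langle(\pi^{*}\alpha)^{n-1}\rangle}(\pi^{[*]}\mathcal{E})$; the semistable case is identical with $<$ replaced by $\le$.

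For $(2)\Rightarrow(1)$ I would, given a nontrivial generically $\theta$-invariant subsheaf $\mathcal{S}\subset\mathcal{E}$, let $\widetilde{\mathcal{S}}$ be the image of $\pi^{*}\mathcal{S}$ under $\pi^{*}\mathcal{E}\to\pi^{[*]}\mathcal{E}$; it agrees with $\pi^{[*]}\mathcal{S}$ away from the $\pi$-exceptional locus, and by the same local computation over $U$ it is generically $\pi^{[*]}\theta$-invariant and nontrivial. The slope identity above then transfers the stability (resp.\ semistability) of $\pi^{[*]}(\mathcal{E},\theta)$ back to $(\mathcal{E},\theta)$, completing the equivalence.

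The only genuinely new point beyond Proposition~\ref{prop-comparison-stability} — and the step I expect to require the most care — is checking that generic $\theta$-invariance is preserved by these two sheaf constructions. This hinges on the fact that over the locus where $\pi$ is an isomorphism onto a smooth open subset, the functorial reflexive differential $d_{\mathrm{refl}}\pi$ of \cite{KS21} is just the tautological isomorphism $\pi^{*}\Omega_X^{[1]}\cong\Omega_{\widetilde{X}}^{[1]}$, so that $\pi^{[*]}\theta$ restricts there to the ordinary pullback of $\theta$; once this is in hand, all the slope bookkeeping is exactly that of Proposition~\ref{prop-comparison-stability} and Lemma~\ref{lem-slope-invariance-VP}.
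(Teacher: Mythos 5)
Your proposal is correct and follows exactly the route the paper intends: the paper omits the proof, stating only that it "follows the same arguments as in [Jin25, Lemma 4.7] and Proposition \ref{prop-comparison-stability}" with the extra verification of generic $\theta$-invariance, and your write-up is a faithful expansion of that sketch, including the key observation that $d_{\mathrm{refl}}\pi$ is the tautological isomorphism over the locus where $\pi$ is an isomorphism onto a smooth open set, which is precisely the point the paper flags as needing care.
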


The proof follows the same arguments as in \cite[Lemma 4.7]{Jin25} and Proposition~\ref{prop-comparison-stability}, and is therefore omitted (strictly speaking, the verification of the $\theta$-generically invariance is required; see Lemma~\ref{lem-DO23-lemma3}).
Moreover, many of the results in \cite{Jin25} for torsion-free sheaves also hold for torsion-free Higgs sheaves, provided that the underlying normal analytic varieties have rational singularities.

\section{Complex orbifolds and Bogomolov-Gieseker inequality}
\label{sec-orbifold}
In this section, we recall the definition of orbifolds and define the intersection number between the orbifold Chern class \( \widehat{c}_2(\mathcal{E}) \) and the non-pluripolar product \( \langle \alpha^{n-2} \rangle \). We then proceed to prove Theorem~\ref{thm-BGinequality-main}.

\subsection{Complex orbifolds}
\subsubsection{The definition of complex orbifold}
Although several closely related conventions for the notion of a complex
orbifold are used in the literature, throughout this paper we follow the
convention of \cite[Subsection~3.1]{DO23}. For the reader's convenience, we
recall the basic definition below.

\begin{defn}[{\cite[Definition 3.1]{DO23}}]
A \emph{complex orbifold} \( X \) of dimension \( n \)  is a connected, second countable Hausdorff space equipped with an orbifold structure \( X_{\orb} = \{ (U_i, G_i, \pi_i) \}_{i \in I} \) satisfying the following conditions:
\begin{enumerate}[label=$(\arabic*)$]
    \item Each \( U_i \) is an open subset of \( \mathbb{C}^n \), and \( G_i \subset \mathrm{GL}_n(\mathbb{C}) \) is a finite group acting holomorphically on \( U_i \). The map \( \pi_i \colon U_i \to U_i / G_i \) is the quotient map, such that \( U_i / G_i \cong X_i \subset X \) for some (analytic) open set \( X_i \), and \( X  =  \bigcup_{i \in I} X_i\).
    
    \item (Compatibility) For any two orbifold charts \( (U_i, G_i, \pi_i) \) and \( (U_j, G_j, \pi_j) \), and any point \( x \in X_i \cap X_j \), there exists an open neighborhood \( X_k \subset X \) of \( x \) with a chart \( (U_k, G_k, \pi_k) \), and there are embeddings of charts from \( (U_k, G_k, \pi_k) \) into both \( (U_i, G_i, \pi_i) \) and \( (U_j, G_j, \pi_j) \).
Here, an \emph{embedding} from \( (U_k, G_k, \pi_k) \) into \( (U_i, G_i, \pi_i) \) consists of an embedding \( \varphi \colon U_k \to U_i \) and a group homomorphism \( \lambda \colon G_k \to G_i \), such that \( \varphi \) is \( \lambda \)-equivariant, i.e.,
\[
    \varphi(g \cdot x) = \lambda(g) \cdot \varphi(x) \quad \text{for all } g \in G_k,
    \]
    and the diagram commutes: \( i \circ \pi_k = \pi_i \circ \varphi \), where \( i \colon X_k \hookrightarrow X_i \) is the inclusion map.
\end{enumerate} 
The orbifold structure \( X_{\orb} = \{ (U_i, G_i, \pi_i) \}_{i \in I} \) is said to be \emph{effective} if for every \( i \in I\), the kernel subgroup
\[
\mathrm{Ker} \,G_i := \bigcap_{x \in U_i} \{ g \in G_i \mid g \cdot x = x \}
\]
is trivial. The orbifold is called \emph{standard} if each \( G_i \) acts freely in codimension one. In this case, each \( \pi_i \) is quasi-\'etale (i.e. finite and \'{e}tale in codimension one).
\end{defn}

Throughout this paper, we always assume that the orbifold structure \( X_{\orb} \) is effective.
According to  \cite[Remark~3.4 (1) and (2)]{DO23}, 
the quotient space \( X \) of a complex orbifold \(X_\orb\) is a normal analytic variety with quotient singularities, and conversely, any complex analytic variety with quotient singularities admits a unique standard orbifold structure.

\begin{defn}[{\cite[Definition 3.3]{DO23}}]
An \emph{orbi-sheaf} $\mathcal{E}_{\orb}$ on a complex orbifold $X_{\orb} = \{(U_i, G_i, \pi_i)\}_{i \in I}$ is a collection of holomorphic $G_i$-linearized sheaves $\{\mathcal{E}_i\}_{i \in I}$ on $U_i$ satisfying the following compatibility conditions: 
\begin{itemize}
    \item For any embedding $(\varphi, \lambda) : (U_k, G_k, \pi_k) \rightarrow (U_i, G_i, \pi_i)$, there exists an isomorphism $\Phi_\varphi : \mathcal{E}_k \to \varphi^* \mathcal{E}_i$.
    \item These isomorphisms are functorial, i.e., for another embedding $(\psi, \mu) : (U_i, G_i, \pi_i) \rightarrow (U_j, G_j, \pi_j)$, we have $\Phi_{\psi \circ \varphi} = (\varphi^* \Phi_\psi) \circ \Phi_\varphi$.
\end{itemize}
The orbi-sheaf $\{\mathcal{E}_i\}_{i \in I}$ is said to be \emph{torsion-free} (resp.~ \emph{reflexive}, \emph{locally free}, \emph{torsion}) if each $\mathcal{E}_i$ is torsion-free (resp.~ reflexive, locally free, torsion).
\end{defn}
As in the smooth case, when an orbi-sheaf \( \mathcal{E}_{\orb} \) is locally free, we identify it with the corresponding vector orbi-bundle \( E_{\orb} \). 

\begin{defn}
An \emph{orbifold differential form} $\sigma_{\orb}$ on a complex orbifold $X_{\orb} = \{(U_i, G_i, \pi_i)\}_{i \in I}$ is a collection $\{\sigma_i\}_{i \in I}$ of $G_i$-invariant differential forms on the charts $\{U_i\}$, satisfying the compatibility condition that for any embeddings $(\varphi_1, \lambda_1) : (V, H, \rho) \hookrightarrow (U_1, G_1, \pi_1)$ and $(\varphi_2, \lambda_2) : (V, H, \rho) \hookrightarrow (U_2, G_2, \pi_2)$, it holds that $\varphi_1^* \sigma_1 = \varphi_2^* \sigma_2$.
In the same manner, we define \( d \)-forms and \( (p,q) \)-forms on \( X_{\orb} \).
\end{defn}

\begin{defn}\cite{Sat56}
Let $X_{\orb} = \{(U_i, G_i, \pi_i)\}_{i \in I}$ and $Y_{\orb} = \{(V_j, H_j, \rho_j)\}_{j \in J}$ be complex orbifolds, and let \( X \) and \( Y \) denote their respective quotient spaces.
A continuous map \( f : X \to Y \) is called a \emph{holomorphic morphism of orbifolds} if, for any \( x \in X \) and any open set \( Y_j \subset Y \) containing \( f(x) \) with a local orbifold chart \( (V_j, H_j, \rho_j) \), there exists an open set \( X_i \subset X \) containing \( x \) with a local orbifold chart \( (U_i, G_i, \pi_i) \), and a holomorphic map between orbifold charts
$$
(f_i, \lambda_i) : (U_i, G_i, \pi_i) \to (V_j, H_j, \rho_j)
$$
such that \( f \circ \pi_i = \rho_j \circ f_i\) and \( f_i \) is \( \lambda_i \)-equivariant. In this case, we denote the map by \( f_{\mathrm{orb}} : X_{\mathrm{orb}} \to Y_{\mathrm{orb}} \).
We say that \( f_{\mathrm{orb}} \) is \emph{proper} (resp.~ \emph{bimeromorphic}) if each local lift \( f_i \) is proper (resp.~ bimeromorphic).
\end{defn}

By \cite[Remark~3.4~(4)]{DO23}, a map \( f_{\mathrm{orb}} : X_{\mathrm{orb}} \to Y_{\mathrm{orb}} \) is a holomorphic morphism of orbifolds if and only if the morphism \( f : X \to Y \) between the underlying quotient spaces is holomorphic.

For a complex orbifold \( X_{\orb} = \{(U_i, G_i, \pi_i)\}_{i \in I} \) with quotient space \( X \), we have the de Rham isomorphism \( H_{dR}^p(X_{\orb}, \mathbb{R}) \cong H^p(X, \mathbb{R}) \) by \cite{Sat56}. 
Then, the orbifold Chern classes of vector orbi-bundles can be defined using the curvature of smooth Hermitian metrics, as described in \cite[Subsection 2.2.2]{Kob14}.

\begin{defn}
\label{defn-bundle-chernclass}
Let \( E_{\orb} = \{E_i\}_{i \in I} \) be a vector orbi-bundle over a complex orbifold \( X_{\orb} \), and let \( h_{\orb} \) be a Hermitian metric on \( E_{\orb} \), given by a collection \( \{h_i\}_{i \in I} \) of \( G_i \)-invariant Hermitian metrics on the local bundles \( E_i \), compatible with the orbifold structure. The \emph{orbifold Chern class} \( c_{p}^{\orb}(E_{\orb}) \in H^{2p}(X, \mathbb{R}) \) is then defined via the \( p \)-th orbifold Chern forms \( \Theta_p := \{ \Theta_p(E_i, h_i) \}_{i \in I} \).
\end{defn}

\subsubsection{Bott-Chern class in complex orbifolds}

\begin{defn}[{\cite[Definition 3.1, 3.3]{DO23}, \cite[Definition 2.14]{Ou24}}]
Let \( X_{\orb} = \{(U_i, G_i, \pi_i)\}_{i \in I} \) be a complex orbifold.
\begin{enumerate}[label=$(\arabic*)$]
    \item An \emph{orbifold Hermitian} (resp.~ \emph{K\"ahler}) \emph{form} \( \omega_{\orb} := \{ \omega_i \}_{i \in I} \) is a collection such that each \( \omega_i \) is a \( G_i \)-invariant Hermitian (resp.~ K\"ahler) form on \( U_i \), and the forms are compatible on overlaps.
    
    \item An \emph{orbifold closed \( (1,1) \)-form} (resp.~ \emph{\( (1,1) \)-current}) \( \theta_{\orb} := \{ \theta_i \}_{i \in I} \) consists of \( G_i \)-invariant closed \( (1,1) \)-forms (resp.~ \( (1,1) \)-currents) on \( U_i \) that are compatible on overlaps.
    
    \item Given an orbifold closed \( (1,1) \)-form (or current) \( \theta_{\orb} := \{ \theta_i \}_{i \in I} \) and an orbifold Hermitian form \( \omega_{\orb} := \{ \omega_i \}_{i \in I} \), for any real number \( c \in \mathbb{R} \), we write
    \(
    \theta_{\orb} \geq c \omega_{\orb}
    \)
    if \( \theta_i \geq c \omega_i \) holds on each chart \( U_i \).
    
    \item An \emph{orbifold quasi-psh function} \( \varphi_{\orb} := \{ \varphi_i \}_{i \in I} \) consists of \( G_i \)-invariant quasi-psh functions \( \varphi_i \) on \( U_i \), compatible on overlaps. Given an orbifold closed \( (1,1) \)-form \( \theta_{\orb} := \{ \theta_i \}_{i \in I} \), we say that \( \varphi_{\orb} \) is an \emph{orbifold \( \theta_{\orb} \)-psh function} if \( \theta_i + dd^c \varphi_i \geq 0 \) holds on each chart \( U_i \).
    
    \item An \emph{orbi-divisor} \( D_{\orb} \) is a collection \( \{ D_i \}_{i \in I} \) of reduced \( G_i \)-invariant divisors on \( U_i \), compatible on overlaps.
\end{enumerate}

\end{defn}
For a complex orbifold $X_{\orb}$, we define the \emph{orbifold Bott–Chern cohomology group} \( H^{1,1}_{BC}(X_{\orb}) \) as the quotient of $d$-closed $(1,1)$-orbifold forms modulo the $dd^c$-exact orbifold forms of degree zero. (This definition remains valid if the forms are replaced by currents.)
We now define the notion of positivity for such classes.
\begin{defn}
Let $\omega_{\orb}$ be an orbifold Hermitian form on a complex orbifold $X_\orb$. For a class $\alpha_{\orb} \in H^{1,1}_{BC}(X_{\orb})$, we define:
\begin{enumerate}[label=$(\arabic*)$]
    \item $\alpha_{\orb}$ is called \emph{K\"ahler} if it contains an orbifold K\"ahler form.
    \item $\alpha_{\orb}$ is called \emph{nef} if for every $\varepsilon > 0$, there exists an orbifold closed $(1,1)$-form $\eta_{\varepsilon, \orb} \in \alpha_{\orb}$ such that
    $
    \eta_{\varepsilon, \orb} \geq - \varepsilon \omega_{\orb}.
    $
    \item $\alpha_{\orb}$ is called \emph{big} if it contains an orbifold K\"ahler current $T_{\orb}$, i.e., there exists an orbifold closed $(1,1)$-current such that $T_{\orb} \geq \varepsilon \omega_{\orb}$ for some $\varepsilon > 0$.
    \item $\alpha_{\orb}$ is called \emph{psef} (pseudo-effective) if it contains an orbifold closed positive $(1,1)$-current.
\end{enumerate}
\end{defn}

The following lemma describes the relationship between $H^{1,1}_{BC}(X)$ and $H^{1,1}_{BC}(X_\orb)$.

\begin{lem}[{cf.~\cite[Page~13]{Wu23}, \cite[Definition~2.21]{ZZZ25}}]
\label{lem-orbifold-BC}
Let $X_{\orb} = \{(U_i, G_i, \pi_i)\}_{i \in I}$ be a compact complex orbifold with the quotient space $X$. Assume that $X$ admits a big class. Consider the map
\[
\Phi_X : H^{1,1}_{BC}(X) \to H^{1,1}_{BC}(X_{\orb}), \quad \{ \theta \} \mapsto \left\{ \{ \pi_i^* \theta \}_{i \in I} \right\},
\]
where $\theta$ is a closed positive $(1,1)$-form or current with local potentials. Then the following holds:
\begin{enumerate}[label=$(\arabic*)$]
\item The class $\{ \theta \}$ is big $($resp.~ psef$)$ if and only if $\Phi_X(\{ \theta \})$ is big $($resp.~ psef$)$.
\item The map $\Phi_X : H^{1,1}_{BC}(X) \to H^{1,1}_{BC}(X_{\orb})$ is an isomorphism.
\item We furthermore assume that $X$ admits a K\"ahler class. Then $\{ \theta \}$ is K\"ahler $($resp.~ nef$)$ if and only if $\Phi_X(\{ \theta \})$ is K\"ahler $($resp.~ nef$)$.
\end{enumerate}
\end{lem}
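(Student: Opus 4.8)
The strategy, following \cite{Wu23, ZZZ25}, is to transfer forms, currents and potentials between $X$ and the orbifold charts $U_i$ along the quotient maps $\pi_i\colon U_i\to X_i$. Two elementary facts about a finite group $G\subset\mathrm{GL}_n(\C)$ acting on an open $U\subset\C^n$ with quotient $p\colon U\to U/G$ will be used repeatedly. $(a)$ The $G$-invariant smooth (resp.~psh, resp.~pluriharmonic) functions on $U$ are exactly the pullbacks by $p$ of smooth (resp.~psh, resp.~pluriharmonic) functions on $U/G$; for the pluriharmonic case one uses that if $\Re f$ is $G$-invariant on a ball then $g\mapsto f\circ g-f$ takes values in the purely imaginary constants and is a homomorphism $G\to(\R,+)$, hence trivial, so $f$ itself is $G$-invariant. $(b)$ A closed $(1,1)$-current (or form) with local potentials on $U/G$ pulls back under $p$ to a $G$-invariant one on $U$ and conversely, closedness and positivity being detected away from the ramification locus of $p$, on which $p^*$ is also injective by normality. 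A $G$-invariant representative can always be produced by averaging over $G$.

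Granting this, part $(2)$ is routine: $\Phi_X$ is well defined because $\pi_i^*$ commutes with $dd^c$ and preserves local potentials. If $\Phi_X(\{\theta\})=0$, then $\{\pi_i^*\theta\}_i=dd^c\{\psi_i\}_i$ for a compatible family of $G_i$-invariant potentials; by $(a)$ each $\psi_i$ descends to some $\bar\psi_i$ on $X_i$, the $\bar\psi_i$ agree on overlaps because the $\psi_i$ do, and these glue to a global $\bar\psi$ with $dd^c\bar\psi=\theta$, so $\{\theta\}=0$. Conversely, a compatible $d$-closed $(1,1)$-orbi-form $\{\theta_i\}$ may be written $\theta_i=dd^c u_i$ with $u_i$ a $G_i$-invariant potential on a coordinate ball; the $u_i$ descend, their $dd^c$'s agree on overlaps by compatibility, and glue to a $\theta$ with $\Phi_X(\{\theta\})=\{\theta_i\}$. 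Applying the same descent to a $\theta$-psh function gives the psef half of $(1)$, and the nef half of $(3)$ follows by running the psef case on $\{\theta\}+\varepsilon\{\omega_0\}$ for a fixed Kähler class $\{\omega_0\}$, then letting $\varepsilon\downarrow 0$ and using $(2)$.

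The main obstacle is the big half of $(1)$ and the Kähler half of $(3)$: the pullback $\pi_i^*\omega$ of a Hermitian, resp.~Kähler, form on $X$ is only semipositive on $U_i$, degenerating along the ramification divisor of $\pi_i$ (which lies over the orbifold points), hence not an orbifold Hermitian, resp.~Kähler, form. To repair this I would add, near each orbifold point, a term $\delta\, dd^c(\chi_i\rho_i)$, where $\rho_i$ is the squared norm of a $G_i$-invariant Hermitian inner product on $\C^n$ --- a strictly psh $G_i$-invariant function --- and $\chi_i$ a $G_i$-invariant cutoff equal to $1$ near the point and supported where $\pi_i^*\omega$ is already positive definite; each $\chi_i\rho_i$ extends by zero to a global orbi-function, so $dd^c(\sum_i\chi_i\rho_i)$ is $dd^c$-exact and does not move the class, and for $\delta>0$ small one checks chart by chart that $\pi_i^*\omega+\delta\, dd^c(\sum_i\chi_i\rho_i)$ dominates a global orbifold Hermitian, resp.~Kähler, form, so that $\Phi_X(\{\theta\})$ is big, resp.~Kähler. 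For bigness one may instead compare volumes: $\{\theta\}$ is big if and only if $\langle\theta^n\rangle>0$, and the orbifold non-pluripolar volume of $\Phi_X(\{\theta\})$ equals $\vol_X(\{\theta\})$ because the two products agree away from the singular locus, which carries no mass. The converse implications follow by descending an orbifold Kähler current, resp.~form, via $(a)$: its $G_i$-invariant local potentials descend to local potentials of a Kähler current, resp.~form, on $X$ in the class $\{\theta\}$, in the Kähler case using the standard identification of Kähler metrics on a quotient singularity with $G_i$-invariant Kähler metrics upstairs. The only delicate point is the bookkeeping in the chart-by-chart positivity estimate: one must choose the region where $\chi_i\equiv1$ small enough that $\pi_i^*\omega$ stays bounded below on the support of $d\chi_i$, and then take $\delta$ correspondingly small.
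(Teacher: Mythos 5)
Your overall descent strategy (invariant potentials on the charts $U_i$ descend to local potentials on $X_i$, and conversely) is the same philosophy as the paper's proof, and most of it is sound. The paper organizes things slightly differently: it first proves a correspondence for closed \emph{positive} currents (so that only psh descent via \cite[Theorem~1.7]{Dem85} is needed), deduces the psef and big cases of $(1)$ from this together with the fact that the big cone is the interior of the psef cone, obtains surjectivity in $(2)$ by writing an arbitrary orbifold class as a difference of two big classes, and quotes \cite[Proposition~4]{Wu23} for the K\"ahler case of $(3)$. Your route is more hands-on: you descend smooth potentials directly for surjectivity (this tacitly uses that $G$-invariant smooth functions on $U_i$ are smooth on the quotient in the complex-space sense, i.e.\ Schwarz's theorem --- true, but worth flagging since the paper's detour through big classes avoids it), and you reprove the K\"ahler/Hermitian gluing with cutoffs $\chi_i\rho_i$ rather than citing it. These are acceptable alternatives.

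There is, however, one genuine error: you derive the nef half of $(3)$ ``by running the \emph{psef} case on $\{\theta\}+\varepsilon\{\omega_0\}$ and letting $\varepsilon\downarrow 0$.'' This does not work. The condition that $\{\theta\}+\varepsilon\{\omega_0\}$ is psef for every $\varepsilon>0$ is equivalent to $\{\theta\}$ being psef (the psef cone is closed), not to $\{\theta\}$ being nef; for instance the class of a $(-1)$-curve on a surface satisfies it without being nef. Nefness in the paper's sense requires \emph{smooth} approximants $\eta+dd^cf_\varepsilon\ge-\varepsilon\omega$, which is strictly stronger than membership in the closure of the psef cone. The correct reduction --- and the one the paper uses --- is that, in the presence of a K\"ahler class, the nef cone is the closure of the K\"ahler cone: $\{\theta\}$ is nef if and only if $\{\theta\}+\varepsilon\{\omega_0\}$ is K\"ahler for all $\varepsilon>0$ (one checks directly from the definitions that nef plus K\"ahler is K\"ahler, using that $\omega_0\ge c\,\omega$ for the reference Hermitian form). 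So you should run your \emph{K\"ahler} case, not your psef case, on $\{\theta\}+\varepsilon\{\omega_0\}$, and likewise on the orbifold side. With that replacement, and granting the chart-by-chart positivity bookkeeping you defer, the argument closes.
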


Before proving the lemma, we clarify the meaning of the expression \( \{ \{ \pi_i^* \theta \}_{i \in I} \} \). Take an orbifold chart \( (U_i, G_i, \pi_i) \), where \( \pi_i : U_i \to X_i \) and \( X_i \subset X \). Since \( \theta \) admits local potentials, we can write \( \theta = dd^c \varphi_i \) on \( X_i \). Thus, the pullback is given by
\[
\pi_i^* \theta := dd^c(\varphi_i \circ \pi_i).
\]
Therefore, the collection \( \{ \pi_i^* \theta \}_{i \in I} \) defines an orbifold closed $(1,1)$-form (or current), and hence \( \{ \{ \pi_i^* \theta \}_{i \in I} \} \) defines a class in \( H^{1,1}_{BC}(X_{\orb}) \).

\begin{proof}
We begin with the following claim.

\begin{claim}
\label{claim-orbifold-BC-correspondence}
Under the setting in Lemma~\ref{lem-orbifold-BC}, let $\alpha \in H^{1,1}_{BC}(X)$ and set $\alpha_{\orb} = \Phi_X(\alpha) \in H^{1,1}_{BC}(X_{\orb})$. If $T_{\orb} \in \alpha_{\orb}$ is an orbifold closed positive $(1,1)$-current, then there exists a unique closed positive $(1,1)$-current $T \in \alpha$ with local potentials such that $T_{\orb} =\{ \pi_i^* T \}_{i \in I}$.
\end{claim}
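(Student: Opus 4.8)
The plan is to construct $T$ by descending potentials through the quotient maps $\pi_i$. First I would fix a closed $(1,1)$-form $\theta$ with local potentials representing $\alpha$, so that $\theta_{\orb} := \{\pi_i^{*}\theta\}_{i\in I}$ represents $\alpha_{\orb} = \Phi_X(\alpha)$; note that if $\theta = dd^c\varphi_i$ with $\varphi_i$ smooth on $X_i$, then $\pi_i^{*}\theta = dd^c(\varphi_i\circ\pi_i)$ is a smooth $G_i$-invariant form on $U_i$. Since $T_{\orb}$ and $\theta_{\orb}$ represent the same class in $H^{1,1}_{BC}(X_{\orb})$, the definition of the orbifold Bott--Chern group gives a compatible collection $\psi_{\orb} = \{\psi_i\}_{i\in I}$ of $G_i$-invariant distributions with $T_i = \pi_i^{*}\theta + dd^c\psi_i$ on $U_i$ for every $i$. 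As $T_i \ge 0$ and $\pi_i^{*}\theta$ is smooth, $dd^c\psi_i = T_i - \pi_i^{*}\theta$ is bounded below by a smooth form, so each $\psi_i$ is a $G_i$-invariant quasi-psh function on $U_i$.

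Next I would descend $\psi_{\orb}$ to $X$. Each $\psi_i$ is $G_i$-invariant, hence factors as $\psi_i = \bar\psi_i\circ\pi_i$ for some function $\bar\psi_i$ on $X_i = U_i/G_i$; applying the compatibility of $\{\psi_i\}$ to a common refinement chart around any point of $X_i\cap X_j$ shows that the $\bar\psi_i$ agree on overlaps, so they glue to a function $\psi$ on $X$ with $\psi\circ\pi_i = \psi_i$ for all $i$. The key point is that $\psi$ is quasi-psh on the normal analytic variety $X$: this is the standard descent statement that a $G$-invariant quasi-psh function on an orbifold chart induces a quasi-psh function on the quotient --- equivalently, a function $u$ on $X_i$ is (quasi-)psh as soon as $u\circ\pi_i$ is, because $\pi_i$ is a finite open holomorphic map --- which is consistent with the definition of (quasi-)psh functions on complex spaces recalled in Section~\ref{sec-pre} and used throughout \cite{Sat56, DO23, Ou24, ZZZ25}.

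Finally I would put $T := \theta + dd^c\psi$. Writing $\theta = dd^c u$ with $u$ smooth locally on $X$, we get $T = dd^c(u + \psi)$ with $u + \psi$ quasi-psh, so $T$ has local potentials and $\{T\} = \{\theta\} = \alpha$; moreover $\pi_i^{*}T = \pi_i^{*}\theta + dd^c\psi_i = T_i \ge 0$ for each $i$, and since $(u+\psi)\circ\pi_i = u\circ\pi_i + \psi_i$ is then psh on $U_i$ and $G_i$-invariant, it descends so that $u + \psi$ is psh on $X_i$; thus $T$ is a closed positive $(1,1)$-current with local potentials satisfying $T_{\orb} = \{\pi_i^{*}T\}_{i\in I}$. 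For uniqueness, if $T$ and $T'$ both satisfy the conclusion then $\pi_i^{*}(T - T') = 0$ for every $i$, and since $\pi_i$ is surjective and a local biholomorphism off a proper analytic subset while $T - T'$ has local potentials, this forces $T = T'$ on each $X_i$, hence on $X$. The one genuinely delicate step is the descent of quasi-psh functions along the branched maps $\pi_i$ together with the gluing of the descended local pieces; the rest is bookkeeping.
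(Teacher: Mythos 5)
Your argument is correct and follows essentially the same route as the paper: both proofs descend $G_i$-invariant (quasi-)psh potentials through the quotient maps $\pi_i$ and check positivity and uniqueness off a proper analytic subset. The one step you flag as delicate — that a $G_i$-invariant quasi-psh function on $U_i$ descends to a quasi-psh function on $X_i$ — is exactly where the paper does its only real work, arguing that the descended function is psh on the unramified locus $X_i\setminus Z_i$ and then extending across $Z_i$ via \cite[Theorem~1.7]{Dem85}; the same extension result also closes your uniqueness argument across the branch locus.
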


\begin{proof}[Proof of Claim~\ref{claim-orbifold-BC-correspondence}]
Suppose $T_{\orb} = \{ T_i \}_{i \in I}$, where each $T_i = dd^c \widetilde{\psi}_i$ for some $G_i$-invariant psh function $\widetilde{\psi}_i$ on $U_i$. Then $\widetilde{\psi}_i$ descends to an upper semi-continuous function $\psi_i$ on $X_i \subset X$ that is locally bounded above. Since $\pi_i$ is unramified over $X_i \setminus Z_i$ for some analytic subset $Z_i \subsetneq X_i$, it follows that $\psi_i$ is psh on $X_i \setminus Z_i$, and hence on $X_i$ by \cite[Theorem~1.7]{Dem85}. The collection $\{ \psi_i \}$ then defines a closed positive $(1,1)$-current $T$ on $X$ such that $T_{\orb} = \{ \pi_i^* T \}_{i \in I}$.

For uniqueness, suppose there exist two such currents $T, S \in \alpha$ with $\{ \pi_i^* T \}_{i \in I} = \{ \pi_i^* S \}_{i \in I}$. Then $T - S$ vanishes on each $X_i \setminus Z_i$, and hence on $X_i$ by \cite[Theorem~1.7]{Dem85}, so $T = S$.
\end{proof}

We now proceed to the proof of Lemma~\ref{lem-orbifold-BC}.

\medskip
\noindent
(1) The psef case follows directly from Claim~\ref{claim-orbifold-BC-correspondence}. The big case follows immediately, as the big cone is the interior of the psef cone.

\medskip
\noindent
(2) We begin by proving that $\Phi_X$ is injective. 
Suppose $\left\{ \{ \pi_i^* \theta \}_{i \in I} \right\} = 0$ in $H^{1,1}_{BC}(X_{\orb})$. Then there exists a smooth orbifold function $u_{\orb} = \{ u_i \}_{i \in I}$ such that
\(
dd^c(\varphi_i \circ \pi_i - u_i) = 0
\)
on each chart $(U_i, G_i, \pi_i)$ (using the notation introduced before Lemma~\ref{lem-orbifold-BC}).
As in the proof of Claim~\ref{claim-orbifold-BC-correspondence}, the functions $u_i$ descend to a continuous global function $u$ on $X$ satisfying $u_i = u \circ \pi_i$ on $U_i$. Since $\pi_i : U_i \to X_i$ is unramified outside a closed analytic subset $Z_i \subsetneq X_i$, it follows that $dd^c(\varphi_i - u) = 0$ on $X_i \setminus Z_i$. Moreover, as $\varphi_i - u$ is locally bounded near $Z_i$, it follows from \cite[Theorem~1.7]{Dem85} that $\varphi_i - u$ is pluriharmonic on $X_i$. Hence, $\{ \theta \} = \{ dd^c u \} = 0$ in $H^{1,1}_{BC}(X)$, showing that $\Phi_X$ is injective.

To prove surjectivity, take $\alpha_{\orb} \in H^{1,1}_{BC}(X_{\orb})$. Then we can write $\alpha_{\orb} = \beta_{1, \orb} - \beta_{2,\orb}$ for some big classes $\beta_{1, \orb}, \beta_{2, \orb} \in H^{1,1}_{BC}(X_{\orb})$. By (1), there exist big classes $\beta_1, \beta_2 \in H^{1,1}_{BC}(X)$ such that $\Phi_X(\beta_1) = \beta_{1, \orb}$ and $\Phi_X(\beta_2) = \beta_{2, \orb}$. Therefore,
\(
\Phi_X(\beta_1 - \beta_2) = \alpha_{\orb},
\)
proving surjectivity.

\medskip
\noindent

(3) The K\"ahler case follows from \cite[Proposition~4]{Wu23}. Note that in \cite[Proposition~4]{Wu23}, the orbifold structure is assumed to be standard; however, this assumption is not necessary.
The nef case follows immediately, as the nef cone is the closure of the K\"ahler cone.
\end{proof}

\subsubsection{Slope stability for compact complex orbifolds with nonpluripolar product}
\label{subsubsec-slope-orbifold}
In this subsection, we define the $\langle \alpha^{n-1} \rangle$-slope for orbi-sheaves and establish some fundamental properties, including the existence of Harder–Narasimhan filtrations. Most of the arguments follow those in \cite[Section 5]{Kob14} and \cite[Appendix A]{GKP14} (see also \cite[Subsection 3.2]{DO23} and \cite[Subsection 3.2]{ZZZ25}).

Let $X_{\orb} = \{ (U_i, G_i, \pi_i) \}_{i \in I}$ be a compact complex orbifold, and let $\mathcal{E}_{\mathrm{orb}} := \{ \mathcal{E}_i \}_{i \in I}$ be an orbi-sheaf over $X_{\orb}$. Following \cite[Subsection 3.1]{ZZZ25}, on each chart $U_i$, we define the determinant line bundle $\det(\mathcal{E}_i)$ via a finite resolution of $\mathcal{E}_i$. Since the determinant is independent of the choice of resolution, the collection $\{ \det(\mathcal{E}_i) \}_{i \in I}$ is compatible on overlaps and thus defines a global line bundle. The determinant line bundle of $\mathcal{E}_{\mathrm{orb}}$ is the locally free orbi-bundle of rank one
\[
\det(\mathcal{E}_{\mathrm{orb}}) := \{ \det(\mathcal{E}_i) \}_{i \in I}.
\]
The first Chern class of $\mathcal{E}_{\mathrm{orb}}$ is defined as
\[
c_1^{\orb}(\mathcal{E}_{\mathrm{orb}}) := c_1(\det(\mathcal{E}_{\mathrm{orb}})) \in H^{1,1}_{BC}(X_{\orb}) \subset H^2(X, \mathbb{R}),
\]
where $X$ denotes the quotient space of $X_{\orb}$.

Let $\alpha \in H^{1,1}_{BC}(X)$ be a big class satisfying the vanishing property (cf.~ Definition~\ref{defn-vanishing-exceptional}). Then we can define the intersection number $c_1^{\orb}(\mathcal{E}_{\mathrm{orb}}) \cdot \langle \alpha^{n-1} \rangle$, since $c_1^{\orb}(\mathcal{E}_{\mathrm{orb}}) \in H^2(X, \mathbb{R})$ and $\langle \alpha^{n-1} \rangle \in H_2(X, \mathbb{R})$. 
Similarly, we can define the $\langle \alpha^{n-1} \rangle$-slope $\mu_{\langle \alpha^{n-1} \rangle}(\mathcal{E}_{\orb})$ of a torsion-free sheaf $\mathcal{E}_{\orb}$ in the same way.

\begin{lem}[{cf.~ \cite[Lemma 3.7]{DO23}, \cite[Lemma 3.3]{ZZZ25}}]
\label{lem-torsion}
Let $\mathcal{F}_{\orb}$ be a torsion orbi-sheaf on $X_{\orb}$. Then
\[
c_1^{\orb}(\mathcal{F}_{\orb}) \cdot \langle \alpha^{n-1} \rangle \ge 0.
\]
In particular, if $\mathcal{F}_{\orb} \subset \mathcal{G}_{\orb}$ are torsion-free orbi-sheaves of the same rank, then
\[
c_1^{\orb}(\mathcal{F}_{\orb}) \cdot \langle \alpha^{n-1} \rangle \le c_1^{\orb}(\mathcal{G}_{\orb}) \cdot \langle \alpha^{n-1} \rangle.
\]
\end{lem}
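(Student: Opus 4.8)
The plan is to reduce the second assertion to the first, and then to derive the first from the fact (Remark~\ref{rem-nonopluripolar-fact}~(5)) that $\langle\alpha^{n-1}\rangle$ pairs non-negatively with every pseudo-effective class. First I would compute $c_1^{\orb}(\mathcal{F}_{\orb})$ chart by chart. On each $U_i$ the sheaf $\mathcal{F}_i$ is a torsion coherent sheaf on a smooth manifold, so $\det(\mathcal{F}_i)\cong\mathcal{O}_{U_i}(D_i)$ for an effective divisor $D_i$ by \cite[Proposition~5.6.14]{Kob14}; since $\mathcal{F}_i$ is $G_i$-linearized and a finite locally free resolution may be chosen $G_i$-equivariantly, the divisor $D_i$ is $G_i$-invariant, and the orbi-sheaf compatibility isomorphisms glue the $D_i$ into an effective orbi-divisor $D_{\orb}=\{D_i\}_{i\in I}$ with $\det(\mathcal{F}_{\orb})\cong\mathcal{O}_{X_{\orb}}(D_{\orb})$. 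Hence $c_1^{\orb}(\mathcal{F}_{\orb})$ is represented by the orbifold closed positive $(1,1)$-current of integration $[D_{\orb}]:=\{[D_i]\}_{i\in I}$, so it is a psef class in $H^{1,1}_{BC}(X_{\orb})$.

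Next I would transfer this positivity to the quotient space $X$. By Lemma~\ref{lem-orbifold-BC}~(1) and~(2), the isomorphism $\Phi_X\colon H^{1,1}_{BC}(X)\xrightarrow{\sim}H^{1,1}_{BC}(X_{\orb})$ identifies $c_1^{\orb}(\mathcal{F}_{\orb})$ with a psef class $\beta\in H^{1,1}_{BC}(X)$; concretely, by Claim~\ref{claim-orbifold-BC-correspondence} the current $[D_{\orb}]$ descends to a closed positive $(1,1)$-current $T$ with local potentials on $X$ with $[D_{\orb}]=\{\pi_i^*T\}_{i\in I}$ and $\beta=\{T\}$. One then checks that the inclusion $H^{1,1}_{BC}(X_{\orb})\hookrightarrow H^2(X,\mathbb{R})$ used to define $c_1^{\orb}(\mathcal{F}_{\orb})\cdot\langle\alpha^{n-1}\rangle$ is compatible with $\delta^1\circ\Phi_X^{-1}$ (both send a class with local potentials to its de Rham/singular class, as one sees by pulling back to a common resolution), so that $c_1^{\orb}(\mathcal{F}_{\orb})\cdot\langle\alpha^{n-1}\rangle=\beta\cdot\langle\alpha^{n-1}\rangle$. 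Since $\alpha$ is big and $\beta$ is psef, Remark~\ref{rem-nonopluripolar-fact}~(5) gives $\beta\cdot\langle\alpha^{n-1}\rangle\ge 0$, which is the first inequality.

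For the ``in particular'' statement, given torsion-free orbi-sheaves $\mathcal{F}_{\orb}\subset\mathcal{G}_{\orb}$ of the same rank, the orbi-sheaf $\mathcal{Q}_{\orb}:=\mathcal{G}_{\orb}/\mathcal{F}_{\orb}$ is torsion, as $\mathcal{G}_i/\mathcal{F}_i$ has generic rank zero on each chart. Using finite locally free resolutions on each chart and the multiplicativity of the determinant in short exact sequences, one obtains $\det(\mathcal{G}_{\orb})\cong\det(\mathcal{F}_{\orb})\otimes\det(\mathcal{Q}_{\orb})$, hence $c_1^{\orb}(\mathcal{G}_{\orb})=c_1^{\orb}(\mathcal{F}_{\orb})+c_1^{\orb}(\mathcal{Q}_{\orb})$. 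Intersecting with $\langle\alpha^{n-1}\rangle$ and applying the first part to $\mathcal{Q}_{\orb}$ yields $c_1^{\orb}(\mathcal{G}_{\orb})\cdot\langle\alpha^{n-1}\rangle-c_1^{\orb}(\mathcal{F}_{\orb})\cdot\langle\alpha^{n-1}\rangle=c_1^{\orb}(\mathcal{Q}_{\orb})\cdot\langle\alpha^{n-1}\rangle\ge 0$.

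The main obstacle I expect is the bookkeeping in the first two paragraphs: one must make the $G_i$-invariance of the $D_i$ and their compatibility under chart embeddings precise enough to obtain a genuine orbi-divisor, and one must pin down the compatibility of the various cohomology identifications ($H^{1,1}_{BC}(X_{\orb})$, $H^2(X,\mathbb{R})$, $H_2(X,\mathbb{R})$, $\Phi_X$, $\delta^1$) so that Remark~\ref{rem-nonopluripolar-fact}~(5) really applies to the intersection number in the statement. These are compatibility verifications rather than genuine analytic difficulties; all the substance is carried by the movability of the non-pluripolar product recorded in Remark~\ref{rem-nonopluripolar-fact}~(5) (equivalently, by the positivity of $\langle\alpha^{n-1}\rangle$ as a movable class).
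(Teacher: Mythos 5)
Your proposal is correct and follows essentially the same route as the paper: the paper cites \cite[Lemma 3.3]{ZZZ25} for the fact that $c_1^{\orb}(\mathcal{F}_{\orb})$ is represented by an effective orbi-divisor (which you re-derive chart by chart), descends the resulting psef class via Lemma~\ref{lem-orbifold-BC}~(1), applies Remark~\ref{rem-nonopluripolar-fact}~(5), and handles the second assertion exactly by your torsion-quotient argument (as in \cite[Lemma 3.7]{DO23}). No substantive differences.
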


\begin{proof}
By \cite[Lemma 3.3]{ZZZ25}, there exists an effective orbi-divisor $D_{\orb}$ such that
\[
c_1^{\orb}(\mathcal{F}_{\orb}) = \{ [D_{\orb}] \} \in H^{1,1}_{BC}(X_{\orb}).
\]
Hence, $c_1^{\orb}(\mathcal{F}_{\orb})$ is psef, and the inequality follows from Remark \ref{rem-nonopluripolar-fact} (5) and Lemma \ref{lem-orbifold-BC} (1). The second assertion can be shown in the same way as in \cite[Lemma 3.7]{DO23}.
\end{proof}

We continue with a compact complex orbifold $X_\orb$ and its quotient space $X$. 
We recall the argument from \cite[Lemma 3.15]{DO23}: let $q$ be the least common multiple of $|G_i|$, let $W \subset X$ be the open subset where $G_i$ acts freely, and let $E_1, \ldots, E_s$ be the irreducible components of $X \setminus W$ of codimension one. Each $E_l$ defines an orbifold subvariety $E_{l, \orb} := \{\pi_i^{-1}(E_l)\}_{i \in I}$ and its ideal orbi-sheaf $\mathcal{I}_{l,\orb}$. Define
\[
c_1^\orb(E_{l, \orb}) := -c_1^\orb(\mathcal{I}_{l,\orb}).
\]

Let $\mathcal{E}_{\orb}$ be a torsion-free orbi-sheaf of rank $r$. As in \cite[Remark 4.8]{ZZZ25}, it descends to a torsion-free coherent sheaf $\mathcal{E}$ on $X$ via the $G_i$-invariant pushforward $({\pi_i}_* \mathcal{E}_i)^{G_i}$.
Let $\mathcal{G}_\orb := \{ \pi_i^* \mathcal{E}  / \tor\}_{i \in I}$ be the torsion-free orbi-sheaf on $X_{\orb}$. Then the following lemma holds:

\begin{lem}[{cf.~ \cite[Lemma 3.15]{DO23}}]
\label{lem-comparison-slope}
In the above setting, let $\alpha \in H^{1,1}_{BC}(X)$ be a big class satisfying the vanishing property. Then
\[
c_1^{\orb}(\mathcal{E}_{\mathrm{orb}}) \cdot \langle \alpha^{n-1} \rangle = c_1(\mathcal{E}) \cdot \langle \alpha^{n-1} \rangle + \sum\limits_{l=1}^s d_l \cdot c_1^\orb(E_{l, \orb}) \cdot \langle \alpha^{n-1} \rangle,
\]
where $d_l$ is the vanishing order of the natural morphism $\det \mathcal{G}_\orb \rightarrow \det \mathcal{E}_\orb$ along $E_{l, \orb}$, and $0 \leq d_l \leq r(q - 1)$ for all $l$.

In particular, if $X_{\orb}$ is standard, then
$c_1^{\orb}(\mathcal{E}_{\mathrm{orb}}) \cdot \langle \alpha^{n-1} \rangle = c_1(\mathcal{E}) \cdot \langle \alpha^{n-1} \rangle$ holds.
\end{lem}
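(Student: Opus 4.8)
The plan is to follow the argument of \cite[Lemma~3.15]{DO23} at the level of orbi-divisor classes, and to bring in the non-pluripolar product only at the last step, where the vanishing property of $\alpha$ is used. Recall that $\mathcal E$ is obtained from $\mathcal E_{\orb}$ by the $G_i$-invariant pushforwards $(\pi_{i*}\mathcal E_i)^{G_i}$, so on each chart the counit provides a natural morphism $\pi_i^*\mathcal E\to\mathcal E_i$; after killing torsion on the source these glue to an injection of torsion-free orbi-sheaves $\mathcal G_{\orb}\hookrightarrow\mathcal E_{\orb}$ that restricts to an isomorphism over $\pi_i^{-1}(W)$, where $W\subset X$ is the open set over which every $G_i$ acts freely (equivalently, over which each $\pi_i$ is quasi-\'{e}tale). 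Taking determinants yields a nonzero morphism of rank-one locally free orbi-sheaves $\det\mathcal G_{\orb}\to\det\mathcal E_{\orb}$ which is an isomorphism over $W$; since $\codim(X\setminus W)\ge 1$ and its codimension-one part is precisely $E_1\cup\cdots\cup E_s$, this identifies $\det\mathcal E_{\orb}\otimes(\det\mathcal G_{\orb})^{-1}$ with the orbi-line-bundle associated to an effective orbi-divisor $\sum_{l=1}^s d_l E_{l,\orb}$, where the $d_l\ge 0$ are the vanishing orders in the statement. When $X_{\orb}$ is standard, $X\setminus W$ has codimension $\ge 2$, so $s=0$ and $\det\mathcal E_{\orb}\cong\det\mathcal G_{\orb}$; this will give the final ``in particular'' assertion.

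Next I would establish the bound $0\le d_l\le r(q-1)$ by a local analysis near a general point of $E_l$. There, after a change of coordinates, some chart map $\pi_i$ is the cyclic cover $(z_1,z_2,\dots,z_n)\mapsto(z_1^{m_l},z_2,\dots,z_n)$ with $m_l\mid q$, and $\mathcal E_i$ may be taken free of rank $r$ with a linear $\mathbb Z/m_l$-action; then $\mathcal E$ is the invariant pushforward $(\pi_{i*}\mathcal E_i)^{\mathbb Z/m_l}$, and a direct computation of the inclusion $\pi_i^*\mathcal E/\tor\hookrightarrow\mathcal E_i$ shows that in each of the $r$ eigen-summands the cokernel has length at most $m_l-1$, so that $0\le d_l\le r(m_l-1)\le r(q-1)$. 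This is exactly the computation in \cite[Lemma~3.15]{DO23}; it is the step that needs the most care, because one must keep track both of the torsion of $\pi_i^*\mathcal E$ and of the fact that $\mathcal E$ is merely torsion-free, hence locally free only in codimension two.

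Finally, applying the orbifold first Chern class to the above relation gives $c_1^{\orb}(\mathcal E_{\orb})=c_1^{\orb}(\mathcal G_{\orb})+\sum_l d_l\,c_1^{\orb}(E_{l,\orb})$ in $H^2(X,\mathbb R)$, and pairing with $\langle\alpha^{n-1}\rangle\in H_2(X,\mathbb R)$ gives the displayed formula once we know $c_1^{\orb}(\mathcal G_{\orb})\cdot\langle\alpha^{n-1}\rangle=c_1(\mathcal E)\cdot\langle\alpha^{n-1}\rangle$. For this identification, one checks that $\det(\pi_i^*\mathcal E/\tor)$ and the pullback of the $\mathbb Q$-line bundle $\det\mathcal E$ agree on $U_i$ outside a set of codimension $\ge 2$ (because $\mathcal E$ is locally free in codimension two on $X_i$ and $\pi_i$ is finite), hence coincide; therefore $c_1^{\orb}(\mathcal G_{\orb})$ is the class of $\det\mathcal E$ in $H^2(X,\mathbb R)$, and comparing $c_1(\det\mathcal E)\cdot\langle\alpha^{n-1}\rangle$ with $c_1(\mathcal E)\cdot\langle\alpha^{n-1}\rangle$ on a strong resolution $\pi\colon\widetilde X\to X$, the difference of the two pulled-back $\mathbb Q$-line bundles is $\pi$-exceptional and hence does not affect the intersection with $\langle(\pi^*\alpha)^{n-1}\rangle$, by the vanishing property (cf.~Lemma~\ref{lem-slope-invariance-VP} and Remark~\ref{rem-mixed-slope}). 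The expected main obstacle is the local vanishing-order computation of the second step; the remaining points are bookkeeping with the constructions of Sections~\ref{sec-nonpluripolar-singular} and~\ref{sec-orbifold}.
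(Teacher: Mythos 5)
Your proposal is correct and follows essentially the same route as the paper: the paper simply cites \cite[Lemma 3.15, Steps 1--2]{DO23} for the identity $c_1^{\orb}(\mathcal{E}_{\orb})=c_1^{\orb}(\mathcal{G}_{\orb})+\sum_l d_l\,c_1^{\orb}(E_{l,\orb})$, the bound on $d_l$, and the equality $c_1(\mathcal{E})=c_1^{\orb}(\mathcal{G}_{\orb})$ in $H^2(X,\mathbb{R})$, whereas you unpack those steps, and then both arguments conclude by pairing with $\langle\alpha^{n-1}\rangle$ and invoking the vanishing property (Lemma~\ref{lem-slope-invariance-VP} and the remark following it) to reconcile the two notions of $c_1(\mathcal{E})\cdot\langle\alpha^{n-1}\rangle$. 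No gaps.
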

\begin{proof}
By \cite[Lemma 3.15]{DO23}, we have the following identity in $H^2(X, \mathbb{R})$:
\begin{equation}
\label{eq-DO23-Lemma3.15}
c_1^{\orb}(\mathcal{E}_{\mathrm{orb}}) = c_1(\mathcal{E}) + \sum\limits_{l=1}^s d_l \cdot c_1^\orb(E_{l, \orb}).
\end{equation}
Indeed, by \cite[Lemma 3.15, Step 1]{DO23}, we have
\[
c_1^{\orb}(\mathcal{E}_{\mathrm{orb}}) = c_{1}^{\orb}(\mathcal{G}_\orb) + \sum\limits_{l=1}^s d_l \cdot c_1^\orb(E_{l, \orb}).
\]
On the other hand, by \cite[Remark 3.14 (7)]{DO23}, the determinant $\det \mathcal{E}$ is a $\mathbb{Q}$-line bundle, and by \cite[Lemma 3.15, Step 2]{DO23}, we have
$c_1(\mathcal{E}) = c^{\orb}_1(\mathcal{G}_\orb)$. Therefore, \eqref{eq-DO23-Lemma3.15} follows. The lemma now follows from \eqref{eq-DO23-Lemma3.15} and Lemma~\ref{lem-slope-invariance-VP} (see also the remark below Lemma~\ref{lem-slope-invariance-VP}).

For the last claim, if $X_{\orb}$ is standard, then $X \setminus W$ has codimension at least 2, so $s = 0$, we get the desired result.
\end{proof}

We now define slope stability for Higgs orbi-sheaves. A \textit{Higgs orbi-sheaf} $(\mathcal{E}_{\mathrm{orb}}, \theta_{\mathrm{orb}}) := \{ (\mathcal{E}_i, \theta_i) \}_{i \in I}$ on $X_{\orb}$ consists of a coherent orbi-sheaf $\mathcal{E}_{\mathrm{orb}}$ together with a morphism $\theta_{\mathrm{orb}}: \mathcal{E}_{\mathrm{orb}} \to \mathcal{E}_{\mathrm{orb}} \otimes \Omega_{X_{\orb}}^{1}$ satisfying $\theta_{\mathrm{orb}} \wedge \theta_{\mathrm{orb}} = 0$. We say that $(\mathcal{E}_{\mathrm{orb}}, \theta_{\mathrm{orb}})$ is \emph{torsion-free} (resp.~ \emph{reflexive}, \emph{locally free}) if $\mathcal{E}_{\mathrm{orb}}$ is torsion-free (resp.~ reflexive, locally free). An orbi-subsheaf $\mathcal{F}_{\mathrm{orb}}$ is called \textit{$\theta_{\mathrm{orb}}$-invariant} if $\theta_{\mathrm{orb}}(\mathcal{F}_{\mathrm{orb}}) \subset \mathcal{F}_{\mathrm{orb}} \otimes \Omega_{X_{\orb}}^{1}$.

Let $\alpha \in H^{1,1}_{BC}(X)$ be a big class satisfying the vanishing property. We say that $(\mathcal{E}_{\mathrm{orb}}, \theta_{\mathrm{orb}})$ is \emph{$\langle \alpha^{n-1} \rangle$-stable} (resp.~ \emph{$\langle \alpha^{n-1} \rangle$-semistable}) if for every non-zero $\theta_{\mathrm{orb}}$-invariant orbi-subsheaf $(\mathcal{F}_{\mathrm{orb}}, \theta_{\orb})$ with $\rk \mathcal{F}_{\mathrm{orb}} \neq \rk \mathcal{E}_{\mathrm{orb}}$, we have
\[
\mu_{\langle \alpha^{n-1} \rangle}(\mathcal{F}_{\orb}) < \mu_{\langle \alpha^{n-1} \rangle}(\mathcal{E}_{\orb}) \quad (\text{resp.~ } \mu_{\langle \alpha^{n-1} \rangle}(\mathcal{F}_{\orb}) \le \mu_{\langle \alpha^{n-1} \rangle}(\mathcal{E}_{\orb}) ).
\]
Following the proofs of \cite[Lemma 3.8, 3.9]{DO23} and \cite[Lemma 3.6, 3.7]{ZZZ25}, we obtain:

\begin{lem}\label{lem-orbifold-slope-bound}
For any torsion-free Higgs orbi-sheaf $(\mathcal{E}_\orb, \theta_\orb)$, the quantity
\[
\mu_{\langle \alpha^{n-1} \rangle}(\mathcal{E}_{\mathrm{orb}}, \theta_{\mathrm{orb}}) := \sup \{ \mu_{\langle \alpha^{n-1} \rangle}(\mathcal{F}_{\orb}) \mid \mathcal{F}_{\orb} \text{ is a } \theta_{\mathrm{orb}}\text{-invariant orbi-subsheaf} \}
\]
is finite and is achieved by some $\theta_{\mathrm{orb}}$-invariant orbi-subsheaf.
\end{lem}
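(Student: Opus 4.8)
The plan is to follow the classical construction of the maximal destabilizing subsheaf (see \cite[Section~5]{Kob14} and \cite[Appendix~A]{GKP14}) in the orbifold setting, exactly as in \cite[Lemmas~3.8 and~3.9]{DO23} and \cite[Lemmas~3.6 and~3.7]{ZZZ25}, the only new feature being that the polarization is now the non-pluripolar product $\langle\alpha^{n-1}\rangle$ of a merely \emph{big} class $\alpha$. The statement splits into finiteness of the supremum and its attainment, and for both I would first reduce to \emph{saturated} orbi-subsheaves: the saturation $\overline{\mathcal{F}}_{\orb}$ of a $\theta_{\orb}$-invariant orbi-subsheaf $\mathcal{F}_{\orb}$ is again $\theta_{\orb}$-invariant (a local statement on the charts $U_i$, hence a consequence of the smooth case, cf.~\cite[Lemma~3.8]{DO23}), and $\mu_{\langle\alpha^{n-1}\rangle}(\overline{\mathcal{F}}_{\orb})\ge\mu_{\langle\alpha^{n-1}\rangle}(\mathcal{F}_{\orb})$ since $\overline{\mathcal{F}}_{\orb}/\mathcal{F}_{\orb}$ is a torsion orbi-sheaf, so that Lemma~\ref{lem-torsion} applies. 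Thus the supremum may be computed over saturated $\theta_{\orb}$-invariant orbi-subsheaves, whose ranks lie in $\{1,\dots,r\}$.

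For the finiteness I would pass to the quotient space $X$. The orbi-sheaf $\mathcal{E}_{\orb}$ descends to a torsion-free sheaf $\mathcal{E}$ on $X$ carrying an induced Higgs field $\theta\colon\mathcal{E}\to\mathcal{E}\otimes\Omega_X^{[1]}$ (using that $X$ has quotient, hence rational, singularities, so that invariant reflexive differentials on the charts descend, as in the discussion preceding Proposition~\ref{prop-comparison-stability-Higgs}), and a $\theta_{\orb}$-invariant orbi-subsheaf $\mathcal{F}_{\orb}$ descends to a $\theta$-invariant subsheaf $\mathcal{F}\subset\mathcal{E}$, and conversely. Applying Lemma~\ref{lem-comparison-slope} to $\mathcal{F}_{\orb}$ yields
\[
\mu_{\langle\alpha^{n-1}\rangle}(\mathcal{F}_{\orb})=\mu_{\langle\alpha^{n-1}\rangle}(\mathcal{F})+\frac{1}{\rk\mathcal{F}}\sum_{l=1}^{s}d_l\,c_1^{\orb}(E_{l,\orb})\cdot\langle\alpha^{n-1}\rangle,
\]
where $0\le d_l\le(\rk\mathcal{F})(q-1)$ and each $c_1^{\orb}(E_{l,\orb})\cdot\langle\alpha^{n-1}\rangle$ is a fixed nonnegative real number by Lemma~\ref{lem-torsion}; hence the two slopes differ by a quantity lying in a bounded interval independent of $\mathcal{F}_{\orb}$. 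Therefore finiteness of the orbifold supremum is equivalent to finiteness of $\sup\{\mu_{\langle\alpha^{n-1}\rangle}(\mathcal{F})\mid\mathcal{F}\subset\mathcal{E}\text{ a }\theta\text{-invariant subsheaf}\}$, which is provided by the slope theory of \cite{Jin25} for torsion-free (Higgs) sheaves on normal analytic varieties in Fujiki's class polarized by a big class with the vanishing property; concretely, one passes via Lemma~\ref{lem-slope-invariance-VP} to a resolution $\pi\colon\widetilde X\to X$ with $\widetilde X$ compact Kähler and, after saturating $\mathcal{F}$ and taking $p$-th exterior powers, bounds $c_1\cdot\langle(\pi^*\alpha)^{n-1}\rangle$ for rank-one reflexive subsheaves of a fixed reflexive sheaf on $\widetilde X$.

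With finiteness in hand, attainment follows from the standard maximality argument carried out directly with orbi-subsheaves, as in \cite[Lemma~3.9]{DO23} and \cite[Lemma~3.7]{ZZZ25}: one observes that the supremum is actually a maximum (the orbi-subsheaves of a fixed rank approaching it form a bounded family, on which $c_1^{\orb}\cdot\langle\alpha^{n-1}\rangle$ takes only finitely many values), selects a maximizer of largest rank, and then, using that sums and intersections of $\theta_{\orb}$-invariant orbi-subsheaves remain $\theta_{\orb}$-invariant together with the additivity of $c_1^{\orb}\cdot\langle\alpha^{n-1}\rangle$ along $0\to\mathcal{F}_{1,\orb}\cap\mathcal{F}_{2,\orb}\to\mathcal{F}_{1,\orb}\oplus\mathcal{F}_{2,\orb}\to\mathcal{F}_{1,\orb}+\mathcal{F}_{2,\orb}\to0$, concludes that this maximizer is unique. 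The main obstacle is the finiteness step: because $\langle\alpha^{n-1}\rangle$ is only a movable homology class, not the $(n-1)$-st power of a nef class, the usual bound coming from restriction to general complete-intersection curves is unavailable, so one must route the argument through a resolution of $X$ --- which is precisely why the vanishing property of $\alpha$ (Definition~\ref{defn-vanishing-exceptional}) is imposed; once finiteness is secured, the attainment step is purely formal and identical to the nef-polarization case.
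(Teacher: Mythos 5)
Your argument is correct and follows essentially the same route the paper intends: the paper gives no written proof, only a pointer to \cite[Lemmas~3.8--3.9]{DO23} and \cite[Lemmas~3.6--3.7]{ZZZ25}, and your expansion (saturate via Lemma~\ref{lem-torsion}, descend to $X$ and control the discrepancy with Lemma~\ref{lem-comparison-slope}, then invoke the finiteness of slopes on $X$ from \cite{Jin25} via a resolution) is exactly the mechanism the paper itself deploys explicitly later in the proof of Lemma~\ref{lem-stability-varepsilon}~(1). The only cosmetic remark is that descending the Higgs field to $X$ is unnecessary for the finiteness step, since boundedness over \emph{all} orbi-subsheaves already suffices.
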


\begin{cor}
\label{cor-JN-filtration}
Every torsion-free Higgs orbi-sheaf $(\mathcal{E}_\orb, \theta_\orb)$ admits a Harder–Narasimhan filtration, and every semistable one admits a Jordan–Hölder filtration.
\end{cor}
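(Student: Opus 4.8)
The plan is to deduce both filtrations from Lemma~\ref{lem-orbifold-slope-bound} by the standard maximal-destabilizing-subsheaf argument, carried out in the category of torsion-free Higgs orbi-sheaves. The only ingredients beyond Lemma~\ref{lem-orbifold-slope-bound} that I would need are that $\theta_\orb$-invariance is preserved under saturation, and under forming sums and quotients, of orbi-subsheaves; since $\Omega_{X_\orb}$ is locally free these are honest (not merely generic) statements and reduce chart-by-chart to the corresponding facts on the $U_i$, exactly as in Lemma~\ref{lem-DO23-lemma3}. I would also use that $c_1^\orb(\cdot)\cdot\langle\alpha^{n-1}\rangle$ is additive in short exact sequences of torsion-free orbi-sheaves (multiplicativity of the determinant, then linearity of intersection with $\langle\alpha^{n-1}\rangle$) and non-negative on torsion orbi-sheaves (Lemma~\ref{lem-torsion}).

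\textbf{Harder--Narasimhan filtration.} I would induct on $\rk\mathcal{E}_\orb$, the cases $\rk\mathcal{E}_\orb\le 1$ being immediate. Put $\mu_{\max} := \mu_{\langle\alpha^{n-1}\rangle}(\mathcal{E}_\orb,\theta_\orb)$, which is finite and attained by Lemma~\ref{lem-orbifold-slope-bound}. Among the nonzero $\theta_\orb$-invariant orbi-subsheaves of slope $\mu_{\max}$ I would pick one of maximal rank and replace it by its saturation $\mathcal{E}_{1,\orb}$ in $\mathcal{E}_\orb$: the saturation is still $\theta_\orb$-invariant, has the same rank, and has slope $\ge\mu_{\max}$ by Lemma~\ref{lem-torsion}, hence slope exactly $\mu_{\max}$. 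Then $\mathcal{E}_{1,\orb}$ is semistable as a Higgs orbi-sheaf (every $\theta_\orb$-invariant orbi-subsheaf of $\mathcal{E}_{1,\orb}$ is one of $\mathcal{E}_\orb$, so has slope $\le\mu_{\max}$) and is the unique $\theta_\orb$-invariant orbi-subsheaf of maximal rank and slope $\mu_{\max}$: given another such $\mathcal{E}_{1,\orb}'$, the sum $\mathcal{E}_{1,\orb}+\mathcal{E}_{1,\orb}'$ is $\theta_\orb$-invariant, contains $\mathcal{E}_{1,\orb}$ with the same rank, hence has slope $\ge\mu_{\max}$ by Lemma~\ref{lem-torsion}, hence $=\mu_{\max}$, and maximality of the rank forces it to coincide with both $\mathcal{E}_{1,\orb}$ and $\mathcal{E}_{1,\orb}'$. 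Since $\mathcal{E}_{1,\orb}$ is saturated, $\mathcal{E}_\orb/\mathcal{E}_{1,\orb}$ is torsion-free of rank $<\rk\mathcal{E}_\orb$ and carries the induced Higgs field $\overline{\theta}_\orb$; by the induction hypothesis it has a Harder--Narasimhan filtration, and its maximal slope is $<\mu_{\max}$, because a $\overline{\theta}_\orb$-invariant subsheaf of the quotient of slope $\ge\mu_{\max}$ would lift to a $\theta_\orb$-invariant orbi-subsheaf of $\mathcal{E}_\orb$ of slope $\ge\mu_{\max}$ (by additivity of degrees and ranks) and of rank $>\rk\mathcal{E}_{1,\orb}$, contradicting maximality. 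Prepending $\mathcal{E}_{1,\orb}$ yields the Harder--Narasimhan filtration with strictly decreasing slopes.

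\textbf{Jordan--Hölder filtration.} Now let $(\mathcal{E}_\orb,\theta_\orb)$ be semistable of slope $\mu$, and again induct on the rank. Among the nonzero $\theta_\orb$-invariant orbi-subsheaves of slope $\mu$ (a nonempty family, since $\mathcal{E}_\orb$ itself belongs to it) I would choose one of minimal rank and replace it by its saturation $\mathcal{E}_{1,\orb}$, which is still $\theta_\orb$-invariant, still of slope $\mu$ by Lemma~\ref{lem-torsion} and semistability, and still of minimal rank. Then $(\mathcal{E}_{1,\orb},\theta_\orb)$ is stable: a proper nonzero $\theta_\orb$-invariant orbi-subsheaf has slope $\le\mu$ by semistability of $\mathcal{E}_\orb$, but it has strictly smaller rank, so it cannot have slope $\mu$ by minimality, hence its slope is $<\mu$. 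Since $\mathcal{E}_{1,\orb}$ is saturated, $(\mathcal{E}_\orb/\mathcal{E}_{1,\orb},\overline{\theta}_\orb)$ is torsion-free, and it is again semistable of slope $\mu$: a $\overline{\theta}_\orb$-invariant subsheaf $\mathcal{Q}_\orb$ lifts to a $\theta_\orb$-invariant $\mathcal{G}_\orb\supset\mathcal{E}_{1,\orb}$ with $\mu_{\langle\alpha^{n-1}\rangle}(\mathcal{G}_\orb)\le\mu$, and additivity of degrees and ranks together with $\mu_{\langle\alpha^{n-1}\rangle}(\mathcal{E}_{1,\orb})=\mu$ forces $\mu_{\langle\alpha^{n-1}\rangle}(\mathcal{Q}_\orb)\le\mu$. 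By the induction hypothesis the quotient has a Jordan--Hölder filtration with stable subquotients of slope $\mu$; lifting it and prepending $\mathcal{E}_{1,\orb}$ completes the argument.

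The genuinely new content is Lemma~\ref{lem-orbifold-slope-bound} (finiteness and attainment of the supremal slope); everything else is the standard bootstrapping. The step I expect to need the most care is the bookkeeping with $\theta_\orb$-invariance — that it survives saturation, sums, and passage to quotients of orbi-subsheaves, and that $c_1^\orb(\cdot)\cdot\langle\alpha^{n-1}\rangle$ behaves additively on the relevant short exact sequences of torsion-free orbi-sheaves — but this is routine once reduced to the charts $U_i$ (equivalently, to a resolution), as in Lemma~\ref{lem-DO23-lemma3}.
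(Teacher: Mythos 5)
Your proposal is correct and follows exactly the route the paper intends: the paper states Corollary~\ref{cor-JN-filtration} as an immediate consequence of Lemma~\ref{lem-orbifold-slope-bound} via the standard maximal-destabilizing-subsheaf bootstrap (citing the analogous arguments in \cite{DO23} and \cite{ZZZ25}), which is precisely what you carry out. The auxiliary facts you flag --- preservation of $\theta_\orb$-invariance under saturation, sums and quotients, additivity of $c_1^{\orb}(\cdot)\cdot\langle\alpha^{n-1}\rangle$ in short exact sequences, and Lemma~\ref{lem-torsion} for same-rank comparisons --- are exactly the ingredients the paper relies on, so nothing is missing.
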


Finally, we present lemmas to be used in Subsection~\ref{subsec-BG-nonpluripolar}.

\begin{lem}[{cf.~ \cite[Lemma 3.14]{DO23}}]
\label{lem-DO23-lemma3}
Let $X_{\orb} = \{(U_i, G_i, \pi_i) \}_{i \in I}$ be a compact complex orbifold with the quotient space $X$, and let $\alpha \in H^{1,1}_{BC}(X)$ be a big class satisfying the vanishing property. For any torsion-free Higgs orbi-sheaf $(\mathcal{E}_{\orb}, \theta_{\orb})$, the following hold:
\begin{enumerate}[label=$(\arabic*)$]
\item There exists a complex orbifold $Y_{\orb} = \{(V_i, G_i, \rho_i)\}_{i \in I}$ and an orbifold morphism $f_{\orb} : Y_{\orb} \to X_{\orb}$ such that the Higgs orbi-sheaf $(f_{\orb}^* \mathcal{E}_{\orb} / \tor, f_{\orb}^* \theta_{\orb})$ is locally free. $($Here, $f_{\orb}^* \mathcal{E}_{\orb} / \tor$ is locally given by $f_i^* \mathcal{E}_i / \tor$ on each chart $f_i : V_i \to U_i$.$)$
Moreover, $(\mathcal{E}_{\orb}, \theta_{\orb})$ is $\langle \alpha^{n-1} \rangle$-$($semi$)$stable if and only if $(f_{\orb}^* \mathcal{E}_{\orb} / \tor, f_{\orb}^* \theta_{\orb})$ is $\langle f^* \alpha^{n-1} \rangle$-$($semi$)$stable, where $f : Y \to X$ is the morphism induced between the quotient spaces.

\item Assume $X_{\orb}$ is standard. Let $(\mathcal{E}, \theta)$ be the torsion-free Higgs sheaf on $X$ such that $(\mathcal{E}_{\orb}, \theta_{\orb}) = \{ (\pi_i^* \mathcal{E} / \tor, \pi_{i}^{*} \theta) \}_{i \in I}$. 
Then $(\mathcal{E}, \theta)$ is $\langle \alpha^{n-1} \rangle$-$($semi$)$stable if and only if $(\mathcal{E}_{\orb}, \theta_{\orb})$ is.
\end{enumerate}
\end{lem}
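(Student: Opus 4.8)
Both parts follow the scheme of \cite[Lemma 3.14]{DO23}, with extra bookkeeping to transport the Higgs field; throughout we use that $\alpha$ satisfies the vanishing property, hence so do $f^{*}\alpha$ and the chart pullbacks $\pi_i^{*}\alpha$.

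\emph{Part (1).} First I would construct the orbifold modification chart by chart. Each $U_i$ is smooth, so the torsion-free sheaf $\mathcal{E}_i$ admits a projective modification $\mu_i\colon V_i\to U_i$ with $V_i$ smooth and $\mu_i^{*}\mathcal{E}_i/\tor$ locally free (flattening by blowing up suitable Fitting ideal sheaves of $\mathcal{E}_i$, followed by an equivariant resolution of singularities). Because these ideal sheaves are canonically attached to $\mathcal{E}_i$, they are $G_i$-invariant, so $\mu_i$ can be taken $G_i$-equivariantly; functoriality of the construction with respect to the chart embeddings of $X_{\orb}$ then makes the data $(V_i,G_i,\rho_i)$ glue to a complex orbifold $Y_{\orb}$ and the $\mu_i$ to a bimeromorphic orbifold morphism $f_{\orb}\colon Y_{\orb}\to X_{\orb}$. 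Since the $U_i,V_i$ are smooth, the ordinary pullback $f_i^{*}\theta_i$ followed by killing torsion produces a Higgs field on $f_i^{*}\mathcal{E}_i/\tor$, and these glue to the desired locally free Higgs orbi-sheaf $(f_{\orb}^{*}\mathcal{E}_{\orb}/\tor,\ f_{\orb}^{*}\theta_{\orb})$. For the equivalence of stability I would repeat the argument of Proposition~\ref{prop-comparison-stability} orbifold-chart-wise: a generically $\theta_{\orb}$-invariant orbi-subsheaf $\mathcal{F}_{\orb}\subset\mathcal{E}_{\orb}$ maps to a generically $f_{\orb}^{*}\theta_{\orb}$-invariant orbi-subsheaf of $f_{\orb}^{*}\mathcal{E}_{\orb}/\tor$ agreeing off the $f_{\orb}$-exceptional locus with the reflexive pullback of $\mathcal{F}_{\orb}$; conversely, a generically $f_{\orb}^{*}\theta_{\orb}$-invariant orbi-subsheaf $\widetilde{\mathcal{F}}_{\orb}$ yields $\mathcal{F}_{\orb}:=(f_{\orb\,*}\widetilde{\mathcal{F}}_{\orb})^{\vee\vee}\cap\mathcal{E}_{\orb}$, which is again generically $\theta_{\orb}$-invariant. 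In either direction the vanishing property, via the orbifold forms of Lemma~\ref{lem-slope-invariance-VP} and Lemma~\ref{lem-comparison-slope}, forces the $\langle(f^{*}\alpha)^{n-1}\rangle$-slope of the upstairs sheaf to equal the $\langle\alpha^{n-1}\rangle$-slope of the downstairs one (and likewise for $\mathcal{E}_{\orb}$ itself). Comparing slopes of subsheaves against the total sheaf then gives the asserted equivalence.

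\emph{Part (2).} With $X_{\orb}$ standard, each $\pi_i$ is quasi-\'etale, so $\pi_i^{*}\Omega_X^{[1]}$ and $\Omega_{U_i}$ agree in codimension one, and the functorial reflexive pullback $d_{\mathrm{refl}}\pi_i$ of \cite[Theorem 1.11]{KS21} (available since quotient singularities are rational) transports Higgs fields and their generic invariance back and forth between $X$ and the charts. Concretely, $\mathcal{F}\mapsto\{\pi_i^{*}\mathcal{F}/\tor\}$ (saturated in $\mathcal{E}_{\orb}$) and $\mathcal{F}_{\orb}\mapsto\big(({\pi_i}_{*}\mathcal{F}_i)^{G_i}\big)^{\vee\vee}\cap\mathcal{E}$ give mutually inverse bijections between saturated generically $\theta$-invariant subsheaves of $\mathcal{E}$ and saturated generically $\theta_{\orb}$-invariant orbi-subsheaves of $\mathcal{E}_{\orb}$, preserving rank. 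By the last assertion of Lemma~\ref{lem-comparison-slope} (the standard case $s=0$), the $\langle\alpha^{n-1}\rangle$-slopes match under this correspondence, and similarly $\mu_{\langle\alpha^{n-1}\rangle}(\mathcal{E})=\mu_{\langle\alpha^{n-1}\rangle}(\mathcal{E}_{\orb})$; hence $(\mathcal{E},\theta)$ is $\langle\alpha^{n-1}\rangle$-(semi)stable iff $(\mathcal{E}_{\orb},\theta_{\orb})$ is.

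\emph{Main obstacle.} The delicate points are, in Part (1), choosing the flattening modification \emph{equivariantly and compatibly across all charts} so that it genuinely defines an orbifold morphism, and, in both parts, checking that generic $\theta$-invariance survives the chain of operations pullback / pushforward / double dual / saturation — in Part (2) this is exactly where the rational-singularity hypothesis enters, through the \cite{KS21} functorial reflexive pullback. Once these are in place, the numerical identities are immediate consequences of Lemmas~\ref{lem-slope-invariance-VP} and~\ref{lem-comparison-slope}.
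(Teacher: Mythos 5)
Your proposal is correct and follows essentially the same route as the paper: the equivariant, functorial chart-by-chart flattening for part (1) is exactly what the paper obtains by citing \cite[Theorem 3.10]{DO23}, and for part (2) the paper likewise descends a $\theta_{\orb}$-invariant orbi-subsheaf via $\big(({\pi_i}_{*}\mathcal{F}_i)^{G_i}\big)^{\vee\vee}\cap\mathcal{E}$, matches slopes through Lemma~\ref{lem-comparison-slope} (the standard case), saturates, and checks generic $\theta$-invariance on the quasi-\'etale locus. The only cosmetic difference is that you assert a full bijection of saturated invariant subsheaves, whereas the paper only needs the one-directional correspondence together with the fact that saturation can only increase the slope (Lemma~\ref{lem-torsion}); this does not affect correctness.
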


\begin{proof}
The existence of $f_{\orb} : Y_\orb \to X_\orb$ in (1) follows from \cite[Theorem 3.10]{DO23}.
For simplicity, we prove only the "only if" part of (2) in the semistable case, 
as the latter part of (1) and the "if" direction of (2) are standard (cf.~ \cite[Section 4.3]{ZZZ25}), except for the verification of the $\theta$-generically invariance (resp.~ the $\theta_\orb$-invariance).

Assume $(\mathcal{E}, \theta)$ is $\langle \alpha^{n-1} \rangle$-semistable. Let $\mathcal{F}_{\orb}=\{ \mathcal{F}_i\}_{i \in I}$ be a saturated $\theta_{\orb}$-invariant subsheaf of $\mathcal{E}_{\orb}$, and we define the torsion-free subsheaf $\mathcal{F} \subset \mathcal{E}$ induced by $\big( ({\pi_i}_* \mathcal{F}_i)^{G_i} \big)^{\vee\vee} \cap \mathcal{E}$, as in Proposition~\ref{prop-comparison-stability}. Then, by Lemma~\ref{lem-comparison-slope},
\begin{equation}
\label{eq-slope-same}
 c_1(\mathcal{F}) \cdot \langle \alpha^{n-1} \rangle
 = c_1^\orb(\mathcal{F}_{\orb}) \cdot \langle \alpha^{n-1} \rangle.
\end{equation}
Since each $\pi_i$ is quasi-\'etale and $\mathcal{E}$ is locally free in codimension one, it follows that $\mathcal{F}$ is generically $\theta_{\orb}$-invariant in codimension one. Let $\widetilde{\mathcal{F}}$ be the saturation of $\mathcal{F}$ in $\mathcal{E}$; then $\widetilde{\mathcal{F}}$ is generically $\theta$-invariant (see e.g.\ \cite[Lemma 4.16]{ZZZ25}). Using Lemma~\ref{lem-torsion}, we obtain
\[
\mu_{\langle \alpha^{n-1} \rangle}(\mathcal{F}_{\orb}) 
\underset{(\ref{eq-slope-same})}{=}
\mu_{\langle \alpha^{n-1} \rangle}(\mathcal{F})
\underset{(\text{Lem.~\ref{lem-torsion}})}{\le}
\mu_{\langle \alpha^{n-1} \rangle}(\widetilde{\mathcal{F}}) \leq \mu_{\langle \alpha^{n-1} \rangle}(\mathcal{E}).
\]
Therefore, $(\mathcal{E}_{\orb}, \theta_{\orb})$ is $\langle \alpha^{n-1} \rangle$-semistable.
\end{proof}

\begin{lem}
\label{lem-stability-varepsilon}
Let $X_{\orb} = \{(U_i, G_i, \pi_i)\}_{i \in I}$ be a compact K\"ahler  orbifold with quotient space $X$, and let $(\mathcal{E}_{\orb}, \theta_{\orb})$ be a torsion-free Higgs orbi-sheaf. Let $\alpha \in H^{1,1}_{BC}(X)$ be a big class satisfying the vanishing property.

\begin{enumerate}[label=$(\arabic*)$]
\item For any real number $C$, the set
\[A_C:=
\left\{ c_1^\orb(\mathcal{F}_\orb) \cdot \langle \alpha^{n-1} \rangle \;\middle|\; 0 \neq \mathcal{F}_\orb \subsetneq \mathcal{E}_\orb,\; c_1^\orb(\mathcal{F}_\orb) \cdot \langle \alpha^{n-1} \rangle \geq C \right\}
\]
is finite.
\item Let $\{T_\varepsilon\}_{\varepsilon > 0}$ be a family of closed positive $(1,1)$-currents with local potentials such that the difference
\(
\left\{ \langle T_\varepsilon^{n-1} \rangle \right\} - \langle \alpha^{n-1} \rangle
\)
is represented by a positive current and converges to zero as $\varepsilon \to 0$. Then, if $(\mathcal{E}_{\orb}, \theta_{\orb})$ is $\langle \alpha^{n-1} \rangle$-stable, it is $\left\{ \langle T_\varepsilon^{n-1} \rangle \right\}$-stable for sufficiently small $\varepsilon > 0$.
\end{enumerate}
\end{lem}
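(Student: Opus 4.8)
The plan is to establish part (1) first and then deduce part (2) from it by a perturbation argument.

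\emph{Part (1).} First I would reduce to saturated orbi-subsheaves: if $\mathcal{F}_\orb \subsetneq \mathcal{E}_\orb$ has saturation $\widetilde{\mathcal{F}}_\orb$, then $\widetilde{\mathcal{F}}_\orb/\mathcal{F}_\orb$ is torsion, so Lemma~\ref{lem-torsion} gives $c_1^\orb(\mathcal{F}_\orb)\cdot\langle\alpha^{n-1}\rangle \le c_1^\orb(\widetilde{\mathcal{F}}_\orb)\cdot\langle\alpha^{n-1}\rangle$, and it suffices to bound the values attained by saturated subsheaves. Two facts then combine. On the one hand, $c_1^\orb(\mathcal{F}_\orb)=c_1(\det\mathcal{F}_\orb)$ takes values in the image of the orbifold N\'eron--Severi group of $X_\orb$ in $H^2(X,\mathbb{R})$, which is a finitely generated --- hence discrete --- subgroup of a finite-dimensional vector space. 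On the other hand, for saturated $\mathcal{F}_\orb$ the quotient $\mathcal{E}_\orb/\mathcal{F}_\orb$ is torsion-free, and the degrees $c_1^\orb(\mathcal{E}_\orb/\mathcal{F}_\orb)\cdot\langle\alpha^{n-1}\rangle$ of torsion-free quotients of the fixed sheaf $\mathcal{E}_\orb$ are bounded below, so the hypothesis $c_1^\orb(\mathcal{F}_\orb)\cdot\langle\alpha^{n-1}\rangle \ge C$ forces this degree into a fixed bounded interval. By the orbifold, Grothendieck-type boundedness of families of subsheaves of bounded rank and bounded degree on compact K\"ahler orbifolds (the orbi-analytic analogue of \cite[Lemma~3.8]{DO23} and \cite[Lemma~3.6]{ZZZ25}), these $\mathcal{F}_\orb$ form a bounded family, so $\{c_1^\orb(\mathcal{F}_\orb)\}$ is a bounded subset of $H^2(X,\mathbb{R})$; being contained in a discrete subgroup it is finite, and therefore so is $A_C$. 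What I really want to extract from this argument is the finiteness of the set of \emph{classes} $c_1^\orb(\mathcal{F}_\orb)$ occurring, which is the input to part (2).

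\emph{Part (2).} I would argue by contradiction. Set $r:=\rank\mathcal{E}_\orb$ and write $\langle T_\varepsilon^{n-1}\rangle=\langle\alpha^{n-1}\rangle+\eta_\varepsilon$ in $H_2(X,\mathbb{R})$; by hypothesis $\eta_\varepsilon$ is represented by a closed positive current and, $H_2(X,\mathbb{R})$ being finite-dimensional, $\|\eta_\varepsilon\|\to 0$ as $\varepsilon\to 0$. By Lemma~\ref{lem-orbifold-slope-bound} the maximal $\theta_\orb$-invariant slope $\mu_{\langle\alpha^{n-1}\rangle}(\mathcal{E}_\orb,\theta_\orb)$ is attained, so $\langle\alpha^{n-1}\rangle$-stability yields a gap $\delta_0>0$ with
\[
\mu_{\langle\alpha^{n-1}\rangle}(\mathcal{F}_\orb) \le \mu_{\langle\alpha^{n-1}\rangle}(\mathcal{E}_\orb)-\delta_0
\]
for every proper $\theta_\orb$-invariant orbi-subsheaf $\mathcal{F}_\orb$. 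If the conclusion failed, there would exist $\varepsilon_k\downarrow 0$ and proper $\theta_\orb$-invariant orbi-subsheaves $\mathcal{F}_k$ of rank $p_k$ with $1\le p_k\le r$, which I may take saturated (the torsion lemma applies to the movable class $\langle T_{\varepsilon_k}^{n-1}\rangle$), such that $\mu_{\langle T_{\varepsilon_k}^{n-1}\rangle}(\mathcal{F}_k)\ge\mu_{\langle T_{\varepsilon_k}^{n-1}\rangle}(\mathcal{E}_\orb)$. As $\langle T_{\varepsilon_k}^{n-1}\rangle\to\langle\alpha^{n-1}\rangle$ the right-hand side converges to $\mu_{\langle\alpha^{n-1}\rangle}(\mathcal{E}_\orb)$, so the degrees $c_1^\orb(\mathcal{F}_k)\cdot\langle T_{\varepsilon_k}^{n-1}\rangle=p_k\mu_{\langle T_{\varepsilon_k}^{n-1}\rangle}(\mathcal{F}_k)$ are bounded below; they are also bounded above, because the degrees of torsion-free quotients of $\mathcal{E}_\orb$ stay bounded below uniformly for $\varepsilon_k$ in a compact range. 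By the uniform-in-$\varepsilon$ form of the boundedness from part (1) applied to the polarizations $\langle T_{\varepsilon_k}^{n-1}\rangle$, the classes $c_1^\orb(\mathcal{F}_k)$ then range over a finite set, so $M:=\sup_k\|c_1^\orb(\mathcal{F}_k)\|<\infty$ and $|c_1^\orb(\mathcal{F}_k)\cdot\eta_{\varepsilon_k}|/p_k\le M\|\eta_{\varepsilon_k}\|\to 0$ (and likewise with $\mathcal{E}_\orb$ in place of $\mathcal{F}_k$). Subtracting $\eta_{\varepsilon_k}$ from $\langle T_{\varepsilon_k}^{n-1}\rangle$ in the slopes therefore gives
\[
\mu_{\langle\alpha^{n-1}\rangle}(\mathcal{F}_k)\ge\mu_{\langle\alpha^{n-1}\rangle}(\mathcal{E}_\orb)-o(1)\qquad(k\to\infty),
\]
which for $k$ large contradicts the gap $\delta_0$. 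Hence $(\mathcal{E}_\orb,\theta_\orb)$ is $\{\langle T_\varepsilon^{n-1}\rangle\}$-stable for all sufficiently small $\varepsilon>0$.

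The hard part will be part (1): upgrading the finiteness of $A_C$ to the finiteness of the set of classes $\{c_1^\orb(\mathcal{F}_\orb)\}$, and, for the use in part (2), securing this \emph{uniformly} as the polarization $\langle T_\varepsilon^{n-1}\rangle$ runs over a neighbourhood of $\langle\alpha^{n-1}\rangle$. This is an orbifold, non-pluripolar incarnation of Grothendieck's boundedness of Quot families, which I would import from \cite{DO23} and \cite{ZZZ25} rather than reprove. Once it is available, part (2) is a soft perturbation: the non-pluripolar intersection pairing is continuous in the classes involved (see Remark~\ref{rem-nonopluripolar-fact}), and the whole computation is carried out intrinsically on $X_\orb$, where $c_1^\orb\in H^2(X,\mathbb{R})$ and $\langle\alpha^{n-1}\rangle,\langle T_\varepsilon^{n-1}\rangle\in H_2(X,\mathbb{R})$, so the relevant pairings are the ordinary (co)homological ones and no vanishing-property subtleties intervene.
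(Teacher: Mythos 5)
Your overall architecture (a uniform slope gap extracted from part (1), then a perturbation estimate for part (2)) matches the paper's, but the mechanism you use to control the perturbation term is different and is where the proposal breaks down. For part (2) the paper needs neither the finiteness of the set of classes $c_1^\orb(\mathcal{F}_\orb)$ nor any uniformity of part (1) in $\varepsilon$. Writing $\gamma_\varepsilon := \{\langle T_\varepsilon^{n-1}\rangle\} - \langle\alpha^{n-1}\rangle$, it invokes a Kobayashi-type bound (\cite[Lemma 5.7.16]{Kob14} together with \cite[Lemma 7.4]{Jin25}): since $\gamma_\varepsilon$ is represented by a closed positive $(n-1,n-1)$-current, there is a constant $C$ depending only on $\mathcal{E}_\orb$ and a fixed orbifold K\"ahler form $\omega_\orb$ such that $\mu_{\gamma_\varepsilon}(\mathcal{F}_\orb)\le C\,\{\omega_\orb\}\cdot\gamma_\varepsilon$ for \emph{every} nonzero subsheaf $\mathcal{F}_\orb$. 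This one-sided, subsheaf-uniform estimate is all that is required: combined with the gap $\mu_{\langle\alpha^{n-1}\rangle}(\mathcal{F}_\orb)\le M<\mu_{\langle\alpha^{n-1}\rangle}(\mathcal{E}_\orb)$ it yields
\(
\mu_{\{\langle T_\varepsilon^{n-1}\rangle\}}(\mathcal{E}_\orb)-\mu_{\{\langle T_\varepsilon^{n-1}\rangle\}}(\mathcal{F}_\orb)\ge\big(\mu_{\langle\alpha^{n-1}\rangle}(\mathcal{E}_\orb)-M\big)+\mu_{\gamma_\varepsilon}(\mathcal{E}_\orb)-C\,\{\omega_\orb\}\cdot\gamma_\varepsilon,
\)
where both error terms tend to $0$; no contradiction sequence and no control of the individual classes $c_1^\orb(\mathcal{F}_k)$ is needed.

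The gap in your proposal is exactly the step you flag as ``the hard part,'' and it is genuinely missing rather than merely deferred. First, the references you lean on (\cite[Lemma~3.8]{DO23}, \cite[Lemma~3.6]{ZZZ25}, Lemma~\ref{lem-orbifold-slope-bound}) only prove that the supremum of slopes is finite and attained; they do not supply a Grothendieck-type boundedness of families of subsheaves of bounded rank and degree on a compact K\"ahler orbifold, much less with respect to the non-K\"ahler movable polarization $\langle\alpha^{n-1}\rangle$, and no such statement is established in this circle of papers. Without it you obtain neither the finiteness of the set of classes $\{c_1^\orb(\mathcal{F}_\orb)\}$ nor its uniform version over the polarizations $\langle T_{\varepsilon_k}^{n-1}\rangle$, and your argument for part (2) collapses at the step ``$M:=\sup_k\|c_1^\orb(\mathcal{F}_k)\|<\infty$.'' Second, even for part (1) alone, discreteness of the lattice of classes plus boundedness of the degrees does not suffice: a linear functional on $H^2(X,\mathbb{R})$ can map a finitely generated discrete subgroup onto a dense subgroup of $\mathbb{R}$, so one must first bound the classes in $H^2(X,\mathbb{R})$, which is again the unavailable boundedness statement; moreover your reduction to saturated subsheaves only places $A_C$ in an interval, since the degrees of non-saturated subsheaves differ from those of their saturations by degrees of torsion sheaves, whose values you would also need to show are finite in number. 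The paper sidesteps all of this in part (1) by descending $\mathcal{F}_\orb$ to a subsheaf of the induced sheaf $\mathcal{E}$ on the quotient space via Lemma~\ref{lem-comparison-slope} — the correction term $\sum_l d_l\,c_1^\orb(E_{l,\orb})\cdot\langle\alpha^{n-1}\rangle$ takes only finitely many values because $0\le d_l\le r(q-1)$ — and then quoting the finiteness result \cite[Lemma~7.3]{Jin25} after passing to a resolution.
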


\begin{proof}
(1). Suppose $\mathcal{H}_\orb \subsetneq \mathcal{E}_\orb$ with $c_1^\orb(\mathcal{H}_\orb) \cdot \langle \alpha^{n-1} \rangle \in A_C$. Let $\mathcal{H}$ (resp.~ $\mathcal{E}$) be the torsion-free sheaf on $X$ induced by $\left({\pi_i}_*\mathcal{H}_i\right)^{G_i}$ (resp.~ $\left({\pi_i}_*\mathcal{E}_i\right)^{G_i}$). By Lemma~\ref{lem-comparison-slope}, we have
\begin{equation}
\label{equ-stablity-varepsilon1}
c_1^\orb(\mathcal{H}_\orb) \cdot \langle \alpha^{n-1} \rangle = c_1(\mathcal{H}) \cdot \langle \alpha^{n-1} \rangle + \sum_{l=1}^{s} d_l \cdot c_1^\orb(E_{l, \orb}) \cdot \langle \alpha^{n-1} \rangle,
\end{equation}
for some integers $0 \le d_l \le r(q-1)$, where $r := \mathrm{rk}(\mathcal{E}_\orb)$ and $q$ is the least common multiple of the orders $|G_i|$. 
Since the values $c_1^\orb(E_{l, \orb}) \cdot \langle \alpha^{n-1} \rangle$ depend only on $X_\orb$, there exists a constant $C'>0$ depending only on $r$ and $X_\orb$ such that
\[
\sum_{l=1}^{s} d_l \cdot c_1^\orb(E_{l, \orb}) \cdot \langle \alpha^{n-1} \rangle \le C'.
\]
Hence $c_1(\mathcal{H}) \cdot \langle \alpha^{n-1} \rangle$ lies in
\[
 \left\{ c_1(\mathcal{F}) \cdot \langle \alpha^{n-1} \rangle \;\middle|\; 0 \neq \mathcal{F} \subsetneq \mathcal{E},\; c_1(\mathcal{F}) \cdot \langle \alpha^{n-1} \rangle \ge C - C' \right\},
\]
which is finite by \cite[Lemma 7.3]{Jin25} (after resolving singularities if necessary). 
Since the correction term in \eqref{equ-stablity-varepsilon1}, namely $\sum_{l=1}^{s} d_l \cdot c_1^\orb(E_{l, \orb}) \cdot \langle \alpha^{n-1} \rangle$, takes only finitely many values, the quantity $c_{1}^{\orb}(\mathcal{H}_{\orb}) \cdot \langle \alpha^{n-1} \rangle$ does as well. Hence we conclude that $A_C$ is finite.

(2). Let $\omega_{\orb}$ be an orbifold K\"ahler form. By (1), there exists a constant $M$ such that
\begin{equation}
\label{eq-(C)-1}
\mu_{\langle \alpha^{n-1} \rangle}(\mathcal{E}_{\orb}) > M \ge \mu_{\langle \alpha^{n-1} \rangle}(\mathcal{F}_{\orb})
\end{equation}
for any $\theta_\orb$-invariant nonzero torsion-free orbi-sheaf $\mathcal{F}_\orb \subsetneq \mathcal{E}_\orb$.
Let $\gamma_\varepsilon := \{ \langle T_\varepsilon^{n-1} \rangle \} - \langle \alpha^{n-1} \rangle$. This is represented by a positive closed $(n-1,n-1)$-current. By \cite[Lemma 5.7.16]{Kob14} and \cite[Lemma 7.4]{Jin25}, there exists a constant $C > 0$ such that for any $0 \neq \mathcal{F}_\orb \subsetneq \mathcal{E}_\orb$,
\begin{equation}
\label{eq-(C)-2}
\mu_{\gamma_\varepsilon }(\mathcal{F}_\orb) \le C \cdot \{ \omega_\orb \} \cdot \gamma_\varepsilon.
\end{equation}
Combining \eqref{eq-(C)-1} and \eqref{eq-(C)-2}, we obtain
\begin{align*}
\mu_{\{ \langle T_\varepsilon^{n-1} \rangle \}}(\mathcal{E}_\orb) - \mu_{\{ \langle T_\varepsilon^{n-1} \rangle \}}(\mathcal{F}_\orb)
&\ge \big( \mu_{\langle \alpha^{n-1} \rangle}(\mathcal{E}_\orb) - M \big)
+ \mu_{\gamma_\varepsilon}(\mathcal{E}_\orb)
- C \cdot \{ \omega_\orb \} \cdot \gamma_\varepsilon.
\end{align*}
Since both $\mu_{\gamma_\varepsilon}(\mathcal{E}_\orb)$ and $\{ \omega_\orb \} \cdot \gamma_\varepsilon$ tend to zero as $\varepsilon \to 0$, it follows that 
\[
\mu_{\{ \langle T_\varepsilon^{n-1} \rangle \}}(\mathcal{E}_\orb) > \mu_{\{ \langle T_\varepsilon^{n-1} \rangle \}}(\mathcal{F}_\orb)
\] for sufficiently small $\varepsilon > 0$.
Therefore, $(\mathcal{E}_\orb, \theta_\orb)$ is $\{\langle T_\varepsilon^{n-1} \rangle\}$-stable for sufficiently small $\varepsilon > 0$.
\end{proof}

\subsection{Intersection number and properties}
\label{subsec-intersection-orbifold-nonpluripolar}
Let $X$ be a compact normal analytic variety in Fujiki's class with quotient singularities in codimension $2$. Let $\mathcal{E}$ be a reflexive sheaf, and let $\alpha_1, \ldots, \alpha_{n-2} \in H^{1,1}_{BC}(X)$ be psef classes. Motivated by \cite[Section 2]{Ou25b}, we define the intersection numbers between $\widehat{c}_2(\mathcal{E})$ and $\langle \alpha_1 \cdots \alpha_{n-2} \rangle$.


By \cite[Theorem 1.2]{Ou24} and \cite[Theorem 1]{KO25}, there exists a bimeromorphic map $f: Y \to X$, where $Y$ has only quotient singularities and admits a standard orbifold structure $Y_{\orb} = \{ (V_i, G_i, \rho_i) \}_{i \in I}$. Define $\mathcal{F} := f^*\mathcal{E} / \Tor$ and set $\mathcal{F}_i := \rho_i^{*} \mathcal{F} / \tor$. Then the sheaf $\mathcal{F}$ defines a torsion-free orbi-sheaf $\mathcal{F}_{\orb} := \{ \mathcal{F}_i \}_{i \in I}$ on $Y_{\orb}$.

According to \cite[Theorem 3.10]{DO23}, there exists a functorial resolution $p_i: U_i \to V_i$ such that each $U_i$ is smooth and the sheaf $\mathcal{H}_i := p_i^* \mathcal{F}_i / \Tor$ is locally free. Moreover, by functoriality, each $U_i$ admits a $G_i$-action, which gives rise to a (not necessarily standard) orbifold structure $Z_{\orb} := \{ (U_i, G_i, \pi_i) \}_{i \in I}$. Let $Z$ be the quotient space associated with $Z_{\orb}$, and let $p: Z \to Y$ be the morphism induced by the collection $\{ p_i \}_{i \in I}$.
Set $q := f \circ p: Z \to X$. Since $\mathcal{H}_{\orb} := \{ \mathcal{H}_i \}_{i \in I}$ is locally free, it defines an orbi-bundle.
(Note that each $\mathcal{H}_i$ is isomorphic to $\pi_i^{*} q^{*} \mathcal{E} / \Tor$ by \cite[Definition 5 below]{Wu22}.)
Thus  we obtain
$$
c_2^{\orb}(\mathcal{H}_{\orb}) \in H^4(Z, \mathbb{R}).
$$
We now define the intersection number. For any psef classes $\alpha_1, \ldots, \alpha_{n-2} \in H^{1,1}_{BC}(X)$, we define
$$
\widehat{c}_2(\mathcal{E}) \cdot \langle \alpha_1 \cdots \alpha_{n-2} \rangle := c_2^{\orb}(\mathcal{H}_{\orb}) \cdot \langle q^* \alpha_1 \cdots q^* \alpha_{n-2} \rangle.
$$
Note that $\langle q^* \alpha_1 \cdots q^* \alpha_{n-2} \rangle \in H_4(Z, \mathbb{R})$, so the intersection number above is well-defined.

\begin{prop}
\label{prop-c2-nonpluri-intersection}
The intersection number $\widehat{c}_2(\mathcal{E}) \cdot \langle \alpha_1 \cdots \alpha_{n-2} \rangle$ defined above is independent of the choice of $Z$ and $q$.
\end{prop}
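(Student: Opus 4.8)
The plan is to reduce the comparison of two admissible data to a single dominating model, and then to separate the two factors of the intersection number according to their different functorialities: the non-pluripolar product is functorial under pushforward of currents, whereas the orbifold second Chern class changes under a bimeromorphic modification only by a class supported on a proper analytic subset, which is annihilated by the non-pluripolar product. Concretely, suppose $(Z,q,\mathcal{H}_{\orb})$ and $(Z',q',\mathcal{H}'_{\orb})$ arise from two runs of the construction. First I would choose a smooth compact variety $\widehat{W}$ dominating both $Z$ and $Z'$ over $X$ (for instance a resolution of the component of $Z\times_X Z'$ that dominates $X$); it again lies in Fujiki's class. Taking $\widehat{W}$, equipped with its trivial (hence standard) orbifold structure, as the starting variety of the construction, that is, applying the functorial resolution \cite[Theorem 3.10]{DO23} to the torsion-free pullback of $\mathcal{E}$, produces a third admissible datum $(Z'',q'',\mathcal{H}''_{\orb})$ with $q''\colon Z''\to X$ bimeromorphic and factoring through both $q$ and $q'$. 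Since $Z''\to Z$ and $Z''\to Z'$ are holomorphic maps of the underlying quotient spaces, by \cite[Remark 3.4 (4)]{DH23} they are morphisms of orbifolds $g_{\orb}\colon Z''_{\orb}\to Z_{\orb}$ and $g'_{\orb}\colon Z''_{\orb}\to Z'_{\orb}$. It therefore suffices to prove, for a single bimeromorphic orbifold morphism $g_{\orb}\colon Z''_{\orb}\to Z_{\orb}$ with $q''=q\circ g$, the identity
$$
c_2^{\orb}(\mathcal{H}''_{\orb})\cdot\langle {q''}^{*}\alpha_1\cdots {q''}^{*}\alpha_{n-2}\rangle=c_2^{\orb}(\mathcal{H}_{\orb})\cdot\langle q^{*}\alpha_1\cdots q^{*}\alpha_{n-2}\rangle,
$$
and then to apply it twice, to $g_{\orb}$ and to $g'_{\orb}$.

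For the non-pluripolar factor, since ${q''}^{*}\alpha_i=g^{*}q^{*}\alpha_i$, computing on a common resolution of $Z''$ exactly as in Lemma~\ref{lem-pluripolar-sing-property} and invoking the projection formula for non-pluripolar products \cite[Remark 1.7]{BEGZ10} gives $g_{*}\langle {q''}^{*}\alpha_1\cdots {q''}^{*}\alpha_{n-2}\rangle=\langle q^{*}\alpha_1\cdots q^{*}\alpha_{n-2}\rangle$ in $H_4(Z,\mathbb{R})$. Combining this with the projection formula for the pairing $H^4(Z,\mathbb{R})\times H_4(Z,\mathbb{R})\to\mathbb{R}$ (available via Poincaré duality for spaces with quotient singularities, \cite[Proposition 5.10]{GK20}) and the naturality $c_2^{\orb}(g_{\orb}^{*}\mathcal{H}_{\orb})=g^{*}c_2^{\orb}(\mathcal{H}_{\orb})$ of orbifold Chern–Weil forms, the right-hand side of the displayed identity equals $c_2^{\orb}(g_{\orb}^{*}\mathcal{H}_{\orb})\cdot\langle {q''}^{*}\alpha_1\cdots {q''}^{*}\alpha_{n-2}\rangle$. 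Hence the claim is reduced to the vanishing
$$
\bigl(c_2^{\orb}(\mathcal{H}''_{\orb})-c_2^{\orb}(g_{\orb}^{*}\mathcal{H}_{\orb})\bigr)\cdot\langle {q''}^{*}\alpha_1\cdots {q''}^{*}\alpha_{n-2}\rangle=0 .
$$

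This last step is the heart of the argument, and the one I expect to be the main obstacle. Since $Z''$ is smooth (with trivial orbifold structure), both $\mathcal{H}''_{\orb}$ and $g_{\orb}^{*}\mathcal{H}_{\orb}$ are honest locally free sheaves of rank $\rk\mathcal{E}$ on $Z''$, and, using the isomorphism $\mathcal{H}_i\cong\pi_i^{*}q^{*}\mathcal{E}/\Tor$ together with $q''=q\circ g$, both are locally free models of the coherent sheaf ${q''}^{*}\mathcal{E}/\Tor$; in particular they are isomorphic over the Zariski-dense open subset $U\subset Z''$ on which ${q''}^{*}\mathcal{E}/\Tor$ is locally free, and $B:=Z''\setminus U$ is a proper analytic subset. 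Consequently the class $\delta:=c_2^{\orb}(\mathcal{H}''_{\orb})-c_2^{\orb}(g_{\orb}^{*}\mathcal{H}_{\orb})\in H^4(Z'',\mathbb{R})$ restricts to zero on $Z''\setminus B$, i.e. it is supported on $B$. It remains to observe that such a $\delta$ pairs to zero with $\langle {q''}^{*}\alpha_1\cdots {q''}^{*}\alpha_{n-2}\rangle$: after pulling back to a resolution of $Z''$ this non-pluripolar product is represented by a closed positive $(n-2,n-2)$-current which puts no mass on locally complete pluripolar sets, hence none on the proper analytic subset $B$ \cite[Subsection 1.2]{BEGZ10}, so that a Stokes-type support argument, in the spirit of \cite[Lemma 9.2]{Ou24}, yields $\delta\cdot\langle {q''}^{*}\alpha_1\cdots {q''}^{*}\alpha_{n-2}\rangle=0$. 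Pinning down precisely that a cohomology class supported on a proper analytic subset is invisible to a non-pluripolar product is where the real care is needed; granting it, applying the reduced identity to $g_{\orb}$ and to $g'_{\orb}$ gives
$$
c_2^{\orb}(\mathcal{H}_{\orb})\cdot\langle q^{*}\alpha_1\cdots q^{*}\alpha_{n-2}\rangle=c_2^{\orb}(\mathcal{H}''_{\orb})\cdot\langle {q''}^{*}\alpha_1\cdots {q''}^{*}\alpha_{n-2}\rangle=c_2^{\orb}(\mathcal{H}'_{\orb})\cdot\langle {q'}^{*}\alpha_1\cdots {q'}^{*}\alpha_{n-2}\rangle,
$$
which is the asserted independence.
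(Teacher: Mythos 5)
Your reduction to a single dominating model and your treatment of the non-pluripolar factor (projection formula on a common resolution) are fine, but the final step --- the one you yourself flag as needing "real care" --- is a genuine gap, and it is precisely the kind of statement this paper goes out of its way to avoid. A degree-$4$ cohomology class $\delta$ that vanishes on the complement of a proper analytic subset $B$ is \emph{not} known to pair to zero with $\langle q''^{*}\alpha_1\cdots q''^{*}\alpha_{n-2}\rangle$ when the $\alpha_i$ are merely psef (or big): the relevant pairing is not the mass of the non-pluripolar measure on $B$, but the integral of the $(n-2,n-2)$-current against a representative of $\delta$, e.g.\ an integration current on a component of $B$, and such integrals can be strictly positive. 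This is exactly the phenomenon behind the paper's "vanishing property" (Definition~\ref{defn-vanishing-exceptional}, which is an unproved hypothesis outside the Moishezon/nef cases, cf.\ Lemma~\ref{lem-VP} and the $\mathrm{Bl}_0\mathbb{C}^2$ example), and behind Remark~\ref{rem-exact-sequence}, where the authors state they cannot even control $\{S_{l,\orb}\}\cdot\langle f^{*}\alpha_1\cdots f^{*}\alpha_{n-2}\rangle$ for a codimension-two cycle. Since Proposition~\ref{prop-c2-nonpluri-intersection} is asserted for arbitrary psef classes with no vanishing-property hypothesis, your argument cannot close. A secondary problem: $g_{\orb}^{*}\mathcal{H}_{\orb}$ does not exist as a bundle on your smooth $Z''$, because a resolution of a quotient singularity does not lift to the orbifold charts (already for $\mathbb{C}^2/\{\pm1\}$ there is no local holomorphic section of $\pi_i$ over a neighbourhood of the exceptional curve); you can only use the cohomology class $g^{*}c_2^{\orb}(\mathcal{H}_{\orb})$, which makes the identification of $\delta$ with a difference of Chern classes of genuine bundles, and hence its support, more delicate than you state.

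The paper's proof is structured exactly so that no such error term $\delta$ ever appears. Given two data $(Z_1,q_1)$, $(Z_2,q_2)$, it resolves the graph to get $W\to Z_1,Z_2$, then (locally on charts) builds resolutions $V_l$ of $U_l\times_{Z_l}W$ and a common smooth model $\widetilde V$ of $V_1\times_W V_2$, so that both orbi-bundles pull back to $\widetilde V$ along \emph{generically finite} maps $\rho_l$. On $\widetilde V$ one has an honest isomorphism $\rho_1^{*}\mathcal{H}_1\cong(\text{composite})^{*}\mathcal{E}/\mathrm{Tor}\cong\rho_2^{*}\mathcal{H}_2$, using that the torsion-free pullback of a locally free torsion-free pullback is the torsion-free pullback of the composite; hence the two Chern classes agree \emph{exactly} upstairs, and the comparison of intersection numbers is completed by the degree factors $1/|G_1||G_2|$. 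If you want to repair your argument, you should replace the direct comparison on $Z''$ by this passage to a generically finite smooth cover of the charts, where the two sheaves become literally isomorphic rather than isomorphic off an analytic set.
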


Before proving Proposition~\ref{prop-c2-nonpluri-intersection}, we make a few remarks. First, in the same way, one can define $\widehat{c}_1(\mathcal{E})^2 \cdot \langle \alpha_1 \cdots \alpha_{n-2} \rangle$ and $\widehat{c}_1(\mathcal{E})\cdot \widehat{c}_1(\mathcal{E}') \cdot \langle \alpha_1 \cdots \alpha_{n-2} \rangle$ for any reflexive sheaves $\mathcal{E}$ and $\mathcal{E}'$ on $X$. 

Second, if all $\alpha_1, \ldots, \alpha_{n-2}$ are nef, then by Lemma~\ref{lem-pluripolar-sing-property}, the intersection number $\widehat{c}_2(\mathcal{E}) \cdot \langle \alpha_1 \cdots \alpha_{n-2} \rangle$ coincides with the usual definition given in \cite[Definition 5.2]{GK20} or \cite[Definition 2.1]{Ou25b} (see also \cite[Section 4]{ZZZ25} for related discussion). Furthermore, if $X$ is smooth in codimension $2$ and $\alpha_1, \ldots, \alpha_{n-2}$ are nef, then the definition of $\widehat{c}_2(\mathcal{E}) \cdot \langle \alpha_1 \cdots \alpha_{n-2} \rangle$ agrees with those in \cite[Definition 4.3]{GKP16} and \cite[Remark 5]{Wu21}.
Indeed, take a strong resolution $\pi \colon \widetilde{X} \to X$ such that $\pi^* \mathcal{E} / \tor$ is locally free. Then, by definition, we have
\[
\widehat{c}_2(\mathcal{E}) \cdot \langle \alpha_1 \cdots \alpha_{n-2} \rangle = c_2(\pi^* \mathcal{E} / \tor) \cdot \langle \pi^* \alpha_1 \cdots \pi^* \alpha_{n-2} \rangle.
\]
The right-hand side is precisely the intersection number defined in \cite{GKP16} and \cite{Wu21}. In fact, the key idea of the proof of Proposition~\ref{prop-c2-nonpluri-intersection} is essentially the same as in the case where $X$ is smooth in codimension two. In that setting, one uses the invariance of $\pi^* \mathcal{E} / \mathrm{Tor}$ under any modification. In our situation, we establish the same invariance on each orbifold chart.

Finally, we introduce the following notation:
\begin{defn}
Let $X$ be a compact normal analytic variety in Fujiki's class with quotient singularities in codimension $2$. We define
\[
\widehat{c}_2(X) := \widehat{c}_2(\Omega_X^{[1]})
\quad \text{and} \quad
\widehat{c}_1(X)^2 := \widehat{c}_1(\Omega_X^{[1]})^2.
\]
For a rank $r$ reflexive sheaf $\mathcal{E}$ on $X$, the \emph{Bogomolov discriminant} is defined by
\[
\widehat{\Delta}(\mathcal{E}) := 2r \widehat{c}_2(\mathcal{E}) - (r-1) \widehat{c}_1(\mathcal{E})^2.
\]
\end{defn}

Now we return to the proof.

\begin{proof}[Proof of Proposition~\ref{prop-c2-nonpluri-intersection}]
We begin with the following setup: Let $q_1 : Z_1 \to X$ and $q_2 : Z_2 \to X$ be proper bimeromorphic maps such that each $Z_l$ has only quotient singularities, and let $Z_{l,\orb}$ denote the associated orbifold structures for $l = 1,2$.
Let $\mathcal{H}_{l, \orb} = \{ \mathcal{H}_{l, i} \}_{i \in I}$ be the orbi-bundle on $Z_{l, \orb}$ induced by $q_l$ and $\mathcal{E}$, constructed as in the setup preceding Proposition~\ref{prop-c2-nonpluri-intersection}. 
Then we have $\mathcal{H}_{l, i} \cong \pi_i^{*} q_l^{*} \mathcal{E} / \Tor$.
Let $\varphi : Z_1 \dashrightarrow Z_2$ be the induced bimeromorphic map, and let $W$ be a resolution of the graph of $\varphi$. We denote by $r_1 : W \to Z_1$ and $r_2 : W \to Z_2$ the induced bimeromorphic maps, so that $q_1 \circ r_1 = q_2 \circ r_2$.

\[
\xymatrix@C=80pt@R=20pt{
 W\ar@{->}[r]^{r_{2}} \ar@{->}[d]_{r_{1}} & Z_{2} \ar@{->}[d]^{q_{2}} \\
 Z_{1} \ar@{->}[r]_{q_{1}} \ar@{-->}[ru]^{\varphi} & X
}
\]

Our goal is to prove the equality
\[
c_2^{\orb}(\mathcal{H}_{1,\orb}) \cdot \langle q_1^* \alpha_1 \cdots q_1^* \alpha_{n-2} \rangle = c_2^{\orb}(\mathcal{H}_{2,\orb}) \cdot \langle q_2^* \alpha_1 \cdots q_2^* \alpha_{n-2} \rangle.
\]
This is equivalent to showing
\begin{equation} \label{eq-orbifold-1}
r_1^* c_2^{\orb}(\mathcal{H}_{1,\orb}) \cdot \sigma = r_2^* c_2^{\orb}(\mathcal{H}_{2,\orb}) \cdot \sigma,
\end{equation}
where $\sigma := \langle r_1^* q_1^* \alpha_1 \cdots r_1^* q_1^* \alpha_{n-2} \rangle = \langle r_2^* q_2^* \alpha_1 \cdots r_2^* q_2^* \alpha_{n-2} \rangle$.

Using refined local charts and a partition of unity, we may reduce to the local case where each $Z_l$ admits an orbifold chart $(U_l, G_l, \pi_l)$ with $Z_l = U_l / G_l$ for $l = 1,2$. In this local setting, the orbi-sheaf $\mathcal{H}_{l, \orb}$ is given by the locally free orbi-sheaf $\mathcal{H}_l$ on $U_l$ such that $\mathcal{H}_l \cong \pi_l^{*} q_l^{*} \mathcal{E} / \Tor$.

Adapting the argument in \cite[Lemma 1.10]{Bla96}, we proceed as follows:
Let $V_l$ be a resolution of the normalization of $U_l \times_{Z_l} W$, and denote by $f_l : V_l \to U_l$ the induced bimeromorphic map. The induced map $g_l : V_l \to W$ is generically finite of degree $|G_l|$ and étale over $r_l^{-1}(Z_{l,\mathrm{reg}})$. Let $\widetilde{V}$ be a resolution of the normalization of $V_1 \times_W V_2$, and let $h_l : \widetilde{V} \to V_l$ be the induced maps. Then $h_1$ (resp.~ $h_2$) is generically finite of degree $|G_2|$ (resp.~ $|G_1|$). We set $\rho_l := f_l \circ h_l : \widetilde{V} \to U_l$.
This gives rise to the diagram:
\begin{equation}
\label{eq-diagram}
\xymatrix@C=80pt@R=30pt{
 U_{1} \ar@{->}[d]_{\pi_1}^{\text{{\tiny \'etale}}} & \ar@{->}[l]_{f_1}^{\text{{\tiny bimero.}}} V_{1} \ar@{->}[d]^{g_1}_{\shortstack{{\tiny gen.\ fin.}\\[-1pt]{\tiny deg.\ $|G_1|$}}}
 & \ar@{->}[l]_{h_1}^{\text{{\tiny gen.\ fin.\ deg.\ $|G_2|$}}}  \widetilde{V} \ar@{->}[r]^{h_2}_{\text{{\tiny gen.\ fin.\ deg.\ $|G_1|$}}} \ar@/_18pt/[ll]_{\rho_1} \ar@/^18pt/[rr]^{\rho_2} & \ar@{->}[r]^{f_2}_{\text{{\tiny bimero.}}} V_{2} \ar@{->}[d]_{g_2}^{\shortstack{{\tiny gen.\ fin.}\\[-1pt]{\tiny deg.\ $|G_2|$}}} & U_2 \ar@{->}[d]_{\pi_2}^{\text{{\tiny \'etale}}} \\
 Z_1 & \ar@{->}[l]_{r_1}^{\text{{\tiny bimero.}}} W & \ar@{=}[l] W \ar@{=}[r] & W \ar@{->}[r]^{r_2}_{\text{{\tiny bimero.}}} & Z_2
}
\end{equation}

We interpret $\sigma \in H^{2n-4}_{dR}(W, \mathbb{R})$ using the de Rham isomorphism \cite{Sat56} together with Poincaré duality \cite{GK20}. 
As a slight abuse of notation, we interpret the pairing $r_1^* c_2^{\orb}(\mathcal{H}_{1,\orb}) \cdot \sigma$ as the integral $\int_W \eta \wedge \eta'$, where we choose smooth forms $\eta$ and $\eta'$ satisfying $r_1^* c_2^{\orb}(\mathcal{H}_{1,\orb}) = \{ \eta \}$ and $\sigma = \{ \eta' \}$. With this notation, we obtain:
\begin{equation}
\label{eq-orbifold-3}
r_1^* c_2^{\orb}(\mathcal{H}_{1,\orb}) \cdot \sigma = \frac{1}{|G_1|} g_1^* r_1^* c_2^{\orb}(\mathcal{H}_{1,\orb}) \cdot g_1^* \sigma 
\underset{\text{(\ref{eq-diagram})}}{=}  \frac{1}{|G_1||G_2|} \rho_1^* \pi_1^* c_2^{\orb}(\mathcal{H}_{1,\orb}) \cdot h_1^* g_1^* \sigma.
\end{equation}
A similar identity holds for $r_2^* c_2^{\orb}(\mathcal{H}_{2,\orb}) \cdot \sigma$. 
In our setting, we have
\[
\rho_1 ^* \mathcal{H}_{1} 
\cong (q_1 \circ \pi_1 \circ \rho_1)^* \mathcal{E} / \tor 
\underset{\text{(\ref{eq-diagram})}}{\cong} (q_2 \circ \pi_2 \circ \rho_2)^* \mathcal{E} / \tor 
\cong \rho_2^* \mathcal{H}_{2}.
\]
Therefore, we obtain $\rho_1^* \pi_1^* c_2^{\orb}(\mathcal{H}_{1,\orb}) = \rho_2^* \pi_2^* c_2^{\orb}(\mathcal{H}_{2,\orb})$. Since $h_1^* g_1^* = h_2^* g_2^*$, it follows that
\[
\rho_1^* \pi_1^* c_2^{\orb}(\mathcal{H}_{1,\orb}) \cdot h_1^* g_1^* \sigma = \rho_2^* \pi_2^* c_2^{\orb}(\mathcal{H}_{2,\orb}) \cdot h_2^* g_2^* \sigma.
\]
Therefore, by \eqref{eq-orbifold-3}, we obtain the desired identity \eqref{eq-orbifold-1}, which completes the proof.
\end{proof}
\begin{rem}
When \( X \) is smooth and \( \mathcal{E} \) is a reflexive sheaf, the second Chern class \( c_2(\mathcal{E}) \in H^{4}(X, \mathbb{R}) \) is well-defined, and one can consider its intersection number with \( \langle \alpha_{1} \cdots \alpha_{n-2} \rangle \in H_4(X, \mathbb{R})\). However, it is not clear whether this intersection number coincides with the one given in Proposition~\ref{prop-c2-nonpluri-intersection}. This issue arises for the same reason as in Remark~\ref{rem-mixed-slope}. If \( \mathcal{E} \) is locally free, then the two intersection numbers agree.
\end{rem}

At the end of this subsection, we prove several propositions that will be used later.
\begin{prop}
\label{prop-exact-sequence-2}
Let \( X \) be a compact normal analytic variety in Fujiki's class with quotient singularities in codimension 2. 
Let \( \alpha_1, \ldots, \alpha_{n-2} \in H^{1,1}_{BC}(X) \) be psef classes. 
Consider an exact sequence of reflexive sheaves:
\begin{equation}
\label{eq-exact-locally-spplitable}
0 \to \mathcal{E} \to \mathcal{F} \overset{\phi}{\to}  \mathcal{G} \to 0,
\end{equation}
If the sequence \eqref{eq-exact-locally-spplitable} is locally split $($i.e., it splits over small analytic open sets$)$, then the following equality holds:
\begin{equation*}
\widehat{c}_2(\mathcal{F}) \cdot \langle \alpha_1 \cdots \alpha_{n-2} \rangle 
=
\left( 
\widehat{c}_2(\mathcal{E}) 
+ \widehat{c}_2(\mathcal{G}^{\vee\vee}) 
+ \widehat{c}_1(\mathcal{E}) \cdot \widehat{c}_1(\mathcal{G}^{\vee\vee})
\right) \cdot \langle \alpha_1 \cdots \alpha_{n-2} \rangle.
\end{equation*}
\end{prop}
\begin{proof}
As in Subsection~\ref{subsec-intersection-orbifold-nonpluripolar}, take a bimeromorphic morphism \( q \colon Z \to X \) such that \( Z \) admits an orbifold structure \( Z_{\orb} = \{ (U_i, G_i, \pi_i) \}_{i \in I} \) and an orbi-bundle \( \mathcal{F}_{\orb} = \{ \mathcal{F}_i \}_{i \in I} \) on \( Z_{\orb} \) satisfying
\begin{equation*}
\label{eq-modification-equal}
\widehat{c}_2(\mathcal{F}) \cdot \langle \alpha_1 \cdots \alpha_{n-2} \rangle = c_2^{\orb}(\mathcal{F}_{\orb}) \cdot \langle q^*\alpha_1 \cdots q^*\alpha_{n-2} \rangle.
\end{equation*}
Similarly, we can define the orbi-bundles \( \mathcal{E}_{\orb} := \{\mathcal{E}_i\}_{i \in I} \) and \( \mathcal{G}_{\orb} :=\{\mathcal{G}_i\}_{i \in I} \) on \( Z_{\orb} \) from \( \mathcal{E} \) and \( \mathcal{G} \), respectively. Then we obtain the following exact sequence on \( Z_{\orb} \):
\begin{equation*}
\label{eq-IMM24-1}
0 \to \mathcal{E}_{\orb} \to \mathcal{F}_{\orb} \overset{\phi_{\orb}}{\to} \mathcal{G}_{\orb} \to \mathcal{T}_{\orb} \to 0,
\end{equation*}
where \( \mathcal{T}_{\orb} := \Coker(\phi_{\orb}) \) is a torsion orbi-sheaf, and \( \phi_{\orb} \) is induced by the morphism \( \phi \colon \mathcal{F} \to \mathcal{G} \) in \eqref{eq-exact-locally-spplitable}. Thus it is enough to show that \( \mathcal{T}_{\orb}=0 \), and then to show that the morphism \( \phi_{\orb} \colon \mathcal{F}_{\orb} \to \mathcal{G}_{\orb} \) admits a local holomorphic section.

Write \( \phi_{\orb} = \{ \phi_i \}_{i \in I} \), where each \( \phi_i \colon \mathcal{F}_i \to \mathcal{G}_i \) is defined over \( U_i \). Since the sequence \eqref{eq-exact-locally-spplitable} is locally split, we may shrink \( U_i \) if necessary to obtain a Zariski closed subset \( W_i \subset U_i \) such that there exists a morphism \( \eta_i \colon \mathcal{G}_i \to \mathcal{F}_i \) on \( U_i \setminus W_i \) satisfying \( \phi_i \circ \eta_i = \mathrm{id}_{\mathcal{G}_i} \). This local splitting originates from the local splitting of the original morphism \( \phi \colon \mathcal{F} \to \mathcal{G} \) on \( X \). Hence, by the Riemann extension theorem, \( \eta_i \) extends holomorphically over \( U_i \), since it can be identified with a tuple of holomorphic functions that remain bounded near \( W_i \). Therefore, \( \eta_i \) provides a holomorphic section of \( \phi_i \), which proves that \( \phi_{\orb} \) is locally split, as required.
\end{proof}

\begin{lem}
\label{lem-Hodge-index-orb}
Let $X$ be a compact normal analytic variety in Fujiki's class with quotient singularities in codimension $2$. Let $\beta, \alpha_1, \ldots, \alpha_{n-2} \in H^{1,1}_{\mathrm{BC}}(X)$ be psef classes. If
\(
\langle \beta^2 \cdot \alpha_1 \cdots \alpha_{n-2} \rangle > 0,
\)
then for any reflexive sheaf $\mathcal{F}$, we have
\begin{equation*} \label{equa-hodge1}
    (\widehat{c}_1(\mathcal{F})^2 \cdot \langle \alpha_1 \cdots \alpha_{n-2} \rangle) \cdot (\langle \beta^2 \cdot \alpha_1 \cdots \alpha_{n-2} \rangle) \leq (\widehat{c}_1(\mathcal{F}) \cdot \langle \beta \cdot \alpha_1 \cdots \alpha_{n-2} \rangle)^2.
\end{equation*}
\end{lem}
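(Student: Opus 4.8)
The plan is to follow, essentially word for word, the reduction used to prove Proposition~\ref{prop-Hodge-index}: I would transport the inequality through an orbifold modification to a setting in which $\widehat{c}_1(\mathcal{E})$ is represented by an honest degree-two class, and then invoke Lemma~\ref{lem-Hodge-index} in place of Lemma~\ref{lem-orbifold-Hodge-index}.

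Concretely, as in Subsection~\ref{subsec-intersection-orbifold-nonpluripolar}, I would fix a proper bimeromorphic morphism $q\colon Z\to X$ with $Z$ carrying an orbifold structure $Z_{\orb}$ together with the orbi-bundle $E_{\orb}=\mathcal{H}_{\orb}$ produced there, so that, by the very definitions of the orbifold intersection numbers,
\[
\widehat{c}_1(\mathcal{E})^2\cdot\langle\alpha_1\cdots\alpha_{n-2}\rangle=c_1^{\orb}(E_{\orb})^2\cdot\langle q^{*}\alpha_1\cdots q^{*}\alpha_{n-2}\rangle
\]
and
\[
\widehat{c}_1(\mathcal{E})\cdot\langle\beta\cdot\alpha_1\cdots\alpha_{n-2}\rangle=c_1^{\orb}(E_{\orb})\cdot\langle q^{*}\beta\cdot q^{*}\alpha_1\cdots q^{*}\alpha_{n-2}\rangle .
\]
By Lemma~\ref{lem-orbifold-BC}(2) I identify $c_1^{\orb}(E_{\orb})\in H^{1,1}_{BC}(Z_{\orb})$ with a class $\gamma\in H^{1,1}_{BC}(Z)$ whose image under $\delta^1$ is $c_1^{\orb}(E_{\orb})$, so that the Bott--Chern intersection products of $\gamma$ on $Z$ coincide with the orbifold ones. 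The classes $q^{*}\beta,q^{*}\alpha_1,\dots,q^{*}\alpha_{n-2}$ are psef, and $Z$, being bimeromorphic to $X$, lies in Fujiki's class (and carries a big class, exactly as in Proposition~\ref{prop-Hodge-index}). I next check that the hypothesis survives the pullback, i.e.\ $\langle(q^{*}\beta)^2\cdot q^{*}\alpha_1\cdots q^{*}\alpha_{n-2}\rangle=\langle\beta^2\cdot\alpha_1\cdots\alpha_{n-2}\rangle>0$, because the top-degree mixed non-pluripolar product is a bimeromorphic invariant; one computes both numbers on a common resolution of $X$ and of $Z$ and applies the projection formula \cite[Remark~1.7]{BEGZ10}. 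Finally I apply Lemma~\ref{lem-Hodge-index} on $Z$ to $\gamma$, with $q^{*}\beta$ playing the role of $\beta$ and $q^{*}\alpha_i$ the role of $\alpha_i$, and read the resulting inequality back through the two displayed identities; this is precisely the asserted inequality.

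The parts that need care rather than ingenuity are (i) that the definition of $\widehat{c}_1(\mathcal{E})\cdot\langle\beta\cdot\alpha_1\cdots\alpha_{n-2}\rangle$ through $q$ and $E_{\orb}$ is independent of the chosen orbifold modification --- the same diagram chase as in Proposition~\ref{prop-c2-nonpluri-intersection} --- and (ii) the bimeromorphic invariance of the mixed top non-pluripolar product invoked in the positivity transfer. Both facts are already built into the proofs of Propositions~\ref{prop-c2-nonpluri-intersection} and~\ref{prop-Hodge-index}, which is why the statement genuinely follows ``by the same method.'' Note that the codimension-two quotient-singularity hypothesis on $X$ is used precisely to produce $Z_{\orb}$ and $E_{\orb}$, i.e.\ to give meaning to $\widehat{c}_1(\mathcal{E})$ as a class that can be fed into a Hodge index inequality; beyond Lemma~\ref{lem-Hodge-index} no further analytic input is required.
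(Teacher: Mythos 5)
Your proposal is correct and coincides with the paper's intended argument: the authors explicitly omit the proof, stating that it follows ``by the same method'' as Proposition~\ref{prop-Hodge-index} with Lemma~\ref{lem-Hodge-index} substituted for Lemma~\ref{lem-orbifold-Hodge-index}, which is precisely the reduction you carry out (orbifold modification $q\colon Z\to X$, identification of $\widehat{c}_1(\mathcal{E})$ with $c_1^{\orb}(E_{\orb})$, bimeromorphic invariance of the non-pluripolar products, then the Hodge index inequality on $Z$).
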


\begin{proof}
We use the same notation as in the proof of Proposition \ref{prop-exact-sequence-2}. Moreover, we may assume that
\[
\widehat{c}_1(\mathcal{F}) \cdot 
\langle \beta \cdot  \alpha_1 \cdots \alpha_{n-2} \rangle 
= c_1^{\orb}(F_{\orb}) \cdot\langle q^{*} \beta \cdot q^* \alpha_1 \cdots q^*\alpha_{n-2} \rangle.
\]
Therefore, the result follows by applying Lemma~\ref{lem-Hodge-index} to $\gamma := c_1^{\orb}(F_{\orb}) \in H^{1,1}_{\mathrm{BC}}(Z)$ on $Z$.
\end{proof}

\subsection{Proof of Theorem \ref{thm-BGinequality-main}}
\label{subsec-BG-nonpluripolar}
In this subsection, we prove Theorem~\ref{thm-BGinequality-main}. The proof reduces to the orbifold case, so we will establish the corresponding results for orbifolds in what follows.

\begin{lem}[{cf.~\cite[Corollary~14.13]{Dem12}, \cite[Theorem~6]{Wu23}}]
\label{lem-demailly-app-orbifold}
Let $X_{\orb} = \{ (U_i, G_i, \pi_i) \}_{i \in I}$ be a compact K\"ahler orbifold with the quotient space $X$.
Let $\gamma$ be a smooth $(1,1)$-form with local potentials on $X$, and let $T$ be a closed $(1,1)$ current with local potentials on $X$ such that $T \ge \gamma$.

Then there exists a sequence of closed $(1,1)$ currents $T_m$ on $X$ in the cohomology class $\{T\}$, each with local potentials given by the logarithm of a sum of squares of holomorphic functions, and a sequence $\varepsilon_m > 0$ with $\varepsilon_m \to 0$, such that the following properties hold:
\begin{enumerate}[label=$(\arabic*)$]
    \item $T_m$ is less singular than $T$ and converges weakly to $T$.
    \item $T_m \ge \gamma - \varepsilon_m \omega$ for some K\"ahler form $\omega$.
    \item For any prime divisor $E$ on $X$, 
    $$
    \nu(T, E) - \frac{n}{m} \le \nu(T_m, E) \le \nu(T, E).
    $$
\end{enumerate}
\end{lem}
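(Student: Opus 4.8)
The plan is to reduce everything to a statement on the orbifold charts and then invoke the orbifold version of Demailly's regularization theorem from \cite{Wu23}. Concretely, by Lemma~\ref{lem-orbifold-BC}~(3), the Kähler class on $X$ corresponds under $\Phi_X$ to an orbifold Kähler class on $X_\orb$, so $X_\orb$ carries an orbifold Kähler form $\omega_\orb$; fix one. Writing $T = \{\theta\}$ with $\theta$ a smooth form with local potentials and $T = \theta + dd^c\varphi$ for a $\theta$-psh function $\varphi$, and similarly writing $\gamma$ via its local potentials, the hypothesis $T \ge \gamma$ pulls back to the charts to give a $G_i$-invariant quasi-psh function $\varphi_i := \varphi\circ\pi_i$ with $dd^c\varphi_i$ lying in the orbifold class $\Phi_X(\{T\})$ and satisfying $dd^c\varphi_i \ge \pi_i^*\gamma$ on each $U_i$. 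This is exactly the input required by \cite[Theorem~6]{Wu23}: regularization of a $G_i$-invariant closed positive (more precisely, bounded below) $(1,1)$-current on an orbifold.

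First I would apply \cite[Theorem~6]{Wu23} (the orbifold analogue of \cite[Corollary~14.13]{Dem12}) to produce a sequence of orbifold closed $(1,1)$-currents $T_{m,\orb}$ in the class $\Phi_X(\{T\})$, each with orbifold local potentials of the form $\tfrac{1}{2m}\log\sum|g_{j}|^2$ for holomorphic (on the charts, $G_i$-semi-invariant) functions, together with constants $\varepsilon_m \to 0$ such that: $T_{m,\orb}$ is less singular than and converges weakly to the orbifold current $\{\pi_i^*T\}$; $T_{m,\orb} \ge \{\pi_i^*\gamma\} - \varepsilon_m\omega_\orb$; and the Lelong-number estimate $\nu(\{\pi_i^*T\},\cdot) - \tfrac{n}{m} \le \nu(T_{m,\orb},\cdot) \le \nu(\{\pi_i^*T\},\cdot)$ holds along every orbifold prime divisor. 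Next I would descend: by Claim~\ref{claim-orbifold-BC-correspondence} (applied with the class $\Phi_X(\{T\})$, noting the $T_{m,\orb}$ are positive after adding a large multiple of $\omega_\orb$, or directly by the same descent argument for currents bounded below), each $T_{m,\orb}$ is the pullback system $\{\pi_i^* T_m\}$ of a unique closed $(1,1)$-current $T_m$ with local potentials on $X$, lying in the class $\{T\}$. The local potentials of $T_m$ are then $\tfrac{1}{2m}\log$ of sums of squares of holomorphic functions on $X$, since the $G_i$-invariant orbifold potentials descend to $X_i$ and extend across the branch locus by \cite[Theorem~1.7]{Dem85}.

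It then remains to translate properties (1)--(3) from the orbifold to $X$. Property~(1): weak convergence $T_{m,\orb}\to\{\pi_i^*T\}$ and the relation $\pi_i^*(\cdot)$ being a local biholomorphism off the branch locus give weak convergence $T_m\to T$ on $X_\reg$, hence on $X$ since currents put no mass on the (pluripolar) singular and branch loci; "less singular than" descends because the potential comparison $\varphi_{m,i} \ge \varphi_i + O(1)$ is $G_i$-invariant and passes to the quotient. Property~(2): $T_m \ge \gamma - \varepsilon_m\omega$ on $X$ follows by testing on $X_\reg$ using $\pi_i^*T_m = T_{m,\orb} \ge \pi_i^*\gamma - \varepsilon_m\omega_\orb$, where $\omega$ is the Hermitian form on $X$ obtained from $\omega_\orb$ (or simply any fixed Kähler form, after absorbing a constant into $\varepsilon_m$). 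Property~(3): by the definition of the Lelong number along a prime divisor $E\subset X$ (Definition~\ref{def-BC-positivity}~(4)) and the fact that $\pi_i$ is quasi-étale, one has $\nu(T_m,E) = \nu(T_{m,\orb}, \widetilde E_\orb)$ and $\nu(T,E) = \nu(\{\pi_i^*T\}, \widetilde E_\orb)$ for the corresponding orbifold divisor (or a suitable component thereof), so the orbifold estimate transfers verbatim.

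The main obstacle, and the step warranting the most care, is verifying that \cite[Theorem~6]{Wu23} applies in the generality needed here — namely to a current that is merely bounded below by a smooth form rather than positive, and on an orbifold structure that is \emph{not} assumed standard (the orbifold $Z_\orb$ arising in our applications need not act freely in codimension one). For the first point, the standard device is to add a large multiple $C\omega_\orb$ to make $\{\pi_i^*\gamma\} + C\{\omega_\orb\}$ Kähler and the current positive, run the regularization, then subtract; one must check the Lelong numbers and the less-singular relation are unaffected, which is routine. For the orbifold non-standardness, I would note, exactly as in the footnote to Lemma~\ref{lem-orbifold-BC}, that the standard hypothesis in \cite{Wu23} is not used in the regularization argument: the construction is local on the charts $U_i$, $G_i$-equivariant by averaging, and the gluing is automatic, so effectiveness of $X_\orb$ suffices. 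I would make this reduction explicit rather than cite it as folklore.
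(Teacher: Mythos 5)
Your proposal is correct and follows essentially the same route as the paper: reduce to the orbifold charts, apply the orbifold Demailly regularization of \cite[Theorem~6]{Wu23} to the pulled-back quasi-psh potentials, descend the $G_i$-invariant regularized potentials to $X$ via \cite[Theorem~1.7]{Dem85} (as in Claim~\ref{claim-orbifold-BC-correspondence}), and transfer properties (1)--(3) through the quasi-\'etale charts. The only cosmetic difference is that the paper first normalizes $\gamma = 0$ by replacing $T$ with $T-\gamma$, so Wu's theorem is applied directly to a quasi-psh function and your ``add $C\omega_{\orb}$ and subtract'' device is unnecessary.
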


\begin{proof}
By replacing $T$ with $T - \gamma$, we may assume $\gamma = 0$. Let $\theta$ be a smooth $(1,1)$-form with local potentials such that $\{ \theta \} = \{ T \} \in H^{1,1}_{BC}(X)$, and let $\varphi$ be a $\theta$-psh function with $T = \theta + dd^c \varphi$. 
Let $\Phi_X : H^{1,1}_{BC}(X) \to H^{1,1}_{BC}(X_{\orb})$ be the natural isomorphism as in Lemma~\ref{lem-orbifold-BC}. 
Choose K\"ahler forms $\omega$ on $X$ and $\omega_{\orb}$ on $X_{\orb}$ such that $\Phi_X(\{\omega\}) = \{\omega_{\orb}\}$.

Define $\varphi_{\orb} := \{ \pi_i^* \varphi \}_{i \in I}$ and $\theta_{\orb} := \{ \pi_i^* \theta \}_{i \in I}$. Since $dd^c \varphi_{\orb} \ge -\theta_{\orb}$, the orbifold version of Demailly's approximation theorem \cite[Theorem~6]{Wu23} implies that there exist a sequence of orbifold quasi-psh functions $\varphi_{m,\orb}$ and a sequence $\varepsilon_m > 0$ with $\varepsilon_m \to 0$ such that:
\begin{itemize}
    \item $\varphi_{m,\orb}$ has the same singularities as the logarithm of a sum of squares of $G_i$-invariant holomorphic functions on $U_i$, after possibly shrinking the orbifold charts.
    \item $\varphi_{\orb} \le \varphi_{m,\orb}$ and $\varphi_{m,\orb}$ converges to $\varphi_{\orb}$ pointwise and in $L^1(X_{\orb})$.
    \item $dd^c \varphi_{m,\orb} \ge - \theta_{\orb} - \varepsilon_m \omega_{\orb}$.
    \item $\nu(\varphi_{\orb}, x) - \frac{n}{m} \le \nu(\varphi_{m,\orb}, x) \le \nu(\varphi_{\orb}, x)$ for any $x \in X_{\orb}$.
\end{itemize}

As in Claim~\ref{claim-orbifold-BC-correspondence} (using the fact that $\varphi_{m,\orb}$ is locally bounded above on each $U_i$ and applying \cite[Theorem 1.7]{Dem85}), the psh function $\varphi_{m,\orb}$ induces a $(\theta + \varepsilon_m \omega)$-psh function $\varphi_m$ on $X$. We then define a closed positive $(1,1)$ current with local potentials by
$$
S_m := dd^c \varphi_m + \theta + \varepsilon_m \omega.
$$
By construction, we have $S_m \in \{ \theta + \varepsilon_m \omega \}$. As the $G_i$-invariant holomorphic functions on $U_i$ descend to holomorphic functions on $X_i$, the local potentials of $S_m$ have the same singularities as the logarithm of a sum of squares of holomorphic functions on $X$. Moreover, $S_m$ converges weakly to $T = \theta + dd^c \varphi$, and for any prime divisor $E$ on $X$, we have $\nu(T, E) - \frac{n}{m} \le \nu(S_m, E) \le \nu(T, E)$. Setting $T_m := S_m - \varepsilon_m \omega$, we find that $T_m$ is less singular than $T$, and satisfies all required properties.
\end{proof}

\begin{thm}
\label{thm-Higgs-BG-movable}
Let $X_{\mathrm{orb}} = \{(U_i, G_i, \pi_i)\}_{i \in I}$ be a compact K\"ahler orbifold with the quotient space $X$, and let $(E_{\orb}, \theta_{\orb})$ be a Higgs orbi-bundle of rank $r$ on $X_{\orb}$. Let $\alpha \in H^{1,1}_{BC}(X)$ be a big class on $X$ satisfying the vanishing property $($see Definition~\ref{defn-vanishing-exceptional}$)$.
If $(E_{\orb}, \theta_{\orb})$ is $\langle \alpha^{n-1} \rangle$-semistable, then the following Bogomolov--Gieseker inequality holds:
$$
\left( 2r \, c_{2}^{\orb}(E_\orb) - (r-1) \, c_{1}^{\orb}(E_\orb)^2 \right) \cdot \langle \alpha^{n-2} \rangle \geq 0.
$$
\end{thm}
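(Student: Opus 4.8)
The plan is to deduce the inequality from the Bogomolov--Gieseker inequality for Higgs orbi-bundles that are semistable with respect to a genuine \emph{K\"ahler} class, by approximating the big class $\alpha$ by K\"ahler classes living on suitable orbifold modifications of $X_\orb$.

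\smallskip
\noindent\emph{Step 1 (reduction to the stable case).} By Corollary~\ref{cor-JN-filtration}, $(E_{\orb},\theta_{\orb})$ admits a Jordan--H\"older filtration by saturated $\theta_{\orb}$-invariant orbi-subsheaves with $\langle\alpha^{n-1}\rangle$-stable quotients $G_{i,\orb}$, all of slope $\mu:=\mu_{\langle\alpha^{n-1}\rangle}(E_{\orb})$. Combining the additivity of $c_1^{\orb}$, the super-additivity of $c_2^{\orb}$ under extensions (the orbifold analogue of Proposition~\ref{prop-exact-sequence}; the torsion terms there occur with the favourable sign because the filtration is by saturated subsheaves) and the Hodge-index negativity of the ``first Chern difference'' terms (Proposition~\ref{prop-Hodge-index}, Lemma~\ref{lem-Hodge-index-orb}, which vanish since all quotients share the slope $\mu$), one obtains $\widehat{\Delta}(E_{\orb})\cdot\langle\alpha^{n-2}\rangle\ge\sum_i\widehat{\Delta}(G_{i,\orb})\cdot\langle\alpha^{n-2}\rangle$. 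Hence it suffices to treat the case where $(E_{\orb},\theta_{\orb})$ is $\langle\alpha^{n-1}\rangle$-stable, which I assume from now on.

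\smallskip
\noindent\emph{Step 2 (approximating $\alpha$ by K\"ahler classes on modifications).} Fix an orbifold K\"ahler form $\omega_{\orb}$ on $X_{\orb}$ with class $\{\omega\}$ on $X$, and a K\"ahler current $T\in\alpha$ with minimal singularities. Lemma~\ref{lem-demailly-app-orbifold} applied to $T$ produces closed $(1,1)$-currents $T_m\in\alpha$, with local potentials the logarithm of a sum of squares of holomorphic functions, such that $T_m\ge-\varepsilon_m\omega$ with $\varepsilon_m\downarrow0$, $T_m$ is less singular than $T$, and $\nu(T,E)-\tfrac nm\le\nu(T_m,E)\le\nu(T,E)$ for every prime divisor $E$. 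Put $S_m:=T_m+\varepsilon_m\omega$, a closed positive $(1,1)$-current with analytic singularities in the big class $\alpha+\varepsilon_m\{\omega\}$; since $S_m$ is less singular than $T+\varepsilon_m\omega$ and they are cohomologous, Remark~\ref{rem-nonopluripolar-fact}(1)--(2) give that $\{\langle S_m^{n-1}\rangle\}-\langle\alpha^{n-1}\rangle$ is represented by a positive current tending to $0$ (it is squeezed between $0$ and $\langle(\alpha+\varepsilon_m\{\omega\})^{n-1}\rangle-\langle\alpha^{n-1}\rangle$, which vanishes in the limit by continuity on big classes). Because $S_m$ has analytic singularities, an orbifold log-resolution (\cite[Theorem~3.10]{DO23}) yields a bimeromorphic orbifold morphism $\mu_m\colon Y_{m,\orb}\to X_{\orb}$, an effective orbi-divisor $D_m$ and a smooth semipositive orbi-form $\beta_m$ on $Y_{m,\orb}$ with $\mu_m^*S_m=\beta_m+[D_m]$; adding a small multiple $\delta_m$ of an orbifold K\"ahler form of $Y_{m,\orb}$, with $\delta_m\downarrow0$, we may replace $\beta_m$ by an orbifold K\"ahler form $\omega_{m,\orb}$. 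One checks that $(\mu_m)_*\langle(\mu_m^*S_m)^{n-2}\rangle=(\mu_m)_*[\beta_m^{n-2}]$ and $(\mu_m)_*[\omega_{m,\orb}^{n-2}]$ both converge, in $H_4(X,\R)$, to the homology class $\langle\alpha^{n-2}\rangle$.

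\smallskip
\noindent\emph{Step 3 (transfer of stability and passage to the limit).} By Lemma~\ref{lem-stability-varepsilon}(2) (with the family $\{S_m\}$ on $X_{\orb}$, whose hypotheses were verified in Step~2, using Lemma~\ref{lem-stability-varepsilon}(1) for the slope gap), $(E_{\orb},\theta_{\orb})$ is $\{\langle S_m^{n-1}\rangle\}$-stable for all $m$ large. Since $\alpha$ satisfies the vanishing property, so does $\mu_m^*\alpha$, and pulling back along $\mu_m$ (Proposition~\ref{prop-comparison-stability-Higgs} and its orbifold version, together with Lemma~\ref{lem-DO23-lemma3} for the $\theta$-invariance) shows that the Higgs orbi-bundle $\mu_m^*(E_{\orb},\theta_{\orb})$ is $\{\langle(\mu_m^*S_m)^{n-1}\rangle\}$-stable, hence $\{\omega_{m,\orb}^{n-1}\}$-stable for $m$ large, as the two classes differ by $\pi$-exceptional and $O(\delta_m)$ terms invisible to the slope. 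As $\{\omega_{m,\orb}\}$ is an orbifold K\"ahler class, the orbifold Bogomolov--Gieseker inequality for Higgs orbi-bundles that are (semi)stable with respect to a K\"ahler class on a compact K\"ahler orbifold (the orbifold version of the Simpson--Bando--Siu inequality via approximate Hermitian--Einstein/balanced metrics, cf.\ \cite{ZZZ25, Ou24}) gives
\[
\Big(2r\,c_2^{\orb}(\mu_m^*E_{\orb})-(r-1)\,c_1^{\orb}(\mu_m^*E_{\orb})^2\Big)\cdot\{\omega_{m,\orb}^{n-2}\}\ \ge\ 0.
\]
Since orbifold Chern classes are compatible with pullback, the left-hand side equals $\widehat{\Delta}(E_{\orb})\cdot(\mu_m)_*[\omega_{m,\orb}^{n-2}]$. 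Letting $m\to\infty$ and using the convergence $(\mu_m)_*[\omega_{m,\orb}^{n-2}]\to\langle\alpha^{n-2}\rangle$ from Step~2 yields $\widehat{\Delta}(E_{\orb})\cdot\langle\alpha^{n-2}\rangle\ge0$, which is the assertion.

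\smallskip
\noindent\emph{Main obstacle.} The delicate part is Step~3: transferring $\langle\alpha^{n-1}\rangle$-stability to the K\"ahler classes $\{\omega_{m,\orb}\}$ on the resolutions requires simultaneously (i) that the exceptional correction between $\{\omega_{m,\orb}^{n-1}\}$ and $\langle(\mu_m^*\alpha)^{n-1}\rangle$ be annihilated by every relevant first Chern class, which is exactly where the vanishing property of $\alpha$ intervenes; (ii) a uniform slope gap, provided by Lemma~\ref{lem-stability-varepsilon}(1); and (iii) that $\{\langle S_m^{n-1}\rangle\}-\langle\alpha^{n-1}\rangle$ be a positive current tending to $0$, which rests on the monotonicity and continuity of non-pluripolar products on big classes. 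The other point demanding care is that the pushed-forward intersection numbers $(\mu_m)_*[\omega_{m,\orb}^{n-2}]$ genuinely converge to the homology class $\langle\alpha^{n-2}\rangle$ rather than to some nearby class; this again uses the vanishing property to discard the exceptional and $O(\delta_m)$ contributions.
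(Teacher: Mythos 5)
Your overall strategy (Demailly approximation of $\alpha$ by currents with analytic singularities, log resolution to a semipositive class, the K\"ahler-polarized orbifold Bogomolov--Gieseker inequality of \cite{ZZZ25}, then a limit) is the same as the paper's, but Step~1 contains a genuine gap. You reduce to the stable case by running a Jordan--H\"older filtration on $X_{\orb}$ itself and invoking ``the orbifold analogue of Proposition~\ref{prop-exact-sequence}'' to get super-additivity of $\widehat{c}_2$ against $\langle\alpha^{n-2}\rangle$. Proposition~\ref{prop-exact-sequence} is proved only for products of \emph{nef} classes: its proof needs $\{S_{l,\orb}\}\cdot f^*\Omega\ge 0$ for the codimension-two components $S_{l,\orb}$ of the support of the torsion correction sheaf, and Remark~\ref{rem-exact-sequence} states explicitly that it is \emph{not known} whether $\{S_{l,\orb}\}\cdot\langle f^*\alpha_1\cdots f^*\alpha_{n-2}\rangle\ge 0$ when the $\alpha_i$ are merely big. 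Saturation of the filtration does not help: the graded pieces are torsion-free but not locally free, so the torsion sheaves $\mathcal{G}_i^{\vee\vee}/\mathcal{G}_i$ supported in codimension $\ge 2$ still appear, and their contribution against $\langle\alpha^{n-2}\rangle$ has unknown sign. The paper avoids this entirely: in the semistable case it first applies Lemma~\ref{lem-DO23-lemma3}~(1) to pull back along an orbifold morphism after which all Jordan--H\"older quotients are orbi-\emph{bundles}; the Langer/Chen discriminant identity \eqref{eq-Langer-orbifold} is then an exact equality with no torsion terms, and the cross terms are handled by the Hodge index theorem (Lemma~\ref{lem-Hodge-index}), which gives $\le 0$ (not $=0$ as you write, but the sign is the favourable one).

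A second, more easily repaired omission: you never reduce to the case where $\alpha$ is modified nef. The paper first replaces $\alpha$ by its positive part $P(\alpha)$, which is legitimate by Proposition~\ref{prop-nonpluripolar-modifiednef} and Lemma~\ref{lem-bigness-positivepart}, and this is what guarantees $\nu(T_\varepsilon,E)=0$ for every prime divisor $E$ (via Lemma~\ref{lem-demailly-app-orbifold}~(3)), hence that the divisorial part $D_m$ in $\mu_m^*S_m=\beta_m+[D_m]$ is $\mu_m$-\emph{exceptional}. Without this, $D_m$ may contain strict transforms of divisors on $X$, and your assertion in Step~3 that $\{\omega_{m,\orb}^{n-1}\}$ and $\langle(\mu_m^*S_m)^{n-1}\rangle$ ``differ by $\pi$-exceptional and $O(\delta_m)$ terms invisible to the slope'' is unjustified, so the transfer of stability to the K\"ahler class breaks down. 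Inserting the $P(\alpha)$ reduction at the start of Step~2 fixes this point; the gap in Step~1 requires replacing your reduction by the paper's pullback-to-bundles argument.
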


In what follows, to simplify notation, we write
\[
\widehat{\Delta}_{\orb}(E_{\orb}) := 2r \, c_{2}^{\orb}(E_\orb) - (r-1) \, c_{1}^{\orb}(E_\orb)^2 \in H^{4}(X, \mathbb{R})
\]
for an orbi-bundle \( E_\orb \) of rank \( r \).

\begin{proof}
Note that $X$ is $\mathbb{Q}$-factorial and has rational singularities by \cite[Proposition 5.15]{KM98} and \cite[Proposition]{Bla96}. By replacing $\alpha$ with its positive part $P(\alpha)$, which is modified nef by \cite[Lemma~2.6]{DH23}, and using Proposition~\ref{prop-nonpluripolar-modifiednef}, we may assume that $\alpha$ is big and modified nef.

\begin{case}[stable case]
First, we consider the case where $(E_\orb, \theta_\orb)$ is $\langle \alpha^{n-1} \rangle$-stable. Let $\omega$ be a K\"ahler form on $X$. We construct a family of K\"ahler currents $\{ T_\varepsilon \}_{\varepsilon > 0}$ with analytic singularities such that:

\begin{enumerate}[label=(\Alph*)]
    \item $T_\varepsilon \in \alpha + \varepsilon \{\omega\}$ and $\nu(T_\varepsilon, E) = 0$ for any prime divisor $E$ on $X$.
    \item For each $k = 1, \ldots, n$, the class $\{ \langle T_\varepsilon^k \rangle \} - \langle \alpha^k \rangle$ is represented by a closed positive current and converges to $0$ as $\varepsilon \to 0$.
\end{enumerate}

Let $T_{\min}$ be the closed positive $(1,1)$ current with minimal singularities in $\alpha$. By applying Demailly’s approximation theorem in Lemma~\ref{lem-demailly-app-orbifold}, for any $\varepsilon > 0$, we obtain a current $S_\varepsilon$ with analytic singularities in the class $\alpha$ such that $S_\varepsilon \ge -\frac{\varepsilon}{2} \omega$. We then define
$$
T_\varepsilon := S_\varepsilon + \varepsilon \omega \in \alpha + \varepsilon \{\omega\},
$$
which is a K\"ahler current. We now verify properties (A) and (B):

\medskip
(A) Since $\alpha$ is big and modified nef, we have $\nu(\alpha, E) = \nu(T_{\min}, E) = 0$ for any prime divisor $E$ (see Subsection~\ref{subsubsec-DZD-nonpluripolar}). By property (3) in Lemma~\ref{lem-demailly-app-orbifold}, it follows that $\nu(T_\varepsilon, E) = 0$ for any prime divisor $E$.

(B) By Lemma~\ref{lem-demailly-app-orbifold}, the current $S_\varepsilon$ is less singular than $T_{\min}$, so $T_\varepsilon$ is less singular than $T_{\min} + \varepsilon \omega$ in the class $\alpha + \varepsilon \{\omega\}$. Hence we have
$$
\{ \langle T_\varepsilon^k \rangle \}
\underset{\text{(Rem.~\ref{rem-nonopluripolar-fact} (1))}}{\geq} 
\{ \langle (T_{\min} + \varepsilon \omega)^k \rangle \}
\underset{\text{(Rem.~\ref{rem-nonopluripolar-fact} (2))}}{\geq} 
\{ \langle T_{\min}^k \rangle \} + (\varepsilon \{\omega\})^k 
\underset{\text{(Lem.~\ref{lem-nonpluripolar-minimlal-singular})}}{\geq} 
\langle \alpha^k \rangle.
$$
On the other hand, since $\langle (\alpha + \varepsilon \{\omega\})^k \rangle \ge \{ \langle T_\varepsilon^k \rangle \}$ and $\langle (\alpha + \varepsilon \{\omega\})^k \rangle \to \langle \alpha^k \rangle$ as $\varepsilon \to 0$ by Remark~\ref{rem-nonopluripolar-fact}~(1) and (2), we conclude that
$$
\{ \langle T_\varepsilon^k \rangle \} \to \langle \alpha^k \rangle
\quad \text{as $\varepsilon \to 0$}. 
$$


\medskip
By Lemma~\ref{lem-stability-varepsilon}~(2), there exists a constant $\varepsilon_0 > 0$ such that for every $0 < \varepsilon < \varepsilon_0$, the Higgs sheaf $(E_\orb, \theta_\orb)$ is $\{ \langle T_{\varepsilon}^{n-1} \rangle \}$-stable. Fix such $\varepsilon > 0$. We now prove the following claim. 

\begin{claim}
\label{claim-Kahler-orbiform}
There exist a compact K\"ahler orbifold $Y_{\orb}$, an orbifold morphism $f_{\orb} : Y_{\orb} \to X_{\orb}$, and a semipositive orbifold form $\tau_{\orb}$ on $Y_{\orb}$ such that:
\begin{enumerate}[label=(\alph*)]
    \item The Higgs orbi-bundle $(f_{\orb}^* E_{\orb}, f_{\orb}^* \theta_{\orb})$ on $Y_{\orb}$ is $\{ \tau_{\orb} \}^{n-1}$-stable, and
    $$
    \widehat{\Delta}_{\orb}(f_{\orb}^* E_{\orb}) \cdot \{ \tau_{\orb} \}^{n-2} = \widehat{\Delta}_{\orb}(E_{\orb}) \cdot \{ \langle T_{\varepsilon}^{n-2} \rangle \}.
    $$
    \item The following Bogomolov--Gieseker inequality holds:
    $$
    \widehat{\Delta}_{\orb}(f_{\orb}^* E_{\orb}) \cdot \{ \tau_{\orb} \}^{n-2} \ge 0.
    $$
\end{enumerate}
\end{claim}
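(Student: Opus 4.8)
The plan is to principalize the analytic singularities of $T_\varepsilon$ in the orbifold category and to take $\tau_\orb$ to be the smooth part of the resulting pullback. Since the local potentials of $T_\varepsilon$ are, up to smooth summands, logarithms of sums of squares of holomorphic functions on $X$, pulling these generators back to the charts $U_i$ defines a $G_i$-linearized coherent ideal orbi-sheaf $\mathcal I_{\varepsilon,\orb}$ on $X_\orb$ whose zero locus is the unbounded locus of $T_\varepsilon$. Applying orbifold embedded resolution (iterated smooth orbifold blow-ups, in the spirit of the functorial resolution of \cite[Theorem~3.10]{DO23}) produces a bimeromorphic orbifold morphism $f_\orb\colon Y_\orb\to X_\orb$ from a compact complex orbifold such that $f_\orb^{-1}\mathcal I_{\varepsilon,\orb}$ is invertible, equal to the ideal of an effective orbi-divisor $D_{\varepsilon,\orb}$ with simple normal crossing support; $Y_\orb$ is again Kähler because a smooth blow-up of a compact Kähler orbifold is Kähler (subtract a small multiple of the exceptional orbi-divisor, exactly as in the manifold case). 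Since $\nu(T_\varepsilon,E)=0$ for every prime divisor $E$ on $X$ by property~(A), the orbi-divisor $D_{\varepsilon,\orb}$ is $f_\orb$-exceptional.

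Next I would set $\tau_\orb:=f_\orb^{*}T_\varepsilon-[D_{\varepsilon,\orb}]$. This is a smooth orbifold $(1,1)$-form, because the pulled-back potential is the logarithm of a local generator of the ideal of $D_{\varepsilon,\orb}$ plus a smooth function; and it is semipositive, since on $Y_\orb\setminus\supp D_{\varepsilon,\orb}$ it equals $f_\orb^{*}T_\varepsilon\ge\varepsilon f_\orb^{*}\omega\ge 0$, hence $\tau_\orb\ge 0$ everywhere by continuity. Moreover $\int_{Y}\tau_\orb^{\,n}=\mathrm{vol}(\alpha+\varepsilon\{\omega\})>0$, so $\{\tau_\orb\}$ is nef and big on $Y_\orb$. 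By Lemma~\ref{lem-nonpluripolar-unbounded}, applied on $Y_\orb$ with the pluripolar set $\supp D_{\varepsilon,\orb}$ (which contains the unbounded loci of the $f_\orb^{*}T_\varepsilon$), one gets $\langle(f_\orb^{*}T_\varepsilon)^{\wedge k}\rangle=1_{Y\setminus\supp D_{\varepsilon,\orb}}\,\tau_\orb^{\wedge k}=\tau_\orb^{\wedge k}$ for $1\le k\le n$, and the projection formula for non-pluripolar products \cite[Remark~1.7]{BEGZ10} gives $(f_\orb)_{*}\tau_\orb^{\wedge k}=\langle T_\varepsilon^{\wedge k}\rangle$. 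The intersection-number identity in~(a) then follows: orbifold Chern classes commute with orbi-bundle pullback, so $\widehat\Delta_\orb(f_\orb^{*}E_\orb)=f_\orb^{*}\widehat\Delta_\orb(E_\orb)$, whence $\widehat\Delta_\orb(f_\orb^{*}E_\orb)\cdot\{\tau_\orb\}^{n-2}=f_\orb^{*}\widehat\Delta_\orb(E_\orb)\cdot\{\tau_\orb^{\wedge(n-2)}\}=\widehat\Delta_\orb(E_\orb)\cdot(f_\orb)_{*}\tau_\orb^{\wedge(n-2)}=\widehat\Delta_\orb(E_\orb)\cdot\{\langle T_\varepsilon^{n-2}\rangle\}$ by the projection formula.

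For the stability assertion in~(a), I would argue as in Proposition~\ref{prop-comparison-stability}. Given a saturated $f_\orb^{*}\theta_\orb$-invariant orbi-subsheaf $\mathcal S_\orb\subsetneq f_\orb^{*}E_\orb$, set $\mathcal S'_\orb:=((f_\orb)_{*}\mathcal S_\orb)^{\vee\vee}\subset E_\orb$; it is generically $\theta_\orb$-invariant by the argument of Lemma~\ref{lem-DO23-lemma3}. The inclusion $\mathcal S_\orb\hookrightarrow f_\orb^{*}E_\orb$ factors through the image of $f_\orb^{*}\mathcal S'_\orb$ and agrees with it off the exceptional locus, so $f_\orb^{*}c_1^\orb(\mathcal S'_\orb)-c_1^\orb(\mathcal S_\orb)$ is represented by an effective divisor (using that $c_1$ of a torsion sheaf is effective, \cite[Proposition~5.6.14]{Kob14}, and that $c_1(f_\orb^{*}\mathcal S'_\orb)=f_\orb^{*}c_1(\mathcal S'_\orb)$). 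Pairing with the nef class $\{\tau_\orb\}^{n-1}$ (Remark~\ref{rem-nonopluripolar-fact}~(5)) and using $c_1^\orb(\mathcal S'_\orb)\cdot\langle T_\varepsilon^{n-1}\rangle=f_\orb^{*}c_1^\orb(\mathcal S'_\orb)\cdot\tau_\orb^{\wedge(n-1)}$ gives $\mu_{\{\tau_\orb\}^{n-1}}(\mathcal S_\orb)\le\mu_{\langle T_\varepsilon^{n-1}\rangle}(\mathcal S'_\orb)$, while $\mu_{\{\tau_\orb\}^{n-1}}(f_\orb^{*}E_\orb)=\mu_{\langle T_\varepsilon^{n-1}\rangle}(E_\orb)$ because $f_\orb^{*}E_\orb$ is a genuine locally free pullback. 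Since $(E_\orb,\theta_\orb)$ is $\langle T_\varepsilon^{n-1}\rangle$-stable, this forces $\mu_{\{\tau_\orb\}^{n-1}}(\mathcal S_\orb)<\mu_{\{\tau_\orb\}^{n-1}}(f_\orb^{*}E_\orb)$, i.e.\ $(f_\orb^{*}E_\orb,f_\orb^{*}\theta_\orb)$ is $\{\tau_\orb\}^{n-1}$-stable. Then~(b) follows from~(a): $(f_\orb^{*}E_\orb,f_\orb^{*}\theta_\orb)$ is a $\{\tau_\orb\}^{n-1}$-stable Higgs orbi-bundle on the compact Kähler orbifold $Y_\orb$ with respect to the nef and big class $\{\tau_\orb\}$, for which the Bogomolov–Gieseker inequality is available by \cite{Ou24,ZZZ25} (if one prefers to invoke only the Kähler case, approximate $\{\tau_\orb\}$ by the Kähler classes $\{\tau_\orb\}+\delta\{\omega_{Y_\orb}\}$, apply Lemma~\ref{lem-stability-varepsilon}~(2), and let $\delta\to 0$), so that $\widehat\Delta_\orb(f_\orb^{*}E_\orb)\cdot\{\tau_\orb\}^{n-2}\ge 0$.

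The hard part will be the bookkeeping of first Chern classes under the bimeromorphic orbifold morphism $f_\orb$—precisely the phenomenon recorded in Remark~\ref{rem-mixed-slope}, that pullbacks of reflexive sheaves and of non-pluripolar products need not be pullbacks. Two features of the construction neutralize it, and both should be stressed: $f_\orb^{*}E_\orb$ is an honest locally free pullback, so its Chern classes are strictly functorial with no correction term; and $\tau_\orb$ is a genuine smooth semipositive (nef) form, so $\langle(f_\orb^{*}T_\varepsilon)^{k}\rangle$ is a bona fide cohomology class $\{\tau_\orb\}^{k}$ against which an effective divisor pairs with a definite sign—this is exactly what makes the slope comparison robust, and is why property~(A) (vanishing Lelong numbers) was arranged for $T_\varepsilon$. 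The remaining ingredients—the orbifold principalization with $Y_\orb$ Kähler, the projection formula, and the generic $\theta$-invariance of pushed-forward subsheaves—are routine and modelled on Lemma~\ref{lem-DO23-lemma3} and Proposition~\ref{prop-comparison-stability}.
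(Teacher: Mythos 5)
Your construction of $Y_{\orb}$, $f_{\orb}$ and $\tau_{\orb}$ (principalization of the singularity ideal of $T_{\varepsilon}$ on the charts, $\tau_{\orb}=f_{\orb}^{*}T_{\varepsilon}-[D_{\varepsilon,\orb}]$, exceptionality of $D_{\varepsilon,\orb}$ from property (A)), the identity $\{\tau_{\orb}\}^{k}=\{\langle (f_{\orb}^{*}T_{\varepsilon})^{k}\rangle\}$, the projection-formula computation in (a), and the deduction of (b) by perturbing $\{\tau_{\orb}\}$ to a K\"ahler class and letting $\delta\to 0$ all coincide with the paper's argument. The gap is in your proof of $\{\tau_{\orb}\}^{n-1}$-stability. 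You assert that $f_{\orb}^{*}c_1^{\orb}(\mathcal S'_{\orb})-c_1^{\orb}(\mathcal S_{\orb})$ is represented by an effective divisor; it is not. Writing $\mathcal S''_{\orb}$ for the image of $f_{\orb}^{*}\mathcal S'_{\orb}$ in $f_{\orb}^{*}E_{\orb}$, one has $c_1^{\orb}(\mathcal S_{\orb})=c_1^{\orb}(\mathcal S''_{\orb})+E_1$ (torsion quotient $\mathcal S_{\orb}/\mathcal S''_{\orb}$) and $f_{\orb}^{*}c_1^{\orb}(\mathcal S'_{\orb})=c_1^{\orb}(\mathcal S''_{\orb})+E_2$ (torsion kernel), with $E_1,E_2$ effective and $f_{\orb}$-exceptional; the difference is $E_2-E_1$, and the term $E_1\cdot\{\tau_{\orb}\}^{n-1}$ enters the needed inequality with the wrong sign. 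The vanishing $E_1\cdot\{\tau_{\orb}\}^{n-1}=0$ that would rescue the comparison genuinely fails: $\{\tau_{\orb}\}^{n-1}=\{\langle(f_{\orb}^{*}T_{\varepsilon})^{n-1}\rangle\}$ is the non-pluripolar power of a pulled-back \emph{current}, not of a class with the vanishing property, and the paper's own example $\varphi=\log(|z|^2+|w|^2)$ on $\mathbb{C}^2$, where $\langle dd^c\pi^{*}\varphi\rangle\cdot[C]=1>0$ on the blow-up, shows that an exceptional divisor can pair strictly positively with such a class even though the current has zero Lelong number along every divisor. So property (A), contrary to what your closing paragraph suggests, does not neutralize this: it makes $D_{\varepsilon,\orb}$ exceptional, but it does not make exceptional divisors orthogonal to $\{\tau_{\orb}\}^{n-1}$, because the relevant blow-ups occur over codimension-$\ge 2$ centers.

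The paper avoids this by routing the stability through the class rather than the current: since $\alpha$ satisfies the vanishing property of Definition~\ref{defn-vanishing-exceptional}, so does $f^{*}\alpha$, and exceptional corrections pair to zero against $\langle(f^{*}\alpha)^{n-1}\rangle$; hence the pullback Higgs orbi-bundle is $\langle(f^{*}\alpha)^{n-1}\rangle$-stable (this is the content of the citation of Lemma~\ref{lem-DO23-lemma3}), and one then passes from $\langle(f^{*}\alpha)^{n-1}\rangle$ to $\{\langle(f_{\orb}^{*}T_{\varepsilon})^{n-1}\rangle\}=\{\tau_{\orb}\}^{n-1}$ by the openness statement of Lemma~\ref{lem-stability-varepsilon}~(2), which requires only that the difference of the two polarizations be a small positive class, not any exceptional vanishing. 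Your argument should be repaired along these lines; as written, the direct slope comparison against $\{\tau_{\orb}\}^{n-1}$ does not close. (A minor imprecision elsewhere: $\int_{Y}\tau_{\orb}^{\,n}=\int_{X}\langle T_{\varepsilon}^{\,n}\rangle$, which is positive but need not equal $\vol(\alpha+\varepsilon\{\omega\})$ since $T_{\varepsilon}$ need not have minimal singularities in its class; positivity is all that is needed for bigness, so this does not affect the conclusion.)
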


\begin{proof}[Proof of Claim \ref{claim-Kahler-orbiform}]
Define the current $T_i := \pi_i^* T_{\varepsilon}$ on $U_i$, and let $T_{\varepsilon, \mathrm{orb}} := \{ T_i \}_{i \in I}$ be the associated orbifold current on $X_{\orb}$. By the construction of $T_{\varepsilon}$ in Lemma~\ref{lem-demailly-app-orbifold}, the local potentials of $T_i$ are given by logarithms of sums of $G_i$-invariant holomorphic functions on $U_i$. Then, by applying the argument in \cite[Corollary~14.13]{Dem12}, we obtain $G_i$-equivariant resolutions $f_i : V_i \to U_i$ such that
$$
f_i^* T_i = \tau_i + [D_i],
$$
where $\tau_i$ is a smooth semipositive $(1,1)$-form and $D_i$ is an $f_i$-exceptional divisor on $V_i$. (The fact that each $D_i$ is exceptional follows from condition~(A). The $G_i$-equivariance of $f_i$ follows from the existence of functorial resolutions; see also \cite[Theorem~3.10]{DO23}. Note that each $f_i$ can be obtained as a sequence of blowups.)

The collection of spaces $\{ V_i \}_{i \in I}$ and morphisms $\{ f_i \}_{i \in I}$ defines an orbifold $Y_{\orb} = \{ (V_i, G_i, \rho_i) \}_{i \in I}$ and an orbifold morphism
$$
f_{\orb} : Y_{\orb} \to X_{\orb}.
$$
Let $Y$ be the quotient space of $Y_{\orb}$, and let $f : Y \to X$ be the induced morphism.

Now define the semipositive orbifold form $\tau_{\orb} := \{ \tau_i \}_{i \in I}$ and the orbi-divisor $D_{\orb} := \{ D_i \}_{i \in I}$. (Since each $f_i$ is a composition of blowups, the family $\{ D_i \}_{i \in I}$ defines a well-defined orbi-divisor by \cite[Subsection~3.3]{DO23}.)
Then we have
$$
f^*_{\orb} T_{\varepsilon, \orb} = [D_{\orb}] + \tau_{\orb}.
$$
The orbi-divisor $D_{\orb}$ induces a divisor $D$ on $Y$. Moreover, there exists a $(1,1)$-form $\tau$ on $Y$, induced by $\tau_{\orb}$, with locally continuous psh potentials such that
\begin{equation}
\label{eq-gamma-nonpluripolar-0}
f^* T_{\varepsilon} = [D] + \tau.
\end{equation}
By the construction (see also Lemma~\ref{lem-orbifold-BC}), we have $\tau_{\orb} = \{ \rho_i^* \tau \}_{i \in I}$. Let $\beta := \{ \tau \} \in H^{1,1}_{BC}(Y)$.

We now show that for each $k = 1, \ldots, n$:
\begin{equation}
\label{eq-gamma-nonpluripolar}
\beta^k = \{ \langle (f^* T_{\varepsilon})^k \rangle \}.
\end{equation}
(Note that this equality is understood via Poincaré duality, identifying $H^{2k}(Y, \mathbb{R})$ with $H_{2n - 2k}(Y, \mathbb{R})$.)
Since $T_{\varepsilon}$ has singularities in codimension at least $2$ by Condition~(A), it has small unbounded locus. Hence, so does $f^* T_{\varepsilon}$. Thus, by Lemma~\ref{lem-nonpluripolar-unbounded} and \eqref{eq-gamma-nonpluripolar-0}, we have
\begin{equation}
\label{eq-gamma-nonpluripolar-2}
\langle (f^* T_{\varepsilon})^k \rangle = \langle \tau^k \rangle.
\end{equation}

Moreover, since $\tau_{\orb}$ is semipositive, the class $\{ \tau_\orb\}$ is nef. Therefore, by Lemma~\ref{lem-orbifold-BC} (3), the class $\beta = \{ \tau \} \in H^{1,1}_{BC}(Y)$ is also nef, and hence
\begin{equation}
\label{eq-gamma-nonpluripolar-3}
\beta^k = \langle \beta^k \rangle.
\end{equation}

Next, we verify that $\beta$ is a big class. Since $T_{\varepsilon}$ is a K\"ahler current, we have $\tau \ge f^* \omega$ for some K\"ahler form $\omega$ on $X$. By \cite[Subsection~3.3]{DO23}, there exists an effective $f$-exceptional $\mathbb{Q}$-divisor $E$ on $Y$ such that the class $\{ f^* \omega - [E] \}$ is K\"ahler. Hence, we can write:
$$
\beta = \{ \tau \} = \{ \tau - f^* \omega \} + \{ [E] \} + \{ f^* \omega - [E] \},
$$
which is the sum of a psef class and a K\"ahler class, and thus is big.
Since the local potential of $\tau$ is continuous, $\tau$ is the current with minimal singularities in $\beta$. Therefore, by Lemma~\ref{lem-nonpluripolar-minimlal-singular}, we obtain:
\begin{equation}
\label{eq-gamma-nonpluripolar-4}
\langle \beta^k \rangle = \{ \langle \tau^k \rangle \}.
\end{equation}
Combining \eqref{eq-gamma-nonpluripolar-2} - \eqref{eq-gamma-nonpluripolar-4}, we conclude the desired identity \eqref{eq-gamma-nonpluripolar}.

We are now ready to verify conditions~(a) and (b) in Claim~\ref{claim-Kahler-orbiform}.

\medskip
(a) By Lemma~\ref{lem-DO23-lemma3}~(2) and Lemma~\ref{lem-stability-varepsilon}~(2), the Higgs orbi-bundle $(f^*_{\orb} E_\orb, f^*_{\orb} \theta_\orb)$ is $\langle (f^* T_{\varepsilon})^{n-1} \rangle$-stable. 
By \eqref{eq-gamma-nonpluripolar}, we conclude that $(f^*_{\orb} E_\orb, f^*_{\orb} \theta_\orb)$ is $\{ \tau_{\orb} \}^{n-1}$-stable.
Applying Proposition~\ref{prop-c2-nonpluri-intersection} together with \eqref{eq-gamma-nonpluripolar}, we obtain:
$$
\widehat{\Delta}_\orb(E_\orb) \cdot \{ \langle T_{\varepsilon}^{n-2} \rangle \}
\underset{\text{(Prop.~\ref{prop-c2-nonpluri-intersection})}}{=}
\widehat{\Delta}_\orb(f^*_{\orb} E_\orb) \cdot \{ \langle f^* T_{\varepsilon}^{n-2} \rangle \}
\underset{\eqref{eq-gamma-nonpluripolar}}{=}
\widehat{\Delta}_\orb(f^*_{\orb} E_\orb) \cdot \beta^{n-2}.
$$
Moreover, since $\beta^{n-2}$ coincides with $\{ \tau_{\orb} \}^{n-2}$ in $H^{2n - 4}(Y, \mathbb{R})$, we have
$$
\widehat{\Delta}_\orb(f^*_{\orb} E_\orb) \cdot \beta^{n-2}
= \widehat{\Delta}_\orb(f^*_{\orb} E_\orb) \cdot \{ \tau_{\orb} \}^{n-2}.
$$

(b) Let $\omega_{Y,\orb}$ be an orbifold K\"ahler form on $Y_\orb$. By Lemma~\ref{lem-stability-varepsilon}~(2) and condition~(a), the Higgs orbi-bundle $(f^*_{\orb} E_\orb, f^*_{\orb} \theta_\orb)$ is $\{ \tau_{\orb} + \delta \omega_{Y,\orb} \}^{n-1}$-stable for sufficiently small $\delta > 0$. Since $\{ \tau_{\orb} + \delta \omega_{Y,\orb} \}$ is a K\"ahler class, the Bogomolov--Gieseker inequality from \cite[Corollary~1.3]{ZZZ25} implies:
$$
\widehat{\Delta}_{\orb}(f^*_{\orb} E_\orb) \cdot \{ \tau_{\orb} + \delta \omega_{Y,\orb} \}^{n-2} \ge 0.
$$
Letting $\delta \to 0$ completes the proof.
\end{proof}

\medskip
We now complete the proof of the Bogomolov--Gieseker inequality for a stable Higgs bundle. From Claim~\ref{claim-Kahler-orbiform}, we obtain
$$
\widehat{\Delta}_\orb(E_\orb) \cdot \{ \langle T_{\varepsilon}^{n-2} \rangle \}
=
\widehat{\Delta}_\orb(f^*_{\orb} E_\orb) \cdot \{ \tau_{\orb} \}^{n-2} \ge 0.
$$
Therefore, by property (B), we conclude the Bogomolov--Gieseker inequality for the $\langle \alpha^{n-1} \rangle$-stable Higgs sheaf:
\begin{equation*}
\widehat{\Delta}_\orb(E_\orb) \cdot \langle \alpha^{n-2} \rangle
=\lim_{\varepsilon \to 0} \widehat{\Delta}_\orb(E_\orb) \cdot \{ \langle T_{\varepsilon}^{n-2} \rangle \}
\ge 0.
\end{equation*}
\end{case}

\begin{case}[Semistable case]
Finally, we consider the case where $(E_\orb, \theta_\orb)$ is $\langle \alpha^{n-1} \rangle$-semistable. We apply the argument of \cite[Proposition~5.1]{Chen22} in the orbifold setting, referring to \cite[Lemma 3.13]{DO23}.
By Corollary~\ref{cor-JN-filtration}, we may take a Jordan–Hölder filtration of $(E_{\orb}, \theta_{\orb})$ with respect to $\langle \alpha^{n-1} \rangle$:
$$
0 = \mathcal{E}_{0, \orb} \subset \mathcal{E}_{1, \orb} \subset \cdots \subset \mathcal{E}_{l, \orb} = E_{\orb},
$$
where each successive quotient $\mathcal{G}_{i, \orb} := \mathcal{E}_{i, \orb} / \mathcal{E}_{i-1, \orb}$ is a $\langle \alpha^{n-1} \rangle$-stable torsion-free orbi-sheaf of rank $r_i$, equipped with a Higgs field $\theta$, and satisfies $\mu_{\langle \alpha^{n-1} \rangle}(\mathcal{G}_{i, \orb}) = \mu_{\langle \alpha^{n-1} \rangle}(E_{\orb})$.
By Lemma~\ref{lem-DO23-lemma3}~(1), there exists an orbifold morphism $f_{\orb} : Y_{\orb} \to X_{\orb}$ from a compact K\"ahler orbifold $Y_{\orb}$ such that the pullback $f_{\orb}^* E_{\orb}$ admits a filtration
$$
0 = \mathcal{E}'_{0, \orb} \subset \mathcal{E}'_{1, \orb} \subset \cdots \subset \mathcal{E}'_{l, \orb} = f_{\orb}^* E_{\orb}
$$
on $Y_{\orb}$, where each successive quotient $\mathcal{G}'_{i, \orb} := \mathcal{E}'_{i, \orb} / \mathcal{E}'_{i-1, \orb}$ is a $\langle f^* \alpha^{n-1} \rangle$-stable orbi-bundle satisfying
\begin{equation}
\label{eq-semistable-slope}
\mu_{\langle f^* \alpha^{n-1} \rangle}(\mathcal{G}'_{i, \orb}) = \mu_{\langle f^* \alpha^{n-1} \rangle}(f^*_{\orb} E_{\orb}).
\end{equation}
Here, $f : Y \to X$ denotes the morphism between the quotient spaces induced by $f_{\orb} : Y_{\orb} \to X_{\orb}$.
As in \cite[Proposition~5.1]{Chen22} (cf.~\cite[Chapter~1, Subsection~6.c]{Nak04}, \cite[Equation (3.6.1)]{Lan04}), we obtain
\begin{align}
\begin{split}
\label{eq-Langer-orbifold}
\frac{\widehat{\Delta}_{\orb}(f^*_{\orb} E_{\orb})}{r} \cdot \langle f^* \alpha^{n-2} \rangle
&= \sum_{i=1}^{l} \frac{\widehat{\Delta}_{\orb}(\mathcal{G}'_{i,\orb})}{r_i} \cdot \langle f^* \alpha^{n-2} \rangle \\
&\quad - \frac{1}{r} \sum_{1 \le i < j \le l} r_i r_j \left( \frac{c_1^{\orb}(\mathcal{G}'_{i,\orb})}{r_i} - \frac{c_1^{\orb}(\mathcal{G}'_{j,\orb})}{r_j} \right)^2 \cdot \langle f^* \alpha^{n-2} \rangle.
\end{split}
\end{align}
Since each $\mathcal{G}'_{i, \orb}$ is $\langle f^* \alpha^{n-1} \rangle$-stable, we have $\widehat{\Delta}_{\orb}(\mathcal{G}'_{i, \orb}) \cdot \langle f^* \alpha^{n-2} \rangle \ge 0$ by the result of case 1.
Furthermore, by the Hodge index theorem (Lemma~\ref{lem-Hodge-index}), we have
\begin{align}
\begin{split}
\left( \frac{c_1^{\orb}(\mathcal{G}'_{i,\orb})}{r_i} - \frac{c_1^{\orb}(\mathcal{G}'_{j,\orb})}{r_j} \right)^2 \cdot \langle f^* \alpha^{n-2} \rangle
\le \frac{\left( \mu_{\langle f^* \alpha^{n-1} \rangle}(\mathcal{G}'_{i,\orb}) - \mu_{\langle f^* \alpha^{n-1} \rangle}(\mathcal{G}'_{j,\orb}) \right)^2}
{\langle f^* \alpha^n \rangle}
 \underset{(\ref{eq-semistable-slope})}{=} 0.
\end{split}
\end{align}

Thus, combining Proposition~\ref{prop-c2-nonpluri-intersection} with \eqref{eq-Langer-orbifold}, we conclude:
$$
\widehat{\Delta}_{\orb}(E_{\orb}) \cdot \langle \alpha^{n-2} \rangle
\underset{\text{(Prop.~\ref{prop-c2-nonpluri-intersection})}}{=}
\widehat{\Delta}_{\orb}(f^*_{\orb} E_{\orb}) \cdot \langle f^* \alpha^{n-2} \rangle
\underset{\eqref{eq-Langer-orbifold}}{\ge}
0.
$$
\end{case}

\end{proof}

\begin{ex}
\label{exa-Keum}
In Theorem \ref{thm-Higgs-BG-movable}, we consider a current $T_{\varepsilon}$ instead of the class $\alpha + \varepsilon \{ \omega \}$, because in general, for a modification $\pi \colon \widetilde{X} \to X$, it may happen that $\langle \pi^* \alpha \rangle \ne \langle \pi^* \alpha - \{ E \} \rangle$ for a $\pi$-exceptional divisor $E$. We illustrate this with two examples.

First, let $\pi \colon Y \to \mathbb{C}\mathbb{P}^2$ be the blow-up at a point, with exceptional divisor $E$, and let $\alpha$ be a nonzero K\"ahler class on $\mathbb{C}\mathbb{P}^2$. Then the class $\pi^* \alpha - \{ E \}$ is a K\"ahler class on $Y$. Hence, it is clear that $\langle \pi^* \alpha \rangle \ne \langle \pi^* \alpha - \{ E \} \rangle$.

Another example, originally due to Keum \cite{Keum08}, involves a fake projective plane $S$ with an automorphism group $G$. Keum showed that if the order of $G$ is $7$, then the minimal resolution of $S/G$ is not of general type. Set $X := S/G$, and let $\pi \colon Y \to X$ be the minimal resolution. By \cite[Lemma~4.5]{Keum08}, there exists a $\mathbb{Q}$-effective exceptional divisor $E$ such that
$$
K_Y + E = \pi^* K_X.
$$
If we had $\langle \pi^* c_1(K_X) \rangle = \langle \pi^* c_1(K_X) - \{ E \} \rangle$, then it would follow that $\langle K_Y \rangle = \pi^* K_X$. Since $K_X$ is nef and big, this would imply that $K_Y$ is big, contradicting the fact that $Y$ is not of general type.
Keum's example also demonstrates that even if the canonical divisor $K_X$ is big, the canonical divisor $K_{\widetilde{X}}$ on a resolution $\widetilde{X} \to X$ is not necessarily big. Therefore, assumption (1) in Theorem~\ref{thm-main-big-canonical} does not imply assumption (2).
\end{ex}

Now we prove Theorem~\ref{thm-BGinequality-main} by establishing the following more general result.

\begin{thm}[$\supset$ Theorem~\ref{thm-BGinequality-main}]
\label{thm-BGinequality-nonpluripolar}
Let $X$ be a compact normal analytic variety in Fujiki's class, with quotient singularities in codimension $2$.  
Let $\mathcal{E}$ be a rank $r$ reflexive sheaf, and let $\alpha \in H^{1,1}_{BC}(X)$ be a big class.  
Assume that one of the following holds:
\begin{enumerate}[label=$(\arabic*)$]
    \item $\mathcal{E}$ is $\langle \alpha^{n-1} \rangle$-semistable.
    \item $X$ has rational singularities, and there exists a Higgs field $\theta$ such that the Higgs sheaf $(\mathcal{E}, \theta)$ is $\langle \alpha^{n-1} \rangle$-semistable.
\end{enumerate}

If $\alpha$ satisfies the vanishing property $($see Definition~\ref{defn-vanishing-exceptional}$)$, then the Bogomolov--Gieseker inequality holds:
$$ 
\widehat{\Delta}(\mathcal{E}) \cdot \langle \alpha^{n-2} \rangle 
= \left(2r \, \widehat{c}_2(\mathcal{E}) - (r-1)\, \widehat{c}_1(\mathcal{E})^2 \right) \cdot \langle \alpha^{n-2} \rangle 
\ge 0.
$$
\end{thm}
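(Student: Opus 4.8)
The plan is to reduce the inequality to the orbifold Bogomolov--Gieseker inequality of Theorem~\ref{thm-Higgs-BG-movable}, following the very construction by which $\widehat{c}_2(\mathcal{E})\cdot\langle\alpha^{n-2}\rangle$ was defined in Subsection~\ref{subsec-intersection-orbifold-nonpluripolar}. First I would take Ou's orbifold modification $f\colon Y\to X$ (\cite{Ou24}), with $Y$ carrying a standard orbifold structure $Y_{\orb}$, form the torsion-free orbi-sheaf $\mathcal{F}_{\orb}$ attached to $\mathcal{F}:=f^{*}\mathcal{E}/\tor$, and then pass to the functorial orbifold resolution $p_{\orb}\colon Z_{\orb}\to Y_{\orb}$ on which $\mathcal{H}_{\orb}:=p_{\orb}^{*}\mathcal{F}_{\orb}/\tor$ becomes a locally free orbi-bundle of rank $r$; writing $q:=f\circ p\colon Z\to X$, the defining formula reads
\[
\widehat{\Delta}(\mathcal{E})\cdot\langle\alpha^{n-2}\rangle=\widehat{\Delta}_{\orb}(\mathcal{H}_{\orb})\cdot\langle (q^{*}\alpha)^{n-2}\rangle .
\]
Since $X$ lies in Fujiki's class, so does $Z$, and the right-hand side is defined; by Proposition~\ref{prop-c2-nonpluri-intersection} (independence of the orbifold model) we are moreover free to assume that $Z_{\orb}$ is a compact K\"ahler orbifold --- this is where the Fujiki hypothesis enters, and it is automatic when $X$ is Moishezon, the orbifold resolutions then being algebraic.

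Because $\alpha$ satisfies the vanishing property, so do $f^{*}\alpha$ and $q^{*}\alpha$ (see the remark after Definition~\ref{defn-vanishing-exceptional}), and $q^{*}\alpha$ stays big since $\langle (q^{*}\alpha)^{n}\rangle=\langle\alpha^{n}\rangle>0$ by the projection formula and Remark~\ref{rem-nonopluripolar-fact}~(3). In case~(2) the Higgs field $\theta$ pulls back functorially, through the reflexive differentials of \cite[Theorem~1.11]{KS21} (legitimate since $X$ has rational singularities, $Y$ has quotient singularities, and the charts of $Z_{\orb}$ are smooth), to Higgs fields $\theta_{Y}$ on $\mathcal{F}$ and $\theta_{\orb}$ on $\mathcal{H}_{\orb}$; in case~(1) one takes these to be zero, and no rationality is needed. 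The crucial step is to deduce the $\langle (q^{*}\alpha)^{n-1}\rangle$-semistability of $(\mathcal{H}_{\orb},\theta_{\orb})$ from the $\langle\alpha^{n-1}\rangle$-semistability of $(\mathcal{E},\theta)$, which I would do by transporting semistability from $X$ up to $Z_{\orb}$ in three stages. First, the argument of Proposition~\ref{prop-comparison-stability} in case~(1), respectively Proposition~\ref{prop-comparison-stability-Higgs} in case~(2) --- which relies only on the vanishing property and on Lemma~\ref{lem-slope-invariance-VP}, both available for the bimeromorphic morphism $f$ --- identifies this with the $\langle (f^{*}\alpha)^{n-1}\rangle$-semistability of $(\mathcal{F},\theta_{Y})$ on $Y$ (one uses in addition that $\mathcal{F}$ and $f^{[*]}\mathcal{E}$ agree in codimension one, so the slopes of corresponding subsheaves coincide, by Lemma~\ref{lem-slope-same}). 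Next, since $Y_{\orb}$ is standard, Lemma~\ref{lem-DO23-lemma3}~(2) identifies this with the $\langle (f^{*}\alpha)^{n-1}\rangle$-semistability of $\mathcal{F}_{\orb}$ on $Y_{\orb}$. Finally Lemma~\ref{lem-DO23-lemma3}~(1), applied to the bimeromorphic orbifold morphism $p_{\orb}$ --- which does make the pullback of $\mathcal{F}_{\orb}$ locally free, $\mathcal{H}_{\orb}$ being locally free --- transfers this to the $\langle (q^{*}\alpha)^{n-1}\rangle$-semistability of $(\mathcal{H}_{\orb},\theta_{\orb})$.

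With these facts in hand, Theorem~\ref{thm-Higgs-BG-movable} applied to the rank-$r$ Higgs orbi-bundle $(\mathcal{H}_{\orb},\theta_{\orb})$ on the compact K\"ahler orbifold $Z_{\orb}$, with the big class $q^{*}\alpha$ satisfying the vanishing property, yields
\[
\widehat{\Delta}_{\orb}(\mathcal{H}_{\orb})\cdot\langle (q^{*}\alpha)^{n-2}\rangle=\bigl(2r\,c_{2}^{\orb}(\mathcal{H}_{\orb})-(r-1)\,c_{1}^{\orb}(\mathcal{H}_{\orb})^{2}\bigr)\cdot\langle (q^{*}\alpha)^{n-2}\rangle\ge 0 ,
\]
and combining this with the identity of the first paragraph gives $\widehat{\Delta}(\mathcal{E})\cdot\langle\alpha^{n-2}\rangle\ge 0$.

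I expect the main obstacle to be the bookkeeping in the semistability transfer rather than any single new idea: one must keep track of the discrepancy between torsion-free pullbacks $f^{*}(-)/\tor$ and reflexive pullbacks $f^{[*]}(-)$ (which agree only in codimension one), of the fact that $Z_{\orb}$ is a \emph{non-standard} orbifold --- so that $\mathcal{H}_{\orb}$ cannot simply be descended to $Z$ and one must route the comparison through the standard orbifold $Y_{\orb}$ --- and, in case~(2), of the generic $\theta$-invariance of saturations under the successive pullbacks, exactly as in the proof of Lemma~\ref{lem-DO23-lemma3}. It is the vanishing property of $\alpha$ and of its pullbacks that renders each of these comparisons an equality of slopes, via Lemmas~\ref{lem-slope-invariance-VP} and~\ref{lem-slope-same}; the one genuinely analytic input --- the orbifold Bogomolov--Gieseker inequality via Demailly's orbifold approximation theorem --- is already subsumed in Theorem~\ref{thm-Higgs-BG-movable}.
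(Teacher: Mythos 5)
Your proposal is correct and follows essentially the same route as the paper: reduce to the orbifold model $q = f\circ p\colon Z\to X$ used to define $\widehat{\Delta}(\mathcal{E})\cdot\langle\alpha^{n-2}\rangle$, transfer $\langle\alpha^{n-1}\rangle$-semistability up through Proposition~\ref{prop-comparison-stability} (resp.\ \ref{prop-comparison-stability-Higgs}) and Lemma~\ref{lem-DO23-lemma3}, and apply Theorem~\ref{thm-Higgs-BG-movable} to the orbi-bundle $\mathcal{H}_{\orb}$. The extra bookkeeping you flag (vanishing property of pullbacks, bigness of $q^{*}\alpha$, non-standardness of $Z_{\orb}$, generic $\theta$-invariance) is exactly what the paper's proof relies on implicitly.
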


In particular, when $X$ is Moishezon, the assumption on the \textit{vanishing property} is not needed, thanks to Lemma~\ref{lem-VP}. Hence, Theorem~\ref{thm-BGinequality-nonpluripolar} implies Theorem~\ref{thm-BGinequality-main}.

\begin{proof}
We consider case~(1).  
As in Subsection~\ref{subsec-intersection-orbifold-nonpluripolar}, take an orbifold modification \( f: Y \to X \) such that \( Y \) admits a standard orbifold structure \( Y_{\orb} \). Set \( \mathcal{F} := f^*\mathcal{E} / \Tor \) on \( Y \), which induces an orbi-sheaf \( \mathcal{F}_{\orb} \) on \( Y_{\orb} \).  
By taking a functorial resolution on each orbifold chart, we obtain a bimeromorphic morphism \( p: Z \to Y \) such that \( Z \) admits a (not necessarily standard) orbifold structure \( Z_{\orb} \) and the pullback of \( \mathcal{F}_\orb \) by $p$ induces an orbi-bundle \( \mathcal{H}_{\orb} \) on \( Z_{\orb} \). Up to a modification, we may assume that $Z_\orb$ is K\"ahler. Set \( q := f \circ p : Z \to X \).

Since \( \mathcal{E} \) is \( \langle \alpha^{n-1} \rangle \)-semistable, it follows from Proposition~\ref{prop-comparison-stability} that \( \mathcal{F} \) is \( \langle f^*\alpha^{n-1} \rangle \)-semistable. By Lemma~\ref{lem-DO23-lemma3}~(2), \( \mathcal{F}_{\orb} \) is also \( \langle f^*\alpha^{n-1} \rangle \)-semistable, and by Lemma~\ref{lem-DO23-lemma3}~(1), so is \( \mathcal{H}_{\orb} \) with respect to \( \langle q^*\alpha^{n-1} \rangle \).

By the definition of the intersection number in Proposition~\ref{prop-c2-nonpluri-intersection}, we obtain
\[
\widehat{\Delta}(\mathcal{E}) \cdot \langle \alpha^{n-2} \rangle
=
\widehat{\Delta}_{\orb}(\mathcal{H}_{\orb}) \cdot \langle q^*\alpha^{n-2} \rangle.
\]
Therefore, the conclusion follows from Theorem~\ref{thm-Higgs-BG-movable}, since the orbi-bundle \( \mathcal{H}_{\orb} \) is \( \langle q^*\alpha^{n-1} \rangle \)-semistable.

The argument for case~(2) is similar.  
Note that the assumption that \( X \) has rational singularities is necessary to ensure that the pullback of a Higgs sheaf remains a Higgs sheaf.
\end{proof}

\section{Miyaoka--Yau inequality}
\label{sec-MY-inequality}
\subsection{Proof of Theorem \ref{thm-main-big-canonical}}

Before giving the proof, we recall a result from \cite[Theorem 4.9]{Jin25} concerning stability. To do so, we first introduce some terminology.

Let $f: X \dashrightarrow Y$ be a bimeromorphic map between compact normal analytic varieties. Let $p: Z \rightarrow X$ and $q: Z \rightarrow Y$ be resolutions of $f$:
$$
\xymatrix@C=40pt@R=30pt{
& Z \ar[ld]_p \ar[rd]^q & \\
X \ar@{-->}[rr]^f && Y
}
$$
We say that $f$ is a \emph{bimeromorphic contraction} if every $p$-exceptional divisor is also $q$-exceptional (cf.~ \cite[Definition 1.1]{Hu-Keel00}). This is equivalent to saying that the inverse map $f^{-1}: Y \dashrightarrow X$ has no exceptional divisors.

Let $\alpha_X$ and $\alpha_Y$ be big classes on $X$ and $Y$, respectively, such that $\alpha_Y = f_* \alpha_X$.
Motivated by \cite[Definition A.10]{DHY23}, we say that $f$ is \emph{$\alpha_X$-negative} if there exists an effective $q$-exceptional divisor $E$ such that
$$
p^*\alpha_X = q^*\alpha_Y + \{E\},
$$
and the support of $p_*E$ is contained in the support of the $f$-exceptional divisors.
Under this setting, the following result holds:

\begin{thm}\cite[Theorem 4.9]{Jin25}
\label{thm-jinnnouchi}
Under the setting above, assume that $f$ is an $\alpha_X$-negative bimeromorphic contraction. Let $\mathcal{E}_X$ and $\mathcal{E}_Y$ be torsion-free sheaves on $X$ and $Y$, respectively. Suppose that $\mathcal{E}_X$ is isomorphic to $f^*\mathcal{E}_Y$ outside the $f$-exceptional locus on $X$.
Then, $\mathcal{E}_X$ is $\langle \alpha_X^{n-1} \rangle$-semistable if and only if $\mathcal{E}_Y$ is $\langle \alpha_Y^{n-1} \rangle$-semistable. 
\end{thm}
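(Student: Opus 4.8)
The plan is to transport the whole question onto a common resolution of $f$, where the two non-pluripolar polarizations become equal and the two sheaves become isomorphic away from a controlled exceptional divisor. Fix resolutions $p\colon Z\to X$ and $q\colon Z\to Y$ of $f$ with $Z$ smooth (both $X$ and $Y$ carry big classes, hence lie in Fujiki's class, and $Z$ may be taken bimeromorphic to a compact K\"ahler manifold), chosen fine enough to be strong resolutions of $\mathcal{E}_X$ and of $\mathcal{E}_Y$ at the same time, with $q=f\circ p$ over the locus of $Z$ on which $f$ is a morphism. The $\alpha_X$-negativity hypothesis provides an effective $q$-exceptional divisor $E$ on $Z$ with $p^{*}\alpha_X=q^{*}\alpha_Y+\{E\}$ and with $\mathrm{Supp}(p_{*}E)$ equal to the union of the $f$-exceptional divisors of $X$.

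\textbf{Step 1: the two polarizations coincide on $Z$.} Since $E$ is $q$-exceptional and $\alpha_Y$ is big, the argument in the proof of Lemma~\ref{lem-VP}, i.e.\ \cite[Lemma~A.5]{DHY23}, gives $\langle q^{*}\alpha_Y+\{E\}\rangle=\langle q^{*}\alpha_Y\rangle$ in $H_{2n-2}(Z,\mathbb{R})$; because $Z$ is smooth this upgrades, via Lemma~\ref{lem-nonpluri-DZD} and Poincar\'e duality, to $P(p^{*}\alpha_X)=P(q^{*}\alpha_Y)$ in $H^{1,1}_{BC}(Z)$. Both positive parts are big by Lemma~\ref{lem-bigness-positivepart}, so Proposition~\ref{prop-nonpluripolar-modifiednef} yields $\langle(p^{*}\alpha_X)^{n-1}\rangle=\langle P(p^{*}\alpha_X)^{n-1}\rangle=\langle P(q^{*}\alpha_Y)^{n-1}\rangle=\langle(q^{*}\alpha_Y)^{n-1}\rangle=:\Omega$.

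\textbf{Step 2: sheaves and saturated subsheaves match up.} By definition of a bimeromorphic contraction every $p$-exceptional prime divisor on $Z$ is $q$-exceptional, and the strict transforms of the $f$-exceptional divisors of $X$ are $q$-exceptional as well; since $\mathcal{E}_X\cong f^{*}\mathcal{E}_Y$ away from $\mathrm{Exc}(f)$, it follows that the reflexive sheaves $p^{[*]}\mathcal{E}_X$ and $q^{[*]}\mathcal{E}_Y$ on $Z$ are isomorphic outside a divisor supported on $q$-exceptional divisors. Restricting to the common big open set and saturating gives a rank-preserving bijection between saturated subsheaves of $\mathcal{E}_X$ and of $\mathcal{E}_Y$; for a corresponding pair $\mathcal{S}_X,\mathcal{S}_Y$ (and for $\mathcal{E}_X,\mathcal{E}_Y$ themselves) the pullbacks $p^{*}\mathcal{S}_X/\mathrm{tor}$ and $q^{*}\mathcal{S}_Y/\mathrm{tor}$ on $Z$ again agree outside a $q$-exceptional divisor, so their first Chern classes differ by a $q$-exceptional divisor class. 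Computing slopes on $Z$ through Definition-Lemma~\ref{defn-slope} and intersecting with $\Omega$ then gives $\mu_{\langle\alpha_X^{n-1}\rangle}(\mathcal{S}_X)=\mu_{\langle\alpha_Y^{n-1}\rangle}(\mathcal{S}_Y)$ and $\mu_{\langle\alpha_X^{n-1}\rangle}(\mathcal{E}_X)=\mu_{\langle\alpha_Y^{n-1}\rangle}(\mathcal{E}_Y)$, provided $\Omega\cdot\{D\}=0$ for every $q$-exceptional divisor $D$ occurring here. Granting that, $\mu(\mathcal{S}_X)\le\mu(\mathcal{E}_X)$ for all saturated $\mathcal{S}_X\subset\mathcal{E}_X$ if and only if $\mu(\mathcal{S}_Y)\le\mu(\mathcal{E}_Y)$ for all saturated $\mathcal{S}_Y\subset\mathcal{E}_Y$, which is the desired equivalence (this may also be packaged through Proposition~\ref{prop-comparison-stability}).

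\textbf{Main obstacle.} Everything reduces to the vanishing $\Omega\cdot\{D\}=0$ for the $q$-exceptional divisors $D$ above, i.e.\ to the vanishing property of Definition~\ref{defn-vanishing-exceptional} for $\alpha_Y$ along those divisors. In the case relevant to Theorem~\ref{thm-main-big-canonical}, where $X$ and $Y$ are Moishezon, this is exactly Lemma~\ref{lem-VP}(1), so the argument above is complete and one may invoke Propositions~\ref{prop-comparison-stability} and~\ref{prop-comparison-stability-Higgs} freely. For a general big class the vanishing property is unavailable, and one must exploit the $\alpha_X$-negativity more delicately: the divisor $D$ is supported on the components covered by $E$ together with $p$-exceptional divisors, and one shows, by a component-by-component comparison of $p^{*}\alpha_X$ with $q^{*}\alpha_Y+\{E\}$ — using that $\Omega$ is movable in the sense of Remark~\ref{rem-nonopluripolar-fact}(5) and that the negative part of $p^{*}\alpha_X$ absorbs exactly $E$ over these components — that each contributes $0$. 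This last step is the technical heart of \cite[Theorem~4.9]{Jin25}, and is where I would expect essentially all of the work to be.
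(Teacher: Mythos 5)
The paper never proves this statement: it is imported verbatim from \cite[Theorem~4.9]{Jin25}, so there is no internal proof to compare against. Judged on its own terms, your reconstruction is the natural one and is fully consistent with the scaffolding the paper erects around the citation. Step~1 is correct: $p^{*}\alpha_X=q^{*}\alpha_Y+\{E\}$ with $E$ effective and $q$-exceptional forces $P(p^{*}\alpha_X)=P(q^{*}\alpha_Y)$ via \cite[Lemma~A.5]{DHY23}, and Proposition~\ref{prop-nonpluripolar-modifiednef} (both positive parts being big by Lemma~\ref{lem-bigness-positivepart}) upgrades this to equality of the $(n-1)$-fold non-pluripolar products on $Z$. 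Step~2 is the same saturation-transport used in the paper's own Proposition~\ref{prop-comparison-stability} and Lemma~\ref{lem-slope-invariance-VP}; the fact that $f$ is a contraction guarantees that all the divisors along which $p^{[*]}\mathcal{E}_X$ and $q^{[*]}\mathcal{E}_Y$ disagree are $q$-exceptional, which is what makes the slope comparison reduce to the single vanishing $\langle(q^{*}\alpha_Y)^{n-1}\rangle\cdot\{D\}=0$.

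The ``main obstacle'' you isolate is genuine and is exactly the right diagnosis: that vanishing is Definition~\ref{defn-vanishing-exceptional}, i.e.\ \cite[Assumption~3.1]{Jin25}, under which the cited theorem is actually proved. The paper only verifies it in the Moishezon, surface, and nef cases (Lemma~\ref{lem-VP}), and every application of Theorem~\ref{thm-jinnnouchi} in this paper (the proofs of Theorems~\ref{thm-main-big-canonical} and~\ref{thm-main-big-anticanonical}) is for projective, hence Moishezon, varieties, where Lemma~\ref{lem-VP}(1) closes your argument completely. The one caution I would add: as transcribed, the theorem statement carries no vanishing-property hypothesis, and your own analysis shows the argument does not close without it — there is no known substitute for differentiability of the volume in the general K\"ahler big case (Remark~\ref{assumption-remark}). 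So either read the hypothesis as implicit in ``the setting above'' of \cite{Jin25}, or state your proof for Moishezon $X$ and $Y$; do not present the final ``component-by-component'' paragraph as if it were a proof, since it is a conjecture about where the work lies rather than an argument.
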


We now turn to the proof of Theorem \ref{thm-main-big-canonical}.

\begin{proof}[Proof of Theorem \ref{thm-main-big-canonical}]
We first show that under either assumption (1) or (2), the reflexive cotangent sheaf $\Omega_X^{[1]}$ is $\langle c_1(K_X)^{n-1} \rangle$-semistable. This follows from the same argument as in \cite[Example 4.11]{Jin25}.
Indeed, under assumption (1), $X$ admits a canonical model $X_{\mathrm{can}}$ by \cite[Theorem 1.2]{BCHM10}. According to \cite[Theorem A]{Gue16}, the reflexive cotangent sheaf $\Omega_{X_{\mathrm{can}}}^{[1]}$ is $c_1(K_{X_{\mathrm{can}}})^{n-1}$-semistable. Let $f: X \dashrightarrow X_{\mathrm{can}}$ be the birational map induced by the $K_X$-MMP. Since $f$ is a $c_1(K_X)$-negative bimeromorphic contraction, Theorem~\ref{thm-jinnnouchi} implies that $\Omega_X^{[1]}$ is $\langle c_1(K_X)^{n-1} \rangle$-semistable.
The same conclusion holds under assumption (2), by applying the $K_{\widetilde{X}}$-MMP to a projective manifold $\widetilde{X}$. (Note that under assumption (2), $X$ is Moishezon).

Now, define the reflexive sheaf $\mathcal{E} := \Omega_X^{[1]} \oplus \mathcal{O}_X$, and consider the Higgs field $\theta$ given by
$$
\begin{array}{cccc}
\theta \colon & \mathcal{E} := \Omega_X^{[1]} \oplus \mathcal{O}_X & \rightarrow & \mathcal{E} \otimes \Omega_X^{[1]}= (\Omega_X^{[1]} \oplus \mathcal{O}_X) \otimes \Omega_X^{[1]}\\
& (a, b) & \mapsto & (0, a).
\end{array}
$$
By Remark~\ref{rem-nonopluripolar-fact} (3) and (4), we have
$$
\mu_{\langle c_1(K_X)^{n-1} \rangle}(\Omega_X^{[1]}) = \frac{1}{n} c_1(K_X) \cdot \langle c_1(K_X)^{n-1} \rangle 
\underset{\text{(Rem. \ref{rem-nonopluripolar-fact} (4))}}{=} \frac{1}{n} \langle c_1(K_X)^n \rangle 
\underset{\text{(Rem. \ref{rem-nonopluripolar-fact} (3))}}{>} 0.
$$
As shown in the proof of \cite[Proposition 2.8]{IMM24}, the Higgs sheaf $(\mathcal{E}, \theta)$ is $\langle c_1(K_X)^{n-1} \rangle$-stable (note that Lemma~\ref{lem-slope-same} is required for the argument to hold).
Therefore, applying the Bogomolov--Gieseker inequality in Theorem~\ref{thm-BGinequality-nonpluripolar} to $(\mathcal{E}, \theta)$, we obtain
$$
\left(2(n+1)\widehat{c}_2(X) - n \widehat{c}_1(X)^2\right) \cdot \langle c_1(K_X)^{n-2} \rangle 
\underset{\text{(Prop. \ref{prop-exact-sequence-2})}}{=} 
\widehat{\Delta}(\mathcal{E})\cdot \langle c_1(K_X)^{n-2} \rangle 
\ge 0.
$$
We recall that any klt variety admits quotient singularities in codimension two, according to \cite[Lemma 5.8]{GK20}.
\end{proof}

\subsection{Proof of Theorem \ref{thm-main-big-anticanonical}}

Next, we consider the Miyaoka–Yau inequality in the case where $-K_X$ is big. To this end, we first recall the notion of the \emph{canonical extension sheaf} (cf.~ \cite{Tian92}, ~\cite[Section 4]{GKP22}, \cite[Section 3]{DGP24}).
Let $X$ be a normal analytic variety, and assume that $K_X$ is $\Q$-Cartier. Then, by \cite[Section 4]{GKP22}, we have
$c_1(-K_X) \in H^{1}(X, \Omega_{X}^{1})$.
Since $H^1(X, \Omega_{X}^{1}) = \operatorname{Ext}^1(\mathcal{O}_{X}, \Omega_{X}^{1})$, this gives rise to an extension of $\Omega_{X}^{1}$ by $\mathcal{O}_{X}$:
$$
0 \to \Omega_{X}^{1} \to \mathcal{W}_{X} \to \mathcal{O}_{X} \to 0.
$$
Taking the dual of this sequence, we obtain
$$
0 \to \mathcal{O}_{X} \to \mathcal{E}_{X} \to \mathcal{T}_{X} \to 0.
$$
The sheaf $\mathcal{E}_{X}$ is called the \emph{canonical extension sheaf}. According to \cite[Section 4]{GKP22}, this sequence is locally split.

We now recall the definition of K-semistability via the delta-invariant $\delta(X)$ (cf.~\cite{FO18}, \cite{BJ20} for the case where $-K_X$ is ample, and \cite{DZ22}, \cite{Xu23} for the case where $-K_X$ is merely big).
Let $X$ be a projective klt variety with $-K_X$ big. For any birational morphism $\pi : Y \to X$ and any prime divisor $E$ on $Y$, we define the discrepancy by
$$
A_{X}(E) := 1 + \operatorname{ord}_{E}(K_{Y/X}),
$$
and the $S$-invariant by
$$
S_{X}(E) := \frac{1}{\operatorname{vol}(-K_X)} \int_0^{\infty} \operatorname{vol}(-\pi^*K_X - tE) \, dt.
$$
The \emph{delta-invariant} $\delta(X)$ is then defined as
$$
\delta(X) := \inf_E \frac{A_{X}(E)}{S_{X}(E)},
$$
where the infimum is taken over all prime divisors $E$ over $X$.
We say that $X$ is \emph{K-semistable} if $\delta(X) \ge 1$.

Now, we turn to the proof of Theorem \ref{thm-main-big-anticanonical}.

\begin{proof}[Proof of Theorem \ref{thm-main-big-anticanonical}]
We consider only case (1), as case (2) is analogous to the proof of Theorem~\ref{thm-main-big-canonical}.
Since $X$ is K-semistable, it follows from \cite[Theorems 1.1, 1.2, and Corollary 3.5]{Xu23} that the section ring of $-K_X$
$$
R(X, -K_X) := \bigoplus_{m \in r \mathbb{N}} H^0(X, -mK_X),
$$
is finitely generated and $Z := \operatorname{Proj}(R(X, -K_X))$ is a K-semistable klt Fano variety, where $r$ denotes the Cartier index of $-K_X$.
Furthermore, according to \cite[Remark 2.4 and Theorem 3.3]{DGP24}, the canonical extension sheaf $\mathcal{E}_Z$ is $c_1(-K_Z)^{n-1}$-semistable.

Let $f \colon X \dashrightarrow Z$ be the natural rational map, and take a resolution $W$ with morphisms $p \colon W \to X$ and $q \colon W \to Z$ resolving $f$:
$$
\xymatrix@C=40pt@R=30pt{
& W \ar[ld]_p \ar[rd]^q & \\
X \ar@{-->}[rr]^f && Z
}
$$
Since the section ring $R(X, -K_X)$ is finitely generated, $f$ is a birational contraction by \cite[Lemma 1.6]{Hu-Keel00}. Hence, there exists an $f$-exceptional effective divisor $D$ such that
$$
-K_X = p_* q^*(-K_Z) + D.
$$
By the negativity lemma \cite[Lemma 3.39]{KM98}, there exists an effective divisor $B$ on $W$ such that
$$
p^*(-K_X) = q^*(-K_Z) + B.
$$
Therefore, we have an isomorphism $p^* \mathcal{E}_X \cong q^* \mathcal{E}_Z$ outside the support of $B$.
Moreover, the map $f$ is $c_1(-K_X)$-negative. Indeed, since the birational map $f \colon X \dashrightarrow Z$ is a contraction, its resolution $q \colon W \to Z$ is also a contraction. Thus,
\[
q_* p^*(-K_X) = q_* q^*(-K_Z) = -K_Z,
\]
which implies that $q_* B = 0$. Therefore, we have $p_* B = D$, and it follows that $p_* B$ is $f$-exceptional. Hence, $f$ is $c_1(-K_X)$-negative.

By applying Theorem~\ref{thm-jinnnouchi}, it follows that the canonical extension sheaf $\mathcal{E}_X$ is $\langle c_1(-K_X)^{n-1} \rangle$-semistable.
Finally, applying the Bogomolov--Gieseker inequality from Theorem~\ref{thm-BGinequality-nonpluripolar} to $\mathcal{E}_X$, we obtain
\begin{equation*}
\left(2(n+1)\widehat{c}_2(X) - n\widehat{c}_1(X)^2\right) \cdot \langle c_1(-K_X)^{n-2} \rangle 
\underset{\text{(Prop. \ref{prop-exact-sequence-2})}}{=}
\widehat{\Delta}(\mathcal{E}_X) \cdot \langle c_1(-K_X)^{n-2} \rangle 
\geq 0.
\end{equation*}
\end{proof}

\begin{rem}
After this paper was submitted, the structure of \(X\) in the equality cases of the Miyaoka--Yau inequalities in Theorems \ref{thm-main-big-canonical} and \ref{thm-main-big-anticanonical} was clarified in \cite{Jin25-2} and \cite{ZZZ26}. According to \cite{ZZZ26}, if equality holds in Theorem \ref{thm-main-big-canonical}, then the canonical model is quasi-covered by the unit ball. According to \cite{Jin25-2} and \cite{ZZZ26}, if equality holds in Theorem \ref{thm-main-big-anticanonical}, then the anticanonical model is quasi-covered by $\mathbb{C}\mathbb{P}^n$.
\end{rem}

At the end of this paper, we also note that the Miyaoka--Yau inequality without the non-pluripolar product does not hold in general when \(K_X\) is merely big.

\begin{lem}
\label{lem-counterexample-MY}
Let $X$ be a minimal projective manifold of general type with $n=\dim X\geq 3$. Then the following assertions hold.
\begin{enumerate}[label=$(\arabic*)$]
    \item If $n$ is odd, then, for $r\gg 0$, the blow-up $X_r$ of $X$ at $r$ distinct points satisfies
    \[
    (2(n+1)c_2(X_r)-nc_1(X_r)^2)\cdot K_{X_r}^{n-2}<0.
    \]
    \item If $n$ is even, then there exists a smooth curve $C\subset X$ such that the blow-up $X_C$ of $X$ along $C$ satisfies
    \[
    (2(n+1)c_2(X_C)-nc_1(X_C)^2)\cdot K_{X_C}^{n-2}<0.
    \]
\end{enumerate}
\end{lem}

This lemma follows from the following Proposition \ref{prop-blowup-chern}.

\begin{prop}\label{prop-blowup-chern}
Let $\pi:\widetilde{X}\rightarrow X$ be the blow-up of a projective manifold $X$ along a smooth center $Y$.
\begin{enumerate}[label=$(\alph*)$]
    \item If $Y$ is a point, then
    \begin{equation}\label{equa-point}
        \begin{split}
            &(2(n+1)c_2(\widetilde{X})-nc_1(\widetilde{X})^2)\cdot K_{\widetilde{X}}^{n-2}-(2(n+1)c_2(X)-nc_1(X)^2)\cdot K_X^{n-2}\\
            &=(-1)^{n}4n(n-1)^{n-2}.
        \end{split}
    \end{equation}
    \item If $Y$ is a smooth curve of genus $g$, then
    \begin{equation}\label{equa-curve}
        \begin{split}
            &(2(n+1)c_2(\widetilde{X})-nc_1(\widetilde{X})^2)\cdot K_{\widetilde{X}}^{n-2}-(2(n+1)c_2(X)-nc_1(X)^2)\cdot K_X^{n-2}\\
            &=(-1)^{n-1}(n-2)^{n-2}(6K_X\cdot[Y]+3(n-2)(2g-2)).
        \end{split}
    \end{equation}
\end{enumerate}
\end{prop}

\begin{proof}[Proof of Lemma \ref{lem-counterexample-MY}]
Assertion (1) follows immediately from \eqref{equa-point}. We now explain (2). Since $K_X$ is nef and big, $mK_X$ is base point free for $m\gg 1$. We can choose general divisors $H_1,\cdots,H_{n-1}\in|mK_X|$ for $m\gg1$ such that $C_m:=H_1\cap\cdots\cap H_{n-1}$ is smooth. Then, by the adjunction formula,
    $$K_{C_m}=(K_X+H_1+\cdots +H_{n-1})|_{C_m}=(1+(n-1)m)K_X|_{C_m},$$
    and hence $\deg K_{C_m}=(1+(n-1)m)m^{n-1}K_X^{n}.$
    Therefore
\[
\begin{aligned}
	6K_X\cdot [C_m]+3(n-2)(2g(C_m)-2)
	=\Bigl(6+3(n-2)(1+(n-1)m)\Bigr)m^{n-1}K_X^n  
	\to +\infty
\end{aligned}
\]
as \(m\to +\infty\), since \(K_X^n>0\).
    Since $n$ is even, \eqref{equa-curve} gives the desired result for $m \gg 1$.
\end{proof}

\begin{proof}[Proof of Proposition \ref{prop-blowup-chern}]
We recall the results of \cite[Subsection 15.4]{Ful93}.
Let \(Y \subset X\) be a submanifold of codimension \(d\), and let \(E=\pi^{-1}(Y)\) be the exceptional divisor of \(\pi\).
The blow-up diagram is as follows:
\[
\begin{tikzcd}
   E \arrow[r,"j"] \arrow[d,"g"]   & \widetilde{X} \arrow[d,"\pi"]   \\
   Y \arrow[r,"i"]  & X
\end{tikzcd}
\]
Let $N$ be the rank $d$ normal bundle of $Y$ in $X$, and identify $E$ with $\mathbb{P}(N)$. Then $N_{E}\widetilde{X}$ is $\mathcal{O}_N(-1)$. Let $\mathcal{O}(1)=\mathcal{O}_N(1)$ and $\xi=c_1(\mathcal{O}(1))$. Recall from \cite[Theorem 15.4]{Ful93} that
\begin{equation}
    \label{equa-diff-chern}
    c(\widetilde{X})-\pi^*c(X)=j_*(g^*c(Y)\cdot\alpha),
\end{equation}
where 
\begin{equation}
    \label{equa-defn-alpha}\alpha=\frac{1}{\xi}\left[\sum\limits_{m=0}^d g^*c_{d-m}(N)-(1-\xi)\sum\limits_{m=0}^d(1+\xi)^mg^*c_{d-m}(N)\right].
\end{equation}
Since $N_{E}\widetilde{X}=\mO_N(-1)$ and
    $j^*[E]=c_1(N_{E}\widetilde{X})=-\xi$, we have
    \begin{equation}
    \label{equa-jm}
    j_*(\xi^{m-1})=(-1)^{m-1}j_*(j^*[E]^{m-1})=(-1)^{m-1}[E]^m.
    \end{equation}

First, we prove \eqref{equa-point}. Note that $N$ is a vector space of rank $n$ over $Y$, and hence $c(N)=c(Y)=1$. Thus $K_{\widetilde{X}} = \pi^{*}K_{X} + (n-1)[E]$ and 
    \begin{equation}
    \label{equa-2ndchern-pt}
        \begin{split}
        c_2(\widetilde{X})-\pi^*c_2(X)
         \underset{\text{\eqref{equa-diff-chern} and \eqref{equa-defn-alpha}}}{=}
         \frac{n(n-3)}{2}j_{*}\xi
         \underset{\text{\eqref{equa-jm}}}{=}
        \frac{n(n-3)}{2}[E]^2.
        \end{split}
    \end{equation}
    Since $\pi^*K_X|_E=0$ and $[E]^{n}=(-1)^{n-1}$, we obtain
    \begin{equation*}\label{equa-example-1}
        \begin{split}
             c_2(\widetilde{X})\cdot K_{\widetilde{X}}^{n-2}-c_2(X)\cdot K_X^{n-2}
             =
             (c_2(\widetilde{X})-\pi^{*}c_2(X))\cdot K_{\widetilde{X}}^{n-2}
             \underset{\eqref{equa-2ndchern-pt}}{=}(-1)^{n-1}(n-1)^{n-2} \frac{n(n-3)}{2}.
        \end{split}
    \end{equation*}
    Therefore, by a direct calculation, \eqref{equa-point} follows from $K_{\widetilde{X}}^n-K_X^n=(-1)^{n-1}(n-1)^n$.

  Second, we prove \eqref{equa-curve}. Fix $p \in Y$ and set $F=[g^{-1}(p)]$. Since $N$ is the normal bundle of $Y$ in $X$ and $c_1(N) \in H^2(Y,\mathbb{Z}) \cong \mathbb{Z}$, we have
\begin{equation}\label{equa-degreeN}
j_*(g^*c_1(N))=\deg_{Y}(N)[F]=\left(2g-2-K_X\cdot [Y]\right)[F]
\in H^4(\widetilde{X},\mathbb{Z}).
\end{equation}
Since $c(N)=1+c_1(N)$ and $c(Y)=1+c_1(Y)$, the same argument as in the first case gives
    \begin{equation}\label{equa-example-3}
        \begin{split}
        &c_2(\widetilde{X})-\pi^*c_2(X) \\
    & \underalign{\text{\eqref{equa-diff-chern} and \eqref{equa-defn-alpha} }}{=}
    -\frac{(n-1)(n-4)}{2}j_*(\xi)-(n-3)j_*(g^*c_1(N))-(n-2)j_*(g^*c_1(Y))\\
        &\underalign{\text{\eqref{equa-jm} and \eqref{equa-degreeN}}}{=}\frac{(n-1)(n-4)}{2}[E]^2+\big((n-3)K_X\cdot [Y]+2g-2\big)[F].
        \end{split}
    \end{equation}
    We compute $c_2(\widetilde{X})\cdot K_{\widetilde{X}}^{n-2}-c_2(X)\cdot K_X^{n-2}$. Since $\pi_*(E^m)=i_*g_*(-\xi)^{m-1}=0$ for all $m\leq n-2$, because $\dim E-\dim Y=n-2$, we have
    \begin{equation}
    \label{equa-chern-equiv}
    \pi^*c_2(X)\cdot K_{\widetilde{X}}^{n-2}=c_2(X)\cdot K_X^{n-2}.
    \end{equation}
    Therefore it remains to compute $(c_2(\widetilde{X})-\pi^*c_2(X))\cdot K_{\widetilde{X}}^{n-2}$. For this, it is enough to compute $[E]^2\cdot K_{\widetilde{X}}^{n-2}$ and $[F]\cdot K_{\widetilde{X}}^{n-2}$. Since $Y$ is a curve, we note that
    \begin{equation}\label{equa-example-vanish}
\pi^*K_X^{m}\cdot[E]^k=j^*\pi^*K_X^{m}\cdot(-1)^{k-1}\xi^{k-1}=0\ \text{ for any $m\geq 2$.}
    \end{equation}
    Moreover, since $g : E=\mathbb{P}(N) \to Y$ is a projective bundle with fiber $F \cong \mathbb{P}^{n-2}$, we have
    \begin{equation}\label{equa-example-projection}
    g_{*}\xi^{n-2}= 1 \quad \text{and} \quad
    g_{*}\xi^{n-1}=-\deg_Y(N)=-(2g-2)+K_X\cdot [Y].
    \end{equation}
    Thus we obtain
    \begin{equation}\label{equa-example-5}
        \begin{split}
            [E]^2\cdot K_{\widetilde{X}}^{n-2}
            &\underalign{}{=}j^*[E]\cdot j^*K_{\widetilde{X}}^{n-2}
            = j^*[E]\cdot j^{*}\big(\pi^{*}(K_X)+(n-2)E\big)^{n-2}\\
            &\underalign{}{=}(-\xi)\cdot\big(g^*(K_X|_Y)-(n-2)\xi\big)^{n-2}\\
            &\underalign{\text{\eqref{equa-example-vanish}}}{=}(-1)^{n-2}(n-2)^{n-2}\left(g_{*}\xi^{n-2}\cdot (K_X \cdot [Y])- g_*\xi^{n-1}
            \right)\\
            &\underalign{\eqref{equa-example-projection}}{=}(-1)^{n-2}(n-2)^{n-2}(2g-2).
        \end{split}
    \end{equation}
    Since $\pi^*K_X|_F$ is trivial and $E|_{F}\cong \mathcal{O}_{\mathbb{P}^{n-2}}(-1)$, we have
    \begin{equation}\label{equa-example-6}
            [F]\cdot K_{\widetilde{X}}^{n-2}=(\pi^*K_X|_F+(n-2)E|_F)^{n-2}
            =(-1)^{n-2}(n-2)^{n-2}.
    \end{equation}
    Therefore,
    \begin{equation}\label{equa-example-7}
    \begin{split}
        &c_2(\widetilde{X})\cdot K_{\widetilde{X}}^{n-2}-c_2(X)\cdot K_X^{n-2} 
        \underalign{\eqref{equa-chern-equiv}}{=} (c_2(\widetilde{X})-\pi^*c_2(X))\cdot K_{\widetilde{X}}^{n-2}\\
        &\underalign{\text{\eqref{equa-example-5} and  \eqref{equa-example-6}}}{=}(-1)^{n-2}(n-2)^{n-2}(n-3)\big(K_X\cdot[Y]+(n-2)(g-1)\big).
    \end{split}
    \end{equation}
    By the same calculation as in \eqref{equa-example-1}, we also obtain
    \begin{equation}\label{equa-example-4}
        \begin{split}
            K_{\widetilde{X}}^n-K_X^n
            &=(-1)^{n-2}(n-2)^{n-2}(2K_X\cdot[Y]+(n-2)(2g-2)).
        \end{split}
    \end{equation}
    Combining \eqref{equa-example-7} with \eqref{equa-example-4}, we complete the proof.
\end{proof}

\bibliographystyle{alpha}
\bibliography{ref_MY.bib}
\end{document}